\newcommand\NoCaseChange[1]{ #1 }
\def\df{\hat{\mathsf{df}}}
\def\argmin{\mathop{\rm arg\, min}}
\def\bBbar{{\overline \bB}}
\def\bHbar{{\overline \bH}}
\def\R{{\mathbb{R}}}
\def\E{{\mathbb{E}}}
\def\P{{\mathbb{P}}}
\DeclareMathOperator{\trace}{Tr}
\DeclareMathOperator{\range}{range}
\DeclareMathOperator{\rank}{rank}
\let\vec\relax
\DeclareMathOperator{\vec}{\mathbf{vec}}
\DeclareMathOperator{\diag}{\mathbf{diag}}
\DeclareMathOperator{\supp}{supp}
\def\mathbold{\boldsymbol} 
\def\ba{\mathbold{a}}
\def\tba{{\widetilde{\ba}}}
\def\bA{\mathbold{A}}
\def\hbA{\smash{\widehat{\mathbf A}}}
\def\bb{\mathbold{b}}
\def\bB{\mathbold{B}}
\def\hbB{{\widehat{\bB}}}
\def\bC{\mathbold{C}}
\def\bD{\mathbold{D}}
\def\bE{\mathbold{E}}
\def\bF{\mathbold{F}}
\def\bg{\mathbold{g}}
\def\bG{\mathbold{G}}
\def\bH{\mathbold{H}}
\def\tbH{{\tilde{\bH}}}
\def\bI{\mathbold{I}}
\def\bJ{\mathbold{J}}
\def\bM{\mathbold{M}}
\def\bn{\mathbold{n}}
\def\bN{\mathbold{N}}
\def\bP{\mathbold{P}}
\def\bQ{\mathbold{Q}}
\def\br{\mathbold{r}}
\def\tbr{{\widetilde{\br}}}
\def\be{\mathbold{e}}
\def\bu{\mathbold{u}}
\def\bU{\mathbold{U}}
\def\bv{\mathbold{v}}
\def\bV{\mathbold{V}}
\def\bw{\mathbold{w}}
\def\bW{\mathbold{W}}
\def\bx{\mathbold{x}}
\def\bX{\mathbold{X}}\def\tbX{{\tilde{\bX}}}\def\bXbar{{\overline \bX}}
\def\by{\mathbold{y}}
\def\bY{\mathbold{Y}}
\def\bz{\mathbold{z}}
\def\hbz{{\widehat{\bz}}}\def\tbz{{\widetilde{\bz}}}
\def\bbeta{\mathbold{\beta}}
\def\hbbeta{\hat{\bbeta}{}}
\def\bgamma{\mathbold{\gamma}}
\def\hbgamma{{\widehat{\bgamma}}}
\def\bGamma{\mathbold{\Gamma}}\def\hbGamma{{\widehat{\bGamma}}}
\def\ep{\varepsilon}\def\eps{\epsilon}
\def\bep{ {\mathbold{\ep} }}
\def\bzeta{\mathbold{\zeta}}
\def\btheta{\mathbold{\theta}}
\def\bLambda{\mathbold{\Lambda}}
\def\tbLambda{{\widetilde{\bLambda}}}
\def\bxi{\mathbold{\xi}}
\def\hbxi{{\widehat{\bxi}}}\def\tbxi{{\widetilde{\bxi}}}
\def\bPi{\mathbold{\Pi}}
\def\bSigma{\mathbold{\Sigma}}
\declaretheorem[name=Theorem,numberwithin=section]{theorem}
\declaretheorem[name=Proposition,sibling=theorem]{proposition}
\declaretheorem[name=Lemma,sibling=theorem]{lemma}
\declaretheorem[name=Assumption,numberwithin=section]{assumption}
\declaretheorem[name=Definition,style=definition]{definition}
\crefname{assumption}{assumption}{assumptions}
\numberwithin{equation}{section}
\newcommand\definitionOfLambdaZero{
    \lambda_0 = \Big(\max_{j=1,...,p}\bSigma_{jj}^{1/2}\Big)
    \frac{\sigma(1+\eta_1)}{\sqrt{nT}}\Big(1 + \sqrt{(2/T)\log(p/s) }\Big)
}
\begin{document}
\title{Chi-square and normal inference in high-dimensional multi-task regression}
\runtitle{Chi-square inference in multi-task regression}
\author{Pierre C. Bellec and Gabriel Romon}
\runauthor{Bellec and Romon}

\begin{abstract}
  The paper proposes chi-square and normal inference methodologies
  for the unknown coefficient matrix $\bB^*$
  of size $p\times T$ in a multi-task
  linear model with
  $p$ covariates, $T$ tasks and $n$ observations
  under a row-sparse assumption on $\bB^*$.
  The row-sparsity $s$, dimension $p$ and number of tasks $T$
  are allowed to grow with $n$.
  In the high-dimensional regime $p\ggg n$, in order to leverage
  the row-sparsity \cite{lounici2011oracle,obozinski2011support},
  the multi-task Lasso is considered.

  We build upon the multi-task Lasso with a de-biasing scheme
  to correct for the bias induced by the penalty.
  The de-biasing scheme requires the introduction of a new data-driven
  object, coined the interaction matrix,
  that captures the effective correlations between 
  noise vector and residuals on different tasks.
  The interaction matrix is symmetric positive semi-definite,
  of size $T\times T$ and can be computed efficiently.

  The interaction matrix lets us derive asymptotic normal
  and asymptotic $\chi^2_T$
  results under general Gaussian design and the rate condition
  ${sT + s\log(p/s)}/{n} \to0$ which corresponds to consistency
  in Frobenius norm of the multi-task Lasso.
  These asymptotic distribution results yield valid
  confidence intervals for single entries of $\bB^*$
  and valid confidence ellipsoids for single rows of $\bB^*$.
  If the covariance of the design
  is unknown, a modification of the multi-task de-biasing scheme
  using the nodewise Lasso provides comparable
  confidence intervals and confidence ellipsoids
  for the $j$-th row of $\bB^*$,
  provided that the $j$-th column of the precision matrix $\bSigma^{-1}$ is
  sufficiently sparse.
  While previous proposals in grouped-variables regression
  require row-sparsity  $s\lesssim \sqrt n$ up to constants depending on $T$
  and logarithmic factors in $(n,p)$ for unknown $\bSigma$,
  the de-biasing scheme
  using the interaction matrix provides confidence intervals
  and $\chi^2_T$ confidence ellipsoids under the conditions
  ${\min(T^2,\log^8p)}/{n} \to 0$ and
  $$
  \frac{sT + s\log(p/s) + \|\bSigma^{-1}\be_j\|_0\log p}{n} \to0,
  \quad
  \frac{\min(s,\|\bSigma^{-1}\be_j\|_0)}{\sqrt n}
  \sqrt{[T+\log(p/s)]\log p} \to 0,
  $$
  allowing for row-sparsity $s\ggg \sqrt n$
  when $\|\bSigma^{-1}\be_j\|_0 \sqrt T \lll \sqrt{n}$
  up to logarithmic factors.
\end{abstract}

\maketitle

\section{Introduction}

\subsection{Model}

We consider a multi-task linear regression model with $T$ tasks,
with $n$ i.i.d. observations $(\bx_i,y^{{(1)}}_i,...,y^{{(T)}}_i)$,
where $\bx_i\in\R^{p}$ is a random feature vector and
$y^{{(1)}}_i,...,y^{{(T)}}_i$ are $T$ different scalar responses.
We assume that on each task $t=1,...,T$, the response $y_i^{{(t)}}$
satisfies a linear model
\begin{equation}
    \label{eq:linear-models-1...T}
    y_i^{{(t)}}
    = \bx_i^\top \bbeta^{{(t)}} + \eps_i^{{(t)}},
    \qquad
    t=1,...,T
\end{equation}
where $\bbeta^{{(t)}} \in \R^p$ is the unknown coefficient vector
on the task $t$.
Throughout, $\bX\in\R^{n\times p}$ is the design matrix with $n$ rows 
$\bx_1^\top,...,\bx_n^\top$. The linear models \eqref{eq:linear-models-1...T} may be
rewritten in vector and matrix form
\begin{equation}
    \label{model-matrix}
    \by^{{(t)}} = \bX\bbeta^{{(t)}} + \bep^{{(t)}},
    \qquad
    \qquad
    \bY = \bX\bB^* + \bE
\end{equation}
where
$\by^{{(t)}} = (y_1^{{(t)}},...,y_n^{{(t)}})^\top$
and
$\bep^{{(t)}} = (\eps_1^{{(t)}},...,\eps_n^{{(t)}})^\top$
are vectors in $\R^n$,
$\bY\in \R^{n\times T}$ is the response matrix with columns $\by^{{(1)}},...,\by^{{(T)}}$,
$\bE\in \R^{n\times T}$ is a noise matrix with columns $\bep^{{(1)}},...,\bep^{{(T)}}$,
and $\bB^*\in\R^{p\times T}$ is an unknown coefficient matrix with
columns $\bbeta^{{(1)}},...,\bbeta^{{(T)}}$.

Estimation of $\bB^*$ in the above multi-task model has been well studied
during the last decade in the high-dimensional regime where $p\ggg n$, see for instance
\cite{lounici2011oracle}. 
This literature on multi-task learning suggests to use a joint convex optimization
problem over the tasks in order to estimate $\bB^*$, namely
$$\hbB
=
\argmin_{\bB\in\R^{p\times T}}
\Big[
    \frac{1}{2nT}\|\bY - \bX\bB \|_F^2 + g(\bB)
\Big]
=
\argmin_{\bB\in\R^{p\times T}}
\Big[
    \frac{1}{2nT}
    \sum_{t=1}^T
    \sum_{i=1}^n
    (y_i^{{(t)}} - \bx_i^\top \bB \be_t)^2
 + g(\bB)
\Big]
$$
where $\be_t\in\R^T$ is the $t$-th canonical basis vector,
$\|\cdot\|_F$ is the Frobenius norm of matrices and $g:\R^{p\times T}\to\R$
is a convex penalty function. The role of the convex penalty $g$ is to
promote a shared structure on the coefficient vectors $\bbeta^{(1)},...,\bbeta^{(T)}$.
The most common shared structure is that of row-sparsity where one assumes that only a few features are relevant across all tasks:
there is a support set
$S\subset \{1,...,p\}$ of small cardinality (relatively to $n,p$) such that
for every task $t=1,...,T$, $\bbeta^{{(t)}}_j =0 \iff j\notin S$.
Equivalently, $\be_j{}^\top \bB^* = {\mathbf 0}_{1\times T}$ if and only if $j\notin S$,
i.e., only $|S|$ rows of $\bB^*$ are nonzero.
In this case, the sparsity pattern encoded by $S\subset\{1,...,p\}$
is shared on all tasks, and previous literature on estimation in this setting
uses a penalty proportional to the $\ell_{2,1}$ norm, $g(\bB)= \lambda \sum_{j=1}^p \|\bB^\top \be_j\|_2$, or alternatively its Elastic-Net version
$g(\bB)= \lambda \sum_{j=1}^p \| \bB^\top \be_j\|_2 + \mu \|\bB\|_F^2$
for non-negative tuning parameters $\lambda,\mu\ge 0$.
If the row-sparsity assumption holds and such $\ell_{2,1}$ penalty is used,
estimation of $\bB^*$ by $\hbB$ is improved compared to estimating
$\bbeta^{(1)},...,\bbeta^{(T)}$ separately \cite{lounici2011oracle}.

\subsection{Noise and residuals: non-trivial correlations for non-separable
penalties}
\label{eq:ls-ridge-non-trivial-correlation}
Classical multivariate statistics studies the least-squares estimate
$\hbB{}^{(ls)} = (\bX^\top\bX)^\dagger\bX^\top\bY$,
which corresponds to $g(\cdot) = 0$ in the above minimization problem.
Here, the estimation on two tasks is independent,
as on the $t$-th task for $t=1,...,T$ we have
$\hbB{}^{(ls)}\be_t = (\bX^\top\bX)^\dagger\bX^\top \by^{(t)}$
for the $t$-th canonical basis vector $\be_t\in\R^T$:
the estimator $\hbB{}^{(ls)}\be_t$ of the unknown regression vector $\bbeta^{(t)}$
on the $t$-th task only depends on the $t$-th response $\by^{(t)}$,
and is independent of the other responses $(\by^{(t')})_{t'\in\{1,...,T\}\setminus \{t\}}$. By independence, if the noise $\bE$ has i.i.d. mean-zero entries,
then
\begin{equation}
    \E[ \bep^{(t')} \be_t^\top(\bY-\bX\hbB{}^{(ls)})^\top]= \mathbf{0}_{n\times n}
\qquad
\forall t\ne t',
\label{eq:uncorrelation-ls}
\end{equation}
i.e., residual and noise on two different tasks are uncorrelated.
A similar story holds for multi-task Ridge regression, which
corresponds to $g(\bB) = \mu \|\bB\|_F^2$ in the above minimization
problem. The optimization problem is separable in the sense that
\begin{align*}
    \hbB{}^{(R)} &= \argmin_{\bB\in\R^{p\times T}}
    \frac{\|\bY-\bX\bB\|_F^2}{2nT}+ \mu\|\bB\|_F^2
    \quad
    \text{ and }
    \quad
    \hbB{}^{(R)}\be_t
    =\argmin_{\bb\in\R^p}
    \frac{ \|\by^{(t)} - \bX\bb\|_2^2 }{2nT} + \mu\|\bb\|_2^2
\end{align*}
equivalently define $\hbB{}^{(R)}$. It follows again that $\hbB{}^{(R)}\be_t$
only depends on the $t$-th response $\by^{(t)}$, and
if $\bE$ has i.i.d. mean-zero entries
then \eqref{eq:uncorrelation-ls}
holds also for $\hbB{}^{(R)}$ by independence.

The situation is more complex for non-separable penalty functions,
for instance if the penalty is proportional to the $\ell_{2,1}$ norm,
$g(\bB) = \lambda\sum_{j=1}^p \|\bB^\top\be_j\|_2$
where $\be_j\in\R^p$ is the $j$-th canonical basis vector.
The corresponding
estimator studied throughout the paper is the multi-task Lasso
\begin{equation}
    \label{eq:hbB-Lasso}
\hbB=\argmin_{\bB\in\R^{p\times T}}
\Big(
\frac{1}{2nT}\|\bY - \bX\bB \|_F^2 + \lambda \|\bB\|_{2,1}
\Big)
\quad\text{where}\quad
\|\bB\|_{2,1}
= \sum_{j=1}^p \|{\bB^{\top} \be_j}\|_2
.
\end{equation}
The estimate $\hbB\be_t$ of the unknown vector $\bbeta^{(t)}$
on the $t$-th task depends in an intricate way on all the responses
including $(\by^{(t')})_{t'\in\{1,...,T\}\setminus \{t\}}$.
Note that this dependence of $\hbB\be_t$ on all responses is
purposeful: we hope to leverage a shared pattern on all tasks
(e.g., if $\bB^*$ is row-sparse and a sparsity pattern
is shared by all $\bbeta^{(t)}, t=1,...,T$
) in order to improve estimation
compared to $\hbB{}^{(ls)}$ or $\hbB{}^{(R)}$.
In this case, however, \eqref{eq:uncorrelation-ls} does not hold
and the correlation between the residual on task $t$ and the noise
on task $t'$ is non-trivial. Our results below (specifically
\Cref{lemma:differentiation-E})
reveal that for $t,t'\in[T]$, 
$$\bigl((\bY-\bX\hbB)\be_t\bigr)^\top\bep^{(t')}
\approx 
\begin{cases}
    \sigma^2(n- \hbA_{tt'}) &\text{ if }  t=   t'\\
    - \sigma^2\hbA_{tt'}         & \text{ if } t\ne t'
\end{cases}
$$
when the noise $\bE$ has i.i.d. $\mathcal N(0,\sigma^2)$ entries
and $\hbA_{tt'}$ is the $(t,t')$ entry of a symmetric matrix
$\hbA\in\R^{T\times T}$ defined in \Cref{sec:A}. This matrix plays
a central role in the present paper to derive asymptotic normality
and asymptotic $\chi^2$ results.

\subsection{Confidence intervals for linear functionals of \texorpdfstring{$\bbeta^{{(1)}}$}{beta\textonesuperior}}
\label{subsec:goal-ci}
A first goal
of the present paper is to provide
confidence intervals for linear functionals of the
regression vector on the first task. Throughout the paper,
regarding asymptotic normality and confidence intervals,
$\ba\in\R^p$ is a fixed direction
of interest and we wish to construct confidence intervals for $\ba^\top
\bbeta^{{(1)}}$. For instance, the direction $\ba\in\R^p$ may be
of the following form.
\begin{enumerate}
    \item a canonical basis vector $\be_j\in\R^p$. For $\ba=\be_j$,
        the goal is to construct confidence intervals for
        $\ba^\top\bbeta^{(1)} = \beta_j^{{(1)}}$, the coefficient
        of the $j$-th feature on the first task.
        This is the classical goal in statistics where one wishes
        to provide inference on the effect of the $j$-th covariate.
    \item a new feature vector $\bx_{new}\in\R^p$, that may for instance
        correspond to the characteristics of a new subject whose responses
        $y_{new}^{{(1)}},...,y_{new}^{{(T)}}$
        are not known yet. The goal is to provide a confidence interval
        for $\ba^\top \bbeta^{(1)}$ which corresponds to the expected response
        of $Y_{new}$ conditionally on the feature vector $\bx_{new}$.
\end{enumerate}
We stress here that the first task ($t=1$) has a special role: the unknown
parameter $\ba^\top \bbeta^{(1)}$ only involves the first unknown coefficient vector $\bbeta^{(1)}$
and not the other coefficient vectors $\bbeta^{(t)}, t=2,...,T$.
If a single linear model $\by^{{(1)}} = \bX\bbeta^{(1)} + \bep^{{(1)}}$
is observed, the construction of confidence intervals for $\ba^\top\bbeta^{(1)}$
has been extensively studied. Most related to the present paper,
\cite{ZhangSteph14,GeerBR14,JavanmardM14a,JavanmardM14b} initially provided
methodologies 
for de-biasing (or de-sparsifying) the Lasso for construction of confidence intervals
in a canonical basis direction $\ba=\be_j$ for sparsity $s\lesssim \sqrt n/\log p$,
\cite{javanmard2018debiasing} extended the sparsity requirement to $s\lesssim n/(\log p)^2$,
\cite{zhu2018linear,bradic2018testability,cai2017confidence,cai2019individualized,zhu2018significance,bellec2019biasing} studied estimation and construction of confidence intervals
in dense direction $\ba\in\R^p$,
and \cite{bellec2019second} extended the de-biasing methodologies to arbitrary convex
penalties.

Of course, one could throw away the responses $\by^{{(2)}},...,\by^{{(T)}}$ and use only the response $\by^{{(1)}}$ with the aforementioned
methodologies, since our goal is to construct
confidence intervals for $\ba^\top\bbeta^{(1)}$. However, throwing away
the responses on tasks $2,...,T$ should intuitively lead to information loss
and is not desirable.

\subsection{Asymptotic \texorpdfstring{$\chi^2$}{chi\texttwosuperior} results and confidence ellipsoids
for rows of \texorpdfstring{$\bB^*$}{B*}}
\label{subsec:goal-chi2}
    The second goal of the paper is to develop
    confidence ellipsoids for whole rows of the unknown matrix
    $\bB^*$. The $j$-th row of $\bB^*$ is
    the vector $(\bB^*)^\top\be_j$ in $\R^T$
    where $\be_j\in\R^p$ is the $j$-th canonical vector.
    Given a confidence level $\alpha\in(0,1)$,
    a confidence ellipsoid for $(\bB^*)^\top\be_j$
    is a subset $\hat{\mathcal E}_{\alpha}$ of $\R^T$
    constructed from the data such that
    $$
    \P\bigl((\bB^*)^\top\be_j \in \hat{\mathcal E}_{\alpha}\bigr) \ge 1-\alpha - o(1)
    $$
    where $o(1)$ converges to 0 as $n\to+\infty$.
    Ideally, the confidence ellipsoid enjoys the exact
    nominal coverage probability $1-\alpha$ asymptotically in the sense 
    that
    \begin{equation}
        \label{eq:intro-exact-quantiles}
    \big|
    \P\bigl((\bB^*)^\top\be_j \in \hat{\mathcal E}_{\alpha}\bigr) - (1-\alpha)
    \big|
    \to 0
    \end{equation}
    as $n\to+\infty$.
    Note that one could also consider confidence sets $\hat{\mathcal E}_\alpha$
    that are not ellipsoids (e.g., hyperrectangles);
    we focus here on ellipsoids as they are the natural confidence sets
    stemming from $\chi^2$-distributed pivotal quantities.
    As in classical multivariate statistics,
    an advantage of confidence ellipsoids is that
    they provide simultaneous confidence intervals
    for every direction $\bb\in\R^T$, that is,
    $\P\bigl(
        \forall \bb\in\R^T, \be_j^\top\bB^*\bb
        \in \{\bb^\top\bu, \bu\in\hat{\mathcal E}_{\alpha} \} 
    \bigr)\to 1-\alpha$
    when \eqref{eq:intro-exact-quantiles} holds 
    and $\hat{\mathcal E}$ is closed and convex.

    Such a confidence ellipsoid allows to perform hypothesis tests of
    \begin{equation}
        H_0: (\bB^{*})^\top\be_j = \mathbf{0}_{T\times 1}
    \qquad
    \text{ against }
    \qquad
    H_1: \|(\bB^{*})^\top\be_j\|_2 \ge \rho,
    \label{H_1-H_0}
    \end{equation}
    where the null hypothesis corresponds to the signal $\bY$
    being independent of the $j$-th feature $\bX\be_j$,
    and $\rho>0$ is a separation radius.
    If a single task is observed ($T=1$), it is impossible 
    to distinguish between the null $\beta_{j}=0$ 
    and the alternative $\beta_{j}\ne 0$ with constant type I and type II
    errors unless $|\beta_j|\ge c \sigma n^{-1/2}$ for some constant $c>0$.
    This follows by noting that the total variation distance
    between $\by^{H_0} = \bX\bbeta^{H_0} + \bep$
    and $\by^{H_1} = \bX\bbeta^{H_1}+\bep$ converges to $0$ 
    if $\bbeta^{H_0},\bbeta^{H_1}$ are the same
    except on coordinate $j$ where $|\beta_j^{H_0}-\beta_j^{H_1}|= a_n$ with
    $a_n=o(\sigma n^{-1/2})$, $\|\bX\be_j\|^2/n \asymp 1$
    and $\bep\sim \mathcal N_n(\mathbf{0},\bI_{n\times n})$,
    for instance 
    by Pinsker's inequality and a standard bound on the Kullback Leibler
    divergence of two multivariate normals.
    If several tasks are observed as in the setting of interest here,
    we will see that it is possible to perform
    the hypothesis test \eqref{H_1-H_0} in situations where
    all nonzero coefficients of $(\bB^*)^\top\be_j$ are of order
    $o(\sigma n^{-1/2})$, i.e., of indistinguishable order 
    when a single task is observed.

    If asymptotic normality results are available for each
    of the $T$ individual
    coefficients of $(\bB^*)^\top\be_j$ (for instance
    such as those described in the previous
    subsection), a natural strategy
    to construct confidence ellipsoids is to sum the square
    of the $T$ asymptotically normal random variables
    and hope that the resulting sum has approximately the $\chi^2$
    distribution with $T$ degrees-of-freedom.
    However, throughout the paper the number of tasks $T$
    is allowed to grow to infinity with $n$ which
    results in some challenges regarding this strategy,
    as pointed out by \cite{mitra2016benefit}.
    For the sake of illustrating the resulting difficulty,
    assume that we have established the asymptotic normality of $T$
    pivotal random variables $U_1,...,U_T$ by
    proving decompositions of the form $U_t = (\hat\sigma/\sigma)Z_t + B_t$
    where $Z_t\sim \mathcal N(0,1)$ and the convergence in probability
    $\hat\sigma/\sigma\smash{\xrightarrow[]{\P}} 1$
    and $B_t\smash{\xrightarrow[]{\P}} 0$ hold,
    so that Slutsky's theorem ensures that the pivotal quantities
    are asymptotically normal
    with $U_t \smash{\xrightarrow[]{d}} \mathcal N(0,1)$.
    Denoting by $\chi^2_T = \sum_{t=1}^T Z_t^2$, summing the squares
    of the pivotal quantities
    and applying the triangle inequality for the Euclidean norm on $\mathbb R^T$
     yields
    \begin{equation}
        \label{eq:challenges-chi2-inference}
    \textstyle
    \big|
    \sqrt{\sum_{t=1}^TU_t^2}
    -
    \sqrt{\chi^2_T}
    \big|
    \le
    |\hat\sigma/\sigma - 1|
    \sqrt{\chi^2_T}
    +
    \sqrt{\sum_{t=1}^TB_t^2}.
    \end{equation}
    While $\E[(\chi^2_T)^{1/2}]$ is of order $\sqrt T$,
    the variance and quantiles of $( \chi^2_T)^{1/2}$ are of 
    constant order
    (specifically,
    $\P((\chi^2_T)^{1/2} - \sqrt{T} \le z_\alpha/\sqrt 2)
    \to 1-\alpha$
    holds by \eqref{eq:approximation-q_T-alpha} below,
    and $\text{Var}[(\chi^2_T)^{1/2}]\to 1/2$ by \cite{stackexchange3376610variance_chi}).
    This implies that a sufficient condition that ensures
    that $(\sum_{t=1}^TU_t^2)^{1/2}$ and $(\chi^2_T)^{1/2}$
    asymptotically share the same quantiles is that
    $\sum_{t=1}^TB_t^2\smash{\xrightarrow[]{\P}}0$ 
    and
    $\sqrt T |\hat\sigma/\sigma - 1|\smash{\xrightarrow[]{\P}} 0$.
    While $B_1\smash{\xrightarrow[]{\P}}0$ and $\hat\sigma/\sigma\smash{\xrightarrow[]{\P}} 1$ are sufficient
    to grant asymptotic normality
    for $U_1$  on the first task,
    the conditions
    $\sqrt T |\hat\sigma/\sigma - 1|\smash{\xrightarrow[]{\P}} 0$
    and
    $(\sum_{t=1}^TB_t^2)^{1/2}\smash{\xrightarrow[]{\P}} 0$
    are much more stringent
    as they involve the number of tasks $T$.

\subsection{Asymptotics and assumptions}
We will derive asymptotic normality and asymptotic $\chi^2_T$
results for a sequence of multi-task regression
problems of increasing dimensions. For each $n$,
we consider the multi-task linear model \eqref{model-matrix} and
and the multi-task Lasso estimate $\hbB$ in \eqref{eq:hbB-Lasso}
where $\bB^*$,
the number of tasks $T$, dimension $p$, tuning parameter
$\lambda$ and row-sparsity $s$ are all 
functions of $n$.
The dependence in $n$ is implicit and will be omitted to avoid notational burden.
We will assume that the sequence of regression problems satisfies the following.

\begin{assumption}
    \label{assum:main}
    \begin{enumerate} 
        \item 
$\bX\in \R^{n\times p}$ is a Gaussian design matrix with i.i.d. $\mathcal N_p(\mathbf 0,\bSigma)$ rows; 
\item 
    $\bB^*\in \R^{p\times T}$ is a row-sparse unknown matrix with at most $s$ nonzero rows;
\item $\bE$ is a Gaussian noise matrix with i.i.d. $\mathcal N(0,\sigma^2)$ entries;
\item $\{s,n,T,p\}$ are positive and satisfy $\frac s n (T+\log \frac p s ) \to 0$ and $n\le p$,
this implies $\frac s p \vee \frac T n\to 0$; 
\item The spectrum of $\bSigma$ is bounded: $C_{\min}\leq \phi_{\min}(\bSigma)\leq \phi_{\max}(\bSigma)\leq C_{\max}$ for some constants $0<C_{\min}\leq C_{\max}$ 
which are independent of $n,p,s,T$;
\item
    $\bSigma$ satisfies $\max_{j=1,...,p}\bSigma_{jj}\le 1$;
\item For two constants $\eta_1,\eta_2>0$,
    the tuning parameter $\lambda$ in \eqref{eq:hbB-Lasso}
    is given by
    \end{enumerate}
    \begin{equation}
        \label{eq:lambda-lambda-0}
        \lambda = (1+\eta_2)\lambda_0,
        \quad \text{where} \quad 
    \definitionOfLambdaZero 
.
    \end{equation}
\end{assumption}

\subsection{Related literature}

For integers $\bar n,\bar p\ge 1$, the multi-task setting
above bears resemblance with the single-response
linear model of the form
\begin{equation}
    \label{linear-model-bar}
    \bar\by
    = \bar \bX \bar\bbeta + \bar\bep
\end{equation}
where $\by\in\R^{\bar n}$, $\bar\bep\in\R^{\bar n}$,
$\bar\bX\in\R^{\bar n\times \bar p}$,
and the features $\{1,...,\bar p\}$ are partitioned into $p$ groups
with equal sizes. Indeed, with $\bar p = pT$, $\bar n = nT$
and by vectorizing the matrices in \eqref{eq:linear-models-1...T},
our multi-task setting is in one-to-one correspondence
with the single-response linear model \eqref{linear-model-bar}
with $\bar\by = \vec(\bY)$, $\bar\bep = \vec(\bE)$,
$\bar\bX$ block diagonal with $T$ blocks each equal to $\bX$,
and the partition $(G_1,...,G_p)$ of
$\{1,...,\bar p\}$ into $p$ groups is given by
$G_j = \{j + (t-1)p, t=1,...,T\}$.
With this correspondence, the estimator $\hbB$ is the group Lasso
$\hat{\bar\bbeta}=\argmin_{\bar\bb\in\R^{\bar p}}
\|\bar \by - \bar\bX \bar\bb\|^2/(2\bar n) + \lambda \|\bar\bb\|_{2,1}
$ where
$\|\bar\bb\|_{2,1} = \sum_{j=1}^p \|\bar\bb_{G_j}\|_2$.
Inference for grouped variables in a single-response linear model
\eqref{linear-model-bar} focuses on estimation, hypothesis tests
or confidence sets
for the vector $\bar\bbeta_{G_j}$ for a group $G_j\subset\{1,...,\bar p\}$
of interest.
In the single task setting \eqref{linear-model-bar} with grouped variables,
\cite{mitra2016benefit} extends the de-biasing methodology
in \cite{ZhangSteph14,GeerBR14}
to inference for grouped variables and provides $\chi^2$ asymptotic
distribution results. 
The paper \cite{mitra2016benefit} already describes some challenges
of chi-square inference in high-dimension (cf. the discussion after
\eqref{eq:challenges-chi2-inference}); the multi-task problem
of the present paper shares some of these challenges,
however our approach and proofs have no overlap with
that of \cite{mitra2016benefit}.
The papers \cite{stucky2018asymptotic,van2016chi} give a different
extension of the de-biasing methodology of \cite{ZhangSteph14,GeerBR14}
to the group setting, again based on the group Lasso, but
here by estimation of the inverse covariance matrix restricted
to the group of interest with a multi-task estimator 
penalized by the nuclear norm.
False Discovery Rate control in single-task linear models
with grouped variables has been studied in 
\cite{brzyski2019group} with a group SLOPE estimator.
Under weak assumptions (in particular, no assumption on $\bX$),
\cite{meinshausen2015group} provides an approach
to inference for grouped variables, although the resulting confidence
regions are conservative.
The papers \cite{mandozzi2016hierarchical,mandozzi2016sequential}
study group inference in a sequence rejection fashion
when the groups are hierarchically ordered.
Bootstrap methods based on the group Lasso are studied in
\cite{zhou2017uncertainty}, without trying to remove the bias.
The paper \cite{guo2019group} develops
conservative inference methods for
quantities of the form $(\bar\bbeta_{G_j})^\top\bA\bar\bbeta_{G_j}$
for a group $G_j\subset[p]$ of interest and a given positive definite matrix $
\bA\in\R^{|G_j|\times |G_j|}$, based on the quadratic program
de-biasing methodology given in \cite{ZhangSteph14,JavanmardM14a}.
Finally, \cite{bellec2019second} introduces a degrees-of-freedom adjustment
for the group Lasso to perform inference on a single coordinate or linear form
of the unknown regression vector in \eqref{linear-model-bar}.

Some papers focus on estimation and inference in the multi-task
model \eqref{model-matrix}.
The papers \cite{van2016chi,bertrand2019handling} study
multi-task models of the form \eqref{model-matrix} where
the noise $\bE\in\R^{n\times T}$ has i.i.d. rows, and the entries
within each row are correlated.
A multi-task extension of the square-root Lasso is developed to
concurrently estimate $\bB^*$ and the correlations in the noise $\bE$.
Such results on estimating the correlations of
the entries in $\bE$ are useful to de-bias the group Lasso
in the single-task model \cite{van2016chi}.
Support recovery through bounds on the group norm
$\|\bB\|_{2,\infty}=\sup_{j\in[p]}\|\bE^\top\be_j\|_2$
is studied in \cite{massias2020support} under a mutual incoherence
assumption on $\bX$.
The mutual incoherence assumption 
requires a row-sparsity level
$s\lesssim \sqrt n$ if $\bX$ has i.i.d. entries.
Closest to the setup and goals of the present paper,
\cite{chevalier2020multi_task_lasso_neurips} extends the de-biasing
methodology of \cite{ZhangSteph14,GeerBR14} to the multi-task setting,
using the nodewise Lasso to estimate a column of the precision matrix
of the design. This approach requires row-sparsity of $\bB^*$ of order
$s\lesssim \sqrt n$ up to logarithmic factors.
Although our approach also involves the nodewise Lasso to estimate
columns of the precision matrix, the de-biasing methodology significantly
differs from \cite{chevalier2020multi_task_lasso_neurips}
and cannot be seen as a straightforward extension of \cite{ZhangSteph14,GeerBR14}: our approach requires the introduction of a data-driven
symmetric matrix $\hbA$ of size $T\times T$ which captures
the interactions between the residuals on different tasks.
Introduction of this novel object lets us significantly relax
the requirement on the row-sparsity of $\bB^*$ while obtaining
normal and $\chi^2_T$ inference results, that are proved
to be non-conservative under some assumption on $T, s, n, p$.

\subsection{Adjustments in high-dimensional inference}
In single-task models, recent literature on high-dimensional
inference has highlighted the necessity to adjust
classical inference principles with scalar adjustments.
To describe such adjustments
consider a single-task linear model $\by=\bX\bbeta + \bep$ 
with $\bbeta\in\R^p$, Gaussian noise $\bep \sim \mathcal N_n(\mathbf{0},\sigma^2\bI_{n\times n})$ 
and $\bX$ with i.i.d. $\mathcal N_p(\mathbf{0},\bSigma)$ rows,
where an initial estimator $\hbbeta{}^{(init)}$
is available. If one is interested in confidence
intervals for the projection $\ba^\top\bbeta$
in some direction $\ba$ normalized with 
$\|\bSigma^{-1/2}\ba\|_2=1$,
a 1-step MLE correction
in direction $\bSigma^{-1}\ba$ \cite{zhang2011statistical},
i.e., maximizing the likelihood over the one-dimensional
model $\{\hbbeta^{(init)} + u\bSigma^{-1}\ba, u\in\R\}$
yields the corrected estimate
\begin{equation}
    \label{eq:one-step-MLE}
\ba^\top\bbeta^{(init)}  + \bz_0^\top (\by-\bX\hbbeta^{(init)})
\|\bz_0\|_2^{-2}
\end{equation}
where $\bz_0 = \bX \bSigma^{-1}\ba$
when $\|\bSigma^{-1/2}\ba\|_2=1$;
and the direction $\bSigma^{-1}\ba$ is the one that maximizes the Fisher information
\cite{zhang2011statistical}.
(Since $\|\bz_0\|_2^2\sim \chi^2_n$ concentrates around $n$,
we allow ourselves to replace $\|\bz_0\|_2^2$ by $n$ in \eqref{eq:one-step-MLE}
in this informal discussion).
In high dimensions, this general principle requires
a modification that accounts for the degrees-of-freedom
of $\hbbeta{}^{(init)}$:
\cite{JavanmardM14b,bellec2019biasing} for the Lasso
and \cite{bellec2019second} for general penalty suggest
to amplify the correction with the degrees-of-freedom adjustment $(1-\df/n)^{-1}$ and to use the estimate
\begin{equation}
    \label{eq:data-driven-adjustmenet-single-task}
    \ba^\top\bbeta^{(init)}  + (1-\df/n)^{-1}\bz_0^\top (\by-\bX\hbbeta^{(init)}) n^{-1}
\end{equation}
instead of \eqref{eq:one-step-MLE}. If $\hbbeta{}^{(init)}$ is the Lasso,
the adjustment $(1-\df/n)^{-1}$ is required for efficiency for large
sparsity levels \cite{bellec2019biasing}.
For the Lasso, the data-driven adjustment $(1-\df/n)^{-1}$ may be replaced
by a deterministic scalar adjustment, i.e.,
\begin{equation}
    \label{eq:deterministic-adjustmenet-single-task}
    \ba^\top\bbeta^{(init)}  + (1-\delta^{-1} s_*)^{-1}\bz_0^\top (\by-\bX\hbbeta^{(init)}) n^{-1}
\end{equation}
where $\delta=n/p$ and $s_*$ is the scalar parameter obtained
after solving the system of two equations with two unknowns
in \cite[Proposition 3.1]{miolane2018distribution}.
The correspondence between $\df/n$ and $s_*$ can be seen
in \cite[Theorem F.1]{miolane2018distribution}
or \cite[Section 3.3]{celentano2020lasso}.
This system of two nonlinear equations first appeared in
\cite{bayati2012lasso} for the Lasso and can be extended
to permutation invariant penalty functions (see \cite{celentano2019fundamental} and the references therein)
and robust M-estimators \cite{thrampoulidis2018precise}.

We are not aware of previous proposals to study
such high-dimensional adjustments in the multi-task setting,
e.g., by extending the data-driven adjustment in 
\eqref{eq:data-driven-adjustmenet-single-task} or the deterministic one in
\eqref{eq:deterministic-adjustmenet-single-task}. 
One goal of the paper is to fill this gap.

\subsection{Contributions}
To summarize \Cref{subsec:goal-ci,subsec:goal-chi2}, the inferential goals of
the paper are twofold:
\begin{enumerate}
    \item 
        To construct valid confidence intervals for a linear functional
        $\ba^\top\bbeta^{(1)}$ of the unknown coefficient on the first task,
        by leveraging responses on all tasks simultaneously.
    \item 
        To construct valid confidence ellipsoids for rows $\be_j^\top\bB^*\in\R^{1\times T}$
        of the unknown coefficient matrix $\bB^*$,
        for instance to provide hypothesis tests on the nullity of
        the $j$-th row of $\bB^*$, or equivalently testing
        that the signal does not depend on the $j$-th covariate.
\end{enumerate}

In order to achieve these statistical goals, we introduce
a new object, the data-driven symmetric matrix $\hbA\in\R^{T\times T}$.
Introduction of the matrix $\hbA$ is
key to equip the estimator $\hbB$ with the inference capabilities (i)
and (ii) above, as the theory and simulations of the next sections will show.
This data-driven matrix $\hbA$ generalizes, to the multi-task
setting, the effective degrees-of-freedom
and other scalar adjustments in single-task linear models discussed 
in the previous subsection. Since $\hbA$ is symmetric, $T(T+1)/2$ scalar
adjustments are necessary in the multi-task setting and that number
of adjustments 
is unbounded if $T\to+\infty$ as a function of $n$. The fact that 
a growing, unbounded number of scalar adjustments would be necessary
to achieve the above inference capabilities in the multi-task
setting was surprising---at least to us---, since existing works
on adjustments in high-dimensional statistics so far
only require a bounded number of scalar adjustments.

The paper also includes contributions related to the performance
of the multi-task estimator $\hbB$ in \eqref{eq:hbB-Lasso}.
We improve the logarithmic dependence in tuning parameter $\lambda$
and the known upper bounds on $\|\hbB-\bB^*\|_F$ and $\|\bX(\hbB-\bB^*)\|_F$
compared to \cite{lounici2011oracle}. We also develop tools to
show that the random matrix $\bX$ enjoys a multi-task
Restricted Eigenvalue (RE) condition from \cite{bickel2009simultaneous}.
Although the single-task
case follows in a straightforward manner from Gordon's
escape through a mesh theorem (e.g., \cite{raskutti2010restricted}),
the multi-task version of the RE condition for the random matrix $\bX$
requires different tools.

\subsection{Organization}
The rest of the paper is organized as follows.
The next section summarizes notation.
\Cref{sec:A} describes a new quantity, the interaction matrix $\hbA$
that plays a major role in our estimates and confidence intervals.
\Cref{sec:known-Sigma} constructs confidence intervals 
for $\ba^\top\bbeta^{(1)}$ when the covariance matrix $\bSigma$ of the design
is known.
\Cref{sec:unknown-Sigma} extends these results and methodologies 
when $\bSigma$ is unknown.
\Cref{sec:chi2} develops confidence ellipsoids for rows of $\bB^*$.
\Cref{sec:computing} provides an efficient way of computing the interaction matrix.
\Cref{sec:simu} presents numerical experiments that corroborate
our theoretical findings.
The proofs are deferred to appendices and some intuition
behind the main technical argument is given in
\Cref{sec:intuition}.

\subsection{Notation}
\label{sec:notation}

Throughout the paper, the linear model vector and matrix notation
\eqref{model-matrix} holds. 
$T$, $p$ and $s$ are all non-decreasing functions of $n$. In all the displays of convergence (e.g., $\to$, $\lim$, $o(\cdot)$, $O(\cdot)$), we implicitly mean that $n$ goes to $\infty$.
Convergence in distribution and in probability are denoted by $\smash{\xrightarrow[]{d}}$ and $\smash{\xrightarrow[]{\P}}$.

Estimators of the unknown $\bB^*$ are denoted by
$\hbB$.
For any real $a$, $a_+=\max(0,a)$ and $[k]= \{1,...,k\}$ for any integer $k$,
e.g., $[n], [p], [T]$. 
We use indices $i,i',i_1,i_2,...$ to sum or loop
over $[n]$ (i.e., over the $n$ observations),
indices $t,t',t_1,t_2,...$ to sum or loop over $[T]$ (i.e., over the $T$ tasks),
indices $j,j',j_1,j_2,...$ to sum or loop over $[p]$ (i.e., the $p$ covariates).
The vectors $\be_j\in\R^p$, $\be_t\in\R^T$, $\be_i\in\R^n$ denote the canonical basis
vector of the corresponding index; the size of such canonical vector
will be made explicit if it is not clear from context.
The identity matrices of sizes $p\times p$,
$n\times n$, $T\times T$ are $\bI_{p\times p}, \bI_{n\times n}$ and $\bI_{T\times T}$
respectively
and ${\mathbf 0}_{k\times q}$ is the zero matrix with $k$ rows and $q$ columns.

For any $q\ge 1$, $\|\cdot\|_q$ is the $\ell_q$-norm of vector,
e.g., $\|\cdot\|_2$ is the Euclidean norm.
For any matrix $\bM$, $\|\bM\|_F$ is the Frobenius norm
and $\|\bM\|_{op} = \sup_{\|\bu\|_2=1} \|\bM\bu\|_2$ the operator norm, also known as the spectral norm.
If $\bM$ is symmetric, $\phi_{\min}(\bM)$ (resp. $\phi_{\max}(\bM)$) denotes the smallest (resp. largest) eigenvalue of $\bM$.
The Moore-Penrose pseudoinverse of matrix $\bM$ is denoted by $\bM^\dagger$.
The Kronecker product between two matrices $\bU$, $\bV$ with
$\bU\in\R^{k\times q}$ is
\begin{equation*}
    \bU\otimes \bV
    \coloneqq
\begin{pmatrix}
    u_{11} \bV &  \dots & u_{1q} \bV \\
    \vdots & \vdots & \vdots \\
    u_{k1} \bV &  \dots & u_{kq} \bV \\
 \end{pmatrix}
 \text{so that }
    \bI_{T\times T}\otimes \bX =
\begin{pmatrix}
    \bX & {\mathbf 0}_{n\times p} & \dots & {\mathbf 0}_{n\times p}
    \\
    {\mathbf 0}_{n\times p} &\bX &  \dots & {\mathbf 0}_{n\times p} &
    \\
    \vdots & \vdots & \vdots & \vdots
    \\
    {\mathbf 0}_{n\times p} & \dots & \dots {\mathbf 0}_{n\times p} &\bX
 \end{pmatrix}
\end{equation*}
for $\bX\in\R^{n\times p}$. We will use the mixed product property
    of Kronecker products,
\begin{equation}
    \label{mixed-product-property}
    (\bU\otimes \bV)
    (\bP\otimes \bQ)
    = (\bU \bP) \otimes  (\bV \bQ),
    \qquad
    (\bU\otimes \bV)^\dagger
    =
    \bU^\dagger \otimes \bV^\dagger
\end{equation}
whenever the dimensions are such that the matrix products $\bU\bP$ and $\bV\bQ$ make sense.
The following trace property also holds
\begin{equation}
    \label{trace-property-kronecker}
    \trace[\bU\otimes \bV]
    =\trace[\bU] \trace[\bV].
\end{equation}
If $\|\cdot\|$ denotes a Schatten norm (e.g., Frobenius or spectral norm), then for any $\bU$, $\bV$ we have
\begin{equation}
    \label{norm-property-kronecker}
    \|\bU\otimes \bV\|
    =\|\bU\| \|\bV\|.
\end{equation}
We define the vectorization $\vec(\bU)$
of any matrix $\bU\in\R^{m\times q}$ by stacking vertically the columns of $\bU$
into a column vector in $\R^{qm\times 1}$, i.e.,
\setcounter{MaxMatrixCols}{20} 
$$\vec(\bA)^\top = 
    \begin{pmatrix}
         u_{11} & u_{21} & \dots & u_{m1}
       & u_{12} & u_{22} & \dots & u_{m2}
       & \dots 
       & u_{1q} & u_{2q} & \dots & u_{mq}
   \end{pmatrix}.$$
For any three matrices $\bA,\bB,\bC$ such that the matrix product $\bA\bB\bC$
makes sense, the above vectorization operator satisfies
\begin{equation}
\vec(\bA\bB\bC)
=
(\bC^\top\otimes \bA) \vec(\bB).
\label{kronecker-vectorization-relation}
\end{equation}
These many properties of Kronecker products are referenced in Section 4.2 of \cite{horn1991topics}.

We consider restrictions of vectors (respectively matrices)
by zeroing the corresponding entries (respectively columns). More precisely,
if $\bv\in\R^p$ and $B\subset [p]$ then $\bv_B\in\R^p$ is the vector
with $(\bv_B)_j = 0$ if $j\notin B$ and $(\bv_B)_j = v_j$ if $j\in B$.
If $\bX\in\R^{n\times p}$ and $B\subset[p]$, $\bX_B\in\R^{n\times p}$
is a matrix of the same
dimension as $\bX$ such that $(\bX_B)\be_j = {\mathbf 0}$ if $j\notin B$ and
$(\bX_B)\be_j = \bX\be_j$ if $j\in B$, i.e., $\bX_B$ is a copy of
$\bX$ after having zeroed the columns not indexed in $B$.
Finally, $I\{\Omega\}$ denotes the indicator function of an event $\Omega$,
and $I\{i\in B\} = 1$ if $i\in B$ and $I\{i\in B\} = 0$ if $i\notin B$
is the indicator that some index $i$ belongs to $B$.

\section{The interaction matrix \texorpdfstring{$\hbA$}{A} of the Multi-Task Lasso estimator}
\label{sec:A}

We consider the multi-task Lasso estimator, with $\ell_{2,1}$ penalty,
given \eqref{eq:hbB-Lasso} for some tuning parameter $\lambda>0$.
Let $\hat S = \{j\in [p]: {\hbB{}^{\top} \be_j} \neq 0\}$ denote the set of nonzero rows of $\hbB$. We will refer to $\hat S$ as the support of $\hbB$
and denote by $|\hat S|$ its cardinality.
The above estimator is the one commonly used in the multi-task learning literature
under a row-sparsity assumption on $\bB^*$, see, e.g., \cite{lounici2011oracle}.
Recall that $\bX_{\hat S} \in\R^{n\times p}$ is a copy of $\bX$
obtained after zeroing the columns not belonging to $\hat S$. Define
$\tbX
\coloneqq
 \bI_{T\times T}
 \otimes
 \bX_{\hat S}
$
where $\otimes$ denotes the Kronecker product defined in \Cref{sec:notation},
so that $\tbX\in\R^{nT\times pT}$ is block-diagonal with $T$ blocks,
each equal to $\bX_{\hat S}$.
Consequently $\tbX{}^\top\tbX = \bI_{T\times T} \otimes (\bX_{\hat S}^\top\bX_{\hat S})
\in\R^{(pT)\times (pT)}$ is also block-diagonal with $T$ blocks
equal to $\bX_{\hat S}^\top \bX_{\hat S}$.
For any $j\in\hat S$, define the matrix
\begin{equation}
    \label{Hj}
\bH^{(j)}
\coloneqq
\lambda
\|\hbB{}^\top \be_j\|_2^{-1}\left(\bI_{T\times T} - \hbB{}^\top\be_j \be_j^\top\hbB ~  \|\hbB{}^\top\be_j\|_2^{-2} \right) \quad \in\R^{T\times T}
\end{equation}
and note that $\bH^{(j)}$ is proportional to an orthogonal projection of rank $T-1$.
The matrix $\bH^{(j)}$ is the Hessian of $\bu\mapsto \lambda \|\bu\|_2$
at $\bu=\hbB{}^\top\be_j$.
Finally, let 
$\tbH \in\R^{(pT) \times (pT)}$ be the matrix defined by
$\tbH \coloneqq \sum_{j\in \hat S} \bH^{(j)} \otimes (\be_j\be_j^\top)$.

\begin{definition}
The interaction matrix $\hbA\in \R^{T\times T}$ of the estimator $\hbB$
in \eqref{eq:hbB-Lasso}
is defined entrywise by
\begin{equation}
\label{def-A}
\hbA_{tt'} \coloneqq
\trace\left(
    \left[
    \begin{array}{c|c|c}
        {\mathbf 0}_{n\times p(t-1)} 
        & \bX_{\hat S}
        & {\mathbf 0}_{n\times p(T-t)} 
    \end{array}
    \right]
    \left[\tbX{}^\top\tbX + nT \tbH \right]^{\dagger}
    \left[
    \begin{array}{c}
        {\mathbf 0}_{p(t'-1)\times n} \\[1mm] \hline \rule{0mm}{4mm}  
        (\bX_{\hat S})^\top
        \\[1mm]  \hline {\mathbf 0}_{p(T-t')\times n}
    \end{array}
    \right]
\right)
\end{equation}
for all $t,t'\in[T]$, where $\dagger$ denotes the Moore-Penrose inverse.
Equivalently, if $\bu,\bv\in\R^T$ then
\begin{equation*}
\bu^\top \hbA \bv
=
\trace\Big(
    \left[
    \begin{array}{c|c|c|c}
        u_1 \bX_{\hat S}
        &
        u_2 \bX_{\hat S}
        &
        \dots
        &
        u_T \bX_{\hat S}
    \end{array}
    \right]
    \left[\tbX{}^\top\tbX + nT \tbH \right]^{\dagger}
    \left[
    \begin{array}{c|c|c|c}
        v_1 \bX_{\hat S}
        &
        v_2 \bX_{\hat S}
        &
        \dots
        &
        v_T \bX_{\hat S}
    \end{array}
    \right]^\top
\Big),
\end{equation*}
or with Kronecker product notation, \begin{equation}
	\label{eq:quadra-A}
        \bu^\top\hbA\bv = \trace\big[ (\bu^{\top} \otimes \bX_{\hat S})
    [\tbX{}^\top\tbX + nT \tbH ]^{\dagger}
    (\bv \otimes (\bX_{\hat S})^{{\top}})
\big].
\end{equation}

\end{definition}
Observe that $\sum_{j\in \hat S}(\be_j\be_j^\top)\otimes \bH^{(j)}$ is a block-diagonal matrix
with $p$ diagonal blocks equal to $I\{j\in \hat S\}\bH^{(j)}$.
For $\bA,\bB$ any
square matrices, $\bA \otimes \bB = \bP (\bB\otimes \bA) \bP{}^\top$ holds
for a permutation matrix $\bP$ that only depends on the dimensions
of $\bA$ and $\bB$.
This permutation $\bP$ is particularly simple and known as a perfect shuffle.
It follows that $\bP \tbH \bP{}^\top$ is block diagonal with $p$ diagonal blocks
for some permutation matrix $\bP\in\R^{pT\times pT}$.
Thus the matrix
\begin{equation}
\tbX{}^\top \tbX + nT \tbH\qquad\in \R^{pT\times pT}
\label{eq:tbX-tbX-nT-tbH}
\end{equation}
appearing in \eqref{def-A}-\eqref{eq:quadra-A}
is the sum of two matrices of size $pT\times pT$, each summand
being block diagonal but in a different basis.
If $\lambda=0$ then $\tbH=\mathbf{0}$ and $\hbA$ is diagonal
as $\tbX{}^\top\tbX + nT\tbH$ can be inverted by block.
This corresponds to the unregularized least-squares estimate
$\hbB{}^{(ls)}$ discussed in \eqref{eq:ls-ridge-non-trivial-correlation}
with $\hbB{}^{(ls)}\be_t$ depending on the $t$-th response $\by^{(t)}$ only.
In the case $\lambda>0$ of interest here, the matrix $\tbH$ induces
nonzero entries outside of the $T$ diagonal blocks of $\tbX{}^\top\tbX$,
the matrix \eqref{eq:tbX-tbX-nT-tbH} is not diagonal by block
and the resulting matrix $\hbA$ is not diagonal.
Additional structure in \eqref{eq:tbX-tbX-nT-tbH} and $\hbA$
is studied  in \Cref{sec:computing}, which yields an efficient
and practical algorithm to compute $\hbA$.


The interaction matrix plays a major role in the construction
of our confidence intervals for $\ba^\top\bbeta^{(1)}$
as well as for chi-square inference regions for rows of $\bB^*$.
A high-level interpretation
of its role is that $\hbA$ captures the correlation between the residuals
on different tasks. The following proposition summarizes some useful
properties of $\hbA$.
Result (iii) is important as our confidence interval for $\ba^\top\bbeta^{(1)}$
defined in the next section will involve the inverse of $\bI_{T\times T} - \hbA/n$.
\Cref{propMatrixA} is proved in \Cref{sec:proof-matrix} of the supplement.

\begin{restatable}{proposition}{propMatrixA}
    \label{propMatrixA}
    Let $\hbA$ be defined by \eqref{def-A}. Then
    \begin{enumerate}[partopsep=0pt,topsep=0pt,itemsep=0pt,parsep=0pt]
        \item $\hbA$ is symmetric and positive semi-definite.
        \item If $\bX_{\hat S}$ is rank $|\hat S|$ 
            then the spectral norm of $\hbA$ is bounded from above as $\|\hbA\|_{op}\leq|\hat S|$.
        \item If $\bX_{\hat S}$ is rank $|\hat S|$ and ${|\hat S|}/{n}<1$ then $\bI_{T\times T} - \hbA/n$ is positive-definite
            and \\
            $\|\bI_{T\times T} - (\bI_{T\times T} - \hbA/n)^{-1}\|_{op} 
            \le
            (|\hat S|/n)/(1-|\hat S|/n)$.
    \end{enumerate}
\end{restatable}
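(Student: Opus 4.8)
The plan is to work with the quadratic form representation \eqref{eq:quadra-A}, which expresses $\bu^\top\hbA\bv$ as a trace involving the pseudoinverse of the matrix $\bM \coloneqq \tbX{}^\top\tbX + nT\tbH$ from \eqref{eq:tbX-tbX-nT-tbH}. The first observation I would establish is that $\bM$ is symmetric positive semi-definite: $\tbX{}^\top\tbX$ is manifestly PSD, and $\tbH = \sum_{j\in\hat S}\bH^{(j)}\otimes(\be_j\be_j^\top)$ is PSD because each $\bH^{(j)}$ is proportional (with nonnegative constant $\lambda\|\hbB{}^\top\be_j\|_2^{-1}$) to an orthogonal projection, hence PSD, and the Kronecker product of PSD matrices is PSD, as is a sum of such. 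Writing $\bM^\dagger = (\bM^\dagger)^{1/2}(\bM^\dagger)^{1/2}$ and setting $\bN_\bu \coloneqq (\bu^\top\otimes\bX_{\hat S})(\bM^\dagger)^{1/2}$, the mixed-product and trace manipulations give $\bu^\top\hbA\bv = \langle \bN_\bu, \bN_\bv\rangle$ in the Frobenius inner product (using that $(\bv\otimes\bX_{\hat S}^\top) = (\bv^\top\otimes\bX_{\hat S})^\top$ and symmetry of $\bM^\dagger$). This Gram-matrix structure immediately yields \textbf{(i)}: $\hbA$ is symmetric and PSD.

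For \textbf{(ii)}, I would bound $\|\hbA\|_{op} = \sup_{\|\bu\|_2=1}\bu^\top\hbA\bu = \sup_{\|\bu\|_2=1}\|\bN_\bu\|_F^2$. The key inequality is the operator bound $(\bM^\dagger)^{1/2}\tbX{}^\top\tbX(\bM^\dagger)^{1/2} \preceq \bI$, which holds because $\tbX{}^\top\tbX \preceq \bM$ (as $nT\tbH\succeq 0$) and conjugating by $(\bM^\dagger)^{1/2}$ preserves the Loewner order on the range of $\bM$ (the range of $\tbX{}^\top\tbX$ is contained in the range of $\bM$ when $\bX_{\hat S}$ has full column rank $|\hat S|$ on its support, so the pseudoinverse interacts cleanly). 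Then
\[
\|\bN_\bu\|_F^2 = \trace\big[(\bu\bu^\top\otimes \bX_{\hat S}^\top\bX_{\hat S})\bM^\dagger\big] = \trace\big[(\bM^\dagger)^{1/2}(\bu\bu^\top\otimes\bX_{\hat S}^\top\bX_{\hat S})(\bM^\dagger)^{1/2}\big].
\]
Since $\bu\bu^\top\otimes\bX_{\hat S}^\top\bX_{\hat S} = (\bu\bu^\top\otimes\bI)^{1/2}(\tbX{}^\top\tbX)(\bu\bu^\top\otimes\bI)^{1/2}$ is dominated (after conjugation) by $\tbX{}^\top\tbX$ when $\|\bu\|_2=1$, and using $(\bM^\dagger)^{1/2}\tbX{}^\top\tbX(\bM^\dagger)^{1/2}\preceq \bI_{pT\times pT}$ restricted to the range, the trace is at most the rank of the relevant projection. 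Counting dimensions — the range involves one copy of the $|\hat S|$-dimensional column space of $\bX_{\hat S}$ weighted by $\bu$ — gives the trace bound $\le |\hat S|$. (I would be careful here: the cleanest route is to note $\bu^\top\hbA\bu \le \trace[(\be_t\otimes\bX_{\hat S})^\top\cdots]$ summed appropriately, or to directly diagonalize; the bound $|\hat S|$ should drop out from $\rank(\bX_{\hat S}) = |\hat S|$.)

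For \textbf{(iii)}, given (ii) the eigenvalues of $\hbA/n$ lie in $[0, |\hat S|/n] \subset [0,1)$ when $|\hat S|/n < 1$, so $\bI_{T\times T} - \hbA/n$ is positive definite with eigenvalues in $(1 - |\hat S|/n, 1]$. Writing the SVD $\hbA/n = \sum_k \mu_k \bq_k\bq_k^\top$ with $\mu_k\in[0,|\hat S|/n]$, one has $\bI - (\bI - \hbA/n)^{-1} = \sum_k\big(1 - \tfrac{1}{1-\mu_k}\big)\bq_k\bq_k^\top = -\sum_k\tfrac{\mu_k}{1-\mu_k}\bq_k\bq_k^\top$, whose operator norm is $\max_k \tfrac{\mu_k}{1-\mu_k} \le \tfrac{|\hat S|/n}{1 - |\hat S|/n}$ since $t\mapsto t/(1-t)$ is increasing on $[0,1)$.

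The main obstacle is step \textbf{(ii)}: making the Loewner-order/pseudoinverse argument rigorous requires tracking the range conditions carefully, since $\bM^\dagger$ only inverts $\bM$ on its range, and one must confirm $\range(\tbX{}^\top\tbX) \subseteq \range(\bM)$ and that the Kronecker-weighting by $\bu\bu^\top$ does not push mass outside this range — this is where the full-rank hypothesis on $\bX_{\hat S}$ is essential. Steps (i) and (iii) are then routine linear algebra.
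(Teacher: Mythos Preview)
Your proposal is correct. Parts (i) and (iii) match the paper's argument essentially verbatim (the paper uses the Neumann series $(\bI-\bM)^{-1}=\sum_{k\ge0}\bM^k$ for (iii), which is equivalent to your spectral computation).

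For part (ii) you take a genuinely different route from the paper, and it is worth noting that yours is in fact the simpler one. The paper first establishes the kernel equality $\ker(\tbX{}^\top\tbX)=\ker(\tbX{}^\top\tbX+nT\tbH)$ by exhibiting an explicit basis $\{\be_t\otimes\be_j:j\notin\hat S,\,t\in[T]\}$ (this step uses $\rank(\bX_{\hat S})=|\hat S|$), deduces the Loewner reversal $(\tbX{}^\top\tbX+nT\tbH)^\dagger\preceq(\tbX{}^\top\tbX)^\dagger$ (which fails without kernel equality), and then computes the resulting trace explicitly via the block-diagonal structure of $(\tbX{}^\top\tbX)^\dagger$. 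Your argument bypasses the kernel step entirely: conjugation by $(\bM^\dagger)^{1/2}$ preserves the Loewner order unconditionally, so $(\bM^\dagger)^{1/2}(\bu\bu^\top\otimes\bX_{\hat S}^\top\bX_{\hat S})(\bM^\dagger)^{1/2}\preceq(\bM^\dagger)^{1/2}\tbX{}^\top\tbX(\bM^\dagger)^{1/2}\preceq\bM^\dagger\bM\preceq\bI$ always holds, and combining this with $\rank(\bu\bu^\top\otimes\bX_{\hat S}^\top\bX_{\hat S})\le|\hat S|$ gives trace $\le\text{rank}\times\|\cdot\|_{op}\le|\hat S|$. Your worry at the end about ``tracking the range conditions carefully'' is therefore misplaced: no range or kernel hypotheses are needed at all for your version of the argument, and you may drop that caveat.
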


\section{Asymptotic normality and confidence intervals in the multi-task setting}
\label{sec:normal-thms}

\subsection{Known \texorpdfstring{$\bSigma$}{Sigma}: Pivotal random variable, asymptotic normality
and confidence intervals}
\label{sec:known-Sigma}

We assume throughout this section that the direction $\ba$ of interest
is normalized with $\|\bSigma^{-1/2}\ba\|_2=1$. This normalization
assumption is relaxed in the next \Cref{sec:unknown-Sigma} where
we develop a methodology for unknown $\bSigma$.
If $\bSigma$ is known, our main result is the following
where $\hbA$ denotes the interaction matrix \eqref{def-A}.

\begin{restatable}{theorem}{ThmNormal}
    \label{thm-normal}
    Let \Cref{assum:main}
    be fulfilled. 
    Assume that $\|\bSigma^{-1/2}\ba\|_2^2=1$. If $\bz_0=\bX\bSigma^{-1}\ba$ then
    \begin{equation}
        \label{eq:asymptotic-normality-bb}
        \frac{
            n \ba^T (\hbB - \bB^*) \bb
            + \bz_0^T(\bY-\bX\hbB)(\bI_{T\times T}-\hbA/n)^{-1}
            \bb
        }
        {
        \|(\bY-\bX\hbB)
        (\bI_{T\times T}-\hbA/n)^{-1}
        \bb\|_2
        }
         \quad
         \smash{\xrightarrow[]{d}}
         \quad
         \mathcal N(0,1)
  \end{equation}
  for any $\bb\in\R^{T}$.
  Hence for $\bb=\be_1\in\R^T$, the parameter $\ba^\top\bbeta^{(1)}$
  of interest satisfies
    \begin{equation}
        \label{eq:asymptotic-normality-e_1}
        \frac{
            n(\ba^T \hbB\be_1 - \ba^\top\bbeta^{(1)})
            + \bz_0^T(\bY-\bX\hbB)(\bI_{T\times T}-\hbA/n)^{-1}
            \be_1
        }
        {
        \|(\bY-\bX\hbB)
        (\bI_{T\times T}-\hbA/n)^{-1}
        \be_1\|_2
        }
         \quad
         \smash{\xrightarrow[]{d}}
         \quad
        \mathcal N(0,1)
         .
  \end{equation}
\end{restatable}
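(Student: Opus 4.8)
The plan is the following. Since $\bB^*\be_1=\bbeta^{(1)}$, note that \eqref{eq:asymptotic-normality-e_1} is the special case $\bb=\be_1$ of \eqref{eq:asymptotic-normality-bb}, so it suffices to prove \eqref{eq:asymptotic-normality-bb} for a fixed $\bb\neq\mathbf 0$. Write $\bV=\hbB-\bB^*$, $\bR=\bY-\bX\hbB=\bE-\bX\bV$ and $\bM=(\bI_{T\times T}-\hbA/n)^{-1}$. The numerator of \eqref{eq:asymptotic-normality-bb} equals $n\ba^\top\bV\bb+\bz_0^\top\bR\bM\bb$, and substituting $\bR=\bE-\bX\bV$ splits it as
\begin{equation*}
    n\ba^\top\bV\bb+\bz_0^\top\bR\bM\bb=\bz_0^\top\bE\bM\bb+R_n,
    \qquad R_n:=n\ba^\top\bV\bb-\bz_0^\top\bX\bV\bM\bb .
\end{equation*}
I would then prove: (a) $\|\bR\bM\bb\|_2=\sigma\sqrt n\,\|\bM\bb\|_2(1+o_\P(1))$ with $\|\bM\bb\|_2\asymp\|\bb\|_2$; (b) $R_n=o_\P(\sqrt n\,\|\bb\|_2)$; and (c) $\bz_0^\top\bE\bM\bb\big/\big(\sigma\sqrt n\,\|\bM\bb\|_2\big)\xrightarrow{d}\mathcal N(0,1)$. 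Together with Slutsky's theorem these give \eqref{eq:asymptotic-normality-bb}.

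Step (a) follows from the Frobenius‑consistency bounds for the multi‑task Lasso established under \Cref{assum:main}: both $\|\bX\bV\|_F^2$ and $n\|\bV\|_F^2$ are $O_\P\big(\sigma^2 s(T+\log(p/s))\big)$, so the rate condition $\tfrac sn(T+\log\tfrac ps)\to0$ yields $\|\bX\bV\bM\bb\|_2\le\|\bX\bV\|_F\,\|\bM\bb\|_2=o_\P(\sqrt n\,\|\bM\bb\|_2)$, whereas $\|\bE\bM\bb\|_2^2=(\bM\bb)^\top\bE^\top\bE\bM\bb=\sigma^2n\,\|\bM\bb\|_2^2(1+o_\P(1))$ because $\|\bE^\top\bE/n-\sigma^2\bI_{T\times T}\|_{op}=o_\P(1)$ when $T/n\to0$; the first claim follows since $\bR\bM\bb=\bE\bM\bb-\bX\bV\bM\bb$. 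For the second, the Lasso support obeys $|\hat S|=O_\P(s)$, so $|\hat S|/n\to0$ and \Cref{propMatrixA} gives $\|\hbA/n\|_{op}\le|\hat S|/n\xrightarrow{\P}0$, hence $\|\bM-\bI_{T\times T}\|_{op}\xrightarrow{\P}0$ and $\|\bM\bb\|_2\asymp\|\bb\|_2$.

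Step (b) is the crux and the main obstacle. I would exploit the decomposition of the Gaussian design adapted to the direction $\ba$: because $\|\bSigma^{-1/2}\ba\|_2=1$, one can write $\bX=\bz_0\ba^\top+\bX'$ with $\bX'$ having i.i.d.\ rows $\mathcal N_p(\mathbf 0,\bSigma-\ba\ba^\top)$, satisfying $\bX'\bSigma^{-1}\ba=\mathbf 0$, and independent of $\bz_0\sim\mathcal N_n(\mathbf 0,\bI_{n\times n})$. Then $\bz_0^\top\bX=\|\bz_0\|_2^2\,\ba^\top+\bz_0^\top\bX'$, and with the identity $\bI_{T\times T}-\bM=-\hbA\bM/n$ one rewrites
\begin{equation*}
    R_n=(n-\|\bz_0\|_2^2)\,\ba^\top\bV\bM\bb\;-\;\ba^\top\bV\hbA\bM\bb\;-\;\bz_0^\top\bX'\bV\bM\bb .
\end{equation*}
The first term is $O_\P(\sqrt n)\cdot o_\P(\|\bb\|_2)=o_\P(\sqrt n\,\|\bb\|_2)$ since $|n-\|\bz_0\|_2^2|=O_\P(\sqrt n)$, $\|\ba\|_2\le C_{\max}^{1/2}$ and $\|\bV\|_{op}\le\|\bV\|_F\xrightarrow{\P}0$. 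The real work is to show the last two terms cancel up to $o_\P(\sqrt n\,\|\bb\|_2)$; individually neither is negligible, which is precisely why the interaction matrix is needed. Conditionally on $(\bX',\bE)$, the only remaining randomness in $\bV$, $\hbA$ and $\bM$ is $\bz_0$, so Gaussian integration by parts gives $\E[\bz_0^\top\bX'\bV\bM\bb\mid\bX',\bE]=\E[\,\div_{\bz_0}(\bX'\bV\bM\bb)\mid\bX',\bE]$; differentiating the optimality conditions of \eqref{eq:hbB-Lasso} in $\bz_0$ (using $\partial\bX/\partial z_{0,k}=\be_k\ba^\top$ and $\partial\bY/\partial z_{0,k}=\be_k\ba^\top\bB^*$) expresses $\partial\hbB/\partial\bz_0$ through the matrix $[\tbX{}^\top\tbX+nT\tbH]^{\dagger}$ of \Cref{sec:A}, and substituting this into the divergence reproduces exactly $-\ba^\top\bV\hbA\bM\bb$ up to negligible corrections, by the very definition \eqref{def-A} of $\hbA$; this is the matrix analogue of the degrees‑of‑freedom cancellation behind \Cref{lemma:differentiation-E} (see also \Cref{sec:intuition}). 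The fluctuation of $\bz_0^\top\bX'\bV\bM\bb$ around this conditional mean would be controlled by the Gaussian Poincaré inequality together with operator‑norm bounds on $\partial\hbB/\partial\bz_0$, $\partial\hbA/\partial\bz_0$ and $\partial\bM/\partial\bz_0$ coming from the same differentiated optimality system. Carrying out these cancellations and deviation bounds precisely is the heart of the argument.

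Finally, for (c): by (b), $n\ba^\top\bV\bb+\bz_0^\top\bR\bM\bb=\bz_0^\top\bE\bM\bb+o_\P(\sqrt n\,\|\bb\|_2)$, so by (a) it remains to show $\bz_0^\top\bE\bM\bb\big/\big(\sigma\sqrt n\,\|\bM\bb\|_2\big)\xrightarrow{d}\mathcal N(0,1)$. Conditionally on $\bX$ (so that $\bz_0$ is fixed and $\|\bz_0\|_2^2=n(1+o_\P(1))$), if $\bM$ were nonrandom then $\bz_0^\top\bE\bM\bb=\sum_{i\in[n],\,t\in[T]}z_{0,i}(\bM\bb)_tE_{it}$ would be exactly $\mathcal N\big(0,\sigma^2\|\bz_0\|_2^2\|\bM\bb\|_2^2\big)$. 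To handle the dependence of $\bM=(\bI_{T\times T}-\hbA/n)^{-1}$ on $(\bz_0,\bE)$, I would again invoke the sensitivity bounds obtained by differentiating \eqref{eq:hbB-Lasso}: $\partial(\bM\bb)/\partial\bz_0$ and $\partial(\bM\bb)/\partial\bE$ have small operator norm, so $\bE\bM\bb$ depends only weakly on the Gaussian coordinates against which it is paired, and a conditioning/perturbation argument (equivalently a second‑order Stein Berry--Esseen bound for $\bz_0^\top(\bE\bM\bb)$) shows the conditional‑Gaussian approximation is asymptotically exact. This yields (c), and combining (a)--(c) proves \eqref{eq:asymptotic-normality-bb} and hence \eqref{eq:asymptotic-normality-e_1}.
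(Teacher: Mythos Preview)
Your high–level plan (split off a Gaussian leading term, control the rest via a Stein/integration‑by‑parts argument in $\bz_0$, and handle the denominator separately) matches the paper's. What you miss is a purely algebraic simplification that the paper exploits and that removes the technical difficulties you flag in both Step~(b) and Step~(c).

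The identity $(n\bI_{T\times T}-\hbA)\bM=n\bI_{T\times T}$ lets you rewrite your remainder as
\[
R_n \;=\; n\,\ba^\top\bV\bb-\bz_0^\top\bX\bV\bM\bb
\;=\;\bigl[\ba^\top\bV(n\bI_{T\times T}-\hbA)-\bz_0^\top\bX\bV\bigr]\bM\bb .
\]
The bracket is a $1\times T$ row that does \emph{not} contain $\bM$, so $|R_n|\le \|\bM\|_{op}\,\|\bb\|_2$ times the Euclidean norm of the bracket, and the Stein calculation can be done \emph{componentwise} on $\bz_0^\top\bX\bV\be_t$. The Jacobian of $\bz_0\mapsto \bX(\hbB(\bz_0)-\bB^*)\be_t$ has trace exactly $\be_t^\top(n\bI_{T\times T}-\hbA)\bV^\top\ba$ up to a second, explicitly bounded contribution (\Cref{lemma:differential,lemma:divergence}), and a second‑order Stein identity then yields $(n\sigma^2)^{-1}\sum_t(\bz_0^\top\bX\bV\be_t-\trace[\mathsf D(\be_t)])^2\to0$ (\Cref{lemma:RemainderII}). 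No derivative of $\hbA$ or $\bM$ in $\bz_0$ ever appears. In your three‑term split of $R_n$, by contrast, you keep $\bM$ inside the Stein computation, which forces you to control $\partial\hbA/\partial\bz_0$ and $\partial\bM/\partial\bz_0$; you assert these are negligible but give no argument, and differentiating the pseudoinverse defining $\hbA$ is not obviously harmless. (Your citation of \Cref{lemma:differentiation-E} is also off: that lemma differentiates in $\bE$, whereas the calculation needed here is in $\bz_0$.)

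The same device repairs Step~(c) without any Berry--Esseen or sensitivity analysis: write $\bz_0^\top\bE\bM\bb=\bz_0^\top\bE\bb+\bz_0^\top\bE(\bM-\bI_{T\times T})\bb$. The first summand is exactly normal after dividing by $\sigma\|\bz_0\|_2$, because $\bz_0$ and $\bE$ are independent; the second is $o_\P(\sigma\sqrt n)$ since $\|\bE^\top\bz_0\|_2=O_\P(\sigma\sqrt{nT})$ and, by \Cref{propMatrixA}(iii) with $|\hat S|=O_\P(s)$, $\|\bM-\bI_{T\times T}\|_{op}=O_\P(s/n)$, so the product is $O_\P(\sigma s\sqrt{T}/\sqrt n)=o_\P(\sigma\sqrt n)$ under \Cref{assum:main}(iv). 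This is precisely the paper's decomposition into the terms $\br$ and $\tbr$. Your proposal to show asymptotic normality of $\bz_0^\top\bE\bM\bb$ directly, by arguing that $\bM\bb$ depends ``only weakly'' on $(\bz_0,\bE)$, is therefore unnecessary, and as written it is unsubstantiated: $\bM$ depends on $\bE$ through $\hbB$, and you offer no bound establishing that this dependence is weak enough.
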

\Cref{thm-normal} is proved in \Cref{sec:proof-thm-normal}.
The left-hand sides of both displays in \Cref{thm-normal} can be interpreted
as Z-scores that have asymptotically standard normal distribution.
In the second display, the only unknown quantity on the left hand side
is $\ba^\top\bbeta^{(1)}$, the parameter of interest
(while in the first display, the only unknown quantity is the scalar $\ba^\top\bB^*\bb$).
Consequently if $z_{\alpha/2}$ is the $1-\alpha/2$ quantile of the standard normal distribution such that $\P(|\mathcal N(0,1)|\le z_{\alpha/2})=1-\alpha$,
an asymptotic $1-\alpha$ confidence interval for $\ba^\top\bbeta^{(1)}$ is given by
$[L_-^\alpha,L_+^\alpha]$ where
$$
L_{\pm}^\alpha =
\underbrace{
\ba^T \hbB\be_1
}_{\substack{\text{initial} \\ \text{estimate}}}
+ 
\underbrace{
    \frac{\bz_0^\top(\bY-\bX\hbB)(\bI_{T\times T}-\frac{\hbA}{n})^{-1}\be_1}{n}
}_{\text{bias correction using the interaction matrix}}
\pm
\underbrace{
\frac{z_{\alpha/2}
\|(\bY-\bX\hbB)
(\bI_{T\times T}-\frac{ \hbA}{n})^{-1}
\be_1\|_2}{n}
}_{\text{confidence interval half-length}}
.
$$
\eqref{eq:asymptotic-normality-e_1} in \Cref{thm-normal}
states that $\P(\ba^\top\bbeta^{(1)} \in [L_-^\alpha,L_+^\alpha]) \to(1-\alpha)$ as $n,p\to+\infty$.

The confidence interval is centered at 
$\ba^T \hbB\be_1$ (which can be interpreted as the initial estimate of $\ba^\top\bbeta^{(1)}$
given by the estimator $\hbB$ in \eqref{eq:hbB-Lasso})
plus a de-biasing correction
$\bz_0^T(\bY-\bX\hbB)(\bI_{T\times T}-\hbA/n)^{-1}$ that involves the interaction
matrix $\hbA$ through the matrix inverse 
\begin{equation}
    \label{matrix-inverse}
    (\bI_{T\times T}-\hbA/n)^{-1}.
\end{equation}
The fact that penalized estimators such as \eqref{eq:hbB-Lasso} require
a de-biasing correction should be expected since it is already the case
for $T=1$ for the Lasso
\cite{ZhangSteph14,GeerBR14,JavanmardM14a,JavanmardM14b,javanmard2018debiasing,bellec2019biasing} and any regularized least-squares
\cite{bellec2019second}.
However, the apparition in the de-biasing correction of the interaction matrix
through the matrix inverse \eqref{matrix-inverse} is surprising
at least to us: we did not expect the multi-task de-biasing correction
to require a matrix inversion such as \eqref{matrix-inverse} when initially
tackling this problem.
The length of the confidence interval above is
$2z_{\alpha/2} n^{-1 }\|(\bY-\bX\hbB) (\bI_{T\times T}-\hbA/n)^{-1} \be_1\|_2$ when $\bb=\be_1$,
and an estimate of this norm is given by the following theorem.

\begin{theorem}
    \label{thm:variance}
    Let the assumptions and setting  of \Cref{thm-normal} be fulfilled.
    Then \\
    $\|(\bY-\bX\hbB)
    (\bI_{T\times T}-\hbA/n)^{-1}
    \bb\|_2^2/n \; \smash{\xrightarrow[]{\P}} \; \sigma^2$
    when $\|\bb\|_2=1$.
\end{theorem}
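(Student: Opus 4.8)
The plan is to establish $\|\hbR\bw\|_2^2/n\xrightarrow{\P}\sigma^2$ by a direct expansion of the residual, where I abbreviate $\hbR=\bY-\bX\hbB\in\R^{n\times T}$ and $\bw=(\bI_{T\times T}-\hbA/n)^{-1}\bb\in\R^T$. Since $\hbR=\bE-\bX(\hbB-\bB^*)$, I would expand
\begin{equation*}
\|\hbR\bw\|_2^2=\|\bE\bw\|_2^2-2\,(\bE\bw)^\top\bX(\hbB-\bB^*)\bw+\|\bX(\hbB-\bB^*)\bw\|_2^2
\end{equation*}
and argue that the first term equals $n\sigma^2(1+o_\P(1))$ while the other two are $o_\P(n)$.

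First I would fix an event $\mathcal G$ with $\P(\mathcal G)\to1$ on which (a) the paper's oracle inequality for $\hbB$ holds, giving in particular the prediction bound $\|\bX(\hbB-\bB^*)\|_F^2=o(n)$ (this is where the tuning $\lambda=(1+\eta_2)\lambda_0$ and the rate condition $\tfrac sn(T+\log\tfrac ps)\to0$ of \Cref{assum:main} enter, since $nT\lambda^2s\lesssim\sigma^2 s(T+\log(p/s))/T=o(n)$); (b) $\bX_{\hat S}$ has full column rank $|\hat S|$ and $|\hat S|=o(n)$; and (c) $\|\bE^\top\bE-n\sigma^2\bI_{T\times T}\|_{op}\lesssim\sigma^2(\sqrt{nT}+T)$, which is $o(n)$ since $T/n\to0$. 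On $\mathcal G$, \Cref{propMatrixA}(ii)--(iii) give $\|\hbA\|_{op}\le|\hat S|$ and $\|\bI_{T\times T}-(\bI_{T\times T}-\hbA/n)^{-1}\|_{op}\le(|\hat S|/n)/(1-|\hat S|/n)=o(1)$, hence $\|\bw-\bb\|_2=o(1)$; since $\|\bb\|_2=1$ this yields $\|\bw\|_2^2\to1$ and $\|\bw\|_2=O_\P(1)$.

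Then I would bound the three terms on $\mathcal G$. For the signal term, $\|\bX(\hbB-\bB^*)\bw\|_2^2\le\|\bX(\hbB-\bB^*)\|_F^2\|\bw\|_2^2=o_\P(n)$. For the main term, $\|\bE\bw\|_2^2=\bw^\top(\bE^\top\bE)\bw=n\sigma^2\|\bw\|_2^2+\bw^\top(\bE^\top\bE-n\sigma^2\bI_{T\times T})\bw$, where the second summand is at most $\|\bw\|_2^2\,\|\bE^\top\bE-n\sigma^2\bI_{T\times T}\|_{op}=O_\P(1)\cdot o_\P(n)$, so $\|\bE\bw\|_2^2=n\sigma^2\|\bw\|_2^2+o_\P(n)=n\sigma^2(1+o_\P(1))$. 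For the cross term, Cauchy--Schwarz gives $|(\bE\bw)^\top\bX(\hbB-\bB^*)\bw|\le\|\bE\bw\|_2\|\bX(\hbB-\bB^*)\bw\|_2=O_\P(\sqrt n)\cdot o_\P(\sqrt n)=o_\P(n)$. Summing the three estimates gives $\|\hbR\bw\|_2^2=n\sigma^2(1+o_\P(1))$, which is the assertion.

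The probabilistic argument itself is elementary; the main obstacle is to secure its inputs, namely the Frobenius prediction bound $\|\bX(\hbB-\bB^*)\|_F^2=o(n)$ and the sparsity bound $|\hat S|=o(n)$, which I would draw from the multi-task Lasso analysis carried out elsewhere in the paper (the restricted-eigenvalue results together with the calibration of $\lambda$). The only genuinely multi-task ingredient beyond the single-task toolbox is the uniform control of the inverse $(\bI_{T\times T}-\hbA/n)^{-1}$, and that is exactly what \Cref{propMatrixA} supplies. As an alternative to the elementary treatment of the cross term, one may instead invoke \Cref{lemma:differentiation-E} in the form $\hbR^\top\bE\approx\sigma^2(n\bI_{T\times T}-\hbA)$ together with the identity $(\bI_{T\times T}-\hbA/n)^{-1}(n\bI_{T\times T}-\hbA)=n\bI_{T\times T}$; both routes give the same conclusion.
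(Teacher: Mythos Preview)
Your proof is correct and follows essentially the same approach as the paper. The only cosmetic difference is in the treatment of the main noise term: the paper first isolates $\bE\bb$ with the \emph{deterministic} direction $\bb$ (so that $\|\bE\bb\|_2^2/(n\sigma^2)\to1$ is just the law of large numbers) and then shows the remainder $\bE[(\bI_{T\times T}-\hbA/n)^{-1}-\bI_{T\times T}]\bb-\bX(\hbB-\bB^*)\bw$ is $o_\P(\sqrt n)$, whereas you keep the random direction $\bw$ throughout and compensate by invoking the uniform bound $\|\bE^\top\bE-n\sigma^2\bI_{T\times T}\|_{op}=o_\P(n)$. Both routes rest on the same three ingredients---\Cref{propMatrixA}(iii), the prediction bound $\|\bX(\hbB-\bB^*)\|_F^2=o_\P(n)$ from \Cref{lemma:risk}, and standard concentration for $\bE$---so there is no substantive difference.
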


Consequently the length of the confidence interval is approximately
$2z_{\alpha/2}\sigma n^{-1/2}$ which is the typical length for
two-sided confidence intervals for an unknown mean $\mu$ when observing i.i.d. $Y_1,...,Y_n$
with $\E[Y_i]=\mu, \mathrm{Var}[Y_i]=\sigma^2$.
\Cref{thm-normal,thm:variance} are proved together in \Cref{sec:proof-thm-normal}.

\paragraph*{Comparison with single-task Lasso on the first task.}
\label{para:width}
It is instructive to compare the above confidence interval with the confidence interval
induced by a single-task Lasso estimator computed on $(\bX,\by^{(1)})$,
i.e., when throwing away the responses $\by^{(2)},...,\by^{(T)}$ on tasks $2,...,T$.
This is also a good opportunity to analyse the form of
$\hbA$ and the matrix inversion \eqref{matrix-inverse} in the degenerate case
where a single task is observed.

For $T=1$, a response vector $\by^{(1)}=\bX\bbeta^{(1)} + \bep^{(1)}$ in $\R^n$
is observed and
the estimator \eqref{eq:hbB-Lasso} reduces to the usual Lasso
with response vector $\by^{(1)}$,
$$\hbbeta^L = \argmin_{\bb\in\R^p} \|\by^{(1)} - \bX\bb\|^2/(2n) + \lambda \|\bb\|_1.$$
The asymptotic normality result in \Cref{thm-normal} for $\bb=1$ asserts that
\begin{equation}
    \frac{ n\ba^\top(\hbbeta^L - \bbeta^{(1)})
    + (1-\hbA_{11}/n)^{-1} \bz_0^\top(\by^{(1)}-\bX\hbbeta^L
    )
}{
(1-\hbA_{11}/n)^{-1}\|\by^{(1)} - \bX\hbbeta^L\|_2
} 
\quad\smash{\xrightarrow[]{d}}\quad
\mathcal N(0,1).
\label{eq:normality-lasso-T=1}
\end{equation}
In the degenerate case $T=1$, the matrices in \eqref{Hj} are all zeros
and the matrix $\hbA$ reduces to a scalar $\hbA_{11}$ equal to
$\trace[\bX(\bX_{\hat S^L}^\top\bX_{\hat S^L})^\dagger \bX^\top] = |\hat S^L|$
where $\hat S^L$ is the support of the Lasso $\hbbeta^L$.
Here $\hbA_{11}$ is the usual effective degrees-of-freedom for the Lasso.
The factor $(1-\hbA_{11}/n) = (1-|\hat S^L|/n)^{-1}$ in \eqref{eq:normality-lasso-T=1} is the
degrees-of-freedom adjustment for the Lasso studied in \cite{bellec2019biasing},
which is required for the asymptotic normality result \eqref{eq:normality-lasso-T=1}
when
$s\gtrsim n^{2/3}$ \cite{bellec2019biasing}.
So \Cref{thm-normal} reduces to the asymptotic normality result of \cite{bellec2019biasing} in the degenerate case $T=1$, and in this case the matrix inversion
\eqref{matrix-inverse} reduces to a degrees-of-freedom adjustment
through the scalar multiplication by $(1-|\hat S^L|/n)^{-1}$.
The length of the resulting confidence interval for $\ba^\top\bbeta^{(1)}$
when $T=1$ (or when the tasks $2,...,T$) are thrown away) is then
\begin{equation}
    \label{eq:length-only-one-task}
    2
    z_{\alpha/2}n^{-1}\|\by^{(1)}-\bX\hbbeta^L\|_2(1-|\hat S^L|/n)^{-1}.
\end{equation}
We may compare the lengths of the two confidence intervals:
\begin{itemize}
    \item The confidence interval $[L_-^\alpha,L_+^\alpha]$ based on \eqref{eq:asymptotic-normality-e_1} using the responses on all tasks $1,...,T$ with
        length 
        $2n^{-1}z_{\alpha/2}
\|(\bY-\bX\hbB)
(\bI_{T\times T}-\hbA/n)^{-1}
\be_1\|_2$, and
    \item
        The confidence interval based on \eqref{eq:normality-lasso-T=1}  obtained by throwing away the responses on tasks $2,...,T$ with length \eqref{eq:length-only-one-task}.
\end{itemize}
The length of the confidence interval based on $\hbB$ and the responses on all tasks
$1,...,T$ is smaller than the length \eqref{eq:length-only-one-task} only when
\begin{equation}
    \label{eq:length-comparison}
    \|\by^{(1)}-\bX\hbbeta^L\|_2(1-|\hat S^L|/n)^{-1}
    >
\|(\bY-\bX\hbB)
(\bI_{T\times T}-\hbA/n)^{-1}
\be_1\|_2.
\end{equation}
Our simulations in \Cref{sec:simu} (see \Cref{fig:Width}) reveal
that \eqref{eq:length-comparison} holds,
in some situations with significant margins,
when $s$ is not too large.
Since the comparison \eqref{eq:length-comparison} can be performed
by looking at the data, the practitioner should choose the multi-task confidence interval
based on \eqref{eq:asymptotic-normality-e_1}
over the single-task confidence interval based on \eqref{eq:normality-lasso-T=1}
when \eqref{eq:length-comparison} holds.
When performing this comparison, two tests are constructed which calls for
a Bonferroni correction to avoid invalid coverage due to multiple testing.

\subsection{Unknown \texorpdfstring{$\bSigma$}{Sigma}: Pivotal random variable, asymptotic normality
and confidence intervals}
\label{sec:unknown-Sigma}
The knowledge of $\bSigma$ is not available in most practical situations
and the methodology of the previous subsection cannot be applied.
Indeed the left hand sides in \Cref{thm-normal}
involve $\bz_0=\bX\bSigma^{-1}\ba$ which cannot be directly constructed from
the data when $\bSigma$ unknown.
Another issue that arises when $\bSigma$ is unknown is that one cannot
verify the normalization $\|\bSigma^{-1/2}\ba\|_2=1$ required in \Cref{thm-normal}.
Intuitively, though, if it was possible to estimate both
$\bz_0=\bX\bSigma^{-1}\ba$ and $\|\bSigma^{-1/2}\ba\|_2$ fast enough, replacing
these quantities by their estimates in \eqref{eq:asymptotic-normality-e_1} should
not break asymptotic normality.
Following ideas from the early de-biasing literature \cite{ZhangSteph14,JavanmardM14b,GeerBR14}, we consider a direction
\begin{equation}
    \ba=\be_j
\end{equation}
for some fixed covariate $j\in\{1,...,p\}$ and compute the nodewise Lasso
\begin{equation}
    \label{lasso-gamma-j}
    \hbgamma^{(j)} = \argmin_{\bgamma\in\R^{p}}
    \|\bX\be_j - \bX_{-j}\bgamma\|_2^2/(2n)
    + \hat\tau_j(1+\eta)\sqrt{(2/n)\log p}\|\bgamma\|_1
\end{equation}
for regressing $\bX\be_j$ on $\bX_{-j}$,
where $\bX_{-j}\in\R^{n\times p}$ is the matrix $\bX$ with $j$-th column replaced
by a column of zeros,
$\hat\tau_j$ is a consistent estimate of $\|\bSigma^{-1/2}\be_j\|_2^{-1}$
and $\eta>0$ is a small constant.
Alternatively, one may use the scale invariant version of \eqref{lasso-gamma-j}
again for regressing $\bX\be_j$ on $\bX_{-j}$,
\begin{equation}
    \label{scaled-lasso-gamma-j}
    \hbgamma^{(j)} = \argmin_{\bgamma\in\R^{p}:\gamma_j=0}
    \big(\|\bX\be_j - \bX_{-j}\bgamma\|_2^2/(2n)\big)^{1/2}
    + (1+\eta)\sqrt{(2/n)\log p}\|\bgamma\|_1,
\end{equation}
known as Scaled lasso \cite{sun2012scaled}
or square-root Lasso \cite{belloni2011square},
and \eqref{scaled-lasso-gamma-j} is equal to \eqref{lasso-gamma-j}
with $\hat\tau_j = \|\bX\be_j - \bX_{-j}\hbgamma^{(j)}\|_2/\sqrt n$.
We finally set 
\begin{equation}
    \hbz_j = \bX\be_j - \bX_{-j} \hbgamma^{(j)}.
    \label{eq:def-hbz-j}
\end{equation}
    This corresponds to the residuals of the estimator $\hbgamma^{(j)}$
    in the linear model
    \begin{equation}
        \label{eq:linear-model-nodewise}
        \bX \be_j
        =
        \bX_{-j} \bgamma^{(j)}
        + \bep^{(j)}
    \end{equation}
    with response vector $\bX\be_j\in\R^n$,
    design matrix $\bX_{-j}$,
    true regression vector 
    $\bgamma^{(j)}\coloneqq
    - \|\bSigma^{-1/2}\be_j\|_2^{-2} (\bI_p - \be_j\be_j^\top)\bSigma^{-1}\be_j
    $
    (so that $\be_j^\top\bgamma^{(j)} = 0$
    and $
    \be_k^\top\bgamma^{(j)}= - (\bSigma^{-1})_{jj}^{-1} (\bSigma^{-1})_{jk}$ for $k\in[p]\setminus\{j\}$),
    and Gaussian noise vector
    $\bep^{(j)}\coloneqq \|\bSigma^{-1/2}\be_j\|_2^{-2} \bX\bSigma^{-1}\be_j$
    independent of $\bX_{-j}$
    with distribution
    $\bep^{(j)} \sim \mathcal N_n(\mathbf{0}, \tau_j^2 \bI_{n\times n})$ where $\tau_j^2 \coloneqq \|\bSigma^{-1/2}\be_j\|_2^{-2} = (\bSigma^{-1})_{jj}^{-1}$. 
    The relationship between $\bSigma^{-1}$ and $(\bgamma^{(j)},\tau_j)$
    is the well known connection between precision matrix
    and linear regression for multivariate normal random vectors
    (see, e.g.,
    \cite{meinshausen2006high,sun2013sparse}).

    The estimators $\hbgamma^{(j)}$ in \eqref{lasso-gamma-j}
    and \eqref{scaled-lasso-gamma-j} both satisfy inequalities
    \begin{align}
        \label{eq:KKT-hbgamma-j}
        \|\bX_{-j}^\top(\bX\be_j - \bX_{-j} \hbgamma^{(j)})\|_\infty
        =
        \|\bX_{-j}^\top \hbz_j\|_\infty
        &\le O_\P(1) \tau_j \sqrt{n \log p},
        \\
        \|\hbgamma^{(j)}-\bgamma^{(j)}\|_1
        &\le
        O_\P(1) 
        \|\bSigma^{-1}\|_{op}
        \|\bgamma^{(j)}\|_0
        \tau_j
        \sqrt{\log(p)/n}
        \label{eq:ell_1_rate_hbgamma-j}
    \end{align}
    provided that $\|\bSigma^{-1}\|_{op}\|\bgamma^{(j)}\|_0\log(p)/n\to 0$.
    Inequality \eqref{eq:ell_1_rate_hbgamma-j} is the usual
    $\ell_1$ estimation rate for the Lasso \cite{bickel2009simultaneous}
    or the Scaled Lasso \cite{sun2013sparse,belloni2011square}, and 
    $\|\bSigma^{-1}\|_{op}^{-1}$ represents a high-probability
    lower bound on the restricted eigenvalue
    in the linear model \eqref{eq:linear-model-nodewise} \cite{raskutti2010restricted}.
    Inequality \eqref{eq:KKT-hbgamma-j} follows from the
    KKT conditions of \eqref{lasso-gamma-j} for the Lasso,
    and from the KKT conditions of \eqref{scaled-lasso-gamma-j}
    combined with $\hat \tau_j/\tau_j\smash{\xrightarrow[]{\P}} 1$ which holds thanks to
    properties of the Scaled or square root Lasso \cite{sun2013sparse,belloni2011square}.
    Inequalities \eqref{eq:KKT-hbgamma-j}-\eqref{eq:ell_1_rate_hbgamma-j}
    are the only properties of $\hbgamma^{(j)}$ that we will use
    in the proof of the following result. Other estimators
    $\hbgamma^{(j)}$ could be used, for instance ones based on
    the Dantzig selector, as long as 
    \eqref{eq:KKT-hbgamma-j}-\eqref{eq:ell_1_rate_hbgamma-j}
    are satisfied.

\begin{theorem}
    \label{thm:unknown-Sigma-normal}
    Consider a canonical basis direction
    $\be_j\in\R^p$ for some $j\in[p]$ and let \Cref{assum:main} be fulfilled.
    Additionally assume that the sparsity of $\bSigma^{-1}\be_j$ satisfies
    either
    \begin{equation}
        \label{eq:assum:T-log-p-sqrt-n-unknown-Sigma}
        n^{-1/2} \|\bSigma^{-1}\be_j\|_0\sqrt{[T+\log(p/s)]\log p}\to 0.
    \end{equation}
    or
    \begin{equation}
        \label{eq:assum-unknown-Sigma-second-version}
        \|\bSigma^{-1}\be_j\|_0\log(p)/n\to 0
        \qquad
        \text{ and }
        \qquad
        s \sqrt{\log(p)[T + \log(p/s)]/n}\to 0
        .
    \end{equation}
    Then for any estimator $\hbgamma^{(j)}$
    satisfying \eqref{eq:KKT-hbgamma-j}-\eqref{eq:ell_1_rate_hbgamma-j} 
    and every fixed $\bb\in\R^{T}$ we have
    \begin{equation}
        \frac{
            n \be_j^\top (\hbB - \bB^*) \bb
            + n (\hbz_j^\top \bX\be_j)^{-1}
            \hbz_j^\top(\bY-\bX\hbB)(\bI_{T\times T}-\hbA/n)^{-1}
            \bb
        }
        {
            (\tau_j)^{-1}  ~
        \|(\bY-\bX\hbB)
        (\bI_{T\times T}-\hbA/n)^{-1}
        \bb\|_2
        }
        ~{\xrightarrow[]{d}}~
        \mathcal N(0,1).
        \label{eq:asymptotic-normality-conclusion-unknown-Sigma}
  \end{equation}
  Asymptotic normality \eqref{eq:asymptotic-normality-conclusion-unknown-Sigma}  still holds if $\tau_j$ in the denominator
  is replaced by either 
  $(\hbz_j^\top\bX\be_j/n)^{1/2}$
  or $\hat\tau_j=(\|\hbz_j\|_2/\sqrt n)$.   
\end{theorem}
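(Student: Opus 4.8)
The plan is to deduce \Cref{thm:unknown-Sigma-normal} from the known-$\bSigma$ result \Cref{thm-normal} by a plug-in argument, replacing the oracle de-biasing direction $\bX\bSigma^{-1}\be_j$ by the nodewise residuals $\hbz_j$. Since the pivot in \eqref{eq:asymptotic-normality-conclusion-unknown-Sigma} is invariant under rescaling $\bb$, I assume $\|\bb\|_2=1$ and set $\bR=(\bY-\bX\hbB)(\bI_{T\times T}-\hbA/n)^{-1}\bb$, so that \Cref{thm:variance} gives $\|\bR\|_2^2/n\xrightarrow[]{\P}\sigma^2$, while item (v) of \Cref{assum:main} makes $\tau_j=((\bSigma^{-1})_{jj})^{-1/2}$ bounded away from $0$ and $\infty$. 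Applying \eqref{eq:asymptotic-normality-bb} of \Cref{thm-normal} to the admissible rescaled direction $\ba=\tau_j\be_j$ (for which $\|\bSigma^{-1/2}\ba\|_2=1$ and the oracle direction is $\bz_0=\tau_j\bX\bSigma^{-1}\be_j$), then dividing numerator and denominator by $\tau_j$, shows that $N^{\mathrm{or}}:=n\be_j^\top(\hbB-\bB^*)\bb+\bz_\star^\top\bR$ with $\bz_\star:=\bX\bSigma^{-1}\be_j$ satisfies $N^{\mathrm{or}}/(\tau_j^{-1}\|\bR\|_2)\xrightarrow[]{d}\mathcal N(0,1)$, and in particular $N^{\mathrm{or}}=O_\P(\sqrt n)$. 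The target pivot $N:=n\be_j^\top(\hbB-\bB^*)\bb+c^{-1}\hbz_j^\top\bR$, with $c:=\hbz_j^\top\bX\be_j/n$, shares its first summand and the denominator $\tau_j^{-1}\|\bR\|_2$ with $N^{\mathrm{or}}$, so by Slutsky everything reduces to proving $N-N^{\mathrm{or}}=o_\P(\sqrt n)$.

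For the next step I would use that $\hbz_j$ is the residual of the nodewise model \eqref{eq:linear-model-nodewise}: with $\Delta:=\hbgamma^{(j)}-\bgamma^{(j)}$, the identity $\be_j-\bgamma^{(j)}=\tau_j^2\bSigma^{-1}\be_j$ gives $\hbz_j=\bep^{(j)}-\bX_{-j}\Delta=\tau_j^2\bz_\star-\bX_{-j}\Delta$, hence
$$N-N^{\mathrm{or}}=\bigl(c^{-1}\hbz_j-\bz_\star\bigr)^\top\bR=(\tau_j^2c^{-1}-1)\,\bz_\star^\top\bR-c^{-1}\,\Delta^\top\bX_{-j}^\top\bR.$$
It then remains to bound the two terms. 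For the scalar $c$: writing $\hbz_j^\top\bX\be_j=\|\hbz_j\|_2^2+\hbz_j^\top\bX_{-j}\hbgamma^{(j)}$, the first term over $n$ tends to $\tau_j^2$ (decompose $\hbz_j=\bep^{(j)}-\bX_{-j}\Delta$, use $\|\bep^{(j)}\|_2^2\sim\tau_j^2\chi^2_n$, the Lasso prediction bound for $\bX_{-j}\Delta$ implied by \eqref{eq:ell_1_rate_hbgamma-j}, and Cauchy--Schwarz for the cross term), and the second is $O_\P(\sqrt{\|\bSigma^{-1}\be_j\|_0\log p/n})$ by \eqref{eq:KKT-hbgamma-j} and $\|\hbgamma^{(j)}\|_1\lesssim\sqrt{\|\bSigma^{-1}\be_j\|_0}$; this yields $|\tau_j^2c^{-1}-1|=O_\P(\sqrt{\|\bSigma^{-1}\be_j\|_0\log p/n})$ and $c$ bounded away from $0$, and incidentally also $\|\hbz_j\|_2^2/n\xrightarrow[]{\P}\tau_j^2$.

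The residual-times-direction factors are controlled by the KKT conditions of \eqref{eq:hbB-Lasso}: $\bX^\top(\bY-\bX\hbB)/(nT)=\lambda\hbG$ with $\hbG$ in the subdifferential of $\|\cdot\|_{2,1}$ at $\hbB$, so every row of $\bX^\top(\bY-\bX\hbB)$ has Euclidean norm at most $nT\lambda$; combined with $\|(\bI_{T\times T}-\hbA/n)^{-1}\|_{op}=O_\P(1)$ from \Cref{propMatrixA}(iii) (using the support bound $|\hat S|=O_\P(s)$ and the rate condition of \Cref{assum:main}), this gives $\|\bX^\top\bR\|_\infty=O_\P(nT\lambda)$ and, exploiting the sparsity of $\bSigma^{-1}\be_j$, $|\bz_\star^\top\bR|\le\|\bSigma^{-1}\be_j\|_1\,\|\bX^\top\bR\|_\infty=O_\P\bigl(\sqrt{\|\bSigma^{-1}\be_j\|_0}\,nT\lambda\bigr)$. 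Together with $\|\Delta\|_1=O_\P(\|\bSigma^{-1}\be_j\|_0\sqrt{\log p/n})$ from \eqref{eq:ell_1_rate_hbgamma-j} and $nT\lambda\asymp\sigma\sqrt{n[T+\log(p/s)]}$ from \eqref{eq:lambda-lambda-0}, both terms above are $O_\P\bigl(\sigma\,\|\bSigma^{-1}\be_j\|_0\sqrt{[T+\log(p/s)]\log p}\bigr)$, which is precisely $o_\P(\sqrt n)$ under \eqref{eq:assum:T-log-p-sqrt-n-unknown-Sigma}; this settles the first sparsity regime.

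Under \eqref{eq:assum-unknown-Sigma-second-version} the worst-case KKT bound $\|\bX^\top\bR\|_\infty=O_\P(nT\lambda)$ is too lossy, so instead I would write $\bY-\bX\hbB=\bE-\bX(\hbB-\bB^*)$ and bound $\bz_\star^\top\bR$ and $\Delta^\top\bX_{-j}^\top\bR$ by pairing Gaussian concentration of the noise pieces $(\bX\bSigma^{-1}\be_j)^\top\bE(\bI_{T\times T}-\hbA/n)^{-1}\bb$ and $(\bX_{-j}\Delta)^\top\bE(\bI_{T\times T}-\hbA/n)^{-1}\bb$---conditioning on $\bX$ and absorbing the dependence of $\hbA$ on $\bE$ through the operator-norm bound of \Cref{propMatrixA}(iii)---with the multi-task rates $\|\hbB-\bB^*\|_F=O_\P(\sigma\sqrt{s[T+\log(p/s)]/n})$, $\|\bX(\hbB-\bB^*)\|_F=O_\P(\sigma\sqrt{s[T+\log(p/s)]})$, $|\hat S|=O_\P(s)$, and the multi-task restricted-eigenvalue property of $\bX$; the exponents then match under \eqref{eq:assum-unknown-Sigma-second-version}. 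I expect this second regime to be the main obstacle, precisely because it forces one to decouple the nodewise-Lasso error $\Delta$ from the bias of $\hbB$ without a wasteful Cauchy--Schwarz and to control the $\bE$-dependence of the adjustment matrix $\hbA$. Once $N/(\tau_j^{-1}\|\bR\|_2)\xrightarrow[]{d}\mathcal N(0,1)$ is established, the closing assertion follows from Slutsky together with $c\xrightarrow[]{\P}\tau_j^2$ and $\|\hbz_j\|_2^2/n\xrightarrow[]{\P}\tau_j^2$ proved along the way, since these give $(\hbz_j^\top\bX\be_j/n)^{1/2}/\tau_j\xrightarrow[]{\P}1$ and $\hat\tau_j/\tau_j\xrightarrow[]{\P}1$.
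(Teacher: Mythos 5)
Your treatment of the first regime \eqref{eq:assum:T-log-p-sqrt-n-unknown-Sigma} is correct and is essentially the paper's own argument: the same splitting of $c^{-1}\hbz_j-\bX\bSigma^{-1}\be_j$ into a calibration error $(\tau_j^2c^{-1}-1)$ times the oracle score plus the nodewise error $\bX_{-j}\Delta$, the same three ingredients \eqref{eq:ell_1_rate_hbgamma-j}, \eqref{eq:b2}, \eqref{eq:b3}, and the same H\"older pairing against the multi-task KKT bound $\max_k\|(\bY-\bX\hbB)^\top\bX\be_k\|_2\le nT\lambda$. The closing Slutsky step for the alternative denominators is also the paper's.

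The second regime \eqref{eq:assum-unknown-Sigma-second-version} is where your proof has a genuine gap, and your own suspicion that it is ``the main obstacle'' is accurate. In that regime $\|\bSigma^{-1}\be_j\|_0$ is only constrained by $\|\bSigma^{-1}\be_j\|_0\log p/n\to0$ and may be polynomially large in $n$. Your reduction to the oracle pivot $N^{\mathrm{or}}$ forces you to control the bias part of $\bigl(c^{-1}\hbz_j-\bz_\star\bigr)^\top\bR$ with $\bz_\star=\bX\bSigma^{-1}\be_j$, i.e.\ the terms $(\tau_j^2c^{-1}-1)\,\bz_\star^\top\bX(\hbB-\bB^*)(\bI_{T\times T}-\hbA/n)^{-1}\bb$ and $c^{-1}(\bX_{-j}\Delta)^\top\bX(\hbB-\bB^*)(\bI_{T\times T}-\hbA/n)^{-1}\bb$. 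Because $\bz_\star$ is a dense direction, the only bounds available for the first of these are Cauchy--Schwarz, $\|\bz_\star\|_2\|\bX(\hbB-\bB^*)\|_{op}=O_\P(\tau_j^{-1}n\bar R)$, or H\"older against the multi-task KKT, $\|\bSigma^{-1}\be_j\|_1\,nT\lambda$; combined with $|\tau_j^2c^{-1}-1|=O_\P(\sqrt{\|\bSigma^{-1}\be_j\|_0\log p/n})$ and divided by $\sqrt n$, both routes give at best a quantity of order $\sigma\sqrt{\|\bSigma^{-1}\be_j\|_0\,s\,[T+\log(p/s)]\log p/n}$, and the product condition $\|\bSigma^{-1}\be_j\|_0\,s\,[T+\log(p/s)]\log p/n\to0$ is \emph{not} implied by \eqref{eq:assum-unknown-Sigma-second-version} (take $\|\bSigma^{-1}\be_j\|_0\asymp n/\log^2p$, $s\asymp1$, $T\asymp\log p$: both conditions in \eqref{eq:assum-unknown-Sigma-second-version} hold, yet the product stays bounded away from $0$). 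The same obstruction hits $(\bX_{-j}\Delta)^\top\bX(\hbB-\bB^*)\cdots$ if you bound it by $\|\bX_{-j}\Delta\|_2\|\bX(\hbB-\bB^*)\|_{op}$. Your proposed fix --- Gaussian concentration of the noise pieces --- only addresses the $\bE$ part of $\bY-\bX\hbB$; it does nothing for the bias part, which is where the chain of inequalities fails to close.

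The paper's proof in this regime abandons the comparison with the oracle pivot altogether and instead compares the estimated pivot to the pure-noise quantity $\tbxi_j=\bE^\top\hbz_j[n(\hbz_j^\top\bX\be_j)^{-1}]\tau_j$, whose asymptotic normality is exact conditionally on $\bX$ since $\bE$ is independent of $\bX$ and $\bE^\top\hbz_j/\|\hbz_j\|_2\sim\mathcal N_T(\mathbf 0,\sigma^2\bI_{T\times T})$. Two structural facts then make the difference $\hbxi_j-\tbxi_j$ controllable: first, the term $n(\hbB-\bB^*)^\top\be_j$ cancels \emph{exactly} against the contribution of the $j$-th column of $\bX$ inside $(\hbB-\bB^*)^\top\bX^\top\hbz_j$ --- this exact cancellation is the reason the pivot is normalized by $\hbz_j^\top\bX\be_j$ rather than by $n\tau_j^2$ or $\|\hbz_j\|_2^2$, and it removes precisely the term your decomposition leaves multiplied by $(\tau_j^2c^{-1}-1)$; second, the remaining cross term $(\hbB-\bB^*)^\top\bX_{-j}^\top\hbz_j$ is bounded by $\|\hbB-\bB^*\|_{2,1}\,\|\bX_{-j}^\top\hbz_j\|_\infty=O_\P(\sqrt s\,\bar R)\cdot O_\P(\tau_j\sqrt{n\log p})$ using the KKT conditions \eqref{eq:KKT-hbgamma-j} of the \emph{nodewise} Lasso, a bound that is entirely free of $\|\bSigma^{-1}\be_j\|_0$ and is exactly what the second condition in \eqref{eq:assum-unknown-Sigma-second-version} is designed to kill. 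Without these two ideas your argument does not go through in the second regime.
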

\Cref{thm:unknown-Sigma-normal}
is proved in \Cref{subsec:proof-unknown-Sigma-normality}.

\section{Confidence ellipsoids for rows of \texorpdfstring{$\bB^*$}{B*}}
\label{sec:chi2}

\subsection{Known \texorpdfstring{$\bSigma$}{Sigma}}
\label{sec:chi2-known-Sigma}
We first construct confidence ellipsoids with the knowledge of $\bSigma$.
\begin{restatable}{theorem}{thmChiSquareQuantiles}
    \label{thm:chi2z0}
    Define the observable positive semi-definite matrix
    $\hbGamma = (\bY-\bX\hbB)^\top(\bY-\bX\hbB)\in\R^{T\times T}$ as well as
    \begin{equation}
        \label{eq:def-bxi}
        \bxi =  (\bY-\bX\hbB)^\top \bz_0 + (n\bI_{T\times T}-\hbA)(\hbB-\bB^*)^\top\ba
        .
    \end{equation}
    Then
    under \Cref{assum:main}, there exists
    a random variable $\chi^2_T$ with chi-square distribution with $T$
degrees of freedom such that
    \begin{align*}
    \sqrt{1-\tfrac T n}
        \Big\|\hbGamma^{-1/2}\bxi\Big\|_2
    -\sqrt{\chi^2_T}
    &\le
    o_\P(1) +
    O_\P\Big(\min\Bigl\{\frac{T}{\sqrt n}, \frac{s^2\log^2(p/s)}{n\sqrt T}\Bigr\}\Big)
    \end{align*}
    as well as
    $$- o_\P(1) - O_\P\Big(\frac{T}{\sqrt n} + \frac{sT + s\log(p/s)}{n}\sqrt T \Big)
        \le
        \sqrt{1-\tfrac T n}
        \Big\|\hbGamma^{-1/2}\bxi\Big\|_2
    -\sqrt{\chi^2_T}.$$
    Consequently,
    \begin{enumerate}
        \item 
    $
    (1-\frac{T}{n})^{\frac12}
    \|\hbGamma{}^{-1/2}\bxi\|_2
    -(\chi^2_T)^{1/2}
    \le o_\P(1)$ holds if additionally 
    $\min\{\frac{T^2}{n},\frac{\log^8p}{n}\}\to0$, and
\item
    $
    (1-\frac{T}{n})^{\frac12}
    \|\hbGamma{}^{-1/2}\bxi\|_2
    -(\chi^2_T)^{1/2}
    \ge o_\P(1)
    $
    holds if additionally $\frac{T^2}{n} + \frac{sT + s\log(p/s)}{n}\sqrt T  \to 0$.
    \end{enumerate}
\end{restatable}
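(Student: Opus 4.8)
The plan is to establish a decomposition of the pivotal vector $\bxi$ into a dominant Gaussian term and negligible remainders, then transfer the approximation through the normalization by $\hbGamma^{-1/2}$ and the scalar factor $(1-T/n)^{1/2}$. First I would expand $\bxi = (\bY-\bX\hbB)^\top\bz_0 + (n\bI_{T\times T}-\hbA)(\hbB-\bB^*)^\top\ba$ using $\bY-\bX\hbB = \bE - \bX(\hbB-\bB^*)$ and $\bz_0 = \bX\bSigma^{-1}\ba$. The key algebraic identity, which should come from the same computation underlying \Cref{thm-normal} (the Stein/SURE-type argument in \Cref{lemma:differentiation-E}), is that the interaction matrix $\hbA$ is precisely the correction making $\bE^\top\bz_0 - \hbA(\hbB-\bB^*)^\top\ba$ behave like a clean Gaussian vector: conditionally on $\bX$ and the relevant $\sigma$-algebra, one expects $\bxi \approx \bE^\top\bw$ for a suitable $\bw\in\R^n$ with $\|\bw\|_2^2 \approx n$, so that $\bxi/\sigma$ is approximately $\mathcal N_T(\mathbf 0, \|\bw\|_2^2\bI_{T\times T})$. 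Concretely I would write $\bxi = \sigma \bG \bw/\|\bw\|_2 \cdot \|\bw\|_2 + \Rem$ where $\bG\in\R^{T\times n}$ has i.i.d.\ standard normal entries (the standardized $\bE^\top$), and track $\Rem$ as $o_\P(\cdot)$ plus the stated $O_\P(\cdot)$ terms coming from $\|\hbB-\bB^*\|_F$, $\|\bX(\hbB-\bB^*)\|_F$, the operator-norm bounds on $\hbA$ from \Cref{propMatrixA}(ii)--(iii), and the rate $\tfrac sn(T+\log\tfrac ps)\to 0$.

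Next I would handle the normalization. Since $\hbGamma = (\bY-\bX\hbB)^\top(\bY-\bX\hbB)$ and $\bY-\bX\hbB \approx \bE$ up to the estimation error $\bX(\hbB-\bB^*)$, one has $\hbGamma \approx \bE^\top\bE$; together with \Cref{thm:variance} applied coordinatewise (or rather its vector analogue $\|(\bY-\bX\hbB)(\bI_{T\times T}-\hbA/n)^{-1}\bb\|_2^2/n\xrightarrow{\P}\sigma^2$) one gets $\hbGamma/n \approx \sigma^2 \bI_{T\times T}$ in the relevant sense, and more precisely $\hbGamma^{-1/2}\bxi \approx (\bE^\top\bE)^{-1/2}\bE^\top\bw$. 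Now the crucial point: if $\bE = \sigma\bG^\top$ with $\bG\in\R^{T\times n}$ standard Gaussian and $\bw\in\R^n$ fixed (after conditioning) with $\|\bw\|_2 = 1$, then $(\bG\bG^\top)^{-1/2}\bG\bw$ is \emph{exactly} distributed as a uniform point on the sphere $S^{T-1}$ scaled independently — in fact $\|(\bG\bG^\top)^{-1/2}\bG\bw\|_2^2 \sim \mathrm{Beta}(T/2,(n-T)/2)$ by rotational invariance, so that $(1-T/n)^{1/2}\|(\bG\bG^\top)^{-1/2}\bG\bw\|_2 \cdot \sqrt n$ is close to $\sqrt{\chi^2_T}$. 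The factor $(1-T/n)^{1/2}$ is exactly the deterministic correction that makes $n(1-T/n)\|(\bE^\top\bE)^{-1/2}\bE^\top\bw\|_2^2$ match $\chi^2_T$ to leading order; I would make this precise via the Beta–chi-square coupling (write $\mathrm{Beta}(T/2,(n-T)/2) = \chi^2_T/(\chi^2_T+\chi^2_{n-T})$ with independent numerator and denominator, couple $\chi^2_{n-T}$ to its mean $n-T$, and bound the discrepancy by $O_\P(T/\sqrt n)$, which accounts for the $T/\sqrt n$ term on both sides).

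The two competing bounds in the upper estimate, $\min\{T/\sqrt n,\ s^2\log^2(p/s)/(n\sqrt T)\}$, reflect two different ways of controlling the remainder: the $T/\sqrt n$ branch comes from the Beta/$\chi^2$ coupling above (used when $T$ is small), while the $s^2\log^2(p/s)/(n\sqrt T)$ branch comes from bounding the estimation-error contribution $\|\bX(\hbB-\bB^*)\|_F^2 \lesssim \sigma^2 (sT + s\log(p/s))$ and the cross terms, divided by the $\sqrt T$ that appears because we are comparing Euclidean norms in $\R^T$ (cf.\ the discussion around \eqref{eq:challenges-chi2-inference}: a perturbation of size $\epsilon$ in each of $T$ coordinates only shifts $\sqrt{\chi^2_T}$ by $\epsilon$, not $\epsilon\sqrt T$, because we may absorb it into the existing $\sqrt T$-sized norm). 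For the lower bound I would similarly isolate the one-sided contributions, noting that some remainder terms (e.g.\ those with a definite sign coming from positive semidefiniteness of $\hbA$ and of the cross terms) only degrade the estimate in one direction, which is why the lower bound carries the cruder $\tfrac{sT+s\log(p/s)}{n}\sqrt T$ term rather than its square root. The two corollaries then follow by checking that under $\min\{T^2/n,\log^8 p/n\}\to 0$ (resp.\ $T^2/n + \tfrac{sT+s\log(p/s)}{n}\sqrt T\to 0$) all the $O_\P(\cdot)$ terms are $o_\P(1)$, using $\tfrac sn(T+\log\tfrac ps)\to 0$ from \Cref{assum:main} to control the product $s^2\log^2(p/s)/n$ against $s\log(p/s)$ and the extra logarithmic factors.

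The main obstacle I expect is establishing the sharp form of the Gaussian approximation $\bxi \approx \bE^\top\bw$ \emph{with} the right remainder order — in particular showing that the interaction matrix $\hbA$ cancels the leading bias term in $(\hbB-\bB^*)^\top\ba$ to high enough precision that the residual is $O_\P(s^2\log^2(p/s)/n)$ rather than merely $o_\P(\sqrt n)$. This requires a second-order analysis of the KKT conditions of the multi-task Lasso (differentiating the map $\bE\mapsto\hbB$ and identifying $nT\tbH$ as the relevant Hessian block), and controlling the operator norm of $(\tbX^\top\tbX + nT\tbH)^\dagger$ restricted to the support — essentially a multi-task analogue of the single-task second-order Stein argument in \cite{bellec2019biasing,bellec2019second}, with the added complication that $\hbA$ is a genuine $T\times T$ matrix rather than a scalar.
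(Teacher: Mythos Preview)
Your high-level decomposition is correct and matches the paper: first establish $(\sigma^2 n)^{-1/2}\|\bxi - \sqrt n\,\bE^\top\bz\|_2 = o_\P(1)$ for $\bz = \bz_0/\|\bz_0\|_2$ (this is \Cref{thm:chi2initial}, proved via the second-order Stein argument of \Cref{lemma:RemainderII} which differentiates in the $\bz_0$ direction --- not \Cref{lemma:differentiation-E}, which differentiates in $\bE$ and is only used later to control the cross term $\bE^\top\bX(\hbB-\bB^*)$ inside $\hbGamma$), then transfer the approximation through $\hbGamma^{-1/2}$ and $(1-T/n)^{1/2}$. Your Beta/chi-square coupling is exactly the paper's Hotelling~$T^2$/$F$-distribution argument (part~(i) of \Cref{lemma:bxi-bz}), and your operator-norm perturbation of $\hbGamma$ around $\bE^\top\bE$ does recover the $O_\P(T/\sqrt n)$ branch and the lower bound (parts~(iii)--(iv)).

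The genuine gap is the $s^2\log^2(p/s)/(n\sqrt T)$ branch. Your proposed mechanism --- ``estimation-error contribution $\|\bX(\hbB-\bB^*)\|_F^2$ and cross terms, divided by $\sqrt T$'' --- does not produce this rate. A direct perturbation of $\hbGamma$ around $\bE^\top\bE$ is dominated by the cross term $\bE^\top\bX(\hbB-\bB^*)$, whose operator norm is $O_\P(\sigma^2\sqrt{nT})$ (via \Cref{lemma:differentiation-E} and $\|\hbA\|_{op}\le|\hat S|$); feeding this through the perturbation inequality only re-derives $T/\sqrt n$, not anything with $\sqrt T$ in the denominator. The paper's argument (part~(v) of \Cref{lemma:bxi-bz}) is structurally different: for each fixed $J\supset \hat S\cup\supp(\bB^*)$ one has $\hbGamma \succeq \bE^\top\bP_J^\perp\bE$ where $\bP_J^\perp$ projects orthogonally to $\bz$ and $\{\bX\be_j:j\in J\}$; the key is that $\bE^\top\bP_J^\perp\bE$ is \emph{independent} of $\bE^\top\bz$, so one can compare $(\bE^\top\bP_J^\perp\bE)^{-1}$ to $(\bE^\top\bP_{\bz}^\perp\bE)^{-1}$ by Sherman--Morrison--Woodbury as a rank-$|J|$ update, use the one-sided inequality $\sqrt a-\sqrt b\le (a-b)_+/\sqrt b$ to extract the factor $1/\sqrt{TF_{T,n-T}}=O_\P(T^{-1/2})$, and then pay a union bound over the $\binom{p}{s_*}$ choices of $J$ (this is where the $\log^2(p/s)$ enters). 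Without this independence-plus-union-bound device you cannot decouple the random normalizer $\hbGamma^{-1/2}$ from the random vector $\bE^\top\bz$ sharply enough to beat $T/\sqrt n$, and the $\sqrt T$ in the denominator has no source in your outline.
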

\Cref{thm:chi2z0}
is proved in \Cref{section:proofs-chi2-quantiles}.
The following proposition with $W_n = (1-\frac Tn)^{1/2} \|\hbGamma{}^{-1/2}\bxi\|_2$
relates the $(1-\alpha)$-quantile of $\|\hbGamma{}^{-1/2}\bxi\|_2$
to that of  $(\chi^2_T)^{1/2}$ when either (i) or (ii) above holds. 

\begin{restatable}{proposition}{chisquareQuantileProp}
    \label{prop:4.2}
    Let $(W_{n})_{n\ge 1}$ be a sequence of random random variables
    and $\chi^2_T$ a sequence of random variables
    with chi-square distribution with $T$ degrees-of-freedom,
    where $T=T_n$ is function of $n$ (in particular, $T\to +\infty$
    as $n\to+\infty$ is allowed).
    If $\alpha\in(0,1)$ is a fixed constant not depending on $n,T$
    and $q_{T,\alpha}> 0$ is the quantile defined by
    $\P((\chi_T^2)^{1/2} \le q_{T,\alpha}) = 1-\alpha$ then
    \begin{enumerate}
        \item 
    $W_n
    -(\chi^2_T)^{1/2}
    \le o_\P(1)$ implies that
    $\P(
        W_n
        \le q_{T,\alpha}
    ) \ge 1-\alpha - o(1)$ and
\item
    $
    W_n
    -(\chi^2_T)^{1/2}
    \ge -o_\P(1)
    $ implies that
    $\P(
    W_n
        \le q_{T,\alpha}
    ) \le 1-\alpha + o(1).$
    \end{enumerate}
\end{restatable}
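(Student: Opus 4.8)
The plan is to derive both one-sided conclusions from a single \emph{uniform-in-$T$} anti-concentration estimate for the square-root chi-square law near its $(1-\alpha)$-quantile, and then to combine it with the one-sided stochastic comparisons of $W_n$ and $(\chi^2_T)^{1/2}$ via elementary set inclusions. Recall that $(\chi^2_T)^{1/2}$ has the chi density $g_T(y)=\tfrac{2^{1-T/2}}{\Gamma(T/2)}\,y^{T-1}e^{-y^2/2}$ on $(0,\infty)$, which is everywhere positive; hence $q_{T,\alpha}$ is uniquely defined and $\P((\chi^2_T)^{1/2}\le q_{T,\alpha})=\P((\chi^2_T)^{1/2}<q_{T,\alpha})=1-\alpha$ exactly, for every $T$.

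\emph{Step 1 (uniform anti-concentration).} I would first show that for every $\delta>0$ there exists $\varepsilon>0$ with $\P\bigl(q_{T,\alpha}-\varepsilon<(\chi^2_T)^{1/2}\le q_{T,\alpha}+\varepsilon\bigr)\le\delta$ for all $T\ge1$. A short computation shows that $g_T$ attains its maximum near $y=\sqrt T$ and, by Stirling, $\sup_{y>0}g_T(y)\to\pi^{-1/2}$ as $T\to\infty$; in particular $\sup_{T\ge1}\sup_{y>0}g_T(y)\le C$ for an absolute constant $C$. Therefore
\[
\P\bigl(|(\chi^2_T)^{1/2}-q_{T,\alpha}|\le\varepsilon\bigr)=\int_{q_{T,\alpha}-\varepsilon}^{q_{T,\alpha}+\varepsilon}g_T(y)\,dy\le 2C\varepsilon ,
\]
so $\varepsilon=\delta/(2C)$ works. (Alternatively, one may use the Fisher/CLT approximation $(\chi^2_T)^{1/2}-\sqrt T\xrightarrow[]{d}\mathcal N(0,1/2)$ recalled around \eqref{eq:challenges-chi2-inference}, which gives $q_{T,\alpha}=\sqrt T+z_\alpha/\sqrt2+o(1)$; together with P\'olya's uniform-convergence theorem this bounds the window mass by the corresponding Gaussian mass $\le\varepsilon\sqrt{2/\pi}$ once $T$ is large, and the finitely many remaining values of $T$ are handled by continuity of each fixed chi law.)

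\emph{Step 2 (the two deductions).} Fix $\delta>0$ and the associated $\varepsilon>0$ from Step 1. For (i): the hypothesis $W_n-(\chi^2_T)^{1/2}\le o_\P(1)$ gives $\P(W_n>(\chi^2_T)^{1/2}+\varepsilon)\to0$, and since $\{(\chi^2_T)^{1/2}\le q_{T,\alpha}-\varepsilon\}\cap\{W_n\le(\chi^2_T)^{1/2}+\varepsilon\}\subseteq\{W_n\le q_{T,\alpha}\}$,
\[
\P(W_n\le q_{T,\alpha})\ \ge\ \P\bigl((\chi^2_T)^{1/2}\le q_{T,\alpha}-\varepsilon\bigr)-\P\bigl(W_n>(\chi^2_T)^{1/2}+\varepsilon\bigr)\ \ge\ (1-\alpha)-\delta-o(1),
\]
where we used $\P((\chi^2_T)^{1/2}\le q_{T,\alpha}-\varepsilon)\ge(1-\alpha)-\delta$ from Step 1; letting $\delta\downarrow0$ gives $\liminf_n\P(W_n\le q_{T,\alpha})\ge1-\alpha$. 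For (ii): symmetrically, $W_n-(\chi^2_T)^{1/2}\ge-o_\P(1)$ gives $\P((\chi^2_T)^{1/2}>W_n+\varepsilon)\to0$, and $\{W_n\le q_{T,\alpha}\}\cap\{(\chi^2_T)^{1/2}\le W_n+\varepsilon\}\subseteq\{(\chi^2_T)^{1/2}\le q_{T,\alpha}+\varepsilon\}$, so
\[
\P(W_n\le q_{T,\alpha})\ \le\ \P\bigl((\chi^2_T)^{1/2}\le q_{T,\alpha}+\varepsilon\bigr)+\P\bigl((\chi^2_T)^{1/2}>W_n+\varepsilon\bigr)\ \le\ (1-\alpha)+\delta+o(1),
\]
and $\delta\downarrow0$ gives $\limsup_n\P(W_n\le q_{T,\alpha})\le1-\alpha$.

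The only delicate point is Step 1: since $T=T_n$ may diverge, there is no single limiting continuous law to invoke, and the anti-concentration of $(\chi^2_T)^{1/2}$ near $q_{T,\alpha}$ must be made uniform over $T$ — achieved either by the explicit bound on the chi density above, or by splitting into bounded $T$ (continuity) and large $T$ (Gaussian approximation plus P\'olya). Given Step 1, Step 2 is a routine combination of set inclusions and union bounds.
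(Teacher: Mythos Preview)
Your proposal is correct and follows essentially the same approach as the paper: the key step in both is the uniform-in-$T$ bound $\sup_{y>0}g_T(y)\le C$ for the chi density obtained via Stirling's formula, after which the conclusion follows from elementary union/inclusion arguments and the definition of $o_\P(1)$. The only cosmetic difference is that the paper bounds $\P(W_n>q_{T,\alpha})$ from above while you bound $\P(W_n\le q_{T,\alpha})$ from below, but the ingredients and logic are identical.
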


    \Cref{prop:4.2} is proved in
    \Cref{section:proofs-chi2-quantiles}.
If $T\to+\infty$, the order of $q_{T,\alpha}$ is given
by 
\begin{equation}
    \label{eq:order-q_T-alpha}
q_{T,\alpha}
- \sqrt T \to z_{\alpha}/{\sqrt 2}
\end{equation}
where $z_\alpha$ is the standard normal quantile defined
by $\int_{-\infty}^{z_\alpha}(\sqrt{2\pi})^{-1} e^{-u^2/2}du=1-\alpha$.
A short proof of \eqref{eq:order-q_T-alpha}
is given around \eqref{eq:approximation-q_T-alpha};
see \cite{mitra2016benefit} for related discussions.
However, using $q_{T,\alpha}$ itself to construct confidence sets
should be preferred in practice to avoid the approximation error
in \eqref{eq:order-q_T-alpha}.

Combining the above two results provides confidence ellipsoids for the rows of $\bB^*$,
or more generally for the unknown vector $(\bB^*){}^\top \ba\in\R^T$
for a fixed direction $\ba\in\R^p$ of interest.
Let $\hat{\mathcal E}_\alpha$ be the subset of $\R^T$ defined by
$$
\hat{\mathcal E}_{\alpha} \coloneqq \Big\{
    \btheta\in\R^T:
    (1-\tfrac Tn)^{1/2}
    \|\hbGamma{}^{-1/2}
    \big[
    (\bY-\bX\hbB)^\top\bz_0
    + (n\bI_{T\times T}-\hbA) (\hbB^\top\ba- \btheta)
    \big]
    \|_2 
    \le q_{T,\alpha}
\Big\}.
$$
Since 
$\hat{\mathcal E}_{\alpha} = \{
    \btheta\in\R^T:
    (\btheta - \bu)^\top \bC (\btheta - \bu) \leq 1
\}$
where 
$  \bC = (q_{T,\alpha})^{-2}(1-\tfrac Tn)
        (n\bI_{T\times T}-\hbA) \hbGamma{}^{-1} (n\bI_{T\times T}-\hbA)
     \text{ and }
    \bu = \hbB{}^\top\ba + (n\bI_{T\times T}-\hbA)^{-1} (\bY-\bX\hbB)^\top\bz_0,$
this set is an ellipsoidal region with center $\bu$.
If 
\begin{equation}
    \label{eq:extra-condition}
    \min\Bigl\{\frac{T^2}{n},\frac{\log^8 p}{n}\Bigr\}\to 0
\end{equation}
additionally to \Cref{assum:main} as required
in case (i) of \Cref{thm:chi2z0},
then $\P[(\bB^*)^\top\ba \in \hat{\mathcal E}_{\alpha} ] \ge 1-\alpha - o(1)$.
If additionally 
\begin{equation}
    \label{eq:lower-bound-extra-condition}
T^2/n + \sqrt T\bigl(sT + s\log(p/s) \bigr) / n\to 0
\end{equation}
as required in case (ii) for the lower bound,
then $\P[(\bB^*)^\top\ba \in \hat{\mathcal E}_{\alpha} ] \to 1-\alpha$
and the above confidence ellipsoid provides the exact nominal coverage
(i.e., it is provably non-conservative).
Note that the upper bound (i) is more important than the lower bound (ii)
since the upper bound (i) guarantees that the type I error 
in the hypothesis test \eqref{H_1-H_0}
is at most
$\alpha$, i.e., $\P[(\bB^*)^\top\ba \in \hat{\mathcal E}_{\alpha} ] \ge 1-\alpha - o(1)$.
It is thus fortuitous that only the weak additional condition
\eqref{eq:extra-condition} is required for the upper bound (i)
to guarantee the desired type I error, while the more stringent
condition \eqref{eq:lower-bound-extra-condition} is only required
to prove non-conservativeness.

The additional assumption \eqref{eq:extra-condition}
is satisfied for a large class of growths of $(T,n,p)$.
For instance it holds under polynomial growth $p\asymp n^\gamma$ 
or exponential growth of the form
$p\lesssim \exp(n^{1/8-\gamma'})$ for constants $\gamma,\gamma'>0$,
as $\frac{\log^8 p}{n}\to 0$ is then satisfied.
Although we believe that the mild condition
\eqref{eq:extra-condition} is an artefact of the proof,
it is unclear at this point how to relax \eqref{eq:extra-condition}
unless a different ellipsoid is considered.
In \Cref{eq:drop-extra-conditions}, we will construct a different ellipsoid
that does not require the extra conditions
\eqref{eq:extra-condition} or \eqref{eq:lower-bound-extra-condition}
but that has worse performance in simulations.

The radius of $\hat{\mathcal E}_\alpha$
i.e., the half-length of its largest axis is given by
\begin{equation}
    \phi_{\min}(\bC)^{-1/2} = (1-T/n)^{-1/2} q_{T,\alpha} \|\hbGamma{}^{1/2}(n\bI_{T\times T} - \hbA)^{-1}\|_{op}.
 \label{eq:radius-ellipsoid}
\end{equation}
Since $\|\bI_{T\times T} - (\bI_{T\times T} - \hbA/n)^{-1} \|_{op} = o_\P(1)$
by \Cref{propMatrixA} and \Cref{lemma:sparsity} on the one hand,
and all eigenvalues of $\hbGamma$ are of order $\sigma^2 n(1+o_\P(1))$
by the arguments in the proof of \Cref{lemma:bxi-bz} on the other hand,
the radius \eqref{eq:radius-ellipsoid} is $q_{T,\alpha} \sigma n^{-1/2} (1+o_\P(1))$ which is of order $\sigma \sqrt{T/n}$ by \eqref{eq:order-q_T-alpha}.

The random vector \eqref{eq:def-bxi} involves multiplication
by $(n\bI_{T\times T} - \hbA)$ which differs from the pivotal quantity
in the asymptotic normality result \eqref{eq:asymptotic-normality-bb}.
However, \Cref{thm:chi2z0} still holds with $\bxi$ in \eqref{eq:def-bxi}
replaced by
\begin{equation}
    \label{eq:def-check-bxi}
    \check\bxi =
    (\bI_{T\times T} - \hbA/n)^{-1}(\bY-\bX\hbB)^\top \bz_0 + n(\hbB-\bB^*)^\top\ba
    .
\end{equation}
Indeed, with
$$
\big|\|\hbGamma^{-1/2}\check\bxi\|_2
-
\|\hbGamma^{-1/2}\bxi\|_2
\big|
\le
\|\hbGamma^{-1/2}(\check\bxi - \bxi)\|_2
=
\|\hbGamma^{-1/2}\bigl((\bI_{T\times T} - \hbA/n)^{-1} - \bI_{T\times T}\bigr) \bxi\|_2.
$$
Since the eigenvalues of $\hbGamma$ are all of order $\sigma^2n(1+o_\P(1))$
,
since $\|(\bI_{T\times T} - \hbA/n)^{-1} - \bI_{T\times T}\|_{op} \le (1+o_\P(1)) |\hat S|/n$
by \Cref{propMatrixA}
and since $\|\bxi\|_2 = O_\P(\sqrt{\sigma^2 n T})$ by \Cref{thm:chi2initial},
the previous display is $O_\P(\sqrt T s/n)$ and converges to 0 in probability
by \Cref{assum:main}. Under \Cref{assum:main}, \Cref{thm:chi2z0}
thus holds for $\bxi$ in \eqref{eq:def-bxi} if and only if it holds
for $\check\bxi$. Furthermore the corresponding ellipsoid,
$$
\check{\mathcal E}_{\alpha} = \Big\{
    \btheta\in\R^T:
    (1-\tfrac Tn)^{1/2}
    \|\hbGamma{}^{-1/2}
    \big[
    (\bI_{T\times T}-\hbA/n)^{-1}(\bY-\bX\hbB)^\top\bz_0
    + n (\hbB^\top\ba- \btheta)
    \big]
    \|_2 
    \le q_{T,\alpha}
\Big\}
$$
enjoys the same properties as $\hat{\mathcal E}_\alpha$:
Type I error guarantees
$\P[(\bB^*)^\top\ba \in \check{\mathcal E}_{\alpha} ] \ge 1-\alpha - o(1)$
under \eqref{eq:extra-condition},
and non-conservativeness
$\P[(\bB^*)^\top\ba \in \check{\mathcal E}_{\alpha} ]\to 1 - \alpha$
under \eqref{eq:lower-bound-extra-condition}.

\subsection{Unknown \texorpdfstring{$\bSigma$}{Sigma}}
\label{sec:chi2-unknown-Sigma}
A similar result is available if $\bSigma$ is unknown.
Consider the notation \eqref{eq:def-hbz-j} from
\Cref{sec:unknown-Sigma}.

\begin{theorem}
    \label{thm:unknown-Sigma-chi2}
    Consider a canonical basis direction
    $\be_j\in\R^p$ for some $j\in[p]$ and let \Cref{assum:main} be fulfilled.
    Additionally assume that either
    \eqref{eq:assum:T-log-p-sqrt-n-unknown-Sigma}
    or \eqref{eq:assum-unknown-Sigma-second-version} holds.
    Then for any estimator $\hbgamma^{(j)}$
    satisfying \eqref{eq:KKT-hbgamma-j}-\eqref{eq:ell_1_rate_hbgamma-j},
    \begin{multline}
        \label{eq:conclusion-chi2}
        \frac{\sqrt{n-T}}{\|\hbz_j\|_2}
    \Bigl\|
    \hbGamma^{-1/2}
        \Big(
            (\bY-\bX\hbB)^\top \hbz_j  
            + \frac{
                (n\bI_{T\times T}-\hbA)(\hbB-\bB^*)^\top\be_j
            }{
                n (\hbz_j^\top \bX\be_j)^{-1} 
            }
        \Big)
    \Bigr\|_2
    \\\le
    \sqrt{\chi^2_T} 
    + o_\P(1)
    + O_\P(\min\{\tfrac{T}{\sqrt n}, ~ \tfrac{\log^2 p}{n^{1/4}} \})
    \end{multline}
    where $\chi^2_T$ is a random variable with chi-square distribution
    with $T$ degrees-of-freedom.
\end{theorem}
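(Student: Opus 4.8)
The plan is to deduce \eqref{eq:conclusion-chi2} from the known-$\bSigma$ statement \Cref{thm:chi2z0}, applied to the purely analytic (unobserved, $\bSigma$-dependent) surrogate direction $\ba=\tau_j\be_j$. Since $\tau_j^2=(\bSigma^{-1})_{jj}^{-1}\in[C_{\min},C_{\max}]$ under \Cref{assum:main}, this $\ba$ satisfies $\|\bSigma^{-1/2}\ba\|_2=1$, and $\bz_0=\bX\bSigma^{-1}\ba=\tau_j^{-1}\bep^{(j)}$, where $\bep^{(j)}\sim\mathcal N_n(\mathbf 0,\tau_j^2\bI_{n\times n})$ is the population residual of $\bX\be_j$ on $\bX_{-j}$ from \eqref{eq:linear-model-nodewise}. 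With this choice the pivot of \Cref{thm:chi2z0} reads $\bxi=\tau_j^{-1}(\bY-\bX\hbB)^\top\bep^{(j)}+\tau_j(n\bI_{T\times T}-\hbA)(\hbB-\bB^*)^\top\be_j$, and \Cref{thm:chi2z0} gives $\sqrt{1-T/n}\,\|\hbGamma^{-1/2}\bxi\|_2\le\sqrt{\chi^2_T}+o_\P(1)+O_\P(\min\{T/\sqrt n,\ s^2\log^2(p/s)/(n\sqrt T)\})$. It therefore remains to show that the observable left-hand side of \eqref{eq:conclusion-chi2} equals $\sqrt{1-T/n}\,\|\hbGamma^{-1/2}\bxi\|_2$ up to an additive error $o_\P(1)+O_\P(\min\{T/\sqrt n,\ \log^2 p/n^{1/4}\})$; the claim then follows by the triangle inequality.

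Write $\hat\tau_j=\|\hbz_j\|_2/\sqrt n$ and $\hat c_j=\hbz_j^\top\bX\be_j/n$, so that $\sqrt{n-T}/\|\hbz_j\|_2=\hat\tau_j^{-1}\sqrt{1-T/n}$ and the bracket in \eqref{eq:conclusion-chi2} equals $\bxi_j:=(\bY-\bX\hbB)^\top\hbz_j+\hat c_j(n\bI_{T\times T}-\hbA)(\hbB-\bB^*)^\top\be_j$; hence the left-hand side of \eqref{eq:conclusion-chi2} is $\sqrt{1-T/n}\,\hat\tau_j^{-1}\|\hbGamma^{-1/2}\bxi_j\|_2$. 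Inserting $\hbz_j=\bep^{(j)}-\bX_{-j}\bDelta$ with $\bDelta:=\hbgamma^{(j)}-\bgamma^{(j)}$, the comparison of $\hat\tau_j^{-1}\|\hbGamma^{-1/2}\bxi_j\|_2$ with $\|\hbGamma^{-1/2}\bxi\|_2$ amounts to three substitutions, each already controlled in the proof of \Cref{thm:unknown-Sigma-normal} under the same hypotheses \eqref{eq:assum:T-log-p-sqrt-n-unknown-Sigma}/\eqref{eq:assum-unknown-Sigma-second-version}: (a) replacing $\hat\tau_j$ by $\tau_j$ in the outer normalization, using $\hat\tau_j/\tau_j\xrightarrow{\P}1$; (b) replacing $\hat c_j$ by $\tau_j^2$ in the bias term, using $\hat c_j=\hat\tau_j^2+\hbz_j^\top\bX_{-j}\hbgamma^{(j)}/n$ together with $\|\bX_{-j}^\top\hbz_j\|_\infty\,\|\hbgamma^{(j)}\|_1/n\xrightarrow{\P}0$ from \eqref{eq:KKT-hbgamma-j}--\eqref{eq:ell_1_rate_hbgamma-j}; and (c) replacing $\hbz_j$ by $\bep^{(j)}$ in the noise term $(\bY-\bX\hbB)^\top\hbz_j$. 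For (a)--(b) one uses in addition, as established in the proof of \Cref{thm:chi2z0}, that the eigenvalues of $\hbGamma$ are all of order $\sigma^2 n$, and \Cref{propMatrixA} to control $\|\bI_{T\times T}-(\bI_{T\times T}-\hbA/n)^{-1}\|_{op}$; multiplied by the sizes of the bias and noise pieces, these scalar errors are absorbed into the stated lower-order terms.

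The crux is (c), i.e.\ bounding $\|\hbGamma^{-1/2}(\bY-\bX\hbB)^\top(\hbz_j-\bep^{(j)})\|_2=\|\hbGamma^{-1/2}(\bY-\bX\hbB)^\top\bX_{-j}\bDelta\|_2$, and it is here that the two alternative hypotheses \eqref{eq:assum:T-log-p-sqrt-n-unknown-Sigma} and \eqref{eq:assum-unknown-Sigma-second-version} arise from two estimates of $(\bY-\bX\hbB)^\top\bX_{-j}\bDelta\in\R^T$. The crude estimate uses the KKT conditions of the multi-task Lasso \eqref{eq:hbB-Lasso}, which give $\|\be_k^\top\bX^\top(\bY-\bX\hbB)\|_2\le nT\lambda$ for every $k$, hence $\|(\bY-\bX\hbB)^\top\bX_{-j}\bDelta\|_2\le nT\lambda\,\|\bDelta\|_1$; since $\|\hbGamma^{-1/2}\|_{op}=O_\P(\sigma^{-1}n^{-1/2})$, $T\lambda\asymp\sigma\sqrt{(T+\log(p/s))/n}$ and $\|\bDelta\|_1=O_\P(\|\bSigma^{-1}\be_j\|_0\sqrt{\log p/n})$ by \eqref{eq:ell_1_rate_hbgamma-j}, this term is $O_\P(n^{-1/2}\|\bSigma^{-1}\be_j\|_0\sqrt{[T+\log(p/s)]\log p})=o_\P(1)$ under \eqref{eq:assum:T-log-p-sqrt-n-unknown-Sigma}. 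The refined estimate splits $\bY-\bX\hbB=\bE-\bX(\hbB-\bB^*)$: conditionally on $\bX$ (legitimate since $\bE\perp\bX$ and $\bDelta$ is a function of $\bX$ only), $\bE^\top(\bX_{-j}\bDelta)$ is a vector of $T$ independent $\mathcal N(0,\sigma^2\|\bX_{-j}\bDelta\|_2^2)$ entries, hence of norm $O_\P(\sigma\sqrt T\,\|\bX_{-j}\bDelta\|_2)$, while $(\hbB-\bB^*)^\top\bX^\top(\bX_{-j}\bDelta)$ is handled through the cone/restricted-eigenvalue structure of both $\hbB-\bB^*$ and $\bDelta$ together with the corresponding prediction-error rates (the Frobenius rate of $\hbB$ and the prediction rate $\|\bX_{-j}\bDelta\|_2/\sqrt n$ of the nodewise Lasso); combined with $\|\bSigma^{-1}\be_j\|_0\log p/n\to0$ this is $o_\P(1)$ under \eqref{eq:assum-unknown-Sigma-second-version}. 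Retaining the $T/\sqrt n$ branch inherited from \Cref{thm:chi2z0} and the slack in these product bounds produces the residual $O_\P(\min\{T/\sqrt n,\ \log^2 p/n^{1/4}\})$ term. I expect the main obstacle to be exactly this refined estimate: because $\bX_{-j}$ is not independent of $\bY$, the conditioning above and the reuse of the restricted-eigenvalue / escape-through-a-mesh machinery for $\bX$ must be organized carefully, precisely as in the proof of \Cref{thm:unknown-Sigma-normal}; once this is in place, assembling (a)--(c) with the displayed bound from \Cref{thm:chi2z0} via the triangle inequality yields \eqref{eq:conclusion-chi2}.
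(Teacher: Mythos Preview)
Your reduction to \Cref{thm:chi2z0} (equivalently to \Cref{lemma:bxi-bz} with $\bz=\tbz_j/\|\tbz_j\|_2$) is exactly the paper's route under assumption \eqref{eq:assum:T-log-p-sqrt-n-unknown-Sigma}, and your ``crude'' KKT bound $\|(\bY-\bX\hbB)^\top\bX_{-j}\bDelta\|_2\le nT\lambda\|\bDelta\|_1$ is the same one the paper uses there; that part is fine.

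The gap is in the treatment of case \eqref{eq:assum-unknown-Sigma-second-version}. Routing through $\bz_0=\bX\bSigma^{-1}\tba_j$ forces you to control $(\sigma^2n)^{-1/2}\|(\bY-\bX\hbB)^\top(\hbz_j-\bep^{(j)})\|_2$. Your conditioning argument for the $\bE$-part gives $O_\P\bigl(\sqrt{T}\,\|\bX_{-j}\bDelta\|_2/\sqrt n\bigr)=O_\P\bigl(\sqrt{T\|\bSigma^{-1}\be_j\|_0\log p/n}\bigr)$, and this is \emph{not} $o_\P(1)$ under \eqref{eq:assum-unknown-Sigma-second-version}: take $p=n^2$, $s=2$, $T=n^{0.4}$, $\|\bSigma^{-1}\be_j\|_0=n^{0.7}$; then \Cref{assum:main}(iv) and both parts of \eqref{eq:assum-unknown-Sigma-second-version} hold, yet $T\|\bSigma^{-1}\be_j\|_0\log p/n\asymp n^{0.1}\log n\to\infty$. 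The same counterexample also breaks your step (a), since the multiplicative error $|\hat\tau_j/\tau_j-1|=O_\P(\sqrt{\|\bSigma^{-1}\be_j\|_0\log p/n})$ multiplies a quantity of size $O_\P(\sqrt T)$.

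The paper avoids this entirely: under \eqref{eq:assum-unknown-Sigma-second-version} it does \emph{not} compare $\hbz_j$ to the oracle $\bep^{(j)}$ at all. Instead it applies \Cref{lemma:bxi-bz} with the data-driven direction $\bz=\hbz_j/\|\hbz_j\|_2$ (valid because $\bz$ only needs to be a unit vector measurable in $\bX$). With this choice, the left-hand side of \eqref{eq:conclusion-chi2} is \emph{exactly} $(1-T/n)^{1/2}\|\hbGamma^{-1/2}\bxi\|_2$ for $\bxi=\hat\tau_j^{-1}\bxi_j$, and the residual $\bxi-\sqrt n\,\bE^\top\bz$ contains no $\bE^\top(\hbz_j-\bep^{(j)})$ term; what remains is
\[
\|\hbA\|_{op}\|\bSigma^{1/2}(\hbB-\bB^*)\|_F
+\tau_j\|\hbB-\bB^*\|_{2,1}\|\bX_{-j}^\top\hbz_j\|_\infty\bigl|n(\hbz_j^\top\bX\be_j)^{-1}\bigr|,
\]
which is $O_\P(\sqrt s\,\bar R\sqrt{\log p})$ and hence $o_\P(\sigma\sqrt n)$ precisely by the second condition in \eqref{eq:assum-unknown-Sigma-second-version}. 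So the fix is to switch the reference direction in \Cref{lemma:bxi-bz} from $\bz_0$ to $\hbz_j/\|\hbz_j\|_2$ in the \eqref{eq:assum-unknown-Sigma-second-version} case, rather than trying to bridge $\hbz_j$ and $\bep^{(j)}$ in front of $\bE$.
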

\Cref{thm:unknown-Sigma-chi2}
is proved in \Cref{subsec:proof-unknown-Sigma-chi2}.
The corresponding confidence ellipsoid for the $j$-th row
$(\bB^*)^\top\be_j$ of $\bB^*$ is
$$
\hat{\mathcal E}_\alpha^j =
\Bigl\{\btheta_j\in\R^T:
        \frac{{\sqrt{n-T}}}{\|\hbz_j\|_2}
    \Bigl\|
    \hbGamma^{-1/2}
        \Big[
            (\bY-\bX\hbB)^\top \hbz_j  
            + \frac{
                (n\bI_{T\times T}-\hbA)(\hbB^\top \be_j-\btheta_j)
            }{
                n (\hbz_j^\top \bX\be_j)^{-1} 
            }
        \Big]
    \Bigr\|_2
    \le q_{T,\alpha}
\Bigr\}.
$$
If either one of the condition
\eqref{eq:assum:T-log-p-sqrt-n-unknown-Sigma}
or \eqref{eq:assum-unknown-Sigma-second-version} holds
on the growth of the sparsity of $\bSigma^{-1}\be_j$,
this confidence ellipsoid does not require the knowledge of $\bSigma$
and has the same guarantees as those of the previous section. 

\subsection{Relaxing the additional assumptions
\eqref{eq:extra-condition} and \eqref{eq:lower-bound-extra-condition}}
\label{eq:drop-extra-conditions}

Instead of normalizing using $\hbGamma{}^{-1/2}$ as in the previous
sections, a simple estimate of $\sigma^2$ lets us relax
the conditions
\eqref{eq:extra-condition} and \eqref{eq:lower-bound-extra-condition}
that are required in the previous section to ensure
$\|\hbGamma{}^{-1/2}\bxi\|_2 = (\chi^2_T)^{1/2} + o_\P(1)$.

\begin{theorem}
    Let $\bxi,\check\bxi$ be defined in \eqref{eq:def-bxi} and \eqref{eq:def-check-bxi} respectively, 
    and let $\hat\sigma^2 = \|\bY-\bX\hbB\|_F^2/(nT)$.
    Then under \Cref{assum:main},
    there exists a random variable $\chi^2_T$ with chi-square
    distribution with $T$ degrees of freedom such that
    \begin{equation}
        \label{eq:asymptotic-chi2-hat-sigma-bxi-check-bxi}
        (\hat\sigma^2 n )^{-1/2}\| \bxi \|_2 = (\chi^2_T)^{1/2} + o_\P(1),
        \qquad
        (\hat\sigma^2 n )^{-1/2}\| \check\bxi \|_2 = (\chi^2_T)^{1/2} + o_\P(1)
        .
    \end{equation}
\end{theorem}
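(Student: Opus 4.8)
The plan is to reduce \eqref{eq:asymptotic-chi2-hat-sigma-bxi-check-bxi} to the ``$\sigma$-normalized'' statement $(\sigma^2 n)^{-1/2}\|\bxi\|_2=(\chi^2_T)^{1/2}+o_\P(1)$, and then to account separately for the replacement of $\sigma^2$ by $\hat\sigma^2$ and for the passage from $\bxi$ to $\check\bxi$. Throughout, $\bz_0=\bX\bSigma^{-1}\ba$ is as in \Cref{thm:chi2z0} (with $\|\bSigma^{-1/2}\ba\|_2=1$), and the single random variable witnessing both identities is $\chi^2_T\coloneqq\|\bE^\top\bz_0\|_2^2/(\sigma^2\|\bz_0\|_2^2)$: since $\bE$ has i.i.d.\ $\mathcal N(0,\sigma^2)$ entries independent of $\bX$, conditionally on $\bX$ the vector $\bE^\top\bz_0$ is $\mathcal N_T(\mathbf 0,\sigma^2\|\bz_0\|_2^2\bI_{T\times T})$, so $\chi^2_T$ is genuinely chi-square with $T$ degrees of freedom and independent of $\bX$. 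The reason one expects the conditions \eqref{eq:extra-condition}--\eqref{eq:lower-bound-extra-condition} to disappear is that normalizing by the scalar $\hat\sigma^2$ never requires controlling $\hbGamma/(\sigma^2 n)$ in operator norm, whereas the $\hbGamma^{-1/2}$ normalization of \Cref{thm:chi2z0} does, and that operator-norm deviation -- amplified by the $O_\P(\sigma^2 nT)$ size of $\|\bxi\|_2^2$ -- is the origin of the error terms (such as $O_\P(T/\sqrt n)$ and $O_\P(\sqrt T\, s(T+\log(p/s))/n)$) there that do not already vanish under \Cref{assum:main}.

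For the $\sigma$-normalized statement I would invoke the ``pre-normalization'' decomposition of $\bxi$ obtained in the course of proving \Cref{thm:chi2z0} (cf.\ \Cref{thm:chi2initial}): under \Cref{assum:main}, $\bxi-\bE^\top\bz_0$ has Euclidean norm $o_\P(\sigma\sqrt n)$, this remainder being responsible only for the $o_\P(1)$ contribution to the pivot of \Cref{thm:chi2z0}. Expanding $\|\bxi\|_2^2$, bounding the cross term by $2\|\bE^\top\bz_0\|_2\,\|\bxi-\bE^\top\bz_0\|_2$, and using $\|\bE^\top\bz_0\|_2=\sigma\|\bz_0\|_2(\chi^2_T)^{1/2}=O_\P(\sigma\sqrt{nT})$ gives $\|\bxi\|_2^2/(\sigma^2 n)=\chi^2_T\,\|\bz_0\|_2^2/n+o_\P(\sqrt T)$. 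Since $\|\bz_0\|_2^2\sim\chi^2_n$ (its entries being i.i.d.\ $\mathcal N(0,\|\bSigma^{-1/2}\ba\|_2^2)=\mathcal N(0,1)$), $\|\bz_0\|_2^2/n=1+O_\P(n^{-1/2})$, and $\chi^2_T=O_\P(T)$ is independent of $\bz_0$, so $\chi^2_T(\|\bz_0\|_2^2/n-1)=O_\P(T/\sqrt n)=o_\P(\sqrt T)$ because $T/n\to0$ by \Cref{assum:main}. Hence $\|\bxi\|_2^2/(\sigma^2 n)=\chi^2_T+o_\P(\sqrt T)$, and since $(\chi^2_T)^{1/2}\ge c\sqrt T$ with probability tending to one by standard $\chi^2$ concentration, dividing $\|\bxi\|_2^2/(\sigma^2 n)-\chi^2_T$ by $(\sigma^2 n)^{-1/2}\|\bxi\|_2+(\chi^2_T)^{1/2}$ yields $(\sigma^2 n)^{-1/2}\|\bxi\|_2=(\chi^2_T)^{1/2}+o_\P(1)$.

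Next I would prove the refined consistency $\sqrt T\,|\hat\sigma^2/\sigma^2-1|=o_\P(1)$. Writing $\bY-\bX\hbB=\bE-\bX(\hbB-\bB^*)$ and expanding, $\hat\sigma^2=\|\bE\|_F^2/(nT)-2\,\trace[(\hbB-\bB^*)^\top\bX^\top\bE]/(nT)+\|\bX(\hbB-\bB^*)\|_F^2/(nT)$. Here $\|\bE\|_F^2\sim\sigma^2\chi^2_{nT}$ gives $\|\bE\|_F^2/(\sigma^2 nT)=1+O_\P((nT)^{-1/2})$; the multi-task Lasso prediction bound gives $\|\bX(\hbB-\bB^*)\|_F^2=O_\P(\sigma^2 s(T+\log(p/s)))$; and Cauchy--Schwarz bounds the cross term by $2\|\bE\|_F\|\bX(\hbB-\bB^*)\|_F/(nT)$. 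Multiplying each error by $\sqrt T$ and using $\tfrac sn(T+\log(p/s))\to0$ and $T\le n$ shows all three converge to $0$ in probability, so in particular $\hat\sigma^2/\sigma^2\to_\P1$. Combining with the previous paragraph, $(\hat\sigma^2 n)^{-1/2}\|\bxi\|_2=(\sigma/\hat\sigma)\bigl[(\chi^2_T)^{1/2}+o_\P(1)\bigr]=(\chi^2_T)^{1/2}+(\sigma/\hat\sigma-1)(\chi^2_T)^{1/2}+o_\P(1)$, and since $(\chi^2_T)^{1/2}=O_\P(\sqrt T)$ while $\sqrt T\,|\sigma/\hat\sigma-1|=o_\P(1)$ (equivalently $\sqrt T\,|\hat\sigma^2/\sigma^2-1|=o_\P(1)$), the middle term is $o_\P(1)$; this is the first identity in \eqref{eq:asymptotic-chi2-hat-sigma-bxi-check-bxi}. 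For the second, observe that $\check\bxi=(\bI_{T\times T}-\hbA/n)^{-1}\bxi$ (multiply $\bxi$ by $(\bI_{T\times T}-\hbA/n)^{-1}$ and use $(\bI_{T\times T}-\hbA/n)^{-1}(n\bI_{T\times T}-\hbA)=n\bI_{T\times T}$); then by \Cref{propMatrixA}(iii), \Cref{lemma:sparsity} and $\|\bxi\|_2=O_\P(\sigma\sqrt{nT})$ (from \Cref{thm:chi2initial}), $\|\check\bxi-\bxi\|_2\le\|(\bI_{T\times T}-\hbA/n)^{-1}-\bI_{T\times T}\|_{op}\,\|\bxi\|_2=O_\P(|\hat S|/n)\,O_\P(\sigma\sqrt{nT})=O_\P(\sigma s\sqrt T/\sqrt n)$, so $(\hat\sigma^2 n)^{-1/2}\|\check\bxi\|_2=(\hat\sigma^2 n)^{-1/2}\|\bxi\|_2+O_\P(s\sqrt T/n)=(\chi^2_T)^{1/2}+o_\P(1)$ since $sT/n\to0$.

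The step I expect to demand the most care is the first one: extracting from the proof of \Cref{thm:chi2z0} the clean decomposition $\|\bxi-\bE^\top\bz_0\|_2=o_\P(\sigma\sqrt n)$ valid under \Cref{assum:main} alone, and confirming that every error term of \Cref{thm:chi2z0} that does not already vanish under \Cref{assum:main} stems from the comparison of $\bxi^\top\hbGamma^{-1}\bxi$ with $\|\bxi\|_2^2/(\sigma^2 n)$ -- rather than from that remainder -- so that replacing $\hbGamma^{-1/2}$ by the scalar $(\hat\sigma^2 n)^{-1/2}$ genuinely removes the conditions \eqref{eq:extra-condition}--\eqref{eq:lower-bound-extra-condition}. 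All the remaining estimates are the routine bookkeeping above, driven only by $T/n\to0$, the $\ell_{2,1}$ prediction rate, and the refined consistency of $\hat\sigma^2$.
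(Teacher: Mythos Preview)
Your proposal is correct and follows essentially the paper's route: the core ingredients are exactly those of \Cref{thm:chi2initial}, namely the decomposition $\|\bxi-\bE^\top\bz_0\|_2=o_\P(\sigma\sqrt n)$ (which in turn rests on \Cref{lemma:RemainderII}), the refined consistency $|\hat\sigma/\sigma-1|=o_\P(T^{-1/2})$, and the operator-norm bound $\|\check\bxi-\bxi\|_2=O_\P(s/n)\|\bxi\|_2$ from \Cref{propMatrixA}(iii) and \Cref{lemma:sparsity}.

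There is one small technical wrinkle in your presentation worth tightening. You pass from $\|\bxi\|_2^2/(\sigma^2 n)=\chi^2_T+o_\P(\sqrt T)$ to $(\sigma^2 n)^{-1/2}\|\bxi\|_2=(\chi^2_T)^{1/2}+o_\P(1)$ by dividing by $(\sigma^2 n)^{-1/2}\|\bxi\|_2+(\chi^2_T)^{1/2}$ and invoking $(\chi^2_T)^{1/2}\ge c\sqrt T$ with probability tending to one. That lower bound is only valid when $T\to\infty$; for bounded $T$ the event $\{(\chi^2_T)^{1/2}<c\}$ has fixed positive probability. The paper sidesteps this by absorbing the $\|\bz_0\|_2$ normalization into the approximating vector first: \Cref{thm:chi2initial} proves directly that $(\sigma^2 n)^{-1/2}\bigl\|\bxi-\sqrt n\,\bE^\top\bz_0\|\bz_0\|_2^{-1}\bigr\|_2=o_\P(1)$, and since $(\sigma^2 n)^{-1/2}\bigl\|\sqrt n\,\bE^\top\bz_0\|\bz_0\|_2^{-1}\bigr\|_2=(\chi^2_T)^{1/2}$ \emph{exactly}, the reverse triangle inequality gives the $\sigma$-normalized statement in one line, uniformly in $T$. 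Replacing your squared-norm expansion by this direct triangle-inequality argument removes the case distinction and makes the proof slightly shorter; everything else in your plan goes through unchanged.
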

\begin{theorem}
    \label{thm:unknown-Sigma-chi2-hatsigma}
    Consider a canonical basis direction
    $\be_j\in\R^p$ for some $j\in[p]$ and let \Cref{assum:main} be fulfilled.
    Additionally assume that either
    \eqref{eq:assum:T-log-p-sqrt-n-unknown-Sigma}
    or \eqref{eq:assum-unknown-Sigma-second-version} holds.
    Then for any estimator $\hbgamma^{(j)}$
    satisfying \eqref{eq:KKT-hbgamma-j}-\eqref{eq:ell_1_rate_hbgamma-j},
    \begin{equation}
        \frac{1}{\|\hbz_j\|_2\hat\sigma}
    \Bigl\|
            (\bY-\bX\hbB)^\top \hbz_j  
            + \frac{
                (n\bI_{T\times T}-\hbA)(\hbB-\bB^*)^\top\be_j
            }{
                n (\hbz_j^\top \bX\be_j)^{-1} 
            }
    \Bigr\|_2
    =
    \sqrt{\chi^2_T} 
    + o_\P(1)
        \label{eq:asymptotic-chi2-hat-sigma-unknown-Sigma}
    \end{equation}
    where $\chi^2_T$ is a random variable with chi-square distribution
    with $T$ degrees-of-freedom.
\end{theorem}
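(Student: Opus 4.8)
The plan is to mirror the proof of \Cref{thm:unknown-Sigma-chi2}, but to replace the known-$\bSigma$ chi-square approximation of \Cref{thm:chi2z0} (valid only up to $O_\P(\min\{T/\sqrt n,\dots\})$) by the sharper $\hat\sigma$-normalized statement \eqref{eq:asymptotic-chi2-hat-sigma-bxi-check-bxi}, which holds up to $o_\P(1)$ under \Cref{assum:main} without any extra condition on $(T,n,p)$. Set $\tau_j=(\bSigma^{-1})_{jj}^{-1/2}$, take $\ba=\tau_j\be_j$ so that $\|\bSigma^{-1/2}\ba\|_2=1$, put $\bz_0=\bX\bSigma^{-1}\ba=\tau_j\bX\bSigma^{-1}\be_j$, and let $\bxi$ be the known-$\bSigma$ vector \eqref{eq:def-bxi} attached to this $(\ba,\bz_0)$. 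Writing $\bv=(\bY-\bX\hbB)^\top\hbz_j+\tfrac{\hbz_j^\top\bX\be_j}{n}(n\bI_{T\times T}-\hbA)(\hbB-\bB^*)^\top\be_j$ for the $\R^T$-vector inside the norm in \eqref{eq:asymptotic-chi2-hat-sigma-unknown-Sigma}, the first step is to show $\bv=\tau_j\bxi+\br$ with $\|\br\|_2=o_\P(\sigma\sqrt n)$. This is exactly the comparison between the unknown-$\bSigma$ and known-$\bSigma$ pivots already carried out in the proof of \Cref{thm:unknown-Sigma-chi2} (and, for inner products against a fixed $\bb$, in \Cref{thm:unknown-Sigma-normal}): the error splits into $(\bY-\bX\hbB)^\top(\hbz_j-\tau_j\bz_0)=(\bY-\bX\hbB)^\top\bX_{-j}(\bgamma^{(j)}-\hbgamma^{(j)})$, handled by the multi-task KKT bound on $\bX^\top(\bY-\bX\hbB)$, by \eqref{eq:ell_1_rate_hbgamma-j}, and by \eqref{eq:assum:T-log-p-sqrt-n-unknown-Sigma} or \eqref{eq:assum-unknown-Sigma-second-version}; and $(\tfrac{\hbz_j^\top\bX\be_j}{n}-\tau_j^2)(n\bI_{T\times T}-\hbA)(\hbB-\bB^*)^\top\be_j$, handled by $\|\bI_{T\times T}-\hbA/n\|_{op}\le1$ (\Cref{propMatrixA}, on the event where $\bX_{\hat S}$ has rank $|\hat S|<n$), the Frobenius rate of $\hbB-\bB^*$, and \Cref{assum:main}.

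The step requiring the most care is upgrading the control of the two data-driven scale factors from $1+o_\P(1)$ to $1+o_\P(T^{-1/2})$. Using $\hbz_j=\bep^{(j)}+\bX_{-j}(\bgamma^{(j)}-\hbgamma^{(j)})$ with $\bep^{(j)}\sim\mathcal N_n(\mathbf 0,\tau_j^2\bI_{n\times n})$ independent of $\bX_{-j}$, I would expand $\|\hbz_j\|_2^2=n\hat\tau_j^2$ and $\hbz_j^\top\bX\be_j$ into a leading $\chi^2_n$-type term of relative size $1+O_\P(n^{-1/2})$, a cross term that is centred Gaussian given $\bX_{-j}$ (relative size $O_\P(n^{-1/2})$), and nodewise-Lasso remainders of relative size $O_\P(\|\bgamma^{(j)}\|_0\log p/n)$ and, for $\hbz_j^\top\bX\be_j$ only, $O_\P(\sqrt{\|\bgamma^{(j)}\|_0\log p/n})$, using the prediction-error bound $\|\bX_{-j}(\bgamma^{(j)}-\hbgamma^{(j)})\|_2^2/n=O_\P(\tau_j^2\|\bgamma^{(j)}\|_0\log p/n)$. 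Multiplying by $\sqrt T$ and using $\|\bgamma^{(j)}\|_0\le\|\bSigma^{-1}\be_j\|_0$, $T/n\to0$, and \eqref{eq:assum:T-log-p-sqrt-n-unknown-Sigma} (or the analogue under \eqref{eq:assum-unknown-Sigma-second-version}) then yields $\sqrt T(\hat\tau_j/\tau_j-1)\xrightarrow[]{\P}0$ and $\sqrt T(\hbz_j^\top\bX\be_j/(n\tau_j^2)-1)\xrightarrow[]{\P}0$. The $\sqrt T$-rate is indispensable because these factors multiply $\|\bxi\|_2/(\sqrt n\hat\sigma)=\sqrt{\chi^2_T}+o_\P(1)=O_\P(\sqrt T)$ --- the same phenomenon flagged around \eqref{eq:challenges-chi2-inference} --- whereas $1+o_\P(1)$ would have been enough for the normality result \eqref{eq:asymptotic-normality-conclusion-unknown-Sigma}.

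It then remains to assemble the pieces. From $\|\hbz_j\|_2=\hat\tau_j\sqrt n=\tau_j\sqrt n(1+o_\P(T^{-1/2}))$, $\hat\sigma\xrightarrow[]{\P}\sigma$, $\|\br\|_2=o_\P(\sigma\sqrt n)$ and $\tau_j\asymp1$,
\[
\frac{\|\bv\|_2}{\|\hbz_j\|_2\hat\sigma}
=\frac{\tau_j\|\bxi\|_2+o_\P(\sigma\sqrt n)}{\tau_j\sqrt n(1+o_\P(T^{-1/2}))\hat\sigma}
=\bigl(1+o_\P(T^{-1/2})\bigr)\frac{\|\bxi\|_2}{\sqrt n\hat\sigma}+o_\P(1),
\]
and \eqref{eq:asymptotic-chi2-hat-sigma-bxi-check-bxi} gives $\|\bxi\|_2/(\sqrt n\hat\sigma)=(\hat\sigma^2n)^{-1/2}\|\bxi\|_2=\sqrt{\chi^2_T}+o_\P(1)$; since $o_\P(T^{-1/2})\cdot O_\P(\sqrt T)=o_\P(1)$, the right-hand side equals $\sqrt{\chi^2_T}+o_\P(1)$, which is the claim. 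I expect the main obstacle to be the middle step: one must verify that the empirical scale factors $\|\hbz_j\|_2/(\tau_j\sqrt n)$ and $\hbz_j^\top\bX\be_j/(n\tau_j^2)$ converge to $1$ at rate $o_\P(T^{-1/2})$, not merely $o_\P(1)$, which is precisely what lets the $\hat\sigma$-normalized ellipsoid reach the $\chi^2_T$ limit without the conditions \eqref{eq:extra-condition}--\eqref{eq:lower-bound-extra-condition}; everything else is a normalization-agnostic rerun of the proof of \Cref{thm:unknown-Sigma-chi2}, with the preceding theorem on $\bxi,\check\bxi$ slotted in for \Cref{thm:chi2z0}.
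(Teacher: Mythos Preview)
Your plan works under \eqref{eq:assum:T-log-p-sqrt-n-unknown-Sigma} but breaks under \eqref{eq:assum-unknown-Sigma-second-version}. The route through the known-$\bSigma$ pivot $\bxi$ and the $\sqrt T$-rate on the nodewise scale factors both require bounds of the type $\|\bSigma^{-1}\be_j\|_0\sqrt{T\log p/n}\to 0$, which is precisely what \eqref{eq:assum:T-log-p-sqrt-n-unknown-Sigma} delivers and \eqref{eq:assum-unknown-Sigma-second-version} does not. Concretely: your first error term $\|(\bY-\bX\hbB)^\top\bX_{-j}(\bgamma^{(j)}-\hbgamma^{(j)})\|_2\le nT\lambda\|\hbgamma^{(j)}-\bgamma^{(j)}\|_1$ is of order $\sigma\sqrt n\cdot\|\bgamma^{(j)}\|_0\sqrt{(T+\log(p/s))\log p/n}$, and your claim $\sqrt T\,|\hbz_j^\top\bX\be_j/(n\tau_j^2)-1|\to^{\P}0$ needs $\sqrt{T\|\bgamma^{(j)}\|_0\log p/n}\to 0$. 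A scenario like $T=n^{1/2}$, $p=n^2$, $s=1$, $\|\bSigma^{-1}\be_j\|_0=n^{0.8}/\log p$ satisfies Assumption~1.1 and \eqref{eq:assum-unknown-Sigma-second-version} yet makes both quantities diverge. The paper's proof of \Cref{thm:unknown-Sigma-chi2} treats the two assumptions by genuinely different arguments: under \eqref{eq:assum-unknown-Sigma-second-version} it never passes through $\bxi_j$ but instead controls $\|\hbxi_j-\tbxi_j\|_2$ with $\tbxi_j=\bE^\top\hbz_j[n(\hbz_j^\top\bX\be_j)^{-1}]\tau_j$, applying H\"older the other way around ($\|\hbB-\bB^*\|_{2,1}\cdot\|\bX_{-j}^\top\hbz_j\|_\infty$) so that the sparsity $s$ of $\bB^*$, not of $\bSigma^{-1}\be_j$, governs the bound. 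Your unified decomposition erases this distinction. (A smaller slip: the cross term $(\bep^{(j)})^\top\bX_{-j}(\bgamma^{(j)}-\hbgamma^{(j)})$ is not Gaussian given $\bX_{-j}$ since $\hbgamma^{(j)}$ depends on $\bep^{(j)}$; it is still $O_\P(\|\bgamma^{(j)}\|_0\log p/n)$ by H\"older, but that is not $O_\P(n^{-1/2})$.)

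The paper's route is shorter and avoids the $\sqrt T$-rate on the nodewise scales entirely. Observe that the left-hand side of \eqref{eq:asymptotic-chi2-hat-sigma-unknown-Sigma} equals $(\hat\sigma^2 n)^{-1/2}\|\bxi\|_2$ for the very same $\bxi=(\sqrt n/\|\hbz_j\|_2)\bv$ used in the proof of \Cref{thm:unknown-Sigma-chi2}. That proof already establishes, under either \eqref{eq:assum:T-log-p-sqrt-n-unknown-Sigma} or \eqref{eq:assum-unknown-Sigma-second-version}, that $(\sigma^2 n)^{-1/2}\|\bxi-\sqrt n\bE^\top\bz\|_2=o_\P(1)$ for some $\bz$ with $\|\bz\|_2=1$ measurable in $\bX$. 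Then $(\hat\sigma^2 n)^{-1/2}\|\bxi\|_2=(\sigma/\hat\sigma)\bigl[\sigma^{-1}\|\bE^\top\bz\|_2+o_\P(1)\bigr]=(1+o_\P(T^{-1/2}))(\sqrt{\chi^2_T}+o_\P(1))=\sqrt{\chi^2_T}+o_\P(1)$, using only $|\hat\sigma/\sigma-1|=o_\P(T^{-1/2})$ from \Cref{thm:chi2initial}. No control on $\|\hbz_j\|_2/(\tau_j\sqrt n)$ or $\hbz_j^\top\bX\be_j/(n\tau_j^2)$ beyond consistency is needed, because the factor $\sqrt n/\|\hbz_j\|_2$ is already baked into $\bxi$.
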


The above asymptotic chi-square results hold under the same 
assumptions as \Cref{thm-normal} and \Cref{thm:unknown-Sigma-normal}.
The reason for the success of these estimates is that 
$\hat \sigma$ estimates $\sigma$ at a rate faster than $T^{-1/2}$:
we have $|\hat \sigma/\sigma - 1| = o_\P(T^{-1/2})$ 
by \Cref{thm:chi2initial}.
However, simulations in \Cref{sec:simu} reveal that the
asymptotic $(\chi^2_T)^{1/2}$ estimates
of the previous subsections involving the matrix $\hbGamma{}^{-1/2}$
are more robust to larger sparsity levels, although
\Cref{assum:main} is oblivious to this phenomenon.

The corresponding $1-\alpha$ confidence ellipsoid for $(\bB^*)^\top\ba$
based on \eqref{eq:asymptotic-chi2-hat-sigma-bxi-check-bxi} and
$\check\bxi$ is
\begin{equation}
    \label{eq:check-hat-sigma-ellipsoid}
    \check{\mathcal E}_{\hat\sigma, \alpha}
    =
    \Bigl\{
        \btheta \in \R^T:
        \tfrac{1}{\hat\sigma\sqrt n}
        \big\|
        \bigl(\bI_{T\times T}-\tfrac{\hbA}{n}\bigr)^{-1}
        (\bY -\bX\hbB)^\top\bz_0
        + n (\hbB^\top\ba - \btheta)
        \big\|_2\leq q_{T,\alpha}
    \Bigr\}
\end{equation}
and satisfies $\P[(\bB^*)^\top\ba \in \check{\mathcal E}_{\hat\sigma,\alpha}]
\to 1-\alpha$ under \Cref{assum:main}.
Similar confidence ellipsoids based on 
\eqref{eq:asymptotic-chi2-hat-sigma-unknown-Sigma}
can be readily constructed.

\subsection{Hypothesis testing}

We now turn to type II error for the testing problem
\begin{equation}
    H_0: (\bB^{*})^\top\ba = \mathbf{0}_{T\times 1}
\qquad
\text{ against }
\qquad
H_1: \|(\bB^{*})^\top\ba\|_2 \ge \rho_n
\label{H_1-H_0-ba-section3}
\end{equation}
where $\rho_n>0$ is a separation radius.
The hypothesis test
\eqref{H_1-H_0-ba-section3}
at level $1-\alpha$ is naturally achieved by rejecting
$H_0$ if and only if $\mathbf{0}_{T\times 1}\not\in \check{\mathcal E}_{\hat\sigma ,  \alpha}$ for the ellipsoid in \eqref{eq:check-hat-sigma-ellipsoid}.
Similar rejection procedures can be obtained
with $\check{\mathcal E}_\alpha$
or $\hat{\mathcal E}_\alpha^j$ for the confidence ellipsoids
defined in \Cref{sec:chi2-known-Sigma,sec:chi2-unknown-Sigma}.

We can also determine the separation radius $\rho_n$ required so that
this testing procedure has nontrivial power (type II error).
Focusing here on $\check{\mathcal E}_{\hat\sigma, \alpha}$ in
\eqref{eq:check-hat-sigma-ellipsoid},
rejection happens if and only if the following quantity is positive 
\begin{align*}
    &(\hat\sigma^2n)^{-1}\|
        (\bI_{T\times T}-\hbA/n)^{-1}
        (\bY-\bX\hbB)^\top\bz_0
        +
        n
        \hbB^\top\ba
    \|_2^2
    -
    q_{T,\alpha}^2 
               \\&=
               W_n^2
    -
    q_{T,\alpha}^2 
    \\&\quad 
    +
    (\hat\sigma^2n)^{-1}
    \|
    n
    (\bB^*)^\top\ba
    \|_2^2
    \\&\quad +
    2
    (\hat\sigma^2n)^{-1/2}
    n\ba^\top(\bB^*)^\top
    \check \bxi.
\end{align*}
where
$W_n^2 = (\hat\sigma^2n)^{-1}
    \|
    \check \bxi
    \|_2^2$
and $W_n = (\chi^2_T)^{1/2} + o_\P(1)$ by
\eqref{eq:asymptotic-chi2-hat-sigma-bxi-check-bxi}.
By \Cref{thm-normal} applied to $\bb=(\bB^*)^\top\ba \|(\bB^*)^\top\ba\|_2^{-1}$,
the last line
is of the form $2 \hat \sigma^{-2} \|(\bB^*)^\top\ba\|_2
 \mathcal N(0,\sigma^2)$ so that it is of order
 $\|(\bB^*)^\top\ba\|_2 O_\P(1)$.
The second line is positive, of order
$\sigma^{-2}  n (1+ o_\P(1)) \|(\bB^*)^\top\ba\|_2^2$;
this is the quantity that should dominate in order
to ensure that the above display is positive.
Since the first line 
$W_n^2 - q_{T,\alpha}= (W_n -  q_{T,\alpha})(W_n + q_{T,\alpha})$
is positive with probability at least $\alpha-o(1)$
by \Cref{prop:4.2}, we obtain
that if $\|(\bB^*)^\top \ba\|_2 \ge \rho_n$ for
$\rho_n/(\sigma n^{-1/2}) \to +\infty$, then
the type II error is at most $1-\alpha + o(1)$.
Although this type II error is typically a constant
close to 1 (e.g. if $\alpha=0.05)$,
this shows that the above test has at most constant type II error
as long as the separation radius satisfies $\rho_n \ggg \sigma n^{-1/2}$.
We can also find conditions on $\rho_n$ that ensures that the type II
error is smaller than any constant. The first line above
is of order $(W_n - q_{T,\alpha}) O_\P(\sqrt T) = O_\P(\sqrt T)$
since $W_n = O_\P(\sqrt T) + o_\P(1)$ and $q_{T,\alpha} = \sqrt T + O(1)$
by \Cref{prop:4.2} and
\eqref{eq:order-q_T-alpha}.
Thus
$\rho_n \ggg T^{1/4} \sigma n^{-1/2}$ is sufficient
in order for
$
    (\hat\sigma^2n)^{-1}
    \|
    n
    (\bB^*)^\top\ba
    \|_2^2
$ to dominate both the first and third lines with probability
approaching one.
In summary,
$\rho_n \ggg \sigma n^{-1/2}$ is sufficient to achieve a constant 
type II error, while
$\rho_n \ggg T^{1/4} \sigma n^{-1/2}$ is sufficient to grant
a vanishing type II error.

In single task models, coefficients $\bB_{jt}^*$
of order 
$o( \sigma n^{-1/2} )$ cannot be detected, cf.
the discussion after \eqref{H_1-H_0}.
Here on the other hand in the multi-task setting with $T\to+\infty$,
detection of non-zero vector
$(\bB^*)^\top\be_j$ is possible with constant power
even if the individual coefficients in 
$(\bB^*)^\top\be_j$ are 
$u_n \sigma (Tn)^{-1/2}$ for any slowly increasing $u_n$ with $u_n\to+\infty$.
If $u_n= o(\sqrt T)$, the coefficients 
$(\bB_{jt}^*)_{t=1,...,T}$ are individually impossible
to detect, while detection of the row vector $(\bB^*)^\top\be_j$ is possible
with constant type I and type II errors.
Similarly, if the individual coefficients $(\bB_{jt}^*)_{t=1,...,T}$
are of order
$u_n \sigma T^{-1/4}n^{-1/2}$ for any slowly increasing $u_n$ with $u_n\to+\infty$, the above testing procedure for the row vector 
$(\bB)^\top\be_j$ has vanishing type II error.

%
%

\section{Computing the interaction matrix efficiently}
\label{sec:computing}
\Cref{def-A} which defines $\hbA$ is convenient for theoretical purposes,
as the pseudoinverse suppresses invertibility issues
and the form
\eqref{def-A} naturally arises in the proofs, see for instance
\Cref{lemma:differential,lemma:divergence,lemma:RemainderII}.
However, \eqref{def-A} is not computationally tractable 
as it involves computing a pseudoinverse of size $pT \times pT$.
The goal of this section is to provide a computationally
tractable representation for $\hbA$; in particular we will see
that one only needs to compute inverses of matrices of size
$|\hat S|\times |\hat S|$.
A first step when implementing is to remove all covariates $j\in\{1,...,p\}$
such that $\hbB{}^\top\be_j = \mathbf 0$, as 
dropping those indices and the corresponding columns of $\bX$
does not change the value of $\hbA$ in \eqref{def-A}.
For the purpose of this section and only in this section,
we assume without loss of generality
that $\hat S=[p]$ and that all variables $j\in[p]$
are such that $\hbB{}^\top\be_j\ne {\mathbf 0}$.
However, we will keep the notation $\bX_{\hat S}$
and use summation sign $\sum_{j\in\hat S}$
to emphasize that the indices $j\notin \hat S$
and corresponding columns of $\bX$ have been dropped.

Before stating a formal proposition
with a computationally friendly representation of the matrix
$\hbA$, we explain the crux of the argument,
which relies on the 
Sherman-Morrison-Woodbury inversion formula.
Recall that
$\tbX{}^\top\tbX = (\bI_{T\times T}\otimes \bX_{\hat S}^\top \bX_{\hat S})$ and that
for every $j\in\hat S$
\begin{equation*}
    \tag{\ref{Hj}}
\bH^{(j)}
\coloneqq
\lambda
\|\hbB{}^\top \be_j\|_2^{-1}\bigl(\bI_{T\times T} - \hbB{}^\top\be_j \be_j^\top\hbB ~  \|\hbB{}^\top\be_j\|_2^{-2} \bigr) \quad \in\R^{T\times T},
\end{equation*}
as well as $\tbH = \sum_{j\in \hat S} \bH^{(j)} \otimes (\be_j\be_j^\top)$.
By splitting the part of $\bH^{(j)}$ proportional to the identity
and the rank one part, we find
\begin{align*}
    \tbX{}^\top\tbX + nT \tbH 
    &=
        (\bI_{T\times T}\otimes \bX_{\hat S}^\top \bX_{\hat S})
        +
        nT\lambda
        \sum_{j\in \hat S}
        \frac{
            \bI_{T\times T}\otimes \be_j\be_j^\top
        }{
            \|\hbB{}^\top \be_j\|_2
        }
        -
        \Big[
        \frac{
            \hbB{}^\top\be_j \be_j^\top \hbB
        }{
        \|\hbB{}^\top\be_j\|_2^{3}
        }
         \otimes
         (\be_j\be_j^\top)
    \Big]
    \\&=
    \left(\bI_{T\times T}\otimes 
        \big(
        \bX_{\hat S}^\top \bX_{\hat S}
        +
        \diag(\bv)
        \big)
    \right)
        - nT\lambda
        \sum_{j\in \hat S} 
        \Big[
        \frac{
            (\hbB{}^\top\be_j \be_j^\top \hbB) \otimes
            (\be_j\be_j^\top)
        }{
            \|\hbB{}^\top\be_j\|_2^{3}
        }
        \Big]
\end{align*}
where $\bv \in\R^{|\hat S|}$ is the vector with $v_j = nT \lambda \|\hbB{}^\top\be_j\|_2^{-1}$
and $\diag(\bv)$ is the square diagonal matrix with $\bv$ as its diagonal.
By the mixed product property
\eqref{mixed-product-property}
we have
$$
        (\hbB{}^\top\be_j \be_j^\top \hbB) \otimes
            (\be_j\be_j^\top)
=
(\hbB{}^\top\be_j \otimes \be_j)
(\hbB{}^\top\be_j \otimes \be_j)^\top
$$
so that, with $\bb^{(j)} = (
        nT\lambda
        \|\hbB{}^\top\be_j\|_2^{-3}
        )^{1/2}
        ~ \hbB{}^\top\be_j \in\R^T$
we obtain
\begin{align*}
\tbX^\top\tbX + nT \tbH
&=
    \left(\bI_{T\times T}\otimes 
        \big(
        \bX_{\hat S}^\top \bX_{\hat S}
        +
        \diag(\bv)
        \big)
    \right)
    - \sum_{j\in\hat S}
    (\bb^{(j)} \otimes \be_j)
    (\bb^{(j)} \otimes \be_j)^\top
    \\&=
    \left(\bI_{T\times T}\otimes 
        \big(
        \bX_{\hat S}^\top \bX_{\hat S}
        +
        \diag(\bv)
        \big)
    \right)
    - \bU \bU^\top
    \\&=
    \bM - \bU\bU^\top,
\end{align*}
where $\bU\in\R^{(|\hat S|T) \times |\hat S|}$ has columns
$(\bb^{(j)}\otimes \be_j)_{j\in \hat S}$ and
$\bM = 
    \bI_{T\times T}\otimes 
        \big(
        \bX_{\hat S}^\top \bX_{\hat S}
        +
        \diag(\bv)
        \big)
    . 
$
If $\bM$
is invertible and its inverse can be computed efficiently,
the inverse of the above display is given by the
Sherman-Morrison-Woodbury formula \cite{horn_johnson_2012}:
if the matrix  
    $
- \bI_{|\hat S|\times |\hat S|}
+ \bU^\top \bM^{-1} \bU$  is invertible then $\bM-\bU\bU^\top$
is also invertible and
$$
(\bM - \bU\bU^\top)^{-1}
= \bM^{-1}
-
\bM^{-1} \bU\left(
- \bI_{|\hat S|\times |\hat S|}
+ \bU^\top \bM^{-1} \bU
\right)^{-1}
\bU^\top \bM^{-1}.
$$
Since $\bv$ has positive entries, 
$\bX_{\hat S}^\top \bX_{\hat S} +\diag(\bv)$ is always invertible 
and so is $\bM$, with
\begin{equation}
    \label{bM-inverse}
    \bM^{-1} = 
\bI_{T\times T}\otimes 
        \big(
        \bX_{\hat S}^\top \bX_{\hat S}
        +
        \diag(\bv)
    \big)^{-1}.
\end{equation}
Hence we only need to perform two inversions of matrices of size $|\hat
S|\times |\hat S|$: the inversion of $\bX_{\hat S}^\top \bX_{\hat S} + \diag(\bv)$ 
and of
$-\bI_{|\hat S|\times |\hat S|} + \bU^\top \bM^{-1} \bU$.

\begin{proposition}
\label{prop:expression-A}
With the above notation for
    $\bv\in\R^{|\hat S|}$ and
$\bb^{(j)}\in\R^T$ for each $j\in\hat S$,
if the matrix $\bP$ defined entrywise by
\begin{align*}
    \bP =(P_{jk})_{(j,k)\in \hat S\times \hat S},
    \quad
    P_{jk}
    =
- I\{j=k\}
+
(\bb^{(j)}{}^\top\bb^{(k)})  ~ (\be_j^\top
    \big(
        \bX_{\hat S}^\top \bX_{\hat S}
        +
        \diag(\bv)
    \big)^{-1}
    \be_k
)       
\end{align*}
is invertible then
$$
    \hbA=
    \trace\Bigl[ {\bX_{\hat S}^\top} \bX_{\hat S}(\bX_{\hat S}^\top\bX_{\hat S} +\diag(\bv))^{-1}  \Bigr]
    ~
    \bI_{T\times T}
-
\Bigl[
\sum_{j\in\hat S}
\bb^{(j)}
\sum_{k\in\hat S}
~ (\be_j^\top
        \bQ
    \be_k
) (\be_j^\top
        \bP^{-1}
    \be_k
)
\bb^{(k)}{}^\top
\Bigr]
$$
where
$
    \bQ = 
    \big(
        \bX_{\hat S}^\top \bX_{\hat S}
        +
        \diag(\bv)
    \big)^{-1}
    \bX_{\hat S}^\top\bX_{\hat S}
    \big(
        \bX_{\hat S}^\top \bX_{\hat S}
        +
        \diag(\bv)
    \big)^{-1}.
$
\end{proposition}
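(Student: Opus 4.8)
The plan is to feed the Sherman--Morrison--Woodbury identity recalled just above into the quadratic form \eqref{eq:quadra-A} and carry out the ensuing Kronecker and trace algebra. Since, as noted at the start of this section, dropping the indices $j\notin\hat S$ does not change $\hbA$, we may work with the convention $\hat S=[p]$, and we have already established the identity $\tbX{}^\top\tbX + nT\tbH = \bM - \bU\bU^\top$ with $\bM$ invertible and $\bM^{-1}$ given by \eqref{bM-inverse}. The first step is to observe that whenever $\bP=-\bI_{|\hat S|\times|\hat S|}+\bU^\top\bM^{-1}\bU$ is invertible, so is $\bM-\bU\bU^\top$ --- e.g.\ by Sylvester's determinant identity $\det(\bM-\bU\bU^\top)=\det(\bM)\det(-\bP)\neq 0$ --- so that the Moore--Penrose pseudoinverse in \eqref{def-A}--\eqref{eq:quadra-A} coincides with the ordinary inverse, which by Woodbury equals $\bM^{-1}-\bM^{-1}\bU\bP^{-1}\bU^\top\bM^{-1}$. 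Substituting into \eqref{eq:quadra-A} splits $\bu^\top\hbA\bv$ into a \emph{main} trace built from $\bM^{-1}$ and a \emph{correction} trace built from $\bM^{-1}\bU\bP^{-1}\bU^\top\bM^{-1}$.

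For the main term I would use $\bM^{-1}=\bI_{T\times T}\otimes(\bX_{\hat S}^\top\bX_{\hat S}+\diag(\bv))^{-1}$ and the mixed-product property \eqref{mixed-product-property} to get $(\bu^\top\otimes\bX_{\hat S})\bM^{-1}(\bv\otimes\bX_{\hat S}^\top)=(\bu^\top\bv)\,\bX_{\hat S}(\bX_{\hat S}^\top\bX_{\hat S}+\diag(\bv))^{-1}\bX_{\hat S}^\top$, whose trace is $(\bu^\top\bv)\trace[\bX_{\hat S}^\top\bX_{\hat S}(\bX_{\hat S}^\top\bX_{\hat S}+\diag(\bv))^{-1}]$ after cyclic permutation. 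Reading this back as a bilinear form in $(\bu,\bv)$ produces exactly the term $\trace[\bX_{\hat S}^\top\bX_{\hat S}(\bX_{\hat S}^\top\bX_{\hat S}+\diag(\bv))^{-1}]\,\bI_{T\times T}$ of the proposition.

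For the correction term, the key computation is that $\bM^{-1}\bU$ has $j$-th column $\bb^{(j)}\otimes\big((\bX_{\hat S}^\top\bX_{\hat S}+\diag(\bv))^{-1}\be_j\big)$, so $(\bu^\top\otimes\bX_{\hat S})\bM^{-1}\bU$ has $j$-th column $(\bu^\top\bb^{(j)})\,\bX_{\hat S}(\bX_{\hat S}^\top\bX_{\hat S}+\diag(\bv))^{-1}\be_j$, and symmetrically $\bU^\top\bM^{-1}(\bv\otimes\bX_{\hat S}^\top)$ has $k$-th row $(\bb^{(k)}{}^\top\bv)\,\be_k^\top(\bX_{\hat S}^\top\bX_{\hat S}+\diag(\bv))^{-1}\bX_{\hat S}^\top$; the same identity also yields $(\bU^\top\bM^{-1}\bU)_{jk}=(\bb^{(j)}{}^\top\bb^{(k)})\,\be_j^\top(\bX_{\hat S}^\top\bX_{\hat S}+\diag(\bv))^{-1}\be_k$, confirming that $\bP$ is precisely the stated matrix. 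Expanding the trace as a double sum over $j,k\in\hat S$ and using cyclicity, $\trace[\bX_{\hat S}(\bX_{\hat S}^\top\bX_{\hat S}+\diag(\bv))^{-1}\be_j\be_k^\top(\bX_{\hat S}^\top\bX_{\hat S}+\diag(\bv))^{-1}\bX_{\hat S}^\top]=\be_j^\top\bQ\be_k$, and collecting the scalar factors gives $-\sum_{j,k\in\hat S}(\bu^\top\bb^{(j)})(\be_j^\top\bQ\be_k)(\be_j^\top\bP^{-1}\be_k)(\bb^{(k)}{}^\top\bv)$, which as a bilinear form in $(\bu,\bv)$ is the claimed second term (the symmetry of $\bP$ and $\bQ$ is used implicitly here). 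The only genuine work is this Kronecker/trace bookkeeping; the replacement of the pseudoinverse in \eqref{def-A} by the true inverse is immediate once $\bP$ is assumed invertible, so there is no substantial obstacle beyond careful computation.
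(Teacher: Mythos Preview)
Your proposal is correct and follows essentially the same route as the paper: substitute the Woodbury inverse $\bM^{-1}-\bM^{-1}\bU\bP^{-1}\bU^\top\bM^{-1}$ into the definition of $\hbA$, split into a main term and a correction term, and reduce each via the mixed-product property \eqref{mixed-product-property}. The only cosmetic difference is that the paper computes the entries $\hbA_{tt'}$ with canonical vectors $\be_t,\be_{t'}$ whereas you work with the bilinear form $\bu^\top\hbA\bv$ from \eqref{eq:quadra-A}; your remark that invertibility of $\bP$ forces invertibility of $\bM-\bU\bU^\top$ (via Sylvester's identity), so that the pseudoinverse in \eqref{def-A} is an ordinary inverse, is a small point the paper leaves implicit.
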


\begin{proof}
    By definition of $\hbA$ and
    using the above Sherman-Morrison-Woodbury identity
\begin{align*}
    \hbA_{t,t'} &=\trace[
    (\be_t^\top \otimes \bX_{\hat S})
    (\tbX^\top\tbX + nT \tbH)^\dagger
    (\be_{t'} \otimes \bX_{\hat S}^\top)
] 
              \\&=
  \trace[
  (\be_t^\top \otimes \bX_{\hat S}) \bM^{-1}
       (\be_{t'} \otimes \bX_{\hat S}^\top)
  ] 
              \\&\qquad -
  \trace\bigl[
  (\be_t^\top \otimes \bX_{\hat S})
\bM^{-1} \bU\bigl(
- \bI_{|\hat S|\times |\hat S|}
+ \bU^\top \bM^{-1} \bU
\bigr)^{-1}
\bU^\top \bM^{-1}
       (\be_{t'} \otimes \bX_{\hat S}^\top)
  \bigr] 
  \\&=
  \trace[(\be_t^\top\be_{t'})\otimes (\bX_{\hat S}(\bX_{\hat S}^\top\bX_{\hat S} +\diag(\bv))^{-1} \bX_{\hat S}^\top ) ]
  \\&\qquad -
  \trace\bigl[
  \bU^\top \bM^{-1}((\be_{t'}\be_t^\top) \otimes \bX_{\hat S}^\top\bX_{\hat S}) \bM^{-1}\bU
\bigl(
- \bI_{|\hat S|\times |\hat S|}
+ \bU^\top \bM^{-1} \bU
\bigr)^{-1}
  \bigr].
\end{align*}
By \eqref{trace-property-kronecker}, the first term equals
$I\{t=t'\} \trace[ \bX_{\hat S}(\bX_{\hat S}^\top\bX_{\hat S} +\diag(\bv))^{-1} \bX_{\hat S}^\top ]$ which gives the first term
in the proposition, proportional to $\bI_{T\times T}$.
Using again the structure of $\bM^{-1}$ in \eqref{bM-inverse},
the second summand in the previous display is equal to
\begin{equation}
    \label{second-term-A} 
- \trace\bigl[\bU^\top\bigl( (\be_{t'}\be_t^\top) \otimes
    \bQ
\bigr)
    \bU
    \bP^{-1}
\bigr]
\end{equation}
where $\bP$ and $\bQ$ are given in the proposition,
    after noting that the definition of $\bP$ is equivalent to
$\bP = - \bI_{|\hat S|\times |\hat S|}
+ \bU^\top \bM^{-1} \bU$.
Since $\bU$ has columns $\bb^{(j)} \otimes \be_j$,
the entry $(j,k)\in\hat S\times \hat S$ of the matrix
$\bU^\top\bigl( (\be_{t'}\be_t^\top) \otimes
    \bQ
\bigr)
    \bU$
 is equal to
$$
(\bb^{(j)}{}^\top\be_{t'} \be_{t}^\top \bb^{(k)})  ~ (\be_j^\top
        \bQ
    \be_k
) =
(\be_{t'}^\top \bb^{(j)})(\be_{t}^\top \bb^{(k)})
~ (\be_j^\top
        \bQ
    \be_k
) 
.
$$
Since $\trace[\bA\bB] = \sum_{j,k} A_{jk}B_{jk}$ for two symmetric
matrices of the same size, we obtain
\begin{align*}
\eqref{second-term-A}
&=
-
\sum_{j\in\hat S}\sum_{k\in\hat S}
(\be_{t'}^\top \bb^{(j)})(\be_{t}^\top \bb^{(k)})
~ (\be_j^\top
        \bQ
    \be_k
) (\be_j^\top
        \bP^{-1}
    \be_k
)
\\ & =
\be_{t'}^\top
\Bigl[
-
\sum_{j\in\hat S}
\bb^{(j)}
\sum_{k\in\hat S}
~ (\be_j^\top
        \bQ
    \be_k
) (\be_j^\top
        \bP^{-1}
    \be_k
)
\bb^{(k)}{}^\top
\Bigr]\be_t.
\end{align*}
On the last line, the matrix in bracket is the second matrix
in the expression of $\hbA$.
\end{proof}

We now turn to implementation details. We recommend
an approach that makes use of optimized
vectorized code as often as possible
to compute the quantities in \Cref{prop:expression-A},
and if available to use a library with Einstein summation routine
as this allows the code to mimick the mathematical notation
in  \Cref{prop:expression-A}.
For concreteness, the following code lets us efficiently compute $\hbA$
with the Python library Numpy \cite{numpy},
and the Einstein summation function
\verb|numpy.einsum| which comes in handy.
Assume that
$\hbB$ has been computed, the rows in $[p]\setminus \hat S$ removed
and the result stored in an array
\verb|B_S| of size $|\hat S|\times T$,
that $\bX$ with the columns in $[p]\setminus\hat S$ removed
is stored in an array \verb|X_S| of size $n\times |\hat S|$,
and that the scalar $nT\lambda$ is stored in variable \verb|nTlambda|.
Then the vector $\bv$
and matrix with columns $(\bb^{(j)})_{j\in\hat S}$ in variable $\verb|b|$
can be computed as follows:
\begin{minted}[fontsize=\footnotesize]{python}
import numpy as np
norms = np.linalg.norm(B_S, axis=1)                          # shape (|Ŝ|, )
v = nTlambda * norms**(-1)                                   # shape (|Ŝ|, )
b = nTlambda**0.5 * np.einsum("j,jt->jt", norms**(-3/2), B_S)# shape (|Ŝ|, T)
\end{minted}
Finally, matrices $(\bX_{\hat S}^\top\bX_{\hat S} + \diag(\bv))^{-1}$ and $\bQ$
are computed using built-in symmetric matrix inversion,
while computation of $\bP$ and $\hbA$ again resorts to using \verb|np.einsum|:
\begin{minted}[fontsize=\footnotesize]{python}
gram = X_S.T @ X_S                                           # shape (|Ŝ|,|Ŝ|)
inverse = np.linalg.invh(gram + np.diag(v))                  # shape (|Ŝ|,|Ŝ|)
Q = inverse @ gram @ inverse                                 # shape (|Ŝ|,|Ŝ|)
P = - np.eye(p) + np.einsum("jt,kt,jk -> jk", b, b, inverse) # shape (|Ŝ|,|Ŝ|)
A = np.eye(T) * np.einsum("jk,jk->", gram, inverse) \
    - np.einsum("jt,ku,jk,jk->tu", b, b, Q, np.linalg.invh(P))# shape (T, T)
\end{minted}
In \verb|einsum|, we use indices \verb|t| and \verb|u| to loop over $[T]$,
and indices \verb|j| and \verb|k| to loop over $\hat S$.
All calls to \verb|einsum| can be further optimized
by pre-computing the optimal order in which tensor contractions
should be performed (see \verb|numpy.einsum_path|).

Empirically, we have observed that this implementation
using the Sherman-Morrison-Woodbury identity and the above 
code is several orders of magnitude faster than a naive one 
involving sparse matrices and the full inversion of 
$\tbX{}^\top\tbX + nT \tbH$.

\section{Numerical experiments}
\label{sec:simu}

We run simulations to illustrate the theorems proved in \Cref{sec:normal-thms,sec:chi2}.
The values of the parameters are fixed to $n=2000$, $p=6000$, $T=10$, $\eta_1=\eta_2=0$, $\sigma^2 = 1$.
The tuning parameter is
$\lambda = \max_j \bSigma_{jj}^{1/2} \frac{1}{\sqrt {nT}} \big(1+\sqrt{\frac 2T \log \frac ps} \big)$
(we explain below how $\bSigma$ is constructed).
The directions of interest are $\ba=\be_j\in \mathbb R^p$ and $\bb=\be_1\in \mathbb R^T$.

\subsection*{Quantile-quantile plots of the pivotal quantities}
The goal is to assess how the sparsity 
of $\bB^*$ and $\bSigma^{-1}\be_1$ influence the convergence in 
\Cref{thm-normal,thm:unknown-Sigma-normal,thm:chi2z0,thm:unknown-Sigma-chi2,thm:unknown-Sigma-chi2-hatsigma}.
Denote by $s$ and $s_\Omega$ the respective sparsity parameters that will vary in the experiments.
Given a target tuple $(s,s_\Omega)$ we generate $\bB^*$ with exactly $s$ non-zero rows and
$\bSigma$ with exactly $s_\Omega$ non-zero entries on the first column of $\bSigma^{-1}$,
so that $s_\Omega = \|\bSigma^{-1}\be_1\|_0$.

We explain first how $\bSigma$ is constructed so that it satisfies the constraints in \Cref{assum:main}
as well as the sparsity requirement on $\bSigma^{-1}\be_1$.
Start by sampling $\bM$, a $(p-1)\times (p-1)$ matrix with i.i.d. $\mathcal N(0,1)$ entries. 
Then perform the QR decomposition of $\bM$ to obtain an orthogonal matrix $\bQ$, 
the distribution of which is uniform in the sense of Haar measure on the orthogonal group $O(p-1)$.
Next, consider $\bD$, the diagonal $(p-1)\times (p-1)$ matrix with entries $\{1+ j/(p-2) : j\in \{0,\ldots, p-2\} \}$ 
and set $\bLambda = \bQ \bD \bQ^\top$.
Define the block matrix 
$$\tbLambda = \begin{bmatrix}
    3/2 & \hspace*{-\arraycolsep}\vline\hspace*{-\arraycolsep} & \bv^\top \\
    \hline
    \bv & \hspace*{-\arraycolsep}\vline\hspace*{-\arraycolsep} & \bLambda
\end{bmatrix}$$ 
where $\bv\in \mathbb R^{p-1}$ is a vector with sparsity $\|\bSigma^{-1}\be_1\|_0-1$ and norm $\|\bv\|_2 = 1$.
This ensures boundedness of the spectrum as the smallest eigenvalue of $\tbLambda$ satisfies the lower bound
\begin{align*}
    \lambda_{\min}(\tbLambda)
    &\geq \lambda_{\min} \Big(
        \begin{bmatrix}
        3/2 & -\|\bv\|_2 \\
        -\|\bv\|_2 & \lambda_{\min}(\bLambda)
        \end{bmatrix}
        \Big)
    = \frac{5-\sqrt{17}}{4}
    \gtrsim 0.219,
\end{align*}
where the last equality follows from $\lambda_{\min}(\bLambda)= 1$ and $\|\bv\|_2 = 1$.
Similarly, the largest eigenvalue of $\tbLambda$ can be bounded above by 
$
    \lambda_{\max}(\tbLambda)
    = \frac{7+\sqrt{17}}{4}
    \lesssim 2.8.
$
Finally set $\bSigma = \alpha^{-1} \tbLambda{}^{-1}$ where $\alpha$ is the greatest diagonal entry of $\tbLambda{}^{-1}$
so that $\max\{ \bSigma_{jj}, j=1,...,p\}=1$.
This construction
leads to $\lambda_{\min}(\bSigma)\approx 0.32,\lambda_{\max}(\bSigma)\approx 1.76$ and
$(\bSigma^{-1})_{jj}\approx 1.85$.

The row-sparse matrix $\bB^*$ is constructed as follows.
Initialize $\bB^*$ as a matrix filled with $\lambda$'s and alter it in two different ways:
\begin{enumerate}
    \item 
        \label{unfavorable}
        \emph{Setting with overlapping supports.}
In the first setting, we zero out rows of $\bB^*$ while forcing an overlap of the supports of $\bB^*$ and $\bSigma^{-1}\be_1$
(either $\supp(\bSigma^{-1}\be_1)\subset \supp(\bB^*)$ or the
reverse inclusion).
The intuition is that this makes inequality \eqref{eq:KKTbound} 
tight. This constraint is therefore expected to slow down convergence.
    \item 
        \label{favorable}
        \emph{No-overlap setting.}
    In the second setting this constraint is removed and the support
    of $\bB^*$ is picked uniformly
    at random as a subset of
    $\{1,...,p\}\setminus \supp(\bSigma^{-1}\be_1)$.
\end{enumerate}

Assume that the tuple $(s, s_\Omega)$ is fixed. 
We sample $N_{sim}=128$ instances of $(\bX,\bE)$. 
For each sample, we compute the estimator $\hbB$ using the function \verb|MultiTaskElasticNet| from the Python library Scikit-learn \cite{scikit-learn},
build the interaction matrix $\hbA$ using the implementation from \Cref{sec:computing} 
and collect the pivotal quantities appearing in the Theorems.
The Q-Q plots and histograms for different pairs $(s, s_\Omega)$ are then reported 
in \Cref{fig:QQ1,fig:QQ2} for the overlapping supports setting
\ref{unfavorable}
and \Cref{fig:QQ3,fig:QQ4} for the no-overlap setting
\ref{favorable}.

Asymptotic normality is observed empirically on \Cref{fig:QQ3} in the no-overlap setting,
both when $\bSigma$ is known (blue) and unknown (green). 
The convergence holds up well across a wide range of sparsity levels. 
In the overlapping supports setting of \Cref{fig:QQ1}, convergence is maintained if $\bSigma$ is known,
but in the unknown $\bSigma$ case it deteriorates fast when $\|\bSigma^{-1}\be_1\|_0$ grows.
This suggests
that condition \eqref{eq:assum:T-log-p-sqrt-n-unknown-Sigma}
is not an artefact of the proof.

The picture is different with chi-square results. 
In the no-overlap setting of \Cref{fig:QQ4},
convergence is observed across all sparsity levels for pivotal quantities
in \Cref{thm:chi2z0} (known $\bSigma$) and \Cref{thm:unknown-Sigma-chi2} (unknown $\bSigma$)
whereas an increase in $s$ slows down convergence in \Cref{thm:unknown-Sigma-chi2-hatsigma} (unknown $\bSigma$).
In the overlapping supports setting \ref{unfavorable} of \Cref{fig:QQ2}, pivotal quantities in \Cref{thm:chi2z0,thm:unknown-Sigma-chi2-hatsigma} exhibit the same behavior 
as in the previous setting whereas the one from \Cref{thm:unknown-Sigma-chi2} shows increasingly slower convergence as $\|\bSigma^{-1}\be_1\|_0$ grows.
Again, this suggests
that condition \eqref{eq:assum:T-log-p-sqrt-n-unknown-Sigma}
is not an artefact of the proof.

\subsection{The advantage of multi-task learning for narrower confidence intervals}
In \Cref{fig:Width} we illustrate the discussion around \eqref{eq:length-comparison}
by comparing the lengths of $95\%$ confidence intervals 
obtained via multi-task Lasso and single-task Lasso.
$\|\bSigma^{-1}\be_1\|_0$ is set to $5$ and the pair $(T,s)$ varies.
For a given $(T,s)$ and a sampled $(\bX, \bE)$ 
we compute the relative change $(\text{length}_{multi} - \text{length}_{single})/\text{length}_{single}$.
We collect these values over $N_{sim}=128$ samples and obtain the bottom figure.
Since the results with or without the overlap constraint in the supports are similar, only the no-overlap setting \ref{favorable} is shown.
In the upper figure, 
multi-task confidence interval lengths are pooled together over the samples and we compare them to the aggregate single-task lengths.
As a sanity check we observe that multi-task and single-task Lasso coincide when $T$ is equal to $1$.
For $s=15$, $\hbA$-based confidence intervals always have smaller length, which shrinks as $T$ increases.
When $T=20$ we observe a $40\%$ average gain in the width.
Exploiting several tasks thus provides better estimates than intervals based on the first task.
However, as $s$ grows, this effect fades gradually and when $s=100$ it is counterbalanced by high variance in the multi-task lengths.

\begin{figure}[H]
\makebox[\textwidth][c]{
    \includegraphics[scale=0.6]{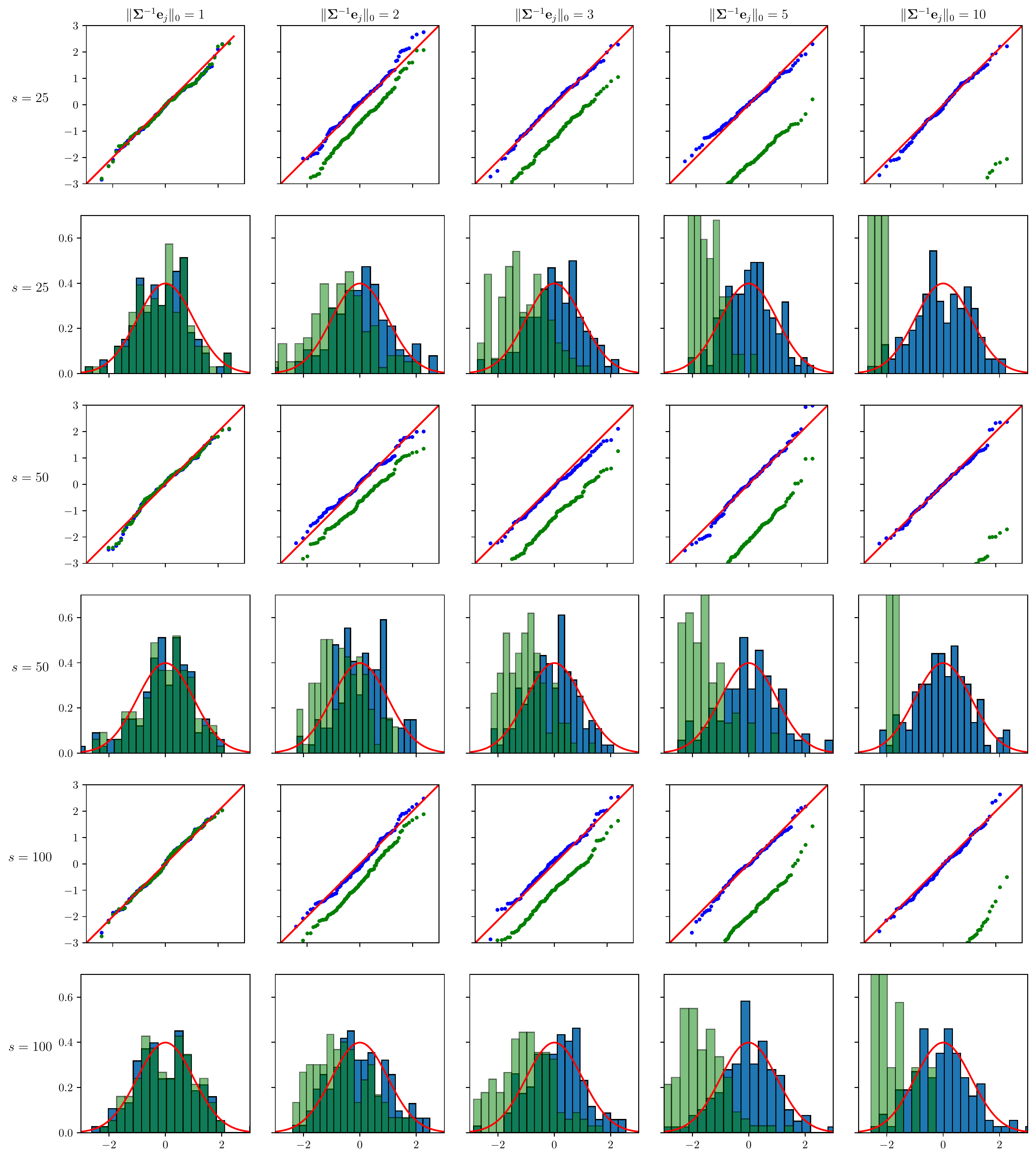}
    }
\caption{
    QQ-plots and histograms in the unfavorable setting \ref{unfavorable} for pivotal quantities in \Cref{thm-normal} (blue), \Cref{thm:unknown-Sigma-normal} (green).
    }
\label{fig:QQ1}
\end{figure}

\newpage 

\begin{figure}[H]
\makebox[\textwidth][c]{
    \includegraphics[scale=0.6]{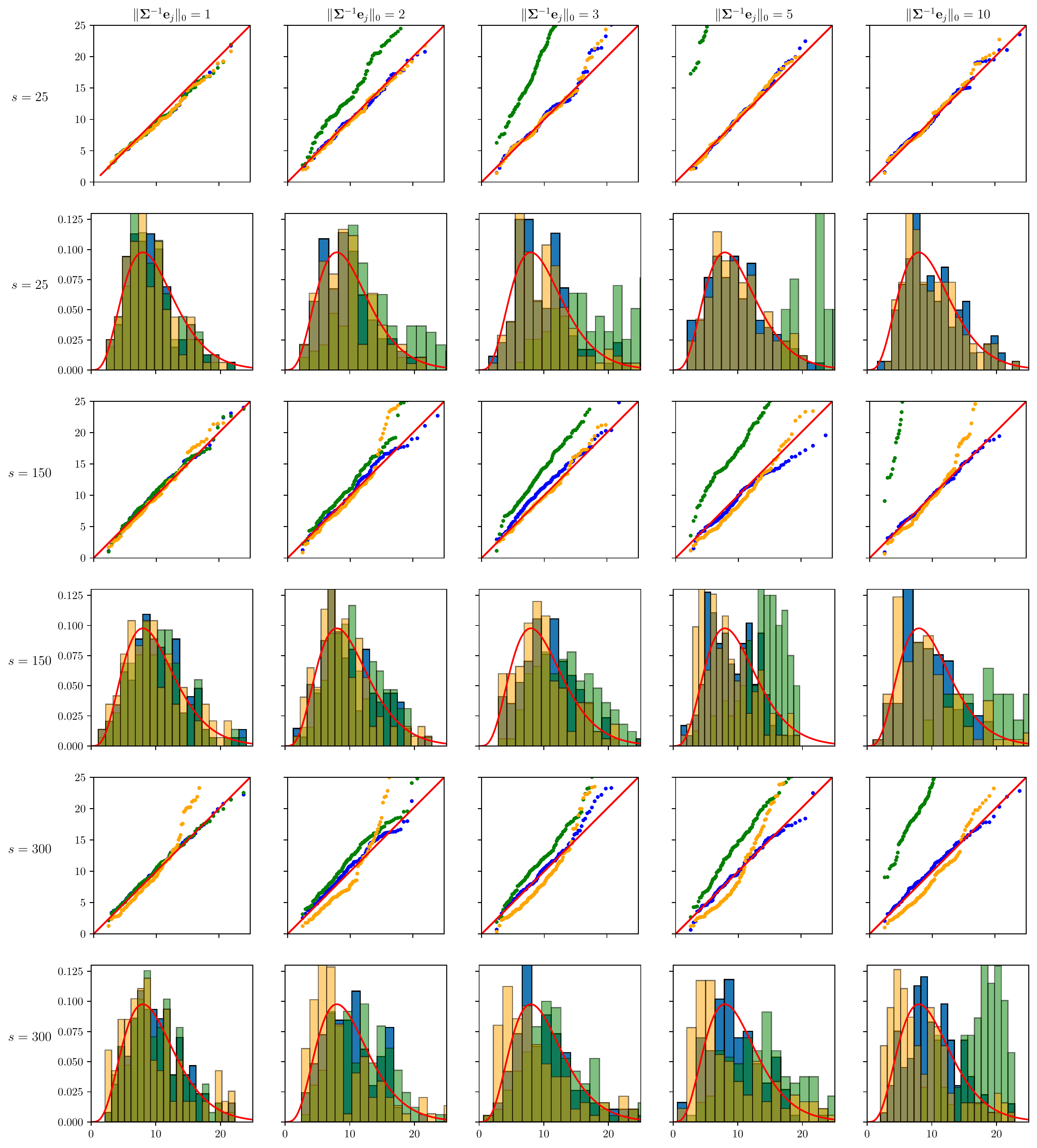}
    }
\caption{
    QQ-plots and histograms in the unfavorable setting \ref{unfavorable} for pivotal quantities in \Cref{thm:chi2z0} (blue), \Cref{thm:unknown-Sigma-chi2} (green), \Cref{thm:unknown-Sigma-chi2-hatsigma} (orange).
    }
\label{fig:QQ2}
\end{figure}

\newpage

\begin{figure}[H]
\makebox[\textwidth][c]{
    \includegraphics[scale=0.6]{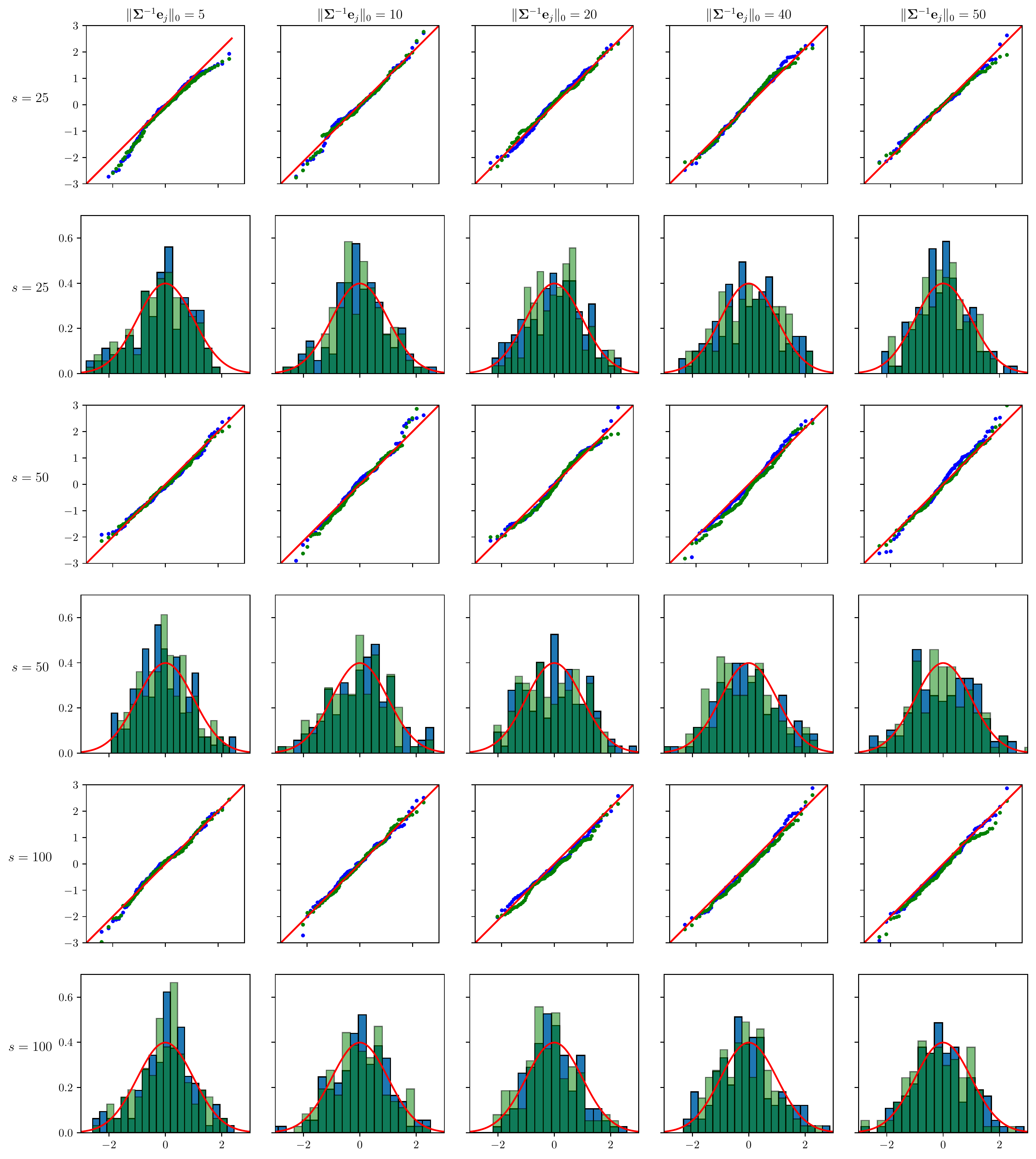}
    }
\caption{
    QQ-plots and histograms in the favorable setting \ref{favorable} for pivotal quantities in \Cref{thm-normal} (blue), \Cref{thm:unknown-Sigma-normal} (green).
    }
\label{fig:QQ3}
\end{figure}

\newpage 

\begin{figure}[H]
\makebox[\textwidth][c]{
    \includegraphics[scale=0.6]{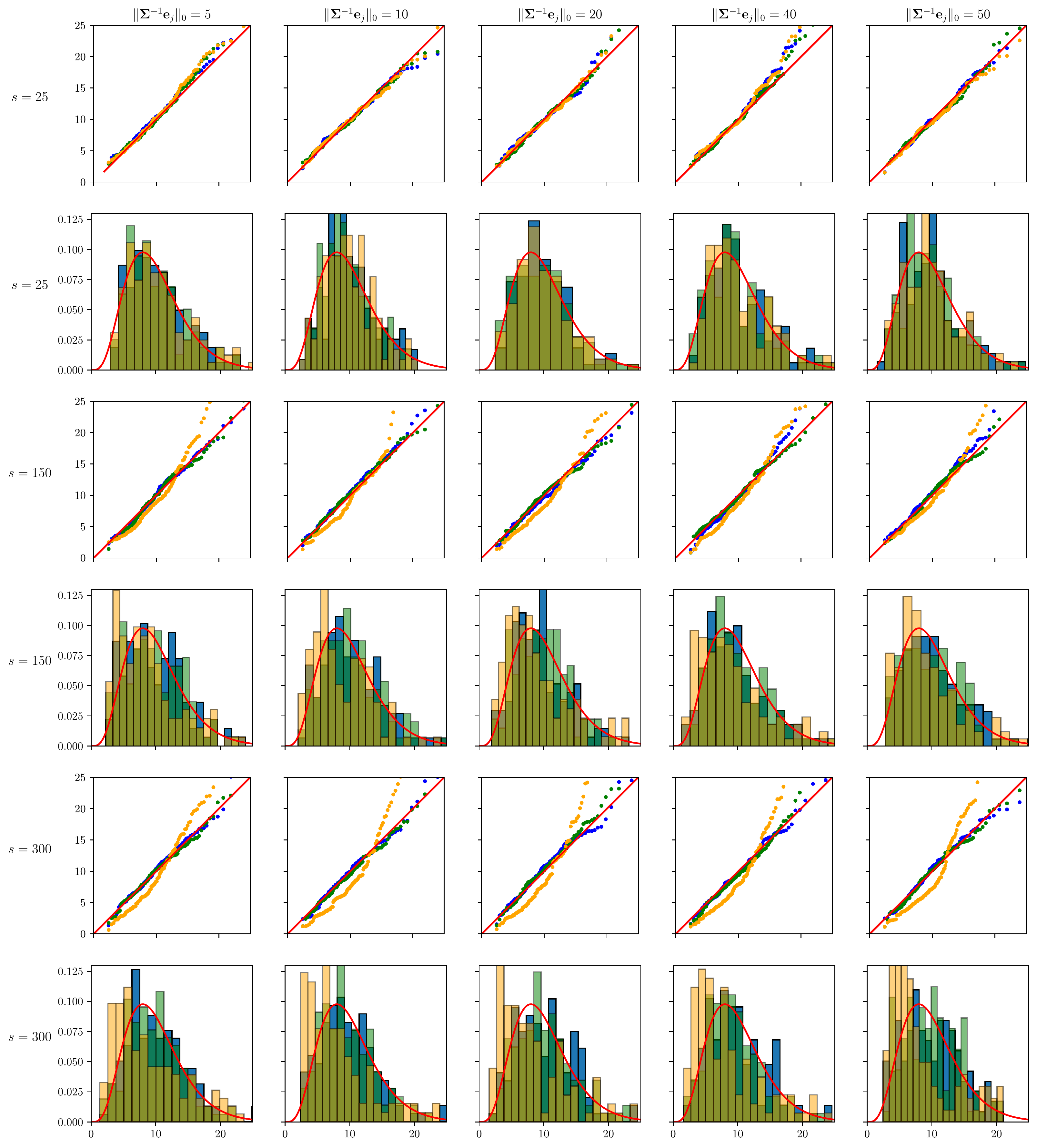}
    }
\caption{
    QQ-plots and histograms in the favorable setting \ref{favorable} for pivotal quantities in \Cref{thm:chi2z0} (blue), \Cref{thm:unknown-Sigma-chi2} (green), \Cref{thm:unknown-Sigma-chi2-hatsigma} (orange).
    }
\label{fig:QQ4}
\end{figure}

\newpage 

\begin{landscape}
\begin{figure}[htbp]
  \hspace*{-2cm}\includegraphics[scale=0.45]{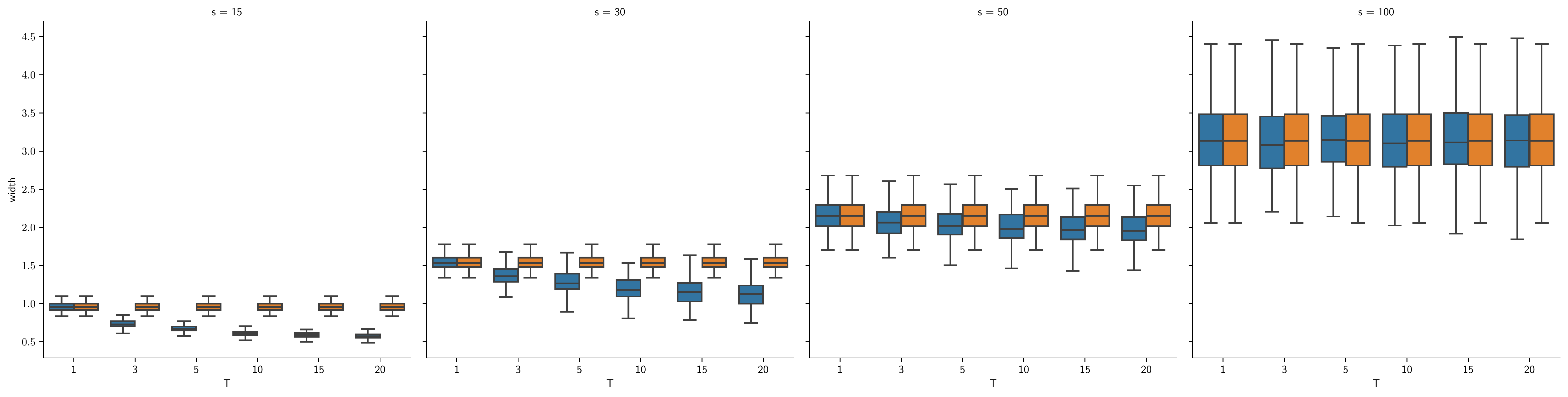}
  \\
  \vspace{0.3in}
  \hspace*{-2cm}\includegraphics[scale=0.45]{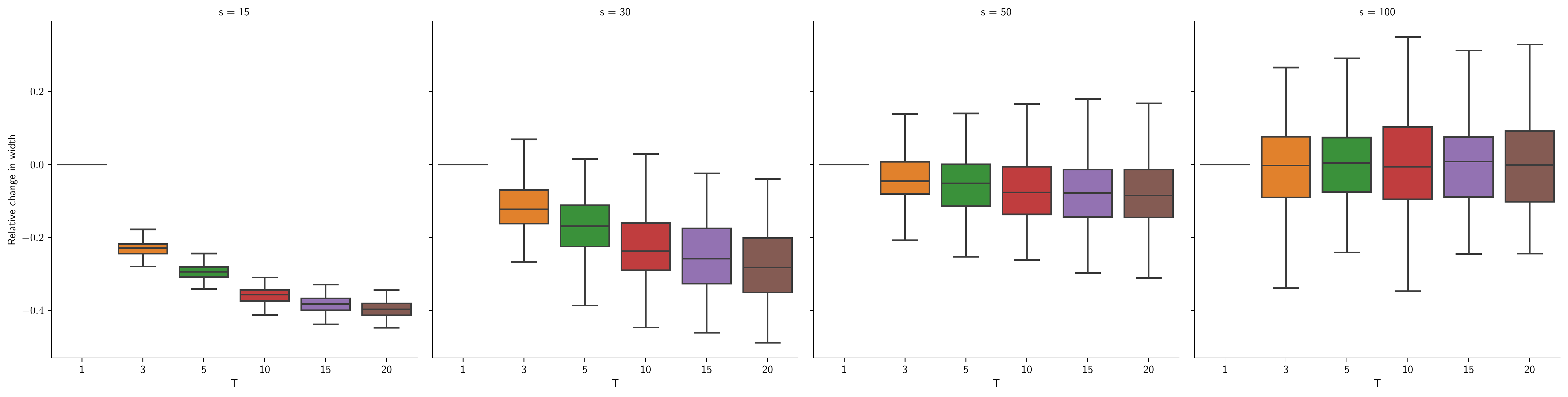}
  \\
  \vspace{0.3in}
\caption{
    Top: boxplots for lengths of 95\% confidence intervals using multi-task Lasso (blue) and single-task Lasso on the first task (orange). \\
    Bottom: boxplots for relative change in length with single-task as reference.
    Only the no-overlap setting \ref{favorable} is shown and $\|\bSigma^{-1}\be_1\|_0$ is set to $5$.
    }
\label{fig:Width}
\end{figure}
\end{landscape}

\newpage 
\bibliographystyle{plain}
\bibliography{biblio}

\newpage
\section*{SUPPLEMENT}
\appendix

\section{Intuition}
\label{sec:intuition}

Let us give some rationale behind the pivotal quantities stated in the
main theorems.
In this paragraph and only in this paragraph for the sake of providing
some intuition, we assume that $\ba=\be_j$ for some
canonical basis vector in $\R^p$ and 
that $\bSigma=\bI_{p\times p}$ so that entries of $\bX$ are i.i.d. $\mathcal N(0,1)$.
In this setting, the random vector $\bz_0=\bX\be_j$ has i.i.d. $\mathcal N(0,1)$ entries
and is independent of $\bX_{-j}$, the matrix $\bX$ with $j$-th column removed.
Since $\bz\sim \mathcal N_n({\mathbf 0}, \bI_{n\times n})$, Stein's formula 
\cite{stein1972bound,stein1981estimation}
states
that $\E[\bz_0^\top \mathbf f(\bz_0)]=\E[\sum_{i=1}^n (\partial/\partial z_{0i})f_i(\bz_0)]$
for any differentiable vector field $\mathbf f = (f_1,...,f_n)$ with $\mathbf f:\R^n\to\R^n$, under integrability conditions. For the sake of the current informal argument,
assume that Stein's formula provides reasonable approximation.
Then applying Stein's formula to $\mathbf f = (\bY-\bX\hbB)\be_t$
for each task $t=1,...,T$ (here, $\be_t$ is the $t$-th canonical basis vector in $\R^T$),
by nontrivial computations that are made rigorous in the proofs given in the supplement,
the approximations
\begin{equation}
\begin{aligned}
    \bz_0^\top (\bY-\bX\hbB)\be_1  & \approx n \ba^\top(\hbB-\bB^*)\be_1
    - \sum\nolimits_{t=1}^T\hbA_{1t}\ba^\top(\hbB-\bB^*)\be_t
    ,
    \\
    \bz_0^\top (\bY-\bX\hbB)\be_2  & \approx n \ba^\top(\hbB-\bB^*)\be_2 
    - \sum\nolimits_{t=1}^T\hbA_{2t} \ba^\top(\hbB-\bB^*)\be_t
    ,
    \\ \vdots &
    \\
    \bz_0^\top (\bY-\bX\hbB)\be_T  & \approx n \ba^\top(\hbB-\bB^*)\be_T
    - \sum\nolimits_{t=1}^T\hbA_{Tt} \ba^\top(\hbB-\bB^*)\be_t
\end{aligned}
\label{eq:linear-system-intuition}
\end{equation}
hold up to smaller order terms, where $\hbA$ is the interaction matrix
in \Cref{def-A}. By viewing \eqref{eq:linear-system-intuition} as a linear system
with $T$ equations and the $T$ unknowns $(\ba^\top(\hbB-\bB^*)\be_t)_{t=1,...,T}$,
and assuming that solving the linear system maintains the approximations, we obtain
that 
$$
\begin{pmatrix}
\ba^\top(\hbB-\bB^*)\be_1 \\
\ba^\top(\hbB-\bB^*)\be_2 \\
\vdots \\
\ba^\top(\hbB-\bB^*)\be_T \\
\end{pmatrix}
\approx
\Big(n \bI_{T\times T} - \hbA\Big)^{-1}
\begin{pmatrix}
\bz_0^\top(\bY-\bX\hbB)\be_1 \\
\bz_0^\top(\bY-\bX\hbB)\be_2 \\
\vdots \\
\bz_0^\top(\bY-\bX\hbB)\be_T \\
\end{pmatrix}
$$
or equivalently
$
(\hbB-\bB^*)^\top\ba
=
\bigl(n\bI_{T\times T} - \hbA\bigr)^{-1}
(\bY-\bX\hbB)^\top\bz_0
$.
Thus the matrix product of $(n\bI_{T\times T} - \hbA)^{-1}$ times
the residuals 
projected onto $\bz_0$
provides us with estimates of the bias of $\hbB$ on the direction $\ba\in\R^p$.
This informal argument is the crux of the rigorous methodology
developed in the next subsections.
In the sequel, we drop the assumption that $\bSigma=\bI_{p\times p}$.
When $\bSigma\ne \bI_{p\times p}$ is known as in \Cref{sec:known-Sigma}, the score
vector $\bz_0$ in \eqref{eq:linear-system-intuition} has to be replaced
by a random vector proportional to $\bX\bSigma^{-1}\ba$.
When $\bSigma$ is unknown as in \Cref{sec:unknown-Sigma}, the score vector
has to be estimated.

\section{Proof of Proposition~\NoCaseChange{\ref{propMatrixA}}}
\label{sec:proof-matrix}

We restate the proposition for convenience.

\propMatrixA*

\begin{proof}
    (i) 
    We have the following equalities:
        \begin{align*}
            \bu^\top\hbA\bv 
        &\stackrel{(i)}{=}
        \trace\big[ (\bu^{\top} \otimes \bX_{\hat S})
        [\tbX{}^T\tbX + nT \tbH ]^{\dagger}
        (\bv \otimes (\bX_{\hat S})^{{\top}})
    \big]\\
     	&\stackrel{(ii)}{=}
    \trace\big[ (\bv\bu^{\top} \otimes (\bX_{\hat S})^\top\bX_{\hat S})
        [\tbX{}^T\tbX + n T \tbH ]^{\dagger}
    \big]
    \\
    	&\stackrel{(iii)}{=}
    \trace\big[ 
        [{\tbX{}^T\tbX + n T \tbH ]^{\dagger}}^\top
    (\bv\bu^{\top} \otimes (\bX_{\hat S})^\top\bX_{\hat S})^\top
    \big]
    \\
    	&\stackrel{(iv)}{=}
    \trace\big[ 
        [\tbX{}^T\tbX + n T \tbH ]^{\dagger}
    (\bu\bv^{\top} \otimes (\bX_{\hat S})^\top\bX_{\hat S})
    \big] 
    \\
    	&\stackrel{(v)}{=}
    \trace\big[ (\bu\bv^{\top} \otimes (\bX_{\hat S})^\top\bX_{\hat S})
        [\tbX{}^T\tbX + n T \tbH ]^{\dagger}
    \big]
    = \bv^\top\hbA\bu
        \end{align*}
        where $(i)$ follows from \eqref{eq:quadra-A}, $(ii)$ is a consequence of $\trace[\bM_1\bM_2]=\trace[\bM_2\bM_1]$ and the mixed product property \eqref{mixed-product-property}, $(iii)$ and $(v)$ follow from $\trace[\bM] = \trace [\bM{}^\top]$, $(iv)$ holds because the pseudoinverse preserves symmetry.

This proves that $\hbA$ is symmetric.
Since the pseudoinverse of a positive semi-definite matrix is positive semi-definite as well, we also have 
\begin{equation}
    \label{eq:expression-bu-A-bu}
    \bu^\top\hbA\bu = 
    \|([\tbX{}^T\tbX + n T\tbH ]^{\dagger})^{1/2}
    (\bu\otimes \bX_{\hat S}^\top)\|_F^2 \ge0
\end{equation}
so that $\hbA$ is positive semi-definite.

(ii) 
Recall that $\tbX =  \bI_{T\times T} \otimes \bX_{\hat S}$.
By properties of Gram matrices, $\rank(\tbX{}^\top\tbX) = \rank(\tbX) = |\hat S|T$, hence by the rank-nullity theorem, $\ker(\tbX{}^\top\tbX)$ has dimension $(p-|\hat S|)T$.
By definition of $\bX_{\hat S}$, each vector $\be_t\otimes \be_j$ is in the kernel of $\tbX$ for $j\notin\hat S$ and $t\in[T]$, hence in $\ker(\tbX{}^\top\tbX)$. These $(p-|\hat S|)T$ vectors are linearly independent, so they form a basis of $\ker(\tbX{}^\top\tbX)$.\\
Besides, since $\tbH = \sum_{k\in \hat S} \bH^{(k)} \otimes (\be_k\be_k^\top)$, the mixed product property of Kronecker products \eqref{mixed-product-property} implies that $\tbH (\be_t\otimes \be_j) = \mathbf 0$ for $j\notin\hat S$ and $t\in[T]$, hence $\ker(\tbX{}^\top\tbX) \subset \ker(\tbX{}^\top\tbX + nT\tbH)$. Since these matrices are positive semi-definite, it is easy to check that the reverse inclusion holds as well, so that $\ker(\tbX{}^\top\tbX) = \ker(\tbX{}^\top\tbX + nT\tbH)$.

Since $\tbH$ is positive semi-definite,
$\tbX{}^\top\tbX\preceq \tbX{}^\top\tbX + nT\tbH$ holds in the sense of the positive semi-definite order, and 
\begin{equation}
    \label{eq:psd-order-pseudo-inverse}
(\tbX{}^\top\tbX + nT \tbH)^\dagger\preceq (\tbX{}^\top\tbX)^\dagger
\end{equation}
holds
because the two matrices have the same kernel, see \cite{stackexchange1}.
Next, using \eqref{eq:expression-bu-A-bu},
\begin{align*}
   \bu^\top\hbA\bu  &
   =
\|([\tbX{}^T\tbX]^{\dagger})^{1/2}
(\bu\otimes (\bX_{\hat S})^\top)\|_F^2
+
\trace[(\bu^\top \otimes \bX_{\hat S})
\{
[\tbX{}^T\tbX + nT \tbH ]^{\dagger}
-
[\tbX{}^\top\tbX]^\dagger
\}
(\bu \otimes (\bX_{\hat S})^\top)
]
                  \\&\le
\|([\tbX{}^T\tbX]^{\dagger})^{1/2}
(\bu\otimes (\bX_{\hat S})^\top)\|_F^2
                  \\&=
\trace[
(\bu^\top \otimes \bX_{\hat S})
(\bI_{T\times T} \otimes (\bX_{\hat S}^\top\bX_{\hat S})^\dagger)
(\bu \otimes (\bX_{\hat S})^\top)
]
                  \\&=
                  (\bu^\top\bu) \trace[\bX_{\hat S}(\bX_{\hat S}^\top\bX_{\hat S})^\dagger (\bX_{\hat S})^\top]\\
                  &= \|\bu\|^2 |\hat S|,
\end{align*}
where the first inequality follows from \eqref{eq:psd-order-pseudo-inverse}
and the third and fourth line follow respectively from
$\tbX{}^\top\tbX = \bI_{T\times T}\otimes \bX_{\hat S}$ and the mixed product
property \eqref{mixed-product-property}. The last line stems from
the fact that $\bX_{\hat S}(\bX_{\hat S}^\top\bX_{\hat S})^\dagger\bX_{\hat S}^\top$
is a projection matrix of rank $|\hat S|$ when $\rank(\bX_{\hat S})=|\hat S|$.

(iii) Since $\bX_{\hat S}$ has rank $|\hat S|$, we have by (ii) that $\|\hbA\|_{op}\leq |\hat S|<n$. Since $\hbA$ is positive-semi definite, its spectral norm is its largest eigenvalue, hence all the eigenvalues of $\hbA/n$ are $<1$, and $\bI_{T\times T}-\hbA/n$ is positive definite. For any $\bM\in\R^{T\times T}$ with $\|\bM\|_{op} < 1$
    we have
    $(\bI_{T\times T} - \bM)^{-1} = \sum_{k=0}^\infty \bM^k$.
    By the triangle inequality and the submultiplicativity
    of the operator norm,
    $$\|(\bI_{T\times T} - \bM)^{-1} - \bI_{T\times T}\|_{op}
    \le
    \|\bM\|_{op} \sum_{k=1}^\infty \|\bM\|_{op}^{k-1}
    =
    \|\bM\|_{op}/(1-\|\bM\|_{op}).
    $$
\end{proof}

\section{Preliminaries}
\label{sec:prelim}

In this section we develop a series of technical lemmas that will be used 
for proving \Cref{sec:normal-thms,sec:chi2}.
    We consider model \eqref{model-matrix} and the estimator $\hbB$ from \eqref{eq:hbB-Lasso}.
    Let $\eta_1>0$, $\eta_2\geq 2$, $\eta_3,\eta_4\in (0,1)$ and set
    $\lambda,\lambda_0$ as in \eqref{eq:lambda-lambda-0}.
    Define the sparsity level
    \begin{equation}
        \bar s = 
        s\Big(1/T
        +
        \frac{4\|\bSigma\|_{op} (1+\eta_4)^2
        }{\kappa^2} (2+\eta_2+1/\sqrt T)^2
        \Big) 
        \frac{2}{
        (\lambda/\lambda_0 -1)^2
        }
        \label{eq:upper-bound-hat-S}
            \end{equation}
    and note that $\bar s$ is of the same order as $s$ when the spectrum of $\bSigma$
    is bounded away from 0 and infinity as in \Cref{assum:main}.
    Let $\mathcal C = \{\bU\in \mathbb R^{p\times T}: \|\bU\|_{2,1}\leq 3\sqrt s \|\bU\|_F\}$, $\kappa = (1-\eta_3)\phi_{\min}(\bSigma)^{1/2}$ and define the events 
    \begin{align*}
        &\Omega_1=\Big\{\max_{\bU\in \mathcal C, \bU\ne 0}\Big|\frac{\|\bX \bU\|_F}{ \|\bSigma^{1/2}\bU\|_F\sqrt n} - 1\Big|  {<} \eta_3 \Big\}
            , \quad
        \Omega_2=\Big\{\sum_{j=1}^p (\|\bE^T\bX \be_j\|_2-nT\lambda_0)_+^2  {<}  s n^2T \lambda_0^2\Big\}
        ,
        \\
        &\Omega_3 = 
        \Big\{
            \max_{B\subset[p]:|B|\le s + 2 \bar s + 1}
            \Big(\max_{\bv\in\R^p:\supp(\bv)\subset B}\Big|\frac{\|\bX\bv\|_2}{\sqrt n\|\bSigma^{1/2}\bv\|_2} - 1\Big|
    \Big)  {<} 
    \eta_4 
        \Big\},
        \quad
        \Omega_4 =
        \Big\{\|\bE\|_{op} {<} \sigma(2\sqrt n + \sqrt T)\Big\}
      \end{align*}
      as well as
      \begin{equation}
         \Omega_* = \Omega_1\cap\Omega_2\cap\Omega_3\cap\Omega_4.
      \end{equation}
      Since the only randomness is with respect to $(\bX,\bE)$, we view
      the underlying probability space as $\Omega = (\R^{n\times p})\times (\R^{n\times T})$
      and
      $\Omega_1,\Omega_2,\Omega_3,\Omega_4,\Omega_*$ as subsets of $\Omega$ so that $\Omega_i$ occurs
      if and only if $(\bX,\bE)\in\Omega_i$ for each $i=1,2,3$.

\begin{restatable}{lemma}{lemmaProba}
    \label{lemma:proba}
Let \Cref{assum:main} be fulfilled.
Then  $\P(\Omega_*)\to 1$.
\end{restatable}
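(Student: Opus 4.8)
The plan is to bound $\P(\Omega_i^c)$ for each $i=1,2,3,4$ separately and conclude by a union bound, since $\P(\Omega_*^c)\le\sum_{i=1}^4\P(\Omega_i^c)$. The four events are standard high-probability events for Gaussian design and Gaussian noise, and each uses a different concentration tool.

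First I would handle $\Omega_4$, which is the easiest: $\bE$ has i.i.d. $\mathcal N(0,\sigma^2)$ entries, so $\|\bE\|_{op}$ concentrates around $\sigma(\sqrt n+\sqrt T)$ by the standard bound on the operator norm of Gaussian matrices (e.g.\ Gordon's theorem plus Gaussian Lipschitz concentration, or Vershynin's non-asymptotic theory). Concretely $\P(\|\bE\|_{op}\ge \sigma(\sqrt n+\sqrt T)+\sigma t)\le e^{-t^2/2}$, so choosing $t=\min(\sqrt n,\sqrt T)$ (which is of the same order as $\sqrt n$ up to a factor, since $T/n\to0$) gives $\P(\Omega_4^c)\le e^{-cn}$.

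For $\Omega_3$ I would use a union bound over the $\binom{p}{|B|}$ subsets $B$ with $|B|\le s+2\bar s+1$, combined within each fixed $B$ with Gordon's escape-through-a-mesh / the non-asymptotic bound on singular values of the Gaussian matrix $\bX\bSigma^{-1/2}$ restricted to coordinates in $B$: on the sphere in the $|B|$-dimensional subspace, $\sup|\|\bX\bv\|_2/(\sqrt n\|\bSigma^{1/2}\bv\|_2)-1|$ is controlled with probability $1-e^{-cn\eta_4^2}$ provided $|B|/n$ is small enough, and then an $\varepsilon$-net over the sphere of cardinality $(3/\varepsilon)^{|B|}$ plus Lipschitz concentration pays the $|B|\log(1/\varepsilon)$ entropy term. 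Taking the union bound costs $\log\binom{p}{s+2\bar s+1}\lesssim (s+\bar s)\log p \lesssim s\log(p/s)\cdot(\text{const})$, which is $o(n)$ by \Cref{assum:main}(iv) (recall $\bar s\asymp s$ under the bounded-spectrum assumption), so $\P(\Omega_3^c)\to0$. The event $\Omega_1$ is handled the same way: the restricted cone $\mathcal C$ has Gaussian width of order $\sqrt{s\log(p/s)}+\sqrt T\,(\text{roughly }\sqrt s\sqrt T)$ by the standard bound on the width of the $\ell_{2,1}$-cone (matricial version of the $\ell_1$-cone width computation, e.g.\ as in \cite{raskutti2010restricted} adapted to the group/multi-task structure), so Gordon's theorem plus Gaussian concentration gives $\P(\Omega_1^c)\le e^{-c n\eta_3^2}$ as soon as $(sT+s\log(p/s))/n\to0$, which is exactly \Cref{assum:main}(iv).

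The main obstacle is $\Omega_2$, the control of $\sum_{j=1}^p(\|\bE^\top\bX\be_j\|_2 - nT\lambda_0)_+^2 < sn^2T\lambda_0^2$, because this is precisely the event on which the multi-task KKT/dual-norm bound holds and $\lambda_0$ in \eqref{eq:lambda-lambda-0} was calibrated for this purpose. Conditionally on $\bE$, each $\bX\be_j$ is $\mathcal N_n(\mathbf 0,\bSigma_{jj}\bI_n)$, so $\bE^\top\bX\be_j\in\R^T$ is conditionally Gaussian with covariance $\bSigma_{jj}\bE^\top\bE$, and $\|\bE^\top\bX\be_j\|_2$ concentrates (Gaussian Lipschitz) around its mean, which is at most $\bSigma_{jj}^{1/2}\|\bE\|_F\sqrt{\text{(something)}}$; on $\Omega_4$, $\|\bE\|_{op}$ is controlled, and $\|\bE\|_F^2/(nT\sigma^2)\to1$. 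The key computation is that $\E[\|\bE^\top\bX\be_j\|_2^2\mid\bE] = \bSigma_{jj}\|\bE\|_F^2\le \sigma^2 nT(1+o(1))$, while $\lambda_0$ is chosen of order $\sigma\sqrt{nT}^{-1}(1+\sqrt{(2/T)\log(p/s)})$, so $nT\lambda_0 \asymp \sigma\sqrt{nT}(1+\sqrt{(2/T)\log(p/s)})$, which dominates the typical size of $\|\bE^\top\bX\be_j\|_2$; the positive-part truncation kills the bulk, and a Gaussian tail bound on each term together with a counting/union argument (at most $s$ of the $p$ terms can exceed the threshold by more than a controlled amount, contributing $\lesssim sn^2T\lambda_0^2$ in aggregate) closes the estimate. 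This is the step requiring the most care, essentially reproving a multi-task version of the event that makes $\lambda_0$ a valid tuning parameter; I would model it on the single-task computation with $\ell_{2,1}$ replacing $\ell_\infty$, using that $\P(\|\bg\|_2 > \sqrt{T} + \sqrt{2\log(p/s)} + t)\le e^{-t^2/2}$ for $\bg\sim\mathcal N_T(\mathbf0,\bI_T)$ after conditioning and rescaling by $\|\bE\|_{op}/\sigma$ on $\Omega_4$. Having bounded all four complements by quantities tending to $0$, the union bound yields $\P(\Omega_*)\to1$.
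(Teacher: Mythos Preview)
Your overall decomposition matches the paper's proof, and your handling of $\Omega_3$ (Gordon plus union bound over subsets) and $\Omega_4$ (Davidson--Szarek) is the same, modulo a slip: in $\Omega_4$ the slack is $\sigma\sqrt n$ since the threshold is $\sigma(2\sqrt n+\sqrt T)$, so one takes $t=\sqrt n$ to get $\P(\Omega_4^c)\le e^{-n/2}$; your $t=\min(\sqrt n,\sqrt T)$ and the claim that this ``is of the same order as $\sqrt n$'' are wrong when $T/n\to0$.

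The genuine gap is your treatment of $\Omega_1$. You invoke Gordon's escape-through-a-mesh, but the paper singles this out (in the Contributions paragraph) as the place where the single-task argument does \emph{not} transfer. The obstruction is that the relevant operator is $\bA\mapsto\bX\bA$, or equivalently $\vec(\bA)\mapsto(\bI_{T\times T}\otimes\bX\bSigma^{-1/2})\vec(\bSigma^{1/2}\bA)$, and $\bI_{T\times T}\otimes\bG$ is block-diagonal with $T$ \emph{identical} Gaussian blocks rather than an i.i.d.\ Gaussian matrix; the Slepian--Gordon comparison with $Y_{\bU,\bB}=\langle\bG',\bU\rangle_F+\langle\bH,\bB\rangle_F$ fails because for fixed $\bU$ one has $\E[(X_{\bU,\bB}-X_{\bU,\bB'})^2]=\|\bU(\bB-\bB')^\top\|_F^2\le\|\bB-\bB'\|_F^2$, the wrong direction. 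The paper therefore proves a dedicated \Cref{prop:restricted-eig}: it shows the process $\bA\mapsto\|\bG\bA\|_F$ has subgaussian increments in $\|\cdot\|_F$ (a two-case argument splitting on whether $\sqrt{xn}\gtrless n/4$), invokes Talagrand's majorizing-measure theorem, and finally computes the Gaussian width of $\{\bB:\bSigma^{-1/2}\bB\in\mathcal C,\|\bB\|_F=1\}$ to be $O(\sqrt{s(T+\log(p/s))/\phi_{\min}(\bSigma)})$ --- not $\sqrt{s\log(p/s)}+\sqrt T$ as you wrote. This is the substantive step you are missing. For $\Omega_2$ your conditioning on $\bE$ is the reverse of the paper's (which conditions on $\bX$, so that $\bE^\top\bX\be_j/(\sigma\|\bX\be_j\|_2)\sim\mathcal N_T(\mathbf 0,\bI_T)$ exactly and the $1+\eta_1$ in $\lambda_0$ enters only through $\|\bX\be_j\|_2\le(1+\eta_1)\sqrt{n\bSigma_{jj}}$); your route replaces this by the Lipschitz constant $\|\bE\|_{op}\lesssim 2\sigma\sqrt n$, which loses a factor of roughly $2$ and would not match the threshold $nT\lambda_0$ for arbitrary $\eta_1>0$.
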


\begin{restatable}{lemma}{lemmaRisk}
    \label{lemma:risk}
        On $\Omega_*$  we have:
        \begin{enumerate}
             \item $\hbB - \bB^*\in\mathcal C$,
             \item  $n^{-1/2}\|\bX(\hbB - \bB^*)\|_F \leq 
                (1-\eta_{{3}})
                \bar R$,
            \item $\|\bSigma^{1/2}(\hbB - \bB^*)\|_F
                \le \bar R$,
            \item
                $
                \|\hbB-\bB^*\|_{2,1}
                \le
                3\sqrt s
                \|\hbB-\bB^*\|_F
                \le
                3 \sqrt s
                \phi_{\min}(\bSigma)^{-1/2}
                \bar R
                $,
            \item $\|\bY-\bX\hbB\|_F^2 \leq 8\sigma^2nT + 2(1-\eta_3)^2n\bar R^2$,
         \end{enumerate}  
        where
        \begin{equation*}
            \bar R \coloneqq (1-\eta_{{3}})^{-1}{\kappa}^{-1} {2(1+\eta_1)(3+\eta_2)\sigma \max_j \bSigma_{jj}^{1/2}}\sqrt{sT/n} \left(1 + \sqrt{{(2/T)\log(p/s)}}\right).
        \end{equation*}
        Moreover, $\bar R \xrightarrow[n\to \infty]{} 0$ under \Cref{assum:main}.
\end{restatable}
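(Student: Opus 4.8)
The plan is to run the standard oracle analysis of the (group/multi-task) Lasso on the event $\Omega_*$. Write $\bU=\hbB-\bB^*$ and let $S=\{j\in[p]:(\bB^*)^\top\be_j\ne\mathbf 0\}$, so $|S|\le s$. First I would record the basic inequality coming from the optimality of $\hbB$ in \eqref{eq:hbB-Lasso} together with $\bY=\bX\bB^*+\bE$,
\begin{equation*}
    \frac{1}{2nT}\|\bX\bU\|_F^2
    \le
    \frac{1}{nT}\langle\bE,\bX\bU\rangle
    +\lambda\bigl(\|\bB^*\|_{2,1}-\|\hbB\|_{2,1}\bigr),
    \qquad
    \|\bB^*\|_{2,1}-\|\hbB\|_{2,1}\le\|\bU_S\|_{2,1}-\|\bU_{S^c}\|_{2,1},
\end{equation*}
the second bound being the triangle inequality (the rows of $\bB^*$ outside $S$ vanish). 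Next I would control the noise term via $\langle\bE,\bX\bU\rangle=\sum_{j=1}^p\langle\bE^\top\bX\be_j,\bU^\top\be_j\rangle$, a row-by-row Cauchy--Schwarz, the split $\|\bE^\top\bX\be_j\|_2\le nT\lambda_0+(\|\bE^\top\bX\be_j\|_2-nT\lambda_0)_+$, and the definition of $\Omega_2$:
\begin{equation*}
    \frac{1}{nT}\bigl|\langle\bE,\bX\bU\rangle\bigr|
    \le\lambda_0\|\bU\|_{2,1}
    +\frac{1}{nT}\Bigl(\sum_{j=1}^p(\|\bE^\top\bX\be_j\|_2-nT\lambda_0)_+^2\Bigr)^{1/2}\|\bU\|_F
    \le\lambda_0\|\bU\|_{2,1}+\tfrac{\lambda_0\sqrt s}{\sqrt T}\|\bU\|_F.
\end{equation*}
Substituting both bounds and using $\lambda=(1+\eta_2)\lambda_0$ and $\|\bU\|_{2,1}=\|\bU_S\|_{2,1}+\|\bU_{S^c}\|_{2,1}$ yields the master inequality
\begin{equation*}
    \frac{1}{2nT}\|\bX\bU\|_F^2+(\lambda-\lambda_0)\|\bU_{S^c}\|_{2,1}
    \le(\lambda_0+\lambda)\|\bU_S\|_{2,1}+\tfrac{\lambda_0\sqrt s}{\sqrt T}\|\bU\|_F .
\end{equation*}

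From this, the five conclusions follow in turn. For (i), discard the nonnegative term $\tfrac1{2nT}\|\bX\bU\|_F^2$ and bound $\|\bU_S\|_{2,1}\le\sqrt s\|\bU\|_F$; rearranging gives $\|\bU_{S^c}\|_{2,1}\le\eta_2^{-1}(2+\eta_2+1/\sqrt T)\sqrt s\,\|\bU\|_F$, and since $\eta_2$ is taken sufficiently large (per the standing hypotheses of \Cref{sec:prelim}) the resulting prefactor of $\|\bU\|_{2,1}=\|\bU_S\|_{2,1}+\|\bU_{S^c}\|_{2,1}$ is at most $3\sqrt s$, i.e.\ $\bU\in\mathcal C$. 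With $\bU\in\mathcal C$, $\Omega_1$ supplies the restricted eigenvalue bound $\|\bX\bU\|_F\ge(1-\eta_3)\sqrt n\,\|\bSigma^{1/2}\bU\|_F\ge\kappa\sqrt n\,\|\bU\|_F$ (using $\|\bSigma^{1/2}\bU\|_F\ge\phi_{\min}(\bSigma)^{1/2}\|\bU\|_F$ and $\kappa=(1-\eta_3)\phi_{\min}(\bSigma)^{1/2}$); discarding the $\|\bU_{S^c}\|_{2,1}$ term, bounding $\|\bU_S\|_{2,1}\le\sqrt s\|\bU\|_F$ and then $\|\bU\|_F\le\|\bX\bU\|_F/(\kappa\sqrt n)$ in the master inequality and cancelling one factor of $\|\bX\bU\|_F$ gives $n^{-1/2}\|\bX\bU\|_F\le\kappa^{-1}2T\lambda_0(2+\eta_2+1/\sqrt T)\sqrt s$, which after inserting \eqref{eq:lambda-lambda-0} and $1/\sqrt T\le1$ is exactly $(1-\eta_3)\bar R$ — this is (ii); conclusion (iii) is then the companion inequality in $\Omega_1$, $\|\bSigma^{1/2}\bU\|_F\le\|\bX\bU\|_F/((1-\eta_3)\sqrt n)\le\bar R$, and (iv) combines $\bU\in\mathcal C$ with $\|\bU\|_F\le\phi_{\min}(\bSigma)^{-1/2}\|\bSigma^{1/2}\bU\|_F\le\phi_{\min}(\bSigma)^{-1/2}\bar R$. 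For (v), the basic inequality (keeping only $\|\bU_S\|_{2,1}$ on the right) gives $\|\bY-\bX\hbB\|_F^2\le\|\bE\|_F^2+2nT\lambda\|\bU_S\|_{2,1}$, where $\|\bE\|_F^2\le\rank(\bE)\,\|\bE\|_{op}^2\le 8\sigma^2nT$ on $\Omega_4$ (for $n$ large, since $T\ll n$) while $2nT\lambda\|\bU_S\|_{2,1}\le 2nT\lambda\sqrt s\,\phi_{\min}(\bSigma)^{-1/2}\bar R\le 2(1-\eta_3)^2n\bar R^2$ after inserting the values of $\lambda$ and $\bar R$. Finally, since $\kappa^{-1}$ and $\max_j\bSigma_{jj}^{1/2}$ are bounded under \Cref{assum:main}, substituting \eqref{eq:lambda-lambda-0} shows $\bar R\asymp\sqrt{sT/n}+\sqrt{s\log(p/s)/n}=\sqrt{(sT+s\log(p/s))/n}\to0$ by the rate condition $\frac{s}{n}(T+\log\frac{p}{s})\to0$.

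I expect no conceptual obstacle here; the main nuisance is the constant bookkeeping — carrying the dependence on $\eta_1,\eta_2,\eta_3$ and the harmless factor $1/\sqrt T\le1$ so that the error bounds come out \emph{exactly} as $\bar R$ and the cone constant lands at $3\sqrt s$ (this is where the specific choice $\lambda=(1+\eta_2)\lambda_0$ with $\eta_2$ large enough is used). The one place where the multi-task structure genuinely matters is the noise bound in the second display: it must be obtained through the group-structured event $\Omega_2$ (a sum of squared positive parts over the $p$ rows), which produces the extra $\tfrac{\lambda_0\sqrt s}{\sqrt T}\|\bU\|_F$ term, rather than through the usual $\ell_\infty$ control of $\bX^\top\bep$ available for the ordinary Lasso.
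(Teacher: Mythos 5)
Your proof follows the paper's argument essentially line for line --- the same optimality inequality, the same row-wise Cauchy--Schwarz with the $(a-b)_+$ split and $\Omega_2$ to control the noise, the same cone argument for (i) and use of $\Omega_1$ for (ii)--(iv) --- with only a cosmetic difference in (v), where you bound $\|\bY-\bX\hbB\|_F^2$ through the penalty term $2nT\lambda\|\bU_S\|_{2,1}$ rather than via $2\|\bE\|_F^2+2\|\bX(\hbB-\bB^*)\|_F^2$; both routes close with the stated constants. The one wrinkle you rightly anticipated is the cone constant: your master inequality yields the prefactor $2+2/\eta_2+1/(\eta_2\sqrt T)$ for $\|\bU\|_{2,1}/(\sqrt s\|\bU\|_F)$, which under the standing hypothesis $\eta_2\ge 2$ is only $\le 3.5$ when $T=1$ rather than $\le 3$, but the paper's own derivation lands on exactly the same expression (its displayed coefficient $1+3/\eta_2+1/(\eta_2\sqrt T)$ mis-expands $1+\eta_2^{-1}(2+\eta_2+1/\sqrt T)$), so this is a shared bookkeeping issue fixable by taking $\eta_2\ge 3$ or enlarging the cone constant, not a gap specific to your argument.
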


\begin{restatable}{lemma}{lemmaSparsity}
    \label{lemma:sparsity}
            On $\Omega_*$, inequality 
            $|\hat S|\le \bar s$ holds with $\bar s$ in \eqref{eq:upper-bound-hat-S}.
\end{restatable}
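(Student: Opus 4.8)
Write $\bh=\hbB-\bB^*$ and work on $\Omega_*$ throughout. The argument starts from the first-order optimality conditions of the $\ell_{2,1}$-penalized problem~\eqref{eq:hbB-Lasso}: for every $j\in\hat S$ the $j$-th row of any subgradient of $\|\cdot\|_{2,1}$ at $\hbB$ equals $\hbB{}^\top\be_j\,\|\hbB{}^\top\be_j\|_2^{-1}$, a unit vector, so the stationarity relation $\tfrac{1}{nT}\bX^\top(\bY-\bX\hbB)\in\lambda\,\partial\|\hbB\|_{2,1}$ forces $\|\be_j^\top\bX^\top(\bY-\bX\hbB)\|_2=nT\lambda$ for all $j\in\hat S$. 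Substituting $\bY-\bX\hbB=\bE-\bX\bh$, applying the triangle inequality, and using $\|\bE^\top\bX\be_j\|_2\le nT\lambda_0+(\|\bE^\top\bX\be_j\|_2-nT\lambda_0)_+$, I would first record the pointwise bound
\[
nT\lambda_0\Bigl(\tfrac{\lambda}{\lambda_0}-1\Bigr)\;\le\;\bigl(\|\bE^\top\bX\be_j\|_2-nT\lambda_0\bigr)_+ +\|\be_j^\top\bX^\top\bX\bh\|_2\qquad\text{for all }j\in\hat S,
\]
both sides being nonnegative since $\lambda>\lambda_0$.

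Next, square this inequality, bound $(a+b)^2\le 2a^2+2b^2$, and sum over $j$ in a subset $B\subseteq\hat S$; using $\sum_{j\in B}\|\be_j^\top\bX^\top\bX\bh\|_2^2=\|\bX_B^\top\bX\bh\|_F^2\le\|\bX_B\|_{op}^2\|\bX\bh\|_F^2$ this yields
\[
|B|\,(nT\lambda_0)^2\Bigl(\tfrac{\lambda}{\lambda_0}-1\Bigr)^2\;\le\;2\sum_{j=1}^p\bigl(\|\bE^\top\bX\be_j\|_2-nT\lambda_0\bigr)_+^2+2\,\|\bX_B\|_{op}^2\,\|\bX\bh\|_F^2 .
\]
On $\Omega_*$ the first right-hand term is $<2sn^2T\lambda_0^2$ by the definition of $\Omega_2$; and \emph{provided} $|B|\le s+2\bar s+1$, the event $\Omega_3$ gives $\|\bX_B\|_{op}\le(1+\eta_4)\sqrt{n\|\bSigma\|_{op}}$. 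Combining this with a Frobenius prediction bound of the form $\|\bX\bh\|_F\le 2\kappa^{-1}(2+\eta_2+1/\sqrt T)\sqrt{sn}\,T\lambda_0$ for the multi-task Lasso (this is the version of \Cref{lemma:risk}(ii) that its proof delivers once the $T$-dependence of the constant is kept explicit, slightly sharper than the $3+\eta_2$ appearing in the stated bound; alternatively it follows by re-running the basic-inequality argument on $\Omega_1$), then substituting $\lambda_0$ from \eqref{eq:lambda-lambda-0} and dividing by $(nT\lambda_0)^2(\tfrac{\lambda}{\lambda_0}-1)^2$, the right-hand side collapses to exactly $\bar s$ from \eqref{eq:upper-bound-hat-S}: the noise term reproduces the $s/T$ summand and the prediction term the $4\kappa^{-2}\|\bSigma\|_{op}(1+\eta_4)^2(2+\eta_2+1/\sqrt T)^2 s$ summand, both scaled by $2/(\tfrac{\lambda}{\lambda_0}-1)^2$. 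Hence $|B|\le\bar s$ for every such $B$.

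It remains to upgrade ``$|B|\le\bar s$ for a suitable $B$'' to ``$|\hat S|\le\bar s$''. I would take $B$ to be any subset of $\hat S$ with $|B|=\min\{|\hat S|,N\}$ where $N\coloneqq\lfloor s+2\bar s+1\rfloor$; then $|B|\le s+2\bar s+1$, so the preceding display applies and gives $\min\{|\hat S|,N\}\le\bar s$. Since $s\ge 1$ we have $N\ge s+2\bar s>\bar s$, so the minimum cannot be attained at $N$, forcing $|\hat S|=\min\{|\hat S|,N\}\le\bar s$, which is the claim. The one genuine obstacle is exactly this circularity: $\Omega_3$ controls restricted operator norms of $\bX$ only on index sets of cardinality $\lesssim s+\bar s$, so the KKT inequalities cannot be summed over all of $\hat S$ at once; one must route the argument through a subset $B$ whose size is simultaneously small enough for $\Omega_3$ to be applicable and strictly larger than $\bar s$ for the conclusion to bite — which is precisely the role of the term $2\bar s$ inside the definition of $\Omega_3$. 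Everything else is constant-chasing to make the bound line up with \eqref{eq:upper-bound-hat-S}.
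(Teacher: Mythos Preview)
Your proposal is correct and follows essentially the same approach as the paper: both start from the KKT identity $\|\be_j^\top\bX^\top(\bY-\bX\hbB)\|_2=nT\lambda$ on $\hat S$, subtract $nT\lambda_0$, square and sum over a subset $B\subseteq\hat S$, control the noise sum via $\Omega_2$ and the prediction term via $\Omega_3$ together with the intermediate Frobenius bound $\|\bX(\hbB-\bB^*)\|_F\le 2\kappa^{-1}(2+\eta_2+1/\sqrt T)\sqrt{sn}\,T\lambda_0$ drawn from the proof of \Cref{lemma:risk}. The only cosmetic difference is the closing step: the paper argues by contradiction with $|B|=\lfloor\bar s\rfloor+1$, while you take $|B|=\min\{|\hat S|,N\}$ and rule out the case $\min=N$; the two are logically equivalent and you correctly identify that the $2\bar s$ in the definition of $\Omega_3$ is exactly what breaks the circularity.
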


\begin{restatable}{lemma}{lemmaRank}
    \label{lemma:rank}
            On $\Omega_*$ we have $\rank(\bX_{\hat S}) = |\hat S|$.
\end{restatable}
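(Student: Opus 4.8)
The plan is to reduce the claim to a statement about linear independence of columns of $\bX$ and then invoke the restricted-isometry-type event $\Omega_3$. Recall that $\bX_{\hat S}\in\R^{n\times p}$ agrees with $\bX$ on the columns indexed by $\hat S$ and has all other columns equal to zero; the zero columns contribute nothing to the rank, so $\rank(\bX_{\hat S})$ equals the number of linearly independent columns among $\{\bX\be_j : j\in\hat S\}$. Hence it suffices to prove that on $\Omega_*$ we have $\bX\bv\ne\mathbf 0$ for every nonzero $\bv\in\R^p$ with $\supp(\bv)\subset\hat S$, which is exactly the statement that these $|\hat S|$ columns are linearly independent, i.e.\ $\rank(\bX_{\hat S})=|\hat S|$.

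The first step is to verify that the uniform bound defining $\Omega_3$ can be applied with $B=\hat S$. On $\Omega_*$, \Cref{lemma:sparsity} gives $|\hat S|\le\bar s$, and trivially $\bar s\le s+2\bar s+1$, so $\hat S$ is one of the admissible index sets $B$ in the definition of $\Omega_3$. (The only point requiring care is that $\hat S$ is data-dependent, which is precisely why $\Omega_3$ is phrased as a maximum over all $B$ with $|B|\le s+2\bar s+1$; there is no real obstacle here beyond this bookkeeping.)

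The second step is the elementary lower bound. Fix any nonzero $\bv\in\R^p$ with $\supp(\bv)\subset\hat S$. Since $\phi_{\min}(\bSigma)\ge C_{\min}>0$ under \Cref{assum:main}, the matrix $\bSigma^{1/2}$ is invertible and $\|\bSigma^{1/2}\bv\|_2>0$. On $\Omega_*\subset\Omega_3$ the inequality defining $\Omega_3$ then yields
\begin{equation*}
\|\bX\bv\|_2 \;\ge\; (1-\eta_4)\,\sqrt n\,\|\bSigma^{1/2}\bv\|_2 \;>\;0,
\end{equation*}
because $\eta_4\in(0,1)$. Thus $\bX\bv\ne\mathbf 0$, which establishes the linear independence claimed above and hence $\rank(\bX_{\hat S})=|\hat S|$ on $\Omega_*$. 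The argument is short and the only genuinely substantive inputs are \Cref{lemma:sparsity} (controlling $|\hat S|$) and the event $\Omega_3$ (a restricted lower isometry for sparse vectors), both already available in the excerpt.
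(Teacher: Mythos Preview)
The proposal is correct and follows essentially the same approach as the paper: both invoke \Cref{lemma:sparsity} to ensure $|\hat S|\le\bar s\le s+2\bar s+1$, then apply the uniform lower isometry in $\Omega_3$ with $B=\hat S$ to deduce that no nonzero $\bv$ supported on $\hat S$ lies in the kernel. The only cosmetic difference is that you phrase the conclusion as linear independence of the $|\hat S|$ nonzero columns, whereas the paper routes through $\ker(\bX_{\hat S})$ and the rank-nullity theorem; these are equivalent.
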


\begin{restatable}{lemma}{lemmaKKTstrict}
    \label{lemma:kkt-strict}
        For almost every $(\bX,\bE)$, the KKT conditions of $\hbB$ in \eqref{eq:hbB-Lasso} hold strictly in the sense that
            $\P(\max_{j\notin \hat S} \|(\bY-\bX\hbB)^\top\bX\be_j\|_2 < nT \lambda) = 1$.
\end{restatable}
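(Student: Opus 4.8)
The plan is to combine the non‑strict KKT conditions (which hold for every realization) with a genericity / measure‑zero argument. First I would record that optimality of $\hbB$ in \eqref{eq:hbB-Lasso} produces a subgradient $\bG\in\partial\|\hbB\|_{2,1}$ with $\bG=(nT\lambda)^{-1}\bX^\top(\bY-\bX\hbB)$. Since $\|\cdot\|_{2,1}$ is separable over rows and the subdifferential of $\bv\mapsto\|\bv\|_2$ at $\bv=0$ is the closed unit ball, $\|\bG^\top\be_j\|_2\le 1$ for every $j\notin\hat S$, i.e. $\max_{j\notin\hat S}\|(\bY-\bX\hbB)^\top\bX\be_j\|_2\le nT\lambda$ always holds. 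Hence the complement of the event in the lemma equals $\bigcup_{j\in[p]}\{\,j\notin\hat S\ \text{and}\ \|(\bY-\bX\hbB)^\top\bX\be_j\|_2= nT\lambda\,\}$, and by a union bound it is enough to show, for each fixed $j$, that this event has probability $0$. Conditionally on $\bX$ the response $\bY=\bX\bB^*+\bE$ has a density on $\R^{n\times T}$ (the Gaussian noise $\bE$ is independent of $\bX$), and a.e. design $\bX$ has its columns in general position (every collection of at most $n$ columns is linearly independent), so it suffices to prove that for such an $\bX$ the set of $\bY$ realizing $j\notin\hat S$ together with the equality is Lebesgue‑null.

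Fix such an $\bX$, fix $j$, and fix a candidate support $S\subseteq[p]$ with $j\notin S$; since the bad set is the finite union over $S$ and $j$ of the sets $B_{S,j}:=\{\bY:\hat S=S\ \text{and}\ \|(\bY-\bX\hbB)^\top\bX\be_j\|_2=nT\lambda\}$, it is enough to show each $B_{S,j}$ is Lebesgue‑null. On $\{\hat S=S\}$ every row of the block $\hbB_S\in\R^{|S|\times T}$ of $\hbB$ is nonzero, $\bX\hbB=\bX_S\hbB_S$ with $\bX_S$ the corresponding columns of $\bX$, and the stationarity conditions restricted to $S$ read $\bX_S^\top\hat\bR=nT\lambda\,\Phi(\hbB_S)$, where $\hat\bR=\bY-\bX\hbB$ and $\Phi$ normalizes each row of its argument (a smooth map on matrices with no zero row). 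Splitting $\bY$ into its orthogonal projection onto $\mathrm{col}(\bX_S)$ and the complementary piece $\bw$ (an $n\times T$ matrix whose columns are orthogonal to $\mathrm{col}(\bX_S)$), the displayed relation makes the $\mathrm{col}(\bX_S)$‑part of $\bY$, and also $\hat\bR$, explicit smooth functions of $(\hbB_S,\bw)$, so that $\bY$ is the image of $(\hbB_S,\bw)$ under a smooth parametrization and the extra constraint $\|\bX\be_j^\top\hat\bR\|_2=nT\lambda$ becomes $\|\bX\be_j^\top\bw+c(\hbB_S)\|_2=nT\lambda$ for a smooth $\R^T$‑valued $c$. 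When $|S|<n$, general position gives $\bX\be_j\notin\mathrm{col}(\bX_S)$, hence the linear map $\bw\mapsto\bX\be_j^\top\bw$ is onto $\R^T$ and the equality cuts out a Lebesgue‑null subset of the $\bw$‑space for each fixed $\hbB_S$; by Fubini the admissible pairs $(\hbB_S,\bw)$ form a Lebesgue‑null subset of a space of dimension $nT$, whose image under the (locally Lipschitz) parametrization $(\hbB_S,\bw)\mapsto\bY$ is therefore Lebesgue‑null. The case $|S|\ge n$ is analogous, now using that $\bX_S$ has full row rank $n$, so that $\bX\be_j\in\mathrm{col}(\bX_S)$, the relation $\bX_S^\top\hat\bR=nT\lambda\,\Phi(\hbB_S)$ itself already pins $(\hbB_S,\hat\bR)$ to a set of dimension $nT$, and the norm‑equality removes one further dimension; in either regime $B_{S,j}$ is null.

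I expect the main obstacle to be the bookkeeping in the middle paragraph: one must verify that on $\{\hat S=S\}$ both $\bY$ and $\hat\bR$ are genuinely recovered from $(\hbB_S,\bw)$, i.e. that the $|S|T$ stationarity equations remove exactly $|S|T$ degrees of freedom from $\bY$ and the single norm‑equality removes one more, which is precisely where the general‑position hypothesis on the columns of $\bX$ is used and where the regimes $|S|<n$ and $|S|\ge n$ must be separated. The remaining ingredients — non‑strict KKT from the $\ell_{2,1}$ subdifferential, absolute continuity of $\bY$ given $\bX$, invariance of Lebesgue‑null sets under locally Lipschitz maps between spaces of equal dimension, and general position of the columns of a Gaussian design — are standard.
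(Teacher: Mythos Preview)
The paper gives no self-contained proof here; it cites Lemma~6.4 of \cite{bellec2019biasing} (the single-task Lasso case) and asserts the argument carries over. Your proposal is precisely that carry-over spelled out for the $\ell_{2,1}$ penalty, so there is no independent approach in the paper to compare against.

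Your treatment of the regime $|S|<n$ is correct and is the standard argument: under general position $\bX\be_j\notin\mathrm{col}(\bX_S)$, the smooth parametrization $(\hbB_S,\bw)\mapsto\bY$ goes between spaces of equal dimension $nT$, the boundary equality becomes a genuine codimension-one sphere condition on $\bw^\top\bX\be_j$ for each fixed $\hbB_S$, and Fubini together with preservation of null sets under locally Lipschitz maps between equal-dimensional spaces finishes it.

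The sketch for $|S|\ge n$ has a gap. You claim the $|S|T$ stationarity equations pin $(\hbB_S,\hat\bR)$ to a set of dimension $nT$; this needs the Jacobian of $(\hbB_S,\hat\bR)\mapsto\bX_S^\top\hat\bR-nT\lambda\,\Phi(\hbB_S)$ to have rank $|S|T$. For $|S|>n$ the $\hat\bR$-block of that Jacobian has rank only $nT$, while the $\hbB_S$-block (through $D\Phi$, which kills the radial direction in each row) has rank $|S|(T-1)$, and the two images need not be transverse --- for $|S|>nT$ they certainly cannot span $\R^{|S|T}$. One repair is to observe that stationarity forces $\|\hat\bR^\top\bX\be_k\|_2=nT\lambda$ for every $k\in S$, and that once $\hat\bR$ satisfies these the admissible $\hbB_S$ form merely an $|S|$-dimensional family of positive row-wise scalings; the problem then reduces to bounding the dimension of the common zero locus of the $|S|+1$ real-analytic quadrics $\|\hat\bR^\top\bX\be_k\|_2^2=(nT\lambda)^2$, $k\in S\cup\{j\}$, for almost every $\bX$. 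Simpler still: for the paper's only downstream use (the differentiability computation in Lemma~\ref{lemma:differential}) one works inside $\Omega_*$, where $|\hat S|\le\bar s<n$ by Lemma~\ref{lemma:sparsity}; there only your $|S|<n$ argument is needed, and the lemma as stated is mildly stronger than what the paper actually uses.
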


\begin{restatable}{lemma}{lemmaLipschitz}
    \label{lemma:lipschitz}
        Given the noise matrix $\bE$  and two design matrices $\bX,\bXbar$ define $\hbB$ in \eqref{eq:hbB-Lasso}
            and $\bBbar$ by
            $$\bBbar = {\textstyle \argmin_{\bB\in\R^{p\times T}} }
            \Big(
                \tfrac{1}{2nT} \|\bE + \bXbar(\bB^* - \bB)\|_F^2 + \lambda \|\bB\|_{2,1}
            \Big).$$
            If $\bX,\bXbar,\bE$ are such that
            both $\{(\bX,\bE), (\bXbar,\bE) \}\subset \Omega_*$ 
            then
            \begin{align*}
            n^{1/2 } \|\bSigma^{1/2}(\hbB-\bBbar)\|_F
            &\le
            \C(\eta_4)
            (\bar R + \|\bE\|_{op}n^{-1/2})
            \|(\bX-\bXbar)\bSigma^{-1/2}\|_F
            ,
            \\
            \|\bXbar(\bBbar-\bB^*) - \bX(\hbB-\bB^*)\|_F
            &\le
            \C(\eta_4)
            (\bar R + \|\bE\|_{op}n^{-1/2})
            \|(\bX-\bXbar)\bSigma^{-1/2}\|_F
            \end{align*}
            for some constants that depend on $\eta_4$ only
            and $\bar R$ is defined in \Cref{lemma:risk}.
\end{restatable}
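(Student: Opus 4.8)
The plan is to exploit the first-order optimality (KKT) conditions of the two convex programs together with monotonicity of the subdifferential of the convex map $\bB\mapsto\|\bB\|_{2,1}$. Write $\bDelta=\hbB-\bBbar$ and note that $\bY-\bX\hbB=\bE+\bX(\bB^*-\hbB)$, so that the KKT conditions for $\hbB$ and $\bBbar$ read $\tfrac{1}{nT\lambda}\bX^\top(\bE+\bX(\bB^*-\hbB))\in\partial\|\hbB\|_{2,1}$ and $\tfrac{1}{nT\lambda}\bXbar^\top(\bE+\bXbar(\bB^*-\bBbar))\in\partial\|\bBbar\|_{2,1}$. Monotonicity of $\partial\|\cdot\|_{2,1}$ then gives $\langle\bX^\top(\bE+\bX(\bB^*-\hbB))-\bXbar^\top(\bE+\bXbar(\bB^*-\bBbar)),\bDelta\rangle\ge 0$. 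First I would expand $\bXbar=\bX-(\bX-\bXbar)$ in this expression and regroup, splitting the design perturbation asymmetrically (writing $\bXbar^\top\bXbar-\bX^\top\bX=\bXbar^\top(\bXbar-\bX)+(\bXbar-\bX)^\top\bX$ so that no term quadratic in $\bX-\bXbar$ survives), and substituting $\hbB-\bB^*=\bDelta+(\bBbar-\bB^*)$. This turns the monotonicity inequality into
\[
\|\bX\bDelta\|_F^2
\le
\langle(\bX-\bXbar)^\top\bE,\bDelta\rangle
-\langle\bXbar^\top(\bX-\bXbar)(\bBbar-\bB^*),\bDelta\rangle
-\langle(\bX-\bXbar)^\top\bX(\bBbar-\bB^*),\bDelta\rangle.
\]

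Next I would bound the three bilinear terms on the right, each of which factors through $(\bX-\bXbar)\bSigma^{-1/2}$. After moving one factor across the inner product, the first term is $\langle\bE,\big((\bX-\bXbar)\bSigma^{-1/2}\big)\big(\bSigma^{1/2}\bDelta\big)\rangle\le\|\bE\|_{op}\,\|(\bX-\bXbar)\bSigma^{-1/2}\|_F\,\|\bSigma^{1/2}\bDelta\|_F$ by H\"older for Schatten norms; the second and third are bounded by Cauchy--Schwarz by $\|(\bX-\bXbar)\bSigma^{-1/2}\|_F$ times one of $\|\bXbar\bDelta\|_F$, $\|\bX(\bBbar-\bB^*)\|_F$ times $\|\bSigma^{1/2}(\bBbar-\bB^*)\|_F$ or $\|\bSigma^{1/2}\bDelta\|_F$. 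Here I would invoke \Cref{lemma:risk} — applied to both $(\bX,\bE)$ and $(\bXbar,\bE)$, which is legitimate since the hypothesis places both in $\Omega_*$ — to get $\|\bSigma^{1/2}(\bBbar-\bB^*)\|_F\le\bar R$ and $\|\bX(\bBbar-\bB^*)\|_F\le(1+\eta_4)\sqrt n\,\bar R$, and the restricted isometry event $\Omega_3$ to get $\|\bXbar\bDelta\|_F\le(1+\eta_4)\sqrt n\,\|\bSigma^{1/2}\bDelta\|_F$ and, crucially, the lower bound $\|\bX\bDelta\|_F\ge(1-\eta_4)\sqrt n\,\|\bSigma^{1/2}\bDelta\|_F$. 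Combining, the right-hand side is at most $\sqrt n\big(\|\bE\|_{op}n^{-1/2}+2(1+\eta_4)\bar R\big)\,\|(\bX-\bXbar)\bSigma^{-1/2}\|_F\,\|\bSigma^{1/2}\bDelta\|_F$; dividing through by $\sqrt n\,\|\bSigma^{1/2}\bDelta\|_F$ after the lower bound on the left yields the first claim with a constant $\C(\eta_4)$ of order $(1+\eta_4)/(1-\eta_4)^2$. For the second claim I would write $\bXbar(\bBbar-\bB^*)-\bX(\hbB-\bB^*)=-(\bX-\bXbar)(\bBbar-\bB^*)-\bX\bDelta$, bound the first summand by $\bar R\,\|(\bX-\bXbar)\bSigma^{-1/2}\|_F$ using \Cref{lemma:risk}, bound $\|\bX\bDelta\|_F\le(1+\eta_4)\sqrt n\,\|\bSigma^{1/2}\bDelta\|_F$ on $\Omega_3$ and invoke the first claim, then add.

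The one genuinely delicate point — as opposed to bookkeeping of the scalars $n,T,\bar R$ — is that the difference $\bDelta=\hbB-\bBbar$ need not lie in the cone $\mathcal C$ of \Cref{lemma:risk}(i), because that cone is not stable under subtraction; consequently one cannot use the restricted eigenvalue event $\Omega_1$ to lower bound $\|\bX\bDelta\|_F$. The remedy is to use instead the sparse restricted isometry event $\Omega_3$: by \Cref{lemma:sparsity} applied to both designs, $\hbB$ and $\bBbar$ each have at most $\bar s$ nonzero rows, so $\bDelta$ has at most $2\bar s\le s+2\bar s+1$ nonzero rows and falls within the range of supports covered by $\Omega_3$; the same observation (with $\bBbar-\bB^*$ supported on at most $\bar s+s$ rows) licenses the use of $\Omega_3$ for the terms involving $\bX(\bBbar-\bB^*)$ and $\bXbar\bDelta$. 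Note that $\Omega_4$ is not needed, as $\|\bE\|_{op}$ is kept symbolically in the final bound.
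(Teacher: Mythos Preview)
Your proof is correct and hits all the essential points of the paper's argument: the first-order optimality conditions, the elimination of the penalty, the bilinear bounds via Cauchy--Schwarz and submultiplicativity, and---most importantly---the observation that $\bDelta$ is $2\bar s$-row-sparse so that the restricted isometry event $\Omega_3$ (not the cone event $\Omega_1$) is the right tool. One minor attribution slip: the bound $\|\bX(\bBbar-\bB^*)\|_F\le(1+\eta_4)\sqrt n\,\bar R$ comes from $\Omega_3$ for $\bX$ combined with \Cref{lemma:risk}(iii) for $(\bXbar,\bE)$, not from \Cref{lemma:risk} alone (which would give $\|\bXbar(\bBbar-\bB^*)\|_F$, the wrong design); the constant $(1+\eta_4)$ you wrote shows you had this in mind.

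The route differs from the paper's in one organizational respect. The paper writes the objective as a strongly convex quadratic $\psi$ plus a convex remainder $\gamma$, deduces $\gamma(\hbB)\le\gamma(\bBbar)$ from optimality, and then \emph{symmetrizes} by applying the same trick with $\bXbar$ and summing; this cancels the penalty and puts $\|\bX\bDelta\|_F^2+\|\bXbar\bDelta\|_F^2$ on the left, with $\bH+\bHbar=\hbB+\bBbar-2\bB^*$ appearing on the right. You instead invoke monotonicity of $\partial\|\cdot\|_{2,1}$ directly, which removes the penalty in one step and yields an asymmetric inequality with only $\|\bX\bDelta\|_F^2$ on the left and $\bBbar-\bB^*$ on the right. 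Your route is a bit more direct; the paper's symmetrization is perhaps conceptually cleaner (no design is privileged) but costs an extra bookkeeping step. Both land on constants of the same shape $\sim(1+\eta_4)(1-\eta_4)^{-2}$.
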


\begin{restatable}{lemma}{lemmaDifferential}
    \label{lemma:differential}
        For almost every $(\bX,\bE)$ in the open set
            $\Omega_1\cap\Omega_2\cap\Omega_3$,
            $\hbB$ is a Fréchet differentiable function of $\bX$.
            For almost every $(\bX,\bE)$ in 
            $\Omega_1\cap\Omega_2\cap\Omega_3$, if
            $$\hbB(\bw) = {\textstyle \argmin_{\bB\in\R^{p\times T}} }
            \Big(
                \tfrac{1}{2nT} \|\bE + (\bX + \bw\ba^\top)(\bB^* - \bB)\|_F^2 + \lambda \|\bB\|_{2,1}
            \Big)$$
            is the estimate \eqref{eq:hbB-Lasso} with $\bX$ replaced
            by the perturbed design $\bX+\bw\ba^\top$,
            then for any $\bb\in\R^T$
            \begin{equation*}
                \Big((\bX+\bw\ba^\top)( \hbB(\bw)- \bB^*)\Big)\bb
                -
                \Big(\bX(\hbB-\bB^*)\Big)\bb
                =
                \big(\mathsf{D}(\bb) \big) \bw
                +
                o(\|\bw\|)
            \end{equation*}
            as $\|\bw\|\to 0$,
            where $\mathsf{D}: \R^T \to \R^{n\times n}$ is a linear map given by
            $
            \mathsf{D}(\bb)
            =
            \mathsf{D}^*(\bb)
            +
            \mathsf{D}^{**}(\bb)
            $ 
            with
            \begin{align*}
                \mathsf{D^*}(\bb)
            &=
            (\ba^\top(\hbB-\bB^*)\bb )
            \bI_{n\times n}
            -
            (\bb^\top \otimes \bX_{{\hat S}})
            \big(\tbX{}^\top\tbX + nT \tbH \big)^\dagger
            \big(((\hbB-\bB^*)^\top \ba)\otimes \bX_{\hat S}^\top\big)
            \\ \nonumber
            &=
            (\ba^\top(\hbB-\bB^*)\bb )
            \bI_{n\times n}
            -
            (\bb^\top \otimes \bX_{{\hat S}})
            \left(\tbX^\top\tbX +  nT \tbH\right)^\dagger
 \begin{pmatrix}
     \ba^\top (\hbB - \bB^*)\be_1 \bX_{{\hat S}}^\top \\
   \vdots\\
   \ba^\top (\hbB - \bB^*)\be_T \bX_{{\hat S}}^\top 
 \end{pmatrix} ,
 \\
            \mathsf{D^{**}}(\bb)
            &=
            (\bb^\top \otimes \bX_{{\hat S}})
              \big(\tbX{}^\top\tbX + nT \tbH \big)^\dagger
              ((\bY-\bX\hbB)^\top \otimes \ba_{\hat S} ) 
            \\ \nonumber
            &=(\bb^\top \otimes \bX_{{\hat S}})
            \left(\tbX^\top\tbX +  nT\tbH\right)^\dagger
 \begin{pmatrix}
     \ba_{{\hat S}}  \be_1^\top(\bY - \bX\hbB)^\top\\
  \vdots \\
  \ba_{{\hat S}}  \be_T^\top(\bY - \bX\hbB)^\top
\end{pmatrix}
            \end{align*}
            for all $\bb\in\R^T$ and $\bw\in\R^n$.
            Note that $\mathsf{D},\mathsf{D}^*$ and $\mathsf{D}^{**}$ implicitly depend on $(\bX,\bE)$.
            Hence the matrix $\mathsf{D}(\bb)$ of size $n\times n$
            is the Jacobian of the map
            $\bw \mapsto (\bX+\bw\ba^\top)(\hbB(\bw)-\bB^*)\bb$
            at $\bw= \mathbf 0$.

\end{restatable}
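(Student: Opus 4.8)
The plan is to \emph{freeze the combinatorial structure} of the multi-task Lasso and then apply the implicit function theorem to its stationarity conditions restricted to the active set. First I would work on the full-measure subset of the open set $\Omega_1\cap\Omega_2\cap\Omega_3$ on which the conclusion of \Cref{lemma:kkt-strict} holds and on which $\rank(\bX_{\hat S})=|\hat S|$ (\Cref{lemma:sparsity,lemma:rank}). On this set I claim the support $\hat S$ and the rank condition are \emph{locally constant}: since $\bX\mapsto\hbB$ is Lipschitz continuous by \Cref{lemma:lipschitz}, every active row $\hbB{}^\top\be_j$ with $j\in\hat S$ is nonzero and stays nonzero under a small perturbation of $\bX$, while for $j\notin\hat S$ the strict inequality $\|(\bY-\bX\hbB)^\top\bX\be_j\|_2<nT\lambda$ of \Cref{lemma:kkt-strict} persists under a small perturbation (the residual and $\bX\be_j$ depending continuously on it). Hence there is a neighborhood $\mathcal U$ of $\bX$ on which the perturbed estimator has support exactly $\hat S$ and $\bX_{\hat S}$ retains full column rank.

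Second, on $\mathcal U$ the perturbed estimator is the unique minimizer over the fixed subspace $V_{\hat S}=\{\bB\in\R^{p\times T}:\bB{}^\top\be_j={\mathbf 0}\text{ for }j\notin\hat S\}$ of $\bB\mapsto\tfrac{1}{2nT}\|\bE+\bXbar(\bB^*-\bB)\|_F^2+\lambda\|\bB\|_{2,1}$ with $\bXbar=\bX+\bw\ba^\top$, and this objective is $C^\infty$ on $V_{\hat S}$ near $\hbB$ since the surviving rows stay bounded away from $0$. Writing the stationarity condition as $\Phi(\vec(\bB),\bX)={\mathbf 0}$ on $V_{\hat S}$, its differential in $\vec(\bB)$ at $(\hbB,\bX)$ equals, in $\vec$-coordinates restricted to $V_{\hat S}$, the Hessian $\tfrac1{nT}(\tbX{}^\top\tbX+nT\tbH)$: the least-squares part contributes $\tfrac1{nT}\tbX{}^\top\tbX$ and the Hessian of $\lambda\|\cdot\|_{2,1}$ at $\hbB$ contributes $\tbH$, by \eqref{Hj} and the definition of $\tbH$. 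Since $\ker(\tbX{}^\top\tbX+nT\tbH)$ is precisely the orthogonal complement of the vectorized $V_{\hat S}$ (as shown in the proof of \Cref{propMatrixA}), this operator is invertible on that subspace with inverse $nT(\tbX{}^\top\tbX+nT\tbH)^\dagger$. The implicit function theorem then gives that $\bX\mapsto\hbB$ is $C^1$ on $\mathcal U$, hence Fréchet differentiable, with $\vec(\mathrm d\hbB)=-nT(\tbX{}^\top\tbX+nT\tbH)^\dagger\,\vec\big(\partial_{\bX}\Phi[\mathrm d\bX]\big)$.

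Third, I would specialize $\mathrm d\bX=\bw\ba^\top$ and assemble the Jacobian of $\bw\mapsto(\bX+\bw\ba^\top)(\hbB(\bw)-\bB^*)\bb$. With $\Phi(\vec(\bB),\bX)=-\tfrac1{nT}(\bX_{\hat S})^\top\big(\bE+\bXbar(\bB^*-\bB)\big)+\lambda\,\bV(\bB)$ on $V_{\hat S}$ (where $\bV(\bB)$ has rows $\bB{}^\top\be_j/\|\bB{}^\top\be_j\|_2$), the derivative $\partial_\bX\Phi[\bw\ba^\top]$ has two contributions: one from the outer factor $(\bX_{\hat S})^\top$ becoming $(\bw\ba^\top)_{\hat S}^\top$ and multiplying the residual $\bR:=\bY-\bX\hbB$, and one from $\bXbar$ inside the residual. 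By the identity $\vec(\bA\bB\bC)=(\bC^\top\otimes\bA)\vec(\bB)$ of \eqref{kronecker-vectorization-relation} these vectorize to $-\tfrac1{nT}(\bR^\top\otimes\ba_{\hat S})\bw$ and $\tfrac1{nT}\big(((\hbB-\bB^*)^\top\ba)\otimes\bX_{\hat S}^\top\big)\bw$ respectively. Substituting into the IFT formula, writing $\bX(\mathrm d\hbB)\bb=(\bb^\top\otimes\bX_{\hat S})\vec(\mathrm d\hbB)$ again via \eqref{kronecker-vectorization-relation}, and adding the term $(\bw\ba^\top)(\hbB-\bB^*)\bb=(\ba^\top(\hbB-\bB^*)\bb)\bI_{n\times n}\bw$ coming from differentiating the prefactor $\bXbar$ in $\bXbar(\hbB(\bw)-\bB^*)\bb$, one recovers exactly $\mathsf D(\bb)=\mathsf D^*(\bb)+\mathsf D^{**}(\bb)$ with the stated expressions (noting $\bR=\bY-\bX\hbB$ enters $\mathsf D^{**}$), the $o(\|\bw\|)$ remainder being controlled by the $C^1$ dependence.

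The hard part is the first step: verifying that the active set and the rank condition are locally constant for a.e.\ $(\bX,\bE)$. This rests on the strict complementary slackness of \Cref{lemma:kkt-strict} (itself an a.e.\ statement) together with the Lipschitz continuity of $\hbB$ in $\bX$ from \Cref{lemma:lipschitz} to rule out degeneration of an active row under perturbation. Once the combinatorial structure is frozen, the remainder is a routine though notation-heavy application of the implicit function theorem and of the Kronecker-product vectorization identities of \Cref{sec:notation}; the main care needed there is bookkeeping of which occurrences of $\bX$ and $\ba$ appear restricted to $\hat S$.
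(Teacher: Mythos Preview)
Your proposal is correct and follows essentially the same route as the paper: freeze the active set $\hat S$ via the strict KKT inequalities of \Cref{lemma:kkt-strict} together with the Lipschitz continuity of \Cref{lemma:lipschitz}, then differentiate the stationarity conditions on the fixed-support subspace and unpack the result using the Kronecker/vectorization identities \eqref{kronecker-vectorization-relation}. The only cosmetic difference is that the paper first invokes Rademacher's theorem (via \Cref{lemma:lipschitz}) to secure existence of the Fr\'echet derivative almost everywhere and then computes it by directional differentiation of the KKT system, whereas you package both existence and the formula into a single application of the implicit function theorem; the resulting linear algebra and the identification of $\mathsf D^*(\bb)$ and $\mathsf D^{**}(\bb)$ are identical.
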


\begin{restatable}{lemma}{lemmaDivergence}
    \label{lemma:divergence}
            For any $\bb \in \R^T$
            we have 
            on $\Omega_*$
            \begin{align}
                \trace[\mathsf{D}^*(\bb)]
            &=
            \bb^\top(n \bI_{T\times T} - \hbA)
            (\hbB - \bB^*)^\top \ba,
            \\
                \sum_{t=1}^T
            \Big(
                \trace[\mathsf{D}^{**}(\be_t)]
            \Big)^2
            &\le 
            \C(\bSigma) \sigma^2 sT 
            \label{eq:divergence-D**-inequality}
            \end{align}
            for some constant depending on $\bSigma$
            and $\eta_1,...,\eta_4$ only.
\end{restatable}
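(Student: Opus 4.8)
The plan is to establish the two assertions separately: the first is a short algebraic identity read off from the definition \eqref{def-A} of $\hbA$, while the second combines the KKT conditions of \eqref{eq:hbB-Lasso} with the positive-semidefinite comparison already exploited in the proof of \Cref{propMatrixA}. For $\trace[\mathsf{D}^*(\bb)]$ I would expand by linearity of the trace: the term $(\ba^\top(\hbB-\bB^*)\bb)\bI_{n\times n}$ contributes $n\,\bb^\top(\hbB-\bB^*)^\top\ba$ because $\ba^\top(\hbB-\bB^*)\bb$ is a scalar and $\trace[\bI_{n\times n}]=n$, and the remaining term equals $\bb^\top\hbA\big((\hbB-\bB^*)^\top\ba\big)$ exactly by the quadratic-form expression \eqref{eq:quadra-A} of the interaction matrix applied with $\bu=\bb$, $\bv=(\hbB-\bB^*)^\top\ba$. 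Subtracting yields $\trace[\mathsf{D}^*(\bb)]=\bb^\top(n\bI_{T\times T}-\hbA)(\hbB-\bB^*)^\top\ba$, valid wherever $\mathsf{D}^*$ is defined and in particular on $\Omega_*$.

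For the bound on $\sum_{t=1}^T\big(\trace[\mathsf{D}^{**}(\be_t)]\big)^2$, write $\bR=\bY-\bX\hbB$ with columns $\br_{t'}$ and let $\bN=(\tbX^\top\tbX+nT\tbH)^\dagger$, regarded as a $T\times T$ array of $p\times p$ blocks $\bN_{t,t'}$. Unwinding the Kronecker products in the definition of $\mathsf{D}^{**}$ --- using $\tbX=\bI_{T\times T}\otimes\bX_{\hat S}$, that $\be_t^\top\otimes\bX_{\hat S}$ picks out the $t$-th block of $\R^{pT}$ and applies $\bX_{\hat S}$ to it, and that $(\bY-\bX\hbB)^\top\otimes\ba_{\hat S}$ sends $\bw\in\R^n$ to the block vector with $t'$-th block $(\br_{t'}^\top\bw)\ba_{\hat S}$ --- one gets $\mathsf{D}^{**}(\be_t)=\sum_{t'=1}^T\bX_{\hat S}\bN_{t,t'}\ba_{\hat S}\br_{t'}^\top$, hence $\trace[\mathsf{D}^{**}(\be_t)]=\sum_{t'=1}^T(\bX_{\hat S}^\top\br_{t'})^\top\bN_{t,t'}\ba_{\hat S}$. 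Applying Cauchy--Schwarz in $t'$ and then summing over $t$ gives
\[
\sum_{t=1}^T\big(\trace[\mathsf{D}^{**}(\be_t)]\big)^2
\le
\|\bX_{\hat S}^\top\bR\|_F^2\;\sum_{t,t'}\|\bN_{t,t'}\ba_{\hat S}\|_2^2 ,
\]
so it remains to bound the two factors on $\Omega_*$.

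For the first factor I would use the KKT conditions of \eqref{eq:hbB-Lasso}: for each $j\in\hat S$ one has $(\bX\be_j)^\top\bR=nT\lambda\,\hbB^\top\be_j/\|\hbB^\top\be_j\|_2$, a row of Euclidean norm $nT\lambda$, whence $\|\bX_{\hat S}^\top\bR\|_F^2=|\hat S|(nT\lambda)^2\le\bar s\,(nT\lambda)^2$ by \Cref{lemma:sparsity}, with $\bar s$ from \eqref{eq:upper-bound-hat-S}. For the second factor, $\sum_{t,t'}\|\bN_{t,t'}\ba_{\hat S}\|_2^2=\sum_{t'}\|\bN(\be_{t'}\otimes\ba_{\hat S})\|_2^2\le T\|\bN\|_{op}^2\|\ba\|_2^2$; since $\tbX^\top\tbX\preceq\tbX^\top\tbX+nT\tbH$ with equal kernels, the reasoning behind \eqref{eq:psd-order-pseudo-inverse} gives $\bN\preceq(\tbX^\top\tbX)^\dagger$, so $\|\bN\|_{op}\le\|(\tbX^\top\tbX)^\dagger\|_{op}$, the reciprocal of the smallest positive eigenvalue of $\bX_{\hat S}^\top\bX_{\hat S}$; on $\Omega_*$ that eigenvalue is $\ge n(1-\eta_4)^2C_{\min}$ because $\rank(\bX_{\hat S})=|\hat S|$ (\Cref{lemma:rank}), $|\hat S|\le\bar s\le s+2\bar s+1$, and the event $\Omega_3$ applies together with $\phi_{\min}(\bSigma)\ge C_{\min}$. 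Substituting the explicit tuning parameter from \eqref{eq:lambda-lambda-0} --- which gives $(nT\lambda)^2\le 2n(1+\eta_1)^2(1+\eta_2)^2\sigma^2(T+2\log(p/s))$ after using $\max_j\bSigma_{jj}\le1$ --- together with $\bar s\lesssim s$ and $\|\ba\|_2^2\lesssim1$ (from $\|\bSigma^{-1/2}\ba\|_2=1$, or $\|\ba\|_2=1$ if $\ba=\be_j$), one finds
\[
\sum_{t=1}^T\big(\trace[\mathsf{D}^{**}(\be_t)]\big)^2
\le
C(\bSigma,\eta_1,\dots,\eta_4)\;\sigma^2\,\bar s\,T\,\frac{T+2\log(p/s)}{n},
\]
and since the last factor is bounded (indeed $o(1)$) under \Cref{assum:main}, this is $\le C'(\bSigma,\eta_1,\dots,\eta_4)\,\sigma^2 sT$, as claimed.

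The Cauchy--Schwarz steps and the substitution of $\lambda$ are routine. The step requiring care is the block-wise rewriting $\trace[\mathsf{D}^{**}(\be_t)]=\sum_{t'}(\bX_{\hat S}^\top\br_{t'})^\top\bN_{t,t'}\ba_{\hat S}$, and, once that is in place, the observation that it is the KKT identity --- not a crude bound on $\|\bR\|_F$ --- that makes $\|\bX_{\hat S}^\top\bR\|_F^2$ of order $\sigma^2 s(T+\log(p/s))$ rather than $\sigma^2 n^2 T$; replacing it by the naive bound would only yield $O(\sigma^2 T^2)$, insufficient when $T\gg s$. The positive-semidefinite comparison $\bN\preceq(\tbX^\top\tbX)^\dagger$ and the eigenvalue lower bound on $\bX_{\hat S}^\top\bX_{\hat S}$ reuse precisely the ingredients already available from \Cref{propMatrixA}, \Cref{lemma:rank} and \Cref{lemma:sparsity}.
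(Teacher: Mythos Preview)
Your proof is correct. The first identity is handled exactly as in the paper. For the second inequality, however, your route differs from the paper's in an interesting way.

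The paper does not use the KKT conditions at all. Instead it defines the vector $\bv\in\R^T$ with $\bv_t=\trace[\mathsf{D}^{**}(\be_t)]$, manipulates Kronecker products to obtain the self-referential identity
\[
\|\bv\|_2^2=\trace\bigl[(\bv^\top\otimes\bX_{\hat S})\,(\tbX^\top\tbX+nT\tbH)^\dagger\,(\bI_{T\times T}\otimes\ba_{\hat S})(\bY-\bX\hbB)^\top\bigr],
\]
and then applies Cauchy--Schwarz and operator-norm bounds to cancel one factor of $\|\bv\|_2$. This leads to $\|\bv\|_2\lesssim\|\bY-\bX\hbB\|_{op}\sqrt{sT/n}$, after which $\|\bY-\bX\hbB\|_{op}$ is bounded by $\|\bE\|_{op}+\|\bX(\hbB-\bB^*)\|_F\lesssim\sigma\sqrt n$ on $\Omega_*$. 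Your approach instead expands $\trace[\mathsf{D}^{**}(\be_t)]$ block-wise, applies Cauchy--Schwarz directly, and exploits the exact KKT equality $\|\bX_{\hat S}^\top(\bY-\bX\hbB)\|_F^2=|\hat S|(nT\lambda)^2$. What you gain is a sharper intermediate bound $C\sigma^2 sT\cdot\frac{T+\log(p/s)}{n}$ (an extra $o(1)$ factor under \Cref{assum:main}), and you avoid both the Kronecker self-referential trick and the event $\Omega_4$ bounding $\|\bE\|_{op}$. What the paper's argument buys is that it never needs the explicit form of $\lambda$ from \eqref{eq:lambda-lambda-0}: it would go through for any penalty whose minimizer has small residual operator norm, whereas yours is tied to the $\ell_{2,1}$ KKT identity. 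Both proofs share the minor feature that the final constant is uniform only for $n$ large enough (your $\frac{T+\log(p/s)}{n}\le 1$, the paper's $\bar R\lesssim\sigma$), which is harmless in the asymptotic regime of the paper.
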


\begin{restatable}{lemma}{lemmaRemainderII}
    \label{lemma:RemainderII}
        Under \Cref{assum:main}, as $n,p\to+\infty$ we have
            $$
\frac{1}{\sigma^2 n}
\E\Big[
    I\{\Omega_*\}
\sum_{t=1}^T
\Big(
\bz_0^\top\bX(\hbB - \bB^*)\be_t
- \trace[\mathsf{D}(\be_t)]
\Big)^2
\Big] \longrightarrow 0.
$$
Since $\Omega_*$ has probability approaching one, this implies that
$
\frac{1}{\sigma^2 n}
\sum_{t=1}^T
(
\bz_0^\top\bX(\hbB - \bB^*)\be_t
- \trace[\mathsf{D}(\be_t) ]
)^2$ converges to 0 in probability.
\end{restatable}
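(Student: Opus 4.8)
The plan is to recognise $\bz_0^\top\bX(\hbB-\bB^*)\be_t-\trace[\mathsf D(\be_t)]$ as a second‑order Stein remainder and to control it in $L^2$ using the structure of $\mathsf D(\be_t)=\mathsf D^*(\be_t)+\mathsf D^{**}(\be_t)$ from \Cref{lemma:differential}. First I would condition on the part of the design orthogonal to $\ba$: since $\|\bSigma^{-1/2}\ba\|_2=1$, the score $\bz_0=\bX\bSigma^{-1}\ba$ has i.i.d.\ $\mathcal N(0,1)$ entries, and $\bX'\coloneqq\bX-\bz_0\ba^\top$ has i.i.d.\ $\mathcal N_p(\mathbf 0,\bSigma-\ba\ba^\top)$ rows and is independent of $\bz_0$. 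Conditioning on $(\bX',\bE)$, the estimator $\hbB$ depends on $\bz_0$ only through the design $\bX'+\bz_0\ba^\top$, so $F_t(\bz_0)\coloneqq\bX(\hbB-\bB^*)\be_t$ is exactly the perturbed quantity of \Cref{lemma:differential} (with base point shifted to $\bz_0$), its Jacobian in $\bz_0$ equals $\mathsf D(\be_t)$ almost everywhere, and $\trace[\partial_{\bz_0}F_t]=\trace[\mathsf D(\be_t)]$.

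Next I would apply a second‑order Stein inequality: for $\bz_0\sim\mathcal N(\mathbf 0,\bI_{n\times n})$ and weakly differentiable $\bff:\R^n\to\R^n$,
$$\E\big[(\bz_0^\top\bff-\trace[\partial\bff])^2\big]\ \lesssim\ \E[\|\bff\|_2^2]+\E[\|\partial\bff\|_F^2]$$
(as used in \cite{bellec2019second}). Applied to $\bff=F_t$ conditionally on $(\bX',\bE)$ and summed over $t\in[T]$, this reduces the claim to
$$\E\Big[I\{\Omega_*\}\Big(\|\bX(\hbB-\bB^*)\|_F^2+\sum_{t=1}^T\|\mathsf D(\be_t)\|_F^2\Big)\Big]=o(\sigma^2 n).$$
Keeping the indicator $I\{\Omega_*\}$ on the right‑hand side—which also takes care of the integrability hypotheses of the Stein inequality—is the delicate point: I would handle it by a standard localisation, either by replacing $\hbB$ by a modification that agrees with it on $\Omega_*$, or by splitting off $\Omega_*^c$ and bounding its contribution through the fast decay of $\P(\Omega_*^c)$ from \Cref{lemma:proba} against polynomial moment bounds on $\hbB$ (which follow from optimality, e.g.\ $\|\bX\hbB\|_F\le 2\|\bY\|_F$) and its a.e.\ derivative.

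The core of the argument is then a set of estimates valid on $\Omega_*$. For the first term, \Cref{lemma:risk}(ii) gives $\|\bX(\hbB-\bB^*)\|_F^2\le(1-\eta_3)^2n\bar R^2=o(\sigma^2n)$ since $\bar R/\sigma\to0$. For the Jacobian, the identity summand $(\ba^\top(\hbB-\bB^*)\be_t)\bI_{n\times n}$ of $\mathsf D^*(\be_t)$ contributes $\sum_t n(\ba^\top(\hbB-\bB^*)\be_t)^2=n\|(\hbB-\bB^*)^\top\ba\|_2^2\le n\bar R^2$ by \Cref{lemma:risk}(iii) and $\|\bSigma^{-1/2}\ba\|_2=1$. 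The remaining part of $\mathsf D^*(\be_t)$ and all of $\mathsf D^{**}(\be_t)$ factor through $\bX_{\hat S}$, hence have rank $\le|\hat S|\le\bar s$ (\Cref{lemma:sparsity,lemma:rank}); combining $\|(\tbX^\top\tbX+nT\tbH)^\dagger\|_{op}\le\phi_{\min}(\bX_{\hat S}^\top\bX_{\hat S})^{-1}\lesssim n^{-1}$ and $\|\bX_{\hat S}\|_{op}^2\lesssim n$ (from $\Omega_3$) with $\|(\hbB-\bB^*)^\top\ba\|_2\le\bar R$, $\|\ba\|_2\le C_{\max}^{1/2}$ and $\|\bY-\bX\hbB\|_{op}\lesssim\sigma\sqrt n$ (from $\Omega_4$ and \Cref{lemma:risk}(ii)) yields the operator‑norm bounds $\|\mathsf D^*(\be_t)-\text{(id part)}\|_{op}\lesssim\bar R$ and $\|\mathsf D^{**}(\be_t)\|_{op}\lesssim\sigma$. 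Converting to Frobenius norm through the rank bound and summing, $\sum_t\|\mathsf D^{**}(\be_t)\|_F^2\lesssim T\bar s\sigma^2\asymp sT\sigma^2=o(\sigma^2n)$, and $\sum_t\|\mathsf D^*(\be_t)-\text{(id part)}\|_F^2\lesssim T\bar s\bar R^2\asymp\sigma^2\,s^2T^2\big(1+\tfrac{\log(p/s)}{T}\big)/n$, which is $o(\sigma^2n)$ because both $(sT/n)^2$ and $(sT/n)(s\log(p/s)/n)$ vanish under \Cref{assum:main}. Assembling these bounds proves the first display; the convergence in probability then follows from $\P(\Omega_*)\to1$.

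The step I expect to be the main obstacle is the rigorous passage through the second‑order Stein inequality under the truncation $I\{\Omega_*\}$: a naive global Lipschitz extension of $\bX\mapsto\bX(\hbB-\bB^*)$ would destroy the rank‑$\le\bar s$ structure of the Jacobian and only give $\sum_t\|\partial\bff\|_F^2\lesssim nT\sigma^2$, which is \emph{not} $o(\sigma^2n)$ once $T\to\infty$. One must therefore either build a structure‑preserving surrogate (exploiting \Cref{lemma:lipschitz,lemma:risk}) or argue directly that the original locally‑Lipschitz map is weakly differentiable with $L^2$‑integrable derivative and discard $\Omega_*^c$ by a moment/tail estimate—while tracking the $(s,T,n,p)$‑dependence of every bound closely enough that only the rate conditions of \Cref{assum:main} are invoked.
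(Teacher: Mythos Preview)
Your approach is essentially the paper's: condition on the part of the design orthogonal to $\ba$, apply a second-order Stein identity to the map $\bz_0\mapsto\bX(\hbB-\bB^*)$, and bound $\|\bX(\hbB-\bB^*)\|_F^2+\sum_t\|\mathsf D(\be_t)\|_F^2$ on $\Omega_*$ exactly as you sketch (the paper's bounds on $\sum_t\|\mathsf D^*(\be_t)\|_F^2$ and $\sum_t\|\mathsf D^{**}(\be_t)\|_F^2$ are obtained via a Kronecker identity rather than your rank-$\bar s$ trick, but both routes work under \Cref{assum:main}).

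The one place where your write-up overcomplicates matters is the last paragraph. The paper does precisely what you call a ``naive global Lipschitz extension'': it extends the full map $\bF:\bz_0\mapsto\bX(\hbB-\bB^*)\in\R^{n\times T}$ by Kirszbraun (using the Lipschitz constant $L$ from \Cref{lemma:lipschitz}), then composes with the projection onto the Frobenius ball of radius $\sqrt n\,\bar R$ to get $\bar\bF$. The resulting extension \emph{does} destroy the rank-$\bar s$ structure of the Jacobian, but this is harmless: the Jacobian $\bJ\in\R^{(nT)\times n}$ of any map $\R^n\to\R^{nT}$ automatically has $\rank(\bJ)\le n$, so
\[
\sum_{t=1}^T\|\bar{\mathsf D}(\be_t)\|_F^2=\|\bJ\|_F^2\le\rank(\bJ)\,\|\bJ\|_{op}^2\le nL^2,
\]
not $nTL^2$. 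Hence the contribution from $\Omega_*^c$ is $\le \P(\Omega_*^c)\,L^2/\sigma^2\to0$, and no structure-preserving surrogate or separate tail/moment argument is needed. Your worry about the extension giving $nT\sigma^2$ stems from bounding each $\partial F_t$ separately by $nL^2$ and summing; viewing $(F_1,\dots,F_T)$ as a single $L$-Lipschitz map into $\R^{nT}$ avoids this.
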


We now prove each lemma. The lemmas are restated before their proofs for convenience.

\lemmaProba*
\begin{proof}[Proof of  \Cref{lemma:proba}]
    The fact that $\Omega_* = \Omega_1\cap\Omega_2\cap\Omega_3\cap\Omega_4$ has probability
    approaching one under \Cref{assum:main} follows
    from the propositions in \Cref{sec:probabliistic-propositions}:
    \Cref{prop:restricted-eig} {\textit{(iii)}} with $k={9s}$ and ${x=\log n}$, 
    \Cref{prop:noise},
    \Cref{prop:gordon} applied with $k=s + 2 \bar s + 1$,
    and 
    $\P(\Omega_4)\ge 1- e^{-n/2}$ by \cite[Theorem II.13]{DavidsonS01}.
\end{proof}

\lemmaRisk*
\begin{proof}[Proof of \Cref{lemma:risk}]
In the whole proof we place ourselves on the event $\Omega_*$. We prove first that $\hbB-\bB^*\in \mathcal C$. \\By the definition of $\hbB$, 
$\frac 1{nT} \|\bX\hbB-\bY\|_F^2 + 2\lambda \|\hbB\|_{2,1} \leq \frac 1{nT} \|\bX\bB^*-\bY\|_F^2 + 2\lambda \| \bB^*\|_{2,1}$.
Rewriting the LHS as $\frac 1{nT} \|\bX(\hbB-\bB^*)+  (\bX\bB^*-\bY)\|_F^2 + 2\lambda \|\hbB\|_{2,1}$ and expanding the square yields 
$\|\bX(\hbB-\bB^*)\|_F^2 \leq  2 \langle \bX(\hbB-\bB^*),\bE  \rangle_F + 2nT\lambda(\|\bB^*\|_{2,1} - \|\hbB\|_{2,1})$.\\
The following chain of inequalities holds
\begin{align*}
  \langle \bX(\hbB-\bB^*),\bE  \rangle_F 
  &\stackrel{(i)}{\leq} \sum_{j=1}^p \|(\hbB-\bB^*)^{\top}\be_j\|_2 \cdot \|(\bX^T\bE)^{\top}\be_j\|_2\\
  &\stackrel{(ii)}{\leq} \sum_{j=1}^p \|(\hbB-\bB^*)^{\top}\be_j\|_2 \left[(\|\bE^T\bX \be_j\|_2-nT\lambda_0)_+ + nT\lambda_0\right]\\
  &\stackrel{(iii)}{\leq} \|\hbB-\bB^*\|_F  \Big(\sum_{j=1}^p (\|\bE^T\bX \be_j\|_2-nT\lambda_0)_+^2\Big)^{1/2} + nT\lambda_0 \|\hbB-\bB^*\|_{2,1}\\
  &\stackrel{(iv)}{\leq} \|\hbB-\bB^*\|_F \sqrt s n \sqrt T \lambda_0 + nT\lambda_0 \|\hbB-\bB^*\|_{2,1}.
\end{align*}
$(i)$ and $(iii)$ follow from Cauchy-Schwarz inequality, $(ii)$ stems from the inequality $a\leq (a-b)_+ + b$ and $(iv)$ holds on $\Omega_2$.
Thus 
\begin{equation}
  \label{eq:ineq-proof-risk}
  \|\bX(\hbB-\bB^*)\|_F^2 \leq  2\|\hbB-\bB^*\|_F \sqrt s nT \lambda_0 
  + 2nT\big[
      \lambda_0 \|\hbB-\bB^*\|_{2,1} + \lambda(\|\bB^*\|_{2,1} - \|\hbB\|_{2,1})
  \big].
\end{equation}
Besides, the quantity inside the bracket on the right hand side satisfies
\begin{align*}
  &\phantom{{}\stackrel{(i)}{=}{}} 
  \lambda_0 \|\hbB-\bB^*\|_{2,1} + \lambda(\|\bB^*\|_{2,1} - \|\hbB\|_{2,1})\\
  &\stackrel{(i)}{=} \lambda_0\sum_{j\in S} \|(\hbB-\bB^*)^{\top}\be_j\|_2 + \lambda_0\sum_{j\notin S} \|\hbB^{\top}\be_j\|_2 + \lambda \Big(\sum_{j\in S} \|{\bB^*}^{\top}\be_j\|_2- \|\hbB^{\top}\be_j\|_2 \Big) - \lambda \sum_{j\notin S} \|\hbB^{\top}\be_j\|_2\\
  &\stackrel{(ii)}{\leq} \lambda_0 \sqrt s \|\hbB-\bB^*\|_F + \lambda_0\sum_{j\notin S} \|\hbB^{\top}\be_j\|_2  + \lambda \sum_{j\in S} \|(\bB^*-\hbB)^{\top}\be_j\|_2 - \lambda \sum_{j\notin S} \|
    \hbB^{\top}\be_j\|_2\\
  &\stackrel{(iii)}{\leq} (\lambda_0+\lambda) \sqrt s \|\hbB-\bB^*\|_F + (\lambda_0-\lambda)\sum_{j\notin S} \|\hbB^{\top}\be_j\|_2,
\end{align*}
where $(ii)$ follows from Cauchy-Schwarz and the reverse triangle inequality applied respectively on the first and third summands of $(i)$, whereas $(iii)$ is a consequence of Cauchy-Schwarz.
Combining this bound with \eqref{eq:ineq-proof-risk} and plugging in the value $\lambda=(1+\eta_2)\lambda_0$ yields
\begin{equation}
  \label{eq:ineq-2-proof-risk}
  \|\bX(\hbB-\bB^*)\|_F^2 \leq 2nT \sqrt s \lambda_0\Big(2+\eta_2+\frac 1{\sqrt T}\Big)\|\hbB-\bB^*\|_F - 2nT \eta_2\lambda_0 \sum_{j\notin S} \|\hbB^{\top}\be_j\|_2.
\end{equation}
Non-negativity of the LHS, the equality 
$\sum_{j\notin S} \|\hbB^{\top}\be_j\|_2 =  \|\hbB-\bB^*\|_{2,1} - \sum_{j\in S} \|(\hbB-\bB^*)^{\top}\be_j\|_2$ and Cauchy-Schwarz lead to
$\|\hbB-\bB^*\|_{2,1}\leq (1+\frac 3{\eta_2} + \frac{1}{\eta_2 \sqrt T}) \sqrt s \|\hbB-\bB^*\|_{F}$.\\
Since $T\geq 1$ and $\eta_2\geq 2$, we get $\|\hbB-\bB^*\|_{2,1}\leq 3\sqrt s \|\hbB-\bB^*\|_{F}$, that is $\hbB-\bB^*\in \mathcal C$.

The inequality 
\begin{equation}
  \label{eq:ineq-3-proof-risk}
  \|\hbB-\bB^*\|_F\leq \|\bSigma^{-1/2}\|_{op}\|\bSigma^{1/2}(\hbB-\bB^*)\|_F = \phi_{\min}(\bSigma)^{-1/2}\|\bSigma^{1/2}(\hbB-\bB^*)\|_F
\end{equation}
 combined with \eqref{eq:ineq-2-proof-risk} and the event $\Omega_1$ yields
\begin{align*}
   \|\bX(\hbB - \bB^*)\|_F 
   &\leq 2\kappa^{-1} \Big(2+\eta_2+ T^{-1/2} \Big) \sqrt n T \lambda_0 \sqrt s \\
   &\leq {2{\kappa^{-1}}(1+\eta_1)(3+\eta_2)\sigma \max_j \bSigma_{jj}^{1/2}}\sqrt{sT} \Big(1 + \sqrt{{(2/T)\log(p/s)}}\Big)\\
   &= \sqrt n (1-\eta_3) \bar R.
\end{align*} 
Reusing $\Omega_1$, we obtain
$
\|\bSigma^{1/2}(\hbB - \bB^*)\|_F \leq ({\sqrt n (1-\eta_3)} )^{-1} {\|\bX(\hbB - \bB^*)\|_F } \leq \bar R
$.
\newline
Combining this last bound with \eqref{eq:ineq-3-proof-risk} yields 
$\|\hbB-\bB^*\|_F \leq \phi_{\min}(\bSigma)^{-1/2}\bar R$, hence (iv).
\newline
For inequality (v), using $\Omega_4$, $(a+b)^2\le 2a^2+2b^2$ and $\|\bE\|_F^2\le\rank(\bE)\|\bE\|_{op}^2$ we have 
\begin{align*}
    \|\bY - \bX\hbB\|_F^2 
    &\leq 2\|\bE\|_F^2 + 2\|\bX(\hbB - \bB^*)\|_F^2\\
    &\leq 2 \rank(\bE) \|\bE\|_{op}^2 + 2n(1-\eta_3)^2 \bar R^2\\
    &\leq 2 \min(n,T) (2\sigma\max(\sqrt n,\sqrt T))^2  + 2n(1-\eta_3)^2 \bar R^2\\
    &\leq 8\sigma^2nT + 2(1-\eta_3)^2n\bar R^2.
\end{align*}

Regarding the limit of $\bar R$, note that $\bar R \propto \big(\frac {sT}n\big)^{1/2} + \big(\frac sn \log(\frac ps)\big)^{1/2}$.
By \Cref{assum:main}, each summand goes to $0$ as $n$ goes to $\infty$.
\end{proof}

\lemmaSparsity*
\begin{proof}[Proof of \Cref{lemma:sparsity}]
The KKT conditions of \eqref{eq:hbB-Lasso} 
are given by
\begin{equation}
    \label{eq:KKT-multitask}
\begin{split}
    (\bY-\bX\hbB)^\top\bX\be_j &=  nT\lambda \|\hbB^\top\be_j\|_2^{-1} \hbB^\top \be_j
                               \qquad\text{ for all } j \in \hat S,
    \\
    \|( \bY-\bX\hbB)^\top\bX\be_j\|_2 &\le nT\lambda
                                   \qquad\text{ for all } j \notin \hat S
    .
\end{split}
\end{equation}
This implies that
$
\forall j\in \hat S,
\|(\bY-\bX\hbB)^\top \bX \be_j\|_2 = n T \lambda.
$
Since $\|(\bY-\bX\hbB)^\top \bX \be_j\|_2 
\le\|\bE^\top \bX \be_j\|_2  + \|(\bX(\bB^*-\hbB))^\top\bX\be_j\|_2 $
by the triangle inequality, we have for any $j\in \hat S$,
\begin{align}
    n T\lambda &\le (\|\bE^\top\bX\be_j\|_2 -nT \lambda_0)_+
+ nT \lambda_0 + \|(\bX(\bB^* - \hbB))^\top\bX\be_j\|_2 ,
\\nT(\lambda - \lambda_0) &\le 
(\|\bE^\top\bX\be_j\|_2 -nT \lambda_0)_+
+ \|(\bX(\bB^* - \hbB))^\top\bX\be_j\|_2 .
\end{align}
Summing the squares of the above inequalities for a subset $B\subset\hat S$
and using $(a+b)^2\le 2a^2+2b^2$,
we get
$$
\frac{
|B| n^2(\lambda-\lambda_0)^2T^2
}{2}
\le \sum_{j\in B}(\|\bE^\top\bX\be_j\|_2 -nT \lambda_0)_+^2
+
\trace(\{\bX(\bB^* - \hbB)\}^\top \Big\{\sum_{j\in B}\bX\be_j\be_j^\top\bX^{{\top}} \Big\}
\{\bX(\bB^* - \hbB)\}).
$$
The first term is bounded from above by $sn^2 T \lambda_0^2$ on the event $\Omega_2$. Dividing by $n^{2}T^2$ we find
$$
\frac{
|B|(\lambda-\lambda_0)^2
}{2}
\le
s \lambda_0^2/T
+
\frac{1}{nT^2}\|\bX(\hbB-\bB^*)\|_F^2 \psi_{\max}(B)
$$
where $\psi_{\max}(B)$ is the largest eigenvalue of
$\frac 1 n \sum_{j\in B}\bX\be_j\be_j^\top\bX^{{\top}}$,
or equivalently the largest eigenvalue of
$\frac 1 n (\bX_B\bX_B^{{\top}})$, which is also the largest eigenvalue of $\frac 1 n (\bX_B^{\top}\bX_B)$.
On the event of $\Omega_*$, we obtain
$$
\frac{
|B|(\lambda-\lambda_0)^2
}{2}
\le s\lambda_0^2 /T
+ 
\frac{\psi_{\max}(B)}{nT^2}
\frac{4}{\kappa^2} (2+\eta_2+1/\sqrt T)^2 n T^2 \lambda_0^2 s,
$$
or equivalently
$$
\frac{|B|(\lambda/\lambda_0 -1)^2}{2}
\le
s\Big(1/T
+
\frac{4\psi_{\max}(B)}{\kappa^2} (2+\eta_2+1/\sqrt T)^2
\Big).
$$

Let $\bar s$ be as in \eqref{eq:upper-bound-hat-S}
and assume that $|\hat S|\le \bar s$ is violated on $\Omega_*$.
Then on $\Omega_3$, any
$B\subset \hat S$ with size $|B|= \lfloor \bar s \rfloor + 1 $ satisfies
$\forall \bv\in \mathbb R^p$, $\|\bX_B \bv_B\|_2 \leq (1+\eta_4)\sqrt n \|\bSigma^{1/2}\|_{op} \|\bv_B\|_2$. Squaring yields
$\psi_{\max}(B) \le \|\bSigma\|_{op}{(1+\eta_4)^2}$.
Then
$$
\frac{
|B|(\lambda/\lambda_0 -1)^2
}{2}
\le
s\Big(1/T
+
\frac{4\|\bSigma\|_{op}
{(1+\eta_4)^2}
}{\kappa^2} (2+\eta_2+1/\sqrt T)^2
\Big)
$$
which shows that $|B|\le \bar s$ by definition of $\bar s$, a contradiction.
\end{proof}

\lemmaRank*

\begin{proof}
	By \Cref{lemma:sparsity}, we have $|\hat S|\leq \bar s$ on $\Omega_*$. Since $s\leq s+2\bar s + 1$, the event $\Omega_3$ yields  $\forall \bv\in \mathbb R^p$, $\supp(\bv) \subset \hat S \implies (1-\eta_4)\sqrt n \|\bSigma^{1/2}\bv\|_2 \leq \|\bX_{\hat S}\bv\|_2$. If $\bv$ is such that $\supp(\bv) \subset \hat S$ and $\bX_{\hat S}\bv=\mathbf 0$, then we must have $\bv=\mathbf 0$. Equivalently, the linear span of $(\be_j)_{j\in \hat S}$ has intersection $\{\mathbf 0\}$ with $\ker(\bX_{\hat S})$, hence $\ker(\bX_{\hat S})$ must be contained in the span of $(\be_j)_{j\notin \hat S}$. Thus $\dim \ker(\bX_{\hat S})\leq p-|\hat S|$ and by the rank-nullity theorem, $\rank(\bX_{\hat S})\geq |\hat S|$. By definition of $\bX_{\hat S}$, it is also clear that $\rank(\bX_{\hat S})\leq |\hat S|$, hence the conclusion.
\end{proof}

\lemmaKKTstrict*

\begin{proof}[Proof of \Cref{lemma:kkt-strict}]
    This follows from the argument in Lemma 6.4 of
    \cite[arXiv version v1, 24 Feb 2019]{bellec2019biasing}.
\end{proof}

\lemmaLipschitz*

\begin{proof}[Proof of \Cref{lemma:lipschitz}]
    By \Cref{lemma:sparsity}, $\hbB - \bBbar$ has at most $2\bar s$ non-zero rows.
     $\Omega_3$ applied on each column of $\bSigma^{1/2}(\hbB - \bBbar)$ gives $(1-\eta_4)^2 n \|\bSigma^{1/2}(\hbB - \bBbar)\|_F^2 \le\|\bX(\hbB-\bBbar)\|_F^2$. Similarly, using $\Omega_3$ with $\bXbar$ and summing the resulting inequality with the previous one yields
        $$2(1-\eta_4)^2 n \|\bSigma^{1/2}(\hbB - \bBbar)\|_F^2
                                    \le\|\bX(\hbB-\bBbar)\|_F^2 + \|\bXbar(\hbB-\bBbar)\|_F^2.$$
    Define $\varphi:\bB\mapsto \frac 1{2nT}\|\bE+\bX(\bB^*-\bB)\|_F^2 + \lambda \|\bB\|_{2,1}$, $\psi:\bB\mapsto \frac 1{2nT} \|\bX(\hbB-\bB)\|_F^2$ and $\gamma:\bB\mapsto \varphi(\bB) - \psi(\bB)$. When expanding the squares, it is clear that $\gamma$ is the sum of a linear function and of the convex penalty, thus $\gamma$ is convex. Additivity of subdifferentials yields $\partial \varphi (\hbB) = \partial \gamma(\hbB) + \partial \psi(\hbB) = \partial \gamma(\hbB)$. By optimality of $\hbB$ we have ${\mathbf 0}_{p\times T}\in \partial \varphi(\hbB)$, thus ${\mathbf 0}_{p\times T}\in \partial \gamma(\hbB)$. This implies $\gamma(\hbB)\leq \gamma(\bBbar)$. Letting $\bHbar = \bBbar - \bB^*$ and $\bH = \hbB - \bB^*$, the last inequality rewrites as 
    $$
    \|\bX(\hbB-\bBbar)\|_F^2
  \le
   \|\bE - \bX\bHbar\|_F^2 
   - \|\bE - \bX\bH\|_F^2
   + g(\bBbar) - g(\hbB).
    $$
    Summing the similar inequality obtained by replacing $\bX$ with $\bXbar$ yields 
    $$\|\bX(\hbB-\bBbar)\|_F^2 + \|\bXbar(\hbB-\bBbar)\|_F^2 \leq \|\bE - \bX\bHbar\|_F^2 
   - \|\bE - \bX\bH\|_F^2
   + \|\bE - \bXbar\bH\|_F^2 
   - \|\bE - \bXbar\bHbar\|_F^2.$$
        
    Combining the above displays, we obtain
    \begin{align*}
    &
    \phantom{{}\leq{}} 
    2(1-\eta_4)^2 n \|\bSigma^{1/2}(\hbB - \bBbar)\|_F^2
    \\
    & \le\|\bX(\hbB-\bBbar)\|_F^2 + \|\bXbar(\hbB-\bBbar)\|_F^2
  \\& \le
   \|\bE - \bX\bHbar\|_F^2 
   - \|\bE - \bX\bH\|_F^2
   + \|\bE - \bXbar\bH\|_F^2 
   - \|\bE - \bXbar\bHbar\|_F^2 
  \\&=
  \langle \bX(\bH-\bHbar), 2\bE - \bX(\bH+\bHbar) \rangle_F
  +
  \langle \bXbar(\bHbar-\bH), 2\bE - \bXbar(\bH+\bHbar) \rangle_F
  \quad {\scriptstyle \text{ thanks to } \langle a-b,a+b\rangle_F = \|a\|_F^2 - \|b\|_F^2}
  \\&=
  \langle (\bX-\bXbar)(\bH-\bHbar), 2\bE\rangle_F
  + \langle \bH-\bHbar, ({\bXbar{}^\top\bXbar - \bX^\top\bX})(\bH + \bHbar)\rangle_F
  \\&=
  \langle \bH-\bHbar, 2(\bX-\bXbar)^\top\bE\rangle_F
  + \langle \bH-\bHbar, \big[{ \bXbar{}^\top(\bXbar-\bX) + (\bXbar-\bX)^{\top}\bX}\big](\bH + \bHbar)\rangle_F.
    \end{align*}
    The second summand rewrites as $\langle\bXbar(\bH-\bHbar), (\bXbar-\bX)(\bH + \bHbar)\rangle_F + \langle(\bXbar-\bX)(\bH-\bHbar), \bX(\bH + \bHbar)\rangle_F.$
        By Cauchy-Schwarz and the submultiplicativity of the Frobenius norm, the second summand is bounded above by
        $$ \|\bXbar(\bH-\bHbar)\|_F \|(\bX-\bXbar)\bSigma^{-1/2}\|_F \|\bSigma^{1/2}(\bH + \bHbar)\|_F + \|(\bX-\bXbar)\bSigma^{-1/2}\|_F \|\bSigma^{1/2}(\bH - \bHbar)\|_F \|\bX(\bH + \bHbar)\|_F.$$
        Since $\bH-\bHbar = \hbB-\bBbar$ and $\bH+\bHbar = \hbB+\bBbar-2\bB^*$ have respectively at most $2\bar s$ and $2\bar s + s$ non-zero rows, using $\Omega_3$ twice gives the following bound on the second summand:
        $$ 
            2(1+\eta_4)\sqrt n
    	\|\bSigma^{1/2}(\hbB - \bBbar)\|_F
        \|(\bX-\bXbar)\bSigma^{-1/2}\|_F
        \|\bSigma^{1/2}(\hbB + \bBbar - 2\bB^*)\|_F.
        $$
        Combining the above displays, we find
        \begin{align*}
    &2(1-\eta_4)^2 n \|\bSigma^{1/2}(\hbB - \bBbar)\|_F^2
            \\\leq {}& 
    	2 \|\bSigma^{1/2}(\hbB - \bBbar)\|_F
        \|(\bX-\bXbar)\bSigma^{-1/2}\|_F
        \big(\|\bE\|_{op}
            +
        (1+\eta_4)\sqrt n
        \|\bSigma^{1/2}(\hbB + \bBbar - 2\bB^*)\|_F
        \big).
        \end{align*}

    Thanks to \Cref{lemma:risk}
    we have $\|\bSigma^{1/2}(\hbB - \bB^*)\|_F\le \bar R$
    and $\|\bSigma^{1/2}(\bBbar- \bB^*)\|_F\le \bar R$, this shows that
    $
    \|\bSigma^{1/2}(\hbB - \bBbar)\|_F
    \le
    {\|(\bX-\bXbar)\bSigma^{-1/2}\|_F
        \big(
        \|\bE\|_{op} n^{-1}
        +
        2 (1+\eta_4) n^{-1/2} \bar R
        \big)
        (1-\eta_4)^{-2}}
    $, hence 
    $${n^{1/2 }\|\bSigma^{1/2}(\hbB-\bBbar)\|_F
                \le
                2 (1+\eta_4)(1-\eta_4)^{-2}
                (\bar R + \|\bE\|_{op}n^{-1/2})
                \|(\bX-\bXbar)\bSigma^{-1/2}\|_F.}
            $$
    We also have by the triangle inequality
    \begin{align*}
     &   \|\bXbar(\bBbar-\bB^*) - \bX(\hbB-\bB^*)\|_F
    \\&\le
        \|\bXbar(\bBbar-\hbB)\|_F
        +
        \|(\bXbar-\bX)(\hbB - \bB^*)\|_F
    \\&\le
    (1+\eta_4) n^{1/2} \|\bSigma^{1/2}(\bBbar - \hbB)\|_F
    + 
    \|(\bX-\bXbar)\bSigma^{-1/2}\|_{{F}}
    \|\bSigma^{1/2}(\hbB - \bB^*)\|_F
    \\&\le
    {\|(\bX-\bXbar)\bSigma^{-1/2}\|_F
        \Big[
            2 (1+\eta_4)(1-\eta_4)^{-2}
                    (\bar R + \|\bE\|_{op}n^{-1/2})
        +
       \bar R
        \Big]}\\
    &\le 
            {4 (1+\eta_4)(1-\eta_4)^{-2}
            \big(\bar R + \|\bE\|_{op}n^{-1/2}\big)
            \|(\bX-\bXbar)\bSigma^{-1/2}\|_F
        ,}
    \end{align*}
    where the last line follows from the inequality $2 (1+\eta_4)(1-\eta_4)^{-2}\geq 2$ for $\eta_4\in (0,1)$.
\end{proof}

\lemmaDifferential*
\begin{proof}[Proof of \Cref{lemma:differential}]
By \Cref{lemma:lipschitz} and Rademacher's theorem, we know that
the Fréchet derivative of $\hbB$ with respect to $\bX$ exists almost everywhere,
so that $\mathsf{D}(\bb)$ exists for almost every $(\bX,\bE)\in\Omega_*$.
By \Cref{lemma:kkt-strict}, we also have that for almost every $(\bX,\bE)$,
the KKT conditions are strict in the sense given in \Cref{lemma:kkt-strict}.
In the following, we consider $(\bX,\bE)\in\Omega_*$ such that
$\mathsf{D}(\bb)$ exists and such that the KKT conditions are strict;
almost every $(\bX,\bE)\in\Omega_*$ satisfy these two conditions.

Since we know that the Jacobian $\mathsf{D}(\bb)$ exists by Rademacher's theorem,
it is enough to characterize its value, for instance by
computing the directional derivative in any fixed direction $\bw\in\R^n$.
To this end, for a real $u$ in a neighborhood of $0$,
let $\bX(u) = \bX + u\bw \ba^\top$
and $\bB(u) = \hbB(u\bw)$ . Define the active set
$\hat S(u)=\{j\in[p]: \|\bB(u)^\top\be_j\|_2 >0 \}$.
We also write $\dot\bX = (d/du) \bX \big|_{u=0} = \bw \ba^\top$,
and $\dot\bB = (d/du) \bB(u) \big|_{u=0}$.
At $0$, we have $\bX(0)=\bX$ and $\bB(0)=\hbB$ is the estimator computed
at $(\bX,\bY)$ with $\bY=\bX\bB^*+\bE$.

As in \eqref{eq:KKT-multitask},
the KKT conditions for $\bB(u)$
read, for $j\in\hat S(u)$ (i.e., $\be_j^\top\bB(u) \neq \mathbf 0$),
$$\be_j^\top\bX(u)^\top\big[\bE-\bX(u)(\bB(u)-\bB^*)\big] = \frac{nT \lambda}{\|\bB(u)^\top\be_j\|_2} \be_j^\top \bB(u)
\qquad \in\R^{1\times T}$$
and for $j\notin \hat S(u)$ (i.e., $\be_j^\top\bB(u) = \mathbf 0$),
$$\Vert \be_j^\top\bX(u)^\top\big[\bE-\bX(u)(\bB(u)-\bB^*)\big]\Vert_2 < nT \lambda.$$
By Lipschitz continuity of $u\mapsto \bB(u)$ established in \Cref{lemma:lipschitz},
the set $\hat S(u)$ is constant in a neighborhood of 0 because
the KKT conditions on $\hat S(u)^c$ are bounded away from $nT\lambda$
on a neighborhood of 0 by continuity, and because
the nonzero rows of $\bB(u)$ are bounded away from ${\mathbf 0}$ in a neighborhood of 
$0$ again by continuity of $\bB(u)$.
Differentiation of the above display for $j\in\hat S(u)$ at
$u=0$ and the product rule yield
$$\be_j^\top\Big[\dot\bX^\top(\bE-\bX(\hbB-\bB^*))
- \bX^\top(\dot\bX (\hbB -\bB^*) + \bX \dot\bB)
\Big]
= nT \be_j^\top \dot\bB \bH^{(j)}$$
with $\bH^{(j)}$ in \eqref{Hj}. Rearranging
and using $\dot\bX = \bw \ba^\top$, 
$$\be_j^\top\Big[\ba\bw^\top(\bE-\bX(\hbB-\bB^*))
- \bX^\top(\bw \ba^\top (\hbB -\bB^*))
\Big]
=  \be_j^\top \Big[nT \dot\bB \bH^{(j)} + \bX^\top\bX \dot\bB\Big]
\qquad \in\R^{1\times T}.$$
Let $\bP_{\hat S} = \sum_{j\in\hat S} \be_j\be_j^\top\in\R^{p\times p}$.
Multiplying by $\be_j$ to the left and summing over $j\in \hat S$, we obtain
$$\bP_{\hat S}\Big[\ba\bw^\top(\bE-\bX(\hbB-\bB^*))
- \bX^\top(\bw \ba^\top (\hbB -\bB^*))
\Big]
=  \bP_{\hat S}\Big[ nT \dot\bB \bH^{(j)} + \bX^\top\bX \dot\bB\Big]
\qquad \in\R^{p\times T}.$$
Since $\hat S(u)$ is locally constant for $u$ in a neighborhood of 0,
we have $\bP_{\hat S}\dot\bB = \dot\bB$
thus $\bX\dot\bB = \bX_{\hat S} \dot\bB$,
hence
$$\ba_{\hat S}\bw^\top(\bE-\bX(\hbB-\bB^*))
- \bX_{\hat S}^\top(\bw \ba^\top (\hbB -\bB^*))
= nT \Big[ \sum_{j\in \hat S}\be_j\be_j^\top \dot\bB \bH^{(j)}\Big] + \bX_{\hat S}^\top\bX_{\hat S} \dot\bB \bI_{T\times T}
\quad \in\R^{p\times T} .$$
We now use the relationship between vectorization
and Kronecker product \eqref{kronecker-vectorization-relation}.
Applying \eqref{trace-property-kronecker}
to the previous display for each term, we find
\begin{align*}
     &((\bY-\bX\hbB)^\top \otimes \ba_{\hat S} ) \vec(\bw^\top)
    - \Big(\big((\hbB-\bB^*)^\top \ba\big)\otimes \bX_{\hat S}^\top\Big)\vec(\bw)
    \\={}&\Big(
            \Big[nT \sum_{j\in \hat S} (\bH^{(j)} \otimes \be_j\be_j^\top)  \Big]
            + \bI_{T\times T}\otimes \bX_{\hat S}^\top\bX_{\hat S}
        \Big)\vec(\dot\bB)
        \\={}&\Big(\tbX{}^\top\tbX + nT \tbH \Big)\vec(\dot\bB).
\end{align*}
Since $\vec(\cdot)$ is always a column vector, $\vec(\bw^\top) = \vec(\bw) = \bw$.
Finally, we have again using  \eqref{kronecker-vectorization-relation} and the chain rule,
for any fixed $\bb\in\R^T$,
\begin{align*}
    \mathsf{D}(\bb)\bw &=\tfrac{d}{du} \bX(u)(\bB(u)-\bB^*)\bb~\big|_{u=0}\\
    &= \bw \ba^\top(\bB(0)-\bB^*)\bb
    + \bX \dot\bB \bb \\
    &= {\bw \ba^\top(\bB(0)-\bB^*)\bb
        + \bX_{\hat S} \dot\bB \bb}.
\end{align*}
By \Cref{lemma:rank}, $\rank(\bX_{\hat S})=|\hat S|$. 
The argument developed in the proof of \Cref{propMatrixA} (ii) shows that the nullspace of the matrix $\tbX{}^\top\tbX + nT \tbH$
is exactly the linear span of $\{\be_t\otimes \be_j,~ (j,t)\in\hat S^{c} \times [T]\}$.
Because $\bP_{\hat S}\dot\bB = \dot\bB$,
$\vec(\dot\bB)$ is in $\ker(\tbX{}^\top\tbX + nT \tbH)^{\perp} = \range(\tbX{}^\top\tbX + nT \tbH)$.
Since for any symmetric matrix $\bM$, $\bM^\dagger \bM$
is the orthogonal projection on the range of $\bM$, 
we have $
\big(nT \tbH  + \tbX{}^\top\tbX\big)^\dagger
\big(nT \tbH  + \tbX{}^\top\tbX\big)\vec(\dot\bB)
= \vec(\dot\bB).
$
Since $\bX_{{\hat S}} \dot\bB \bb$ is a column vector, using \eqref{kronecker-vectorization-relation}
again,
\begin{align*}
    \bX_{{\hat S}}\dot\bB\bb
    &=\vec(\bX_{{\hat S}}\dot\bB\bb)
  \\&= (\bb^\top \otimes \bX_{{\hat S}}) \vec(\dot \bB)
  \\&= (\bb^\top \otimes \bX_{{\hat S}})
      \big(\tbX{}^\top\tbX + nT \tbH \big)^\dagger
      \Big[
      ((\bY-\bX\hbB)^\top \otimes \ba_{\hat S} ) 
      - \Big(\big((\hbB-\bB^*)^\top \ba\big)\otimes \bX_{\hat S}^\top\Big)
     \Big]\bw.
\end{align*}
Since this holds for all $\bw$, this provides the desired
expression for $\mathsf{D}(\bb)$ for all $\bb$.
\end{proof}

\lemmaDivergence*
\begin{proof}[Proof of \Cref{lemma:divergence}]

For the first equality, 
$$
\trace[\mathsf{D^{*}}(\bb)] = \trace[(\ba^\top(\hbB-\bB^*)\bb )
            \bI_{n\times n}
            -
            (\bb^\top \otimes \bX_{{\hat S}})
            \big(\tbX{}^\top\tbX + nT \tbH \big)^\dagger
            \big(((\hbB-\bB^*)^\top \ba)\otimes \bX_{\hat S}^\top\big)
            ]
$$
and the conclusion follows from \eqref{eq:quadra-A}.

For \eqref{eq:divergence-D**-inequality}, the following bounds will be useful.
Inequality $\|\bX_{\hat S}\|_{op}^2 \le \|\bSigma\|_{op} (1+\eta_4)^2 n$
holds on $\Omega_*$.
Furthermore since $\ker \bN = \ker \bN^\dagger$
for all symmetric matrices $\bN$
and since $\tbH$ is positive semi-definite,
on $\Omega_*$ we find
\begin{align}
    \nonumber
\|(\tbX^\top\tbX +  nT\tbH)^\dagger\|_{op}^2
&= 
    \big[
        \min_{\bu\in\R^{np}:\bu\in \ker(\tbX^\top\tbX +  nT\tbH)^\perp}
        \bu^\top(\tbX^\top\tbX + nT\tbH) \bu
\big]^{-2}
\\&\le
    \big[
        \min_{\bu\in\R^{np}:\bu\in \ker(\tbX^\top\tbX +  nT\tbH)^\perp}
        \bu^\top(\tbX^\top\tbX) \bu
\big]^{-2}
    \nonumber
\\&\le {\phi_{\min}(\bSigma)^{-2} (1-\eta_4)^{-4} n^{-2}}
.
\label{eq:bound-operator-norm-big-dagger}
\end{align}

We now work on
$\sum_{t=1}^T \trace[\mathsf{D}^{**}(\be_t)]^2= \|\bv\|^2$,
the left hand side of \eqref{eq:divergence-D**-inequality}.
For brevity, define $\bM = 
(\tbX^\top\tbX +  nT\tbH)^\dagger
(\bI_{T\times T} \otimes \ba_{\hat S})
(\bY-\bX\hbB)^\top.$
Then if $\be_t\in\R^T$ and $\be_i\in\R^n$ denote canonical basis vectors,
$\sum_{t=1}^T \trace[\mathsf{D}^{**}(\be_t)]^2= \|\bv\|^2$
where $\bv\in\R^T$ has components $\bv_t= \trace[\mathsf{D}^{**}(\be_t)]$
so that
\begin{equation*}
\bv_t  =
\sum_{i=1}^n \be_i^\top[\mathsf{D}^{**}(\be_t) ]\be_i
= \sum_{i=1}^n \be_i^\top (\be_t^\top \otimes \bX_{\hat S}) \bM \be_i
= \be_t^\top \sum_{i=1}^n(\bI_{T\times T} \otimes (\be_i^\top\bX_{\hat S}) ) \bM\be_i,
\end{equation*}
where the last equality stems from two applications of
the mixed product property
\eqref{mixed-product-property}:
\begin{multline*}
\be_i^\top (\be_t^\top \otimes \bX_{\hat S}) 
= (1\otimes \be_i^\top)(\be_t^\top \otimes \bX_{\hat S})
= (\be_t^\top) \otimes (\be_i^\top \bX_{\hat S})
= (\be_t^\top \bI_{T\times T}) \otimes (1 (\be_i^\top \bX_{\hat S}))
\\= \be_t^\top  (\bI_{T\times T} \otimes (\be_i^\top \bX_{\hat S})).
\end{multline*}
Thus
    $\bv = \sum_{i=1}^n(\bI_{T\times T} \otimes (\be_i^\top\bX_{\hat S}) ) \bM\be_i$ and since
$\|\bv\|_2^2 = \bv^\top\bv = (\bv^\top \otimes 1) \bv$,
it follows that
$\|\bv\|^2 = 
\sum_{i=1}^n (\bv^\top \otimes (\be_i^\top\bX_{\hat S}) ) \bM\be_i
=
\trace[
(\bv^\top\otimes \bX_{\hat S}) \bM]
$ by \eqref{mixed-product-property}.

By the definition of $\bM$,
using the commutation property of the trace
we have
\begin{align*}
\|\bv\|_2^2&=
  \trace\bigl[
      (\bY-\bX\hbB)^\top
      (\bv^\top \otimes \bX_{\hat S})
      (\tbX^\top\tbX +  nT\tbH)^\dagger
      (\bI_{T\times T} \otimes \ba_{\hat S})
  \bigr].
\end{align*}
By the Cauchy-Schwarz inequality for $\langle\cdot,\cdot\rangle_{F}$
and
using $\|\bU\bV\|_F\le \|\bU\|_{op}\|\bV\|_F$ twice,
we find
\begin{align*}
\|\bv\|_2^2 
  &\le
\|
(\bY-\bX\hbB)^\top 
     (\bv^\top \otimes \bX_{\hat S})
\|_F
\|
      (\tbX^\top\tbX +  nT\tbH)^\dagger
      (\bI_{T\times T} \otimes \ba_{\hat S})
\|_F
\\&\le
\|
      \bY-\bX\hbB
\|_{op}
\| \bv^\top \otimes  \bX_{\hat S} \|_F
\|
      (\tbX^\top\tbX +  nT\tbH)^\dagger
\|_{op}
\|
      (\bI_{T\times T} \otimes \ba_{\hat S})
\|_F
\end{align*}
and the second factor equals
$\| \bv^\top \otimes  \bX_{\hat S} \|_F
= \|\bv\|_2 \|\bX_{\hat S} \|_F
$ by \eqref{norm-property-kronecker} for the Frobenius norm.

We introduce the notation $\lesssim$ to denote an inequality up to a constant that depends
on $\eta_1,...,\eta_4$ and $\phi_{\min}(\bSigma),\phi_{\max}(\bSigma)$ only. 
On $\Omega_*$ we have the operator norm bound \eqref{eq:bound-operator-norm-big-dagger},
the bound $\|\bX_{\hat S}\|_F \le |\hat S|^{1/2} \|\bX_{\hat S}\|_{op} \lesssim (|\hat S|n)^{1/2}$ as well as
$\|
      (\bI_{T\times T} \otimes \ba_{\hat S})
\|_F
= \sqrt T \| \ba_{\hat S}\|_2 \lesssim \sqrt T$ so that
$$
\|\bv\|_2
\lesssim
\|\bY-\bX\hbB\|_{op}
\sqrt{ns} n^{-1} \sqrt T
$$
and $
\|\bY-\bX\hbB\|_{op}\le \|\bE\|_{op} + \|\bX(\bB^*-\hbB)\|_F
\le
\sigma (\sqrt T + 2\sqrt n)
+ \sqrt n \bar R
$
thanks to $\Omega_4$ and \Cref{lemma:risk}.
Since $T\le n$ and $\bar R\lesssim 1$ under \Cref{assum:main}, we have proved that 
$\|\bv\|_2 \lesssim \sigma \sqrt{s T}$ holds on $\Omega_*$
which is exactly the desired bound \eqref{eq:divergence-D**-inequality}.

\end{proof}

\lemmaRemainderII*

\begin{proof}[Proof of \Cref{lemma:RemainderII}]

Recall that we assume the normalization $\|\bSigma^{-1/2}\ba\|^2=1$.
Following the notation in \cite{bellec2019biasing} we define the quantities:
 $$\bu_0 = \bSigma^{-1}\ba, \quad \bz_0=\bX\bu_0, \quad \bQ_0=\bI_{p\times p}-\bu_0\ba^\top.$$
We have the decomposition $\bX = \bX \bQ_0 + \bz_0 \ba^\top$,
the vector $\bz_0$ is independent of $\bX\bQ_0$ and $\bz_0$ has distribution $\mathcal N_n(\mathbf 0, \bI_{n\times n})$. 
Given a value of $(\bE,\bX\bQ_0)$, define the open set
$$U_0 = \{ \bz_0\in\R^n: (\bE,\bX\bQ_0 +\bz_0 {\ba^{\top}})\in\Omega_*\} \subset \R^n.$$
Since $\Omega_*$ is open , so is the set $U_0$.
Given a value of $(\bE,\bX\bQ_0)$ we also define the function
$U_0\to\R^{p\times T}$ given by
$$\hbB(\bz_0) = {\textstyle \argmin_{\bB\in\R^{p\times T}} }
\Big(
\tfrac{1}{2nT} \|\bE + (\bX\bQ_0 + \bz_0\ba^\top)(\bB^* - \bB)\|_F^2 + \lambda \|\bB\|_{2,1}
\Big)$$
as well as
$$\bF:U_0\to\R^{n\times T}, \qquad \bF:\bz_0\mapsto (\bX\bQ_0+\bz_0 {\ba^{\top}})(\hbB(\bz_0){ - \bB^*}).$$
Since $\bar R\to 0$ under \Cref{assum:main} and $\|\bE\|_{op}n^{-1/2}$
is bounded by an absolute constant on $\Omega_4$ when $T\le n$,
\Cref{lemma:lipschitz} shows that $\bF$ is $L$-Lipschitz
for some constant $L$ of the form $L=\sigma \C(\eta_1,...,\eta_4,\bSigma)$
where the constant depends only on $\eta_1,...,\eta_4$ and the minimal and
maximal eigenvalues of $\bSigma$.
By Kirszbraun's Theorem, there exists an $L$-Lipschitz function
$\tilde\bF:\R^n\to\R^{n\times T}$
which is an extension of $\bF$, i.e., it satisfies $\bF(\bz_0)=\tilde\bF(\bz_0)$ for all $\bz_0\in U_0$.
Since $\bF(\bz_0)$ is bounded from above by $n^{1/2}{(1-\eta_3)}\bar R$ in $U_0$ by
\Cref{lemma:risk}, we define the function $\bar\bF:\R^n\to\R^{n\times T}$ by
$$
\bar\bF(\bz_0) = \bPi \circ \tilde \bF(\bz_0)
$$
where $\bPi:\R^{n\times T}\to\R^{n\times T}$ is the convex projection onto the Frobenius ball of radius
$n^{1/2}\bar R$ in $\R^{n\times T}$. Since convex projections are 1-Lipschitz functions,
the function $\bar\bF$ is also an $L$-Lipschitz extension of $\bF$.

If $\bar{\mathsf{D}}(\bb)$ denotes the Jacobian such that
${\bar\bF(\bw)\bb - \bar\bF(\mathbf 0)\bb = \bar{\mathsf{D}}(\bb)\bw + o(\|\bw\|)}$  for all $\bb\in\R^T$,
then $\bar{\mathsf{D}}(\bb)=\mathsf{D}(\bb)$ on $U_0$
because two functions that coincide on an open set have the same gradient
on this open set.
This implies
\begin{align*}
\E\Big[
    I\{\Omega_*\}
\sum_{t=1}^T
\Big(
    \bz_0^\top\bF(\bz_0)\be_t
- \trace[\mathsf{D}(\be_t)]
\Big)^2
\Big]
&=
\E\Big[
    I\{\Omega_*\}
\sum_{t=1}^T
\Big(
    \bz_0^\top\bar\bF(\bz_0)\be_t
    - \trace[\bar{\mathsf{D}}(\be_t)]
\Big)^2
\Big]
\\&\le
\E\Big[
\sum_{t=1}^T
\Big(
    \bz_0^\top\bar\bF(\bz_0)\be_t
    - \trace[\bar{\mathsf{D}}(\be_t)]
\Big)^2
\Big]
\end{align*}
where the second display simply follows from $I\{\Omega_*\}\le1$.
By the main result of \cite{bellec_zhang2018second_stein}
 we find
 \begin{align*}
    \E\left[
    (
    \bz_0^\top\bar\bF(\bz_0)\be_t
    - \trace[\bar{\mathsf{D}}(\be_t)]
    )^2\right] 
    &=
    \E\left[
    \|\bar\bF(\bz_0)\be_t\|_2^2
    +
    \trace(
\{\bar{\mathsf{D}}(\be_t) \}^2
)
\right]
  \\&\le
    \E\left[
    \|\bar\bF(\bz_0)\be_t\|_2^2
    +
    \|\bar{\mathsf{D}}(\be_t) \|_F^2
    \right]
\end{align*}
for each $t=1,...,T$ since $\bz_0\sim \mathcal N_n({\mathbf 0},\bI_{n \times n})$.
Summing this inequality over $t=1,...,T$ yields
\begin{align*}
    &\frac{1}{n\sigma^2}\E\Big[
\sum_{t=1}^T
\Big(
    \bz_0^\top\bar\bF(\bz_0)\be_t
    - \trace[\bar{\mathsf{D}}(\be_t)]
\Big)^2
\Big]
\\&\le
\frac{1}{n\sigma^2}\E\Big[
    \|\bar\bF(\bz_0)\|_F^2
    +\sum_{t=1}^T
    \|\bar{\mathsf{D}}(\be_t)\|_F^2
\Big]
\\&\le \frac{\bar R^2}{\sigma^2}
+ \frac{1}{n\sigma^2}
\E\Big[
    \sum_{t=1}^T
    \|\bar{\mathsf{D}}(\be_t)\|_F^2
\Big]
\\&=
\frac{\bar R^2}{\sigma^2}
+
\frac{1}{n\sigma^2}
\E\Big[
    I\{\Omega_*\}
    \sum_{t=1}^T
    \|\bar{\mathsf{D}}(\be_t)\|_F^2
\Big]
+
\frac{1}{n\sigma^2}
\E\Big[
    I\{\Omega_*^c\}
    \sum_{t=1}^T
    \|\bar{\mathsf{D}}(\be_t)\|_F^2
\Big].
\end{align*}
Note that the first term, $\bar R^2/\sigma^2$, converges to 0,
as stated in \Cref{lemma:risk}.
We now bound the third term, on $\Omega_*^c$.
The quantity $
    \sum_{t=1}^T
    \|\bar{\mathsf{D}}(\be_t)\|_F^2
$ is exactly the squared Frobenius norm of the 
Jacobian of the map
$\bar\bF:\R^n\to\R^{n\times T}$ (this Jacobian has dimensions $(nT)\times n$
but we do not need to write it explicitly or choose a specific vectorization
of $\R^{n\times T}$ into $\R^{nT}$).
Since $\bar\bF$ is $L$-Lipschitz,
the operator norm of the Jacobian is at most $L$.
Since the rank of the Jacobian of a map from $\R^n$ to any other linear
space is at most $n$,
the rank of the Jacobian is at most $n$.
If follows from $\|\bJ\|_F^2\le \rank(\bJ) \|\bJ\|_{op}^2$ with $\bJ\in\R^{(nT)\times n}$ the Jacobian
of $\bar\bF$ that
$$
    \sum_{t=1}^T
    \|\bar{\mathsf{D}}(\be_t)\|_F^2
    =\|\bJ\|_F^2
    \le n L^2
$$
so that
$\frac{1}{n\sigma^2}\E[
    I\{\Omega_*^c\}
    \sum_{t=1}^T
    \|\bar{\mathsf{D}}(\be_t)\|_F^2
    ]\le \P(\Omega_*^c) L^2/\sigma^2
$ which converges to 0 under \Cref{assum:main} thanks to $\P(\Omega_*)\to 1$ in
\Cref{lemma:proba}.

It remains to show that
$\frac{1}{n\sigma^2}\E[
    I\{\Omega_*\}
    \sum_{t=1}^T
    \|\bar{\mathsf{D}}(\be_t)\|_F^2
    ]
$ converges to 0. This quantity is equal to
$\frac{1}{n\sigma^2}\E[
    I\{\Omega_*\}
    \sum_{t=1}^T
    \|{\mathsf{D}}(\be_t)\|_F^2
    ]
$ since the derivatives of $\bar\bF$ and $\bF$ coincide on $U_0$.
To bound this quantity, we use the explicit formulae obtained in \Cref{lemma:differential}
with
$\|{\mathsf{D}}(\be_t)\|_F^2
\le
2\|{\mathsf{D}^*}(\be_t)\|_F^2
+2\|{\mathsf{D}^{**}}(\be_t)\|_F^2$.
We can use the following property of Kronecker products.
If $\bM,\bQ$ are two matrices, and $\be_t$ is the $t$-th canonical basis
vector in $\R^T$, then by the mixed product property \eqref{mixed-product-property}
\begin{align}
\sum_{t=1}^T \|(\be_t^\top \otimes \bM )\bQ\|_F^2
&=
\sum_{t=1}^T\trace[ \bQ^\top (\be_t\otimes \bM^\top)(\be_t^\top \otimes \bM )\bQ]
\cr&=
\trace[ \bQ^\top \sum_{t=1}^T\Big[(\be_t\otimes \bM^\top)(\be_t^\top \otimes \bM )\Big]\bQ]
\cr&=
\trace[ \bQ^\top (\bI_{T\times T}\otimes \bM^\top\bM)\Big]\bQ]
\cr&=
\| (\bI_{T\times T} \otimes \bM) \bQ\|_F^2.
\label{eq:sum-over-T-helper}
\end{align}
Since $\|{\mathsf{D}^*}(\be_t)\|_F^2
            \leq 
            2(\ba^\top(\hbB-\bB^*)\be_t)^2 \|\bI_{n\times n}\|_F^2 
            +
             2 \|(\be_t^\top \otimes \bX_{{\hat S}})
            \big(\tbX{}^\top\tbX + nT \tbH \big)^\dagger
            \big(((\hbB-\bB^*)^\top \ba)\otimes \bX_{\hat S}^\top\big) \|_F^2$,
thanks to \eqref{eq:sum-over-T-helper}
with $\bM=\bX_{\hat S}$ and $\bQ=
\big(\tbX{}^\top\tbX + nT \tbH \big)^\dagger
                \big(((\hbB-\bB^*)^\top \ba)\otimes \bX_{\hat S}^\top\big)
$ for the second term we find
    $$\sum_{t=1}^T \|{\mathsf{D}^*}(\be_t)\|_F^2 
    \leq 
    2n \|(\hbB-\bB^*)^{\top}\ba\|_2^2 
    +
    2 \|(\bI_{T\times T} \otimes \bX_{{\hat S}}) \big(\tbX{}^\top\tbX + nT \tbH \big)^\dagger
                \big(((\hbB-\bB^*)^\top \ba)\otimes \bX_{\hat S}^\top\big)\|_F^2.$$
The first summand is bounded by 
$2n\|(\hbB-\bB^*)\|_F^2 \;\|\ba\|_2^2 
\leq 2n \phi_{\min}(\bSigma)^{-1/2}\bar R \; \phi_{\max}(\bSigma)$ 
and the second summand by
\begin{align*}
    &\stackrel{\text{(i)}}{\leq}
    2 \|(\bI_{T\times T} \otimes \bX_{{\hat S}})\|_{op}^2 \;
      \|\big(\tbX{}^\top\tbX + nT \tbH \big)^\dagger\|_{op}^2 \;
      \|((\hbB-\bB^*)^\top \ba)\otimes \bX_{\hat S}^\top\|_F^2 \\
    &\stackrel{\text{(ii)}}{\leq}
    2 \|\bX_{{\hat S}}\|_{op}^2 \;
      \|\big(\tbX{}^\top\tbX + nT \tbH \big)^\dagger\|_{op}^2 \;
      \|(\hbB-\bB^*)^\top \ba\|_F^2 \; \|\bX_{\hat S}^\top\|_F^2\\
    &\stackrel{\phantom{\text{(iii)}}}{\leq} 
    2 \|\bX_{{\hat S}}\|_{op}^2 \;
      \|\big(\tbX{}^\top\tbX + nT \tbH \big)^\dagger\|_{op}^2 \;
      \|(\hbB-\bB^*)\|_F^2 \;\|\ba\|_2^2  \; \rank(\bX_{\hat S})\|\bX_{\hat S}\|_{op}^2\\
    &\stackrel{\text{(iii)}}{\leq} 
    2 (\phi_{\max}(\bSigma)(1+\eta_4)^2 n)^2 
    (\phi_{\min}(\bSigma)^{-2} (1-\eta_4)^{-4} n^{-2})
    (\phi_{\min}(\bSigma)^{-1/2}\bar R \; \phi_{\max}(\bSigma))
    \bar s\\
    &=
    2 \phi_{\max}(\bSigma)^{3} \phi_{\min}(\bSigma)^{-5/2} \bar s \bar R.
\end{align*}
Above, 
(i) follows from $\|\bM\bN\bU\|_F \le \|\bM\|_{op} \|\bN\|_{op} \|\bU\|_F$,
(ii) is a consequence of \eqref{norm-property-kronecker} and (iii) holds on $\Omega_*$.
Thus $\sum_{t=1}^T \|{\mathsf{D}^*}(\be_t)\|_F^2 \lesssim n\bar R$.

Likewise,
\begin{align*}
    \sum_{t=1}^T \|{\mathsf{D}^{**}}(\be_t)\|_F^2 
    &\leq \|(\bI_{T\times T} \otimes \bX_{{\hat S}})
              \big(\tbX{}^\top\tbX + nT \tbH \big)^\dagger
              ((\bY-\bX\hbB)^\top \otimes \ba_{\hat S} ) \|_F^2\\
    &\leq (\phi_{\max}(\bSigma)(1+\eta_4)^2 n) 
         (\phi_{\min}(\bSigma)^{-2} (1-\eta_4)^{-4} n^{-2})
         (8\sigma^2nT + 2(1-\eta_3)^2n\bar R^2)
         \phi_{\max}(\bSigma)\\
    &\lesssim \sigma^2 T
\end{align*}
Thus
$\frac{1}{n\sigma^2}\E[
    I\{\Omega_*\}
    \sum_{t=1}^T
    \|\bar{\mathsf{D}}(\be_t)\|_F^2
    ]\lesssim \frac{\bar R}{\sigma^2} + \frac Tn$ and the right hand side converges to $0$ under \Cref{assum:main}. 
\end{proof}

\section{Proof that \texorpdfstring{$\P(\Omega_*)\to 1$}{P(Omega*) goes to 0}}
\label{sec:probabliistic-propositions}

\subsection{$\Omega_1$: Restricted Eigenvalues for random matrices in multi-task learning}

\begin{proposition}
 \label{prop:restricted-eig}
    Let $\bG\in\R^{n\times p}$ be a random matrix with i.i.d. $\mathcal N(0,1)$ entries
    and let $\mathcal A$ be a subset of $\R^{p\times T}$ with $\|\bB\|_F=1$ for all
    $\bB\in\mathcal A$.

    \begin{enumerate}
        \item
            For any two $\bA,\bB\in\mathcal A$,
            $\P(|~\|\bG\bA\|_F - \|\bG\bB\|_F|\ge \C \sqrt{x}\|\bB-\bA\|_F)
            \le 6 e^{{- x}}$ for all $x>0$. 
        \item 
            $\sup_{\bA,\bB\in\mathcal A}|~\|\bG\bA\|_F - \|\bG\bB\|_F| \le \C \E
     \sup_{\bB\in \mathcal A} | \trace[\bB^\top \bG']| + \C \sqrt{x}$
     with probability at least $1-e^{-x}$,
     where $\bG'\in\R^{p\times T}$ has i.i.d. $\mathcal N(0,1)$ entries.

     $\sup_{\bA\in\mathcal A}|~\|\bG\bA\|_F - \sqrt n| \le \C \E
     \sup_{\bB\in \mathcal A} | \trace[\bB^\top \bG']| + \C \sqrt{x}$ 
     also holds with probability at least $1-{3}e^{-x}$.
        \item
            If $\bX$ has i.i.d. $\mathcal N_p({\mathbf 0},\bSigma)$ rows with $\max_{j\in[p]}\bSigma_{jj} \le 1$
            and 
            \begin{equation}
                \label{def-cone-C}
                \mathcal C =  \{\bA\in\R^{p\times T}: \|\bA\|_{2,1} \le \sqrt{k} \|\bA\|_F \},
            \end{equation}
            then with probability at least $1-{3} e^{-x}$,
            \begin{align*}
        &
                \sup_{\bA\in\mathcal C: \|\bSigma^{1/2}\bA\|_F=1}
        \Big| n^{-1/2}\|\bX\bA\|_F - 1\Big|
        =
                \sup_{
                \bB\in\R^{p\times T}:\bSigma^{-1/2}\bB\in \mathcal C,
            \|\bB\|_F=1}
            \Big| n^{-1/2}\|\bX\bSigma^{-1/2}\bB\|_F - 1\Big|
        \\&
        \le \C \sqrt{x / n}
        + \C n^{-1/2} \E \sup_{\bB\in\R^{p\times T}:\bSigma^{-1/2}\bB\in\mathcal C, \|\bB\|_F=1}|\trace[\bB^\top\bG']|
      \\&\le
      \C \sqrt{x / n }
      + \C
      \sqrt{{[ k T +  k \log(p/k)} ]/({\phi_{\min}(\bSigma)n} )}
            \end{align*}
            This implies that for any constant $\eta_3\in(0,1)$,
            if $\{kT+k\log(p/k)\}/(n\phi_{\min}(\bSigma))\to 0$
            then
            $\P(\max_{\bA\in\mathcal C: \|\bSigma^{1/2}\bA\|_F=1}
            \big| n^{-1/2} \|\bX\bA\|_F - 1|\le \eta_3)\to 1$.
    \end{enumerate}
\end{proposition}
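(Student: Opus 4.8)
The plan is to prove the three parts in turn; part~(ii) carries the analytic weight.

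\textbf{Part (i).} For a fixed pair $\bA,\bB\in\mathcal A$ I would write
\[
\|\bG\bA\|_F-\|\bG\bB\|_F=\frac{\|\bG\bA\|_F^2-\|\bG\bB\|_F^2}{\|\bG\bA\|_F+\|\bG\bB\|_F}.
\]
The numerator equals $\langle\bG^\top\bG,\ \bA\bA^\top-\bB\bB^\top\rangle_F$, which after vectorizing $\bG$ becomes a quadratic form $\bg^\top\bN\bg$ in a standard Gaussian vector $\bg\in\R^{np}$, with $\bN$ a symmetrization of $(\bA\bA^\top-\bB\bB^\top)\otimes\bI_{n\times n}$; it is centered since $\E[\bg^\top\bN\bg]=n(\|\bA\|_F^2-\|\bB\|_F^2)=0$. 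Using the identity $\bA\bA^\top-\bB\bB^\top=\bA(\bA-\bB)^\top+(\bA-\bB)\bB^\top$ together with $\|\bA\|_{op},\|\bB\|_{op}\le1$ gives $\|\bN\|_{op}\le2\|\bA-\bB\|_F$ and $\|\bN\|_F\le2\sqrt n\,\|\bA-\bB\|_F$, so the Hanson--Wright inequality bounds the numerator by $C\sqrt{nx}\,\|\bA-\bB\|_F$ with probability at least $1-2e^{-x}$ when $x\lesssim n$. On the other side $\bG\mapsto\|\bG\bA\|_F$ is $1$-Lipschitz (since $\|\bA\|_{op}\le1$) with mean in $[\sqrt{n-1},\sqrt n]$, so Gaussian concentration gives $\min(\|\bG\bA\|_F,\|\bG\bB\|_F)\ge\tfrac12\sqrt n$ with probability at least $1-2e^{-x}$ for $x\lesssim n$; dividing yields the claim with constant $6$. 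For $x\gtrsim n$ one instead uses $|\,\|\bG\bA\|_F-\|\bG\bB\|_F\,|\le\|\bG(\bA-\bB)\|_F$ together with a sub-exponential tail bound showing $\|\bG(\bA-\bB)\|_F\lesssim(\sqrt n+\sqrt x)\|\bA-\bB\|_F\lesssim\sqrt x\,\|\bA-\bB\|_F$ with probability at least $1-e^{-x}$.

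\textbf{Part (ii).} Each map $\bG\mapsto\|\bG\bA\|_F$ is $1$-Lipschitz because $\|\bA\|_{op}\le\|\bA\|_F=1$, hence so are $\bG\mapsto\sup_{\bA,\bB\in\mathcal A}|\,\|\bG\bA\|_F-\|\bG\bB\|_F\,|$ (up to a factor $2$) and $\bG\mapsto\sup_{\bA\in\mathcal A}|\,\|\bG\bA\|_F-\sqrt n\,|$. Borell--TIS therefore reduces both bounds to estimating the corresponding expectations; since $\E\sup_{\bA,\bB}|\,\|\bG\bA\|_F-\|\bG\bB\|_F\,|\le\E\sup_{\bA}\|\bG\bA\|_F-\E\inf_{\bB}\|\bG\bB\|_F$ and $\sqrt{n-1}\le\E\inf_{\bA}\|\bG\bA\|_F\le\E\sup_{\bA}\|\bG\bA\|_F$, everything reduces to the two-sided comparison
\[
\E\sup_{\bA\in\mathcal A}\|\bG\bA\|_F\le\sqrt n+C\,w(\mathcal A),\qquad \E\inf_{\bA\in\mathcal A}\|\bG\bA\|_F\ge\sqrt{n-1}-C\,w(\mathcal A),
\]
where $w(\mathcal A)=\E\sup_{\bB\in\mathcal A}|\trace[\bB^\top\bG']|$. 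Writing $\|\bG\bA\|_F=\sup_{\|\bV\|_F=1}\langle\bG,\bV\bA^\top\rangle_F$ presents these as extrema of the Gaussian process $(\bA,\bV)\mapsto\langle\bG,\bV\bA^\top\rangle$. This is exactly where the single-task toolkit fails: because $\bV\bA^\top$ can have rank up to $T$, the variance $\E[\langle\bG,\bV\bA^\top\rangle^2]=\|\bV\bA^\top\|_F^2$ is \emph{not} of the product form $\|\bV\|_F^2\|\bA\|_F^2$, so Chevet's or Gordon's inequality cannot be quoted directly. I would instead establish the comparison by a Gaussian-interpolation (Slepian/Gordon-type) argument tailored to this process, comparing it against auxiliary processes built from two independent Gaussian ensembles and checking the covariance inequalities using $\|\bV\bA^\top\|_F\le\|\bV\|_F\|\bA\|_{op}\le\|\bV\|_F$ and $\|\bA\|_F=1$. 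Establishing this matrix-valued comparison is the step I expect to be the main obstacle, in line with the paper's remark that the multi-task RE condition needs tools beyond Gordon's escape theorem.

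\textbf{Part (iii) and the conclusion.} The first equality is the change of variables $\bB=\bSigma^{1/2}\bA$. Since $\bX\bSigma^{-1/2}$ has i.i.d.\ $\mathcal N(0,1)$ entries, applying part~(ii) with $\bG=\bX\bSigma^{-1/2}$ and $\mathcal A=\{\bB:\bSigma^{-1/2}\bB\in\mathcal C,\ \|\bB\|_F=1\}$ reduces the claim to bounding $w(\mathcal A)=\E\sup\{|\langle\bA,\bSigma^{1/2}\bG'\rangle|:\bA\in\mathcal C,\ \|\bSigma^{1/2}\bA\|_F=1\}$, using $\langle\bSigma^{1/2}\bA,\bG'\rangle=\langle\bA,\bSigma^{1/2}\bG'\rangle$. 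On this set $\|\bA\|_F\le\phi_{\min}(\bSigma)^{-1/2}$ and $\|\bA\|_{2,1}\le\sqrt k\,\|\bA\|_F$; the $\ell_{2,1}/\ell_{2,\infty}$ duality $|\langle\bA,\bM\rangle|\le\|\bA\|_{2,1}\max_j\|\bM^\top\be_j\|_2$ already gives the crude estimate $w(\mathcal A)\lesssim\sqrt k\,\phi_{\min}(\bSigma)^{-1/2}\,\E\max_{j\in[p]}\|(\bG')^\top\bSigma^{1/2}\be_j\|_2\lesssim\sqrt{k(T+\log p)/\phi_{\min}(\bSigma)}$, using $(\bG')^\top\bSigma^{1/2}\be_j\sim\mathcal N(0,\bSigma_{jj}\bI_{T\times T})$, $\bSigma_{jj}\le1$ and a union bound over $j\in[p]$. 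To sharpen $\log p$ to $\log(ep/k)$ one uses that the unit-Frobenius slice of $\mathcal C$ sits inside a universal multiple of the convex hull of $k$-row-sparse matrices of Frobenius norm at most $1$, whence $w(\mathcal A)\lesssim\phi_{\min}(\bSigma)^{-1/2}\,\E\max_{|B|=k}\|(\bSigma^{1/2}\bG')_B\|_F\lesssim\sqrt{[kT+k\log(ep/k)]/\phi_{\min}(\bSigma)}$ after a union bound over the $\binom pk$ index sets $B$. Substituting this into part~(ii) with, say, $x=\log n$, the right-hand side of the displayed inequality becomes $C\sqrt{(\log n)/n}+C\sqrt{[kT+k\log(p/k)]/(n\phi_{\min}(\bSigma))}$, which tends to $0$ under the hypothesis $\{kT+k\log(p/k)\}/(n\phi_{\min}(\bSigma))\to0$; combined with the exceptional probability $3e^{-\log n}\to0$ this yields $\P\bigl(\sup_{\bA\in\mathcal C:\|\bSigma^{1/2}\bA\|_F=1}|n^{-1/2}\|\bX\bA\|_F-1|\le\eta_3\bigr)\to1$ for every fixed $\eta_3\in(0,1)$.
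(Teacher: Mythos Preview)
Your Part (i) is essentially the same as the paper's: both split into small/large $x$ regimes, handle the small-$x$ case via the ratio $(\|\bG\bA\|_F^2 - \|\bG\bB\|_F^2)/(\|\bG\bA\|_F + \|\bG\bB\|_F)$ with Hanson--Wright on the numerator and concentration on the denominator, and the large-$x$ case via the crude bound $|\,\|\bG\bA\|_F - \|\bG\bB\|_F\,| \le \|\bG(\bA-\bB)\|_F$.

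Part (ii) is where you go astray. You identify it as the main obstacle, propose a tailored Slepian/Gordon interpolation, and concede that you have not carried it out. But the paper's route is much shorter and requires no new comparison inequality: Part (i) is precisely the statement that the process $\bA \mapsto \|\bG\bA\|_F$ has sub-gaussian increments with respect to the Frobenius distance on $\mathcal A$. Talagrand's Majorizing Measure theorem (in the form of \cite[Theorem 4.1]{liaw2017simple}) then immediately yields $\sup_{\bA,\bB\in\mathcal A}|\,\|\bG\bA\|_F - \|\bG\bB\|_F\,| \le C\, w(\mathcal A) + C\sqrt{x}$ with probability $1-e^{-x}$. The second statement follows by fixing any $\bB_0\in\mathcal A$ and adding the concentration of $\|\bG\bB_0\|_F$ around $\sqrt{n}$. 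In short, Part (i) was engineered to feed into generic chaining; you produced the right ingredient and then did not use it, instead reaching for a harder tool that you could not close.

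For Part (iii), your convex-hull-of-$k$-row-sparse-matrices argument would likely work, but the paper's Gaussian-width bound is more direct and avoids that detour: write $|\trace[\bA^\top\bSigma^{1/2}\bG']| \le \sum_j \|\bA^\top\be_j\|_2\,\|\bg_j\|_2$ with $\bg_j^\top$ the $j$-th row of $\bSigma^{1/2}\bG'$, then split $\|\bg_j\|_2 \le (m+t) + (\|\bg_j\|_2 - m - t)_+$ with $m$ the median of the $\chi_T$ distribution and $t = \sqrt{2\log(p/k)}$. The first piece is controlled by $\|\bA\|_{2,1}(m+t) \le \sqrt{k}\,\|\bA\|_F(\sqrt{T} + \sqrt{2\log(p/k)})$ via the cone condition; the second by Cauchy--Schwarz and the bound $\E\sum_j(\|\bg_j\|_2 - m - t)_+^2 \le k$ from the Gaussian tail (as in the proof of \Cref{prop:noise}). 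This delivers $\sqrt{k(T + \log(p/k))}$ directly, without sparse decomposition or a union bound over $\binom{p}{k}$ sets.
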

The proof follows the argument from \cite{liaw2017simple}, adapted to the multi-task setting.
\begin{proof}[Proof of (i)]
    We distinguish two cases.

    \emph{Case (a): $\sqrt{xn} > n/4$}.
    In this case we use that
        $$\|\bG\bA\|_F-\|\bG\bB\|_F
        \le \|\bG(\bA-\bB)\|_F
        = \Big(\sum_{i=1}^n \|(\bA-\bB)^\top \bG^\top \be_i\|_2^2 \Big)^{1/2}
        $$ 
        and we apply \cite[Theorem 6.3.2]{vershynin2018high} 
        to the vector $\vec(\bG^\top) \in \mathbb R^{np\times 1}$ and 
        the block diagonal matrix with $n$ blocks, each block being $(\bA-\bB)^\top$.
        This yields 
        $$\P(|~\|\bG\bA\|_F - \|\bG\bB\|_F|\ge \sqrt{x}\|\bB-\bA\|_{op} 
                                                  + \sqrt n  \|\bB-\bA\|_{F})
                    \le 2 e^{-\C x}.$$
    Here, $\sqrt{n} \le 4\sqrt{x}$
    and we can bound from above the first
    term to obtain the desired bound.

    \emph{Case (b): $\sqrt{xn} \le n/4$}.
    Write
    $\|\bG\bA\|_F - \|\bG\bB\|_F = \frac{\|\bG\bA\|_F^2 - \|\bG\bB\|_F^2}{\|\bG\bA\|_F + \|\bG\bB\|_F}$.
    We will use repeatedly the following concentration bounds:
        if $\bz\sim \mathcal N_q(\mathbf{0},\bI_{q\times q})$ and $\bM\in\R^{q\times q}$
        is symmetric positive semi-definite, then
        \begin{equation}
            \label{eq:concentration-quadratic-lower-bound}
        \P\left(\bz^\top\bM\bz-\trace\bM < 2\sqrt{x}\|\bM\|_F \right)
        \le e^{-x}.
        \end{equation}
        This is a straightforward consequence
        of \cite[Lemma 1]{laurent2000adaptive}
        after diagonalizing the symmetric positive semi-definite matrix $\bM$.
        Furthermore, for any $\bM\in\R^{q\times q}$,
        \begin{equation}
            \label{eq:concentration-quadratic-upper-bound}
        \P\left(\bz^\top\bM\bz-\trace\bM > 2\sqrt{x}\|\bM\|_F + 2 x \|\bM\|_{op}\right)
        \le e^{-x}
        \end{equation}
        see for instance
        \cite[Example 2.12]{boucheron2013concentration} or
        \cite[Lemma 3.1]{bellec2018optimal}.
    
    If $\bg_1^\top,...,\bg_n^\top$ are the rows of $\bG$ then
    $\|\bG\bA\|_F^2
    = \sum_{i=1}^n \bg_i^\top \bA \bA^{\top} \bg_i$
    is of the above form with $q=np$ and $\bM$ is
    block diagonal with $n$ blocks equal to $\bA\bA^\top\in\R^{p\times p}$.
    Thus
    $\|\bG\bA\|_F^2
    \ge n \|\bA\|_F^2 - 2 \sqrt{x n} \|\bA\bA^\top\|_F
    \ge n - 2 \sqrt{xn}
    $ with probability at least $1-e^{-x}$ by \eqref{eq:concentration-quadratic-lower-bound} and thanks to $\|\bA\|_F=1$.
    The same holds for a lower bound
    on $\|\bG\bB\|_F^2$.
    For the numerator, thanks to \eqref{eq:concentration-quadratic-upper-bound},
    with probability at least $1-e^{-x}$:
    \begin{align*}
    \|\bG\bA\|_F^2 - \|\bG\bB\|_F^2
    &=\sum_{i=1}^n \bg_i^\top (\bA-\bB)(\bA+\bB)^\top\bg_i
  \\&\le
  2\sqrt{xn} \|(\bA-\bB)(\bA+\bB)^\top\|_F
    + 2x \|(\bA-\bB)(\bA+\bB)^\top\|_{op}.
    \end{align*}
    By the union bound,
    with probability at least $1-3e^{-x}$,
    \begin{equation*}
        \|\bG\bA\|_F - \|\bG\bB\|_F 
        \le
        \frac{
            2\sqrt{xn} \|(\bA-\bB)(\bA+\bB)^\top\|_F
    + 2x \|(\bA-\bB)(\bA+\bB)^\top\|_{op}
        }{
        2(n-2\sqrt{xn})_+^{1/2}
    }.
    \end{equation*}
    Since here $\sqrt{xn} \le n/4$,
    the denominator is at
    least $2(n/2)^{1/2}$
    and using the submultiplicativity of the Frobenius norm
    with $\|\bA+\bB\|_F\le 2$ for the numerator we find
    $$\frac{\|\bG\bA\|_F - \|\bG\bB\|_F}{\|\bA-\bB\|_F}
    \le
    2\frac{\sqrt{xn} + x}{(n/2)^{1/2}}
    \le
    \C \sqrt x.
    $$
\end{proof}

\begin{proof}[Proof of (ii)]
    Since (i) proves that the process $Z_{\bA} = \|\bG\bA\|_F$
    has subgaussian increment with respect to the Frobenius norm,
    (ii) follows by Talagrand Majorizing Measure theorem, for example as stated in \cite[Theorem 4.1]{liaw2017simple}.

       The second statement follows by taking a fixed $\bB\in\mathcal A$
    and using $|\sqrt n -\|\bG{\bB}\|_F|\le \C \sqrt{x}$ with probability
    at least $1-2e^{-x}$
    by \cite[Theorem 6.3.2]{vershynin2018high} applied to the block diagonal matrix with $n$ blocks,
    each block being $\bB^\top$.
\end{proof}

\begin{proof}[Proof of (iii)]
    Recall that $\bG'\in\R^{p\times T}$ has i.i.d. $\mathcal N(0,1)$ entries.
    By application of (ii), it is sufficient to control the Gaussian
    width
    \begin{equation}
    \E \sup_{\bB\in\R^{p\times T}:\bSigma^{-1/2}\bB\in\mathcal C, \|\bB\|_F=1}|\trace[\bB^\top\bG']|
    =
    \E \sup_{\bA\in\mathcal C:\|\bSigma^{1/2}\bA\|_F=1}|\trace[\bA^\top\bSigma^{1/2}\bG']|.
    \label{eq:gaussian-width-to-bound}
    \end{equation}
    Let $\bA\in\mathcal C$ and
    let $\bg_1^{\top},...,\bg_p^{\top}$ be the rows of $\bSigma^{1/2}\bG'$.
    For any fixed $j\in[p]$, the random vector $\bg_j\in\R^{T\times 1}$
    has $\mathcal N_T(\mathbf{0}_{1\times T},\bSigma_{jj}\bI_{T\times T})$ distribution.
    By the triangle inequality and the Cauchy-Schwarz inequality we have
    for some $m,t >0$
    \begin{align*}
        |\trace[\bA^\top\bSigma^{1/2}\bG']|
        &\le
        \sum_{j=1}^p
        \|\bA^{{\top}} \be_j\|_2 \|\bg_j\|_2
        =
        \|\bA\|_{2,1}(m+t)
        +
        \sum_{j=1}^p
        \|\bA^{{\top}} \be_j\|_2(\|\bg_j\|_2-m-t)
        \\
        &\le
        \|\bA\|_F \sqrt k (m+t)
        +
        \|\bA\|_F
        \Big(
            \sum_{j=1}^p
            (\|\bg_j\|_2-m-t)_+^2
        \Big)^{1/2}
    \end{align*}
    where for the second line we used that $\bA\in\mathcal C$.
    We have $\|\bA\|_F\le \|\bSigma^{-1/2}\|_{op}$
    if $\|\bSigma^{1/2}\bA\|_F=1$.
    Next, we now define $m$ such that 
    $m^2$ is the median of the $\chi^2_T$ distribution,
    and $t=\sqrt{2\log (p/k)}$.
    As explained in the proof of \Cref{prop:noise} around \eqref{ineq-E-noise-event}
    we have $m\le \sqrt T$ \cite{stack_exchange3590470median_chi_square}
    as well as
    $
    \E
            \sum_{j=1}^p
            (\|\bg_j\|_2-m-t)_+^2
    \le k$. 
        By the inequality $\sqrt a + \sqrt b \leq \sqrt {2(a+b)}$, 
        \eqref{eq:gaussian-width-to-bound}
        is bounded from above 
        by $\|\bSigma^{-1/2}\|_{op}
        (
        \sqrt{{2} k(T + 2\log(p/k))}
        + \sqrt k)
        \leq
         \|\bSigma^{-1/2}\|_{op} \sqrt{8 k(T +\log(p/k))}$
    and the proof is complete.

\end{proof}

\subsection{$\Omega_2$: Control of the noise}

\begin{proposition}
    \label{prop:noise}
    Let $a_+=\max(0,a)$.
    If $\bE\in\R^{n\times T}$ has i.i.d. $\mathcal N(0,\sigma^2)$ entries
    and $\bX\in\R^{n\times p}$ has i.i.d. $\mathcal N_p({\mathbf 0},\bSigma)$ rows independent of $\bE$,
    then
    \begin{align}
    \sum_{j=1}^p \Big(
    \frac{\|\bE^\top\bX\be_j\|_2}{\sigma(1+\eta_1)\sqrt{n\bSigma_{jj}}} - \sqrt{T} - \sqrt{2\log(p/s)} 
        \Big)_+^2
         &\le
     \sum_{j=1}^p \Big(
         \frac{\|\bE^\top\bX\be_j\|_2}{\sigma\|\bX\be_j\|_2} - \sqrt{T} -\sqrt{2\log(p/s)}
     \Big)^2
    \nonumber
         \\&\le s
         \label{eq:prop-noise-conclusion}
    \end{align}
     with probability at least $1-{4}/\{(2\log(p/s)+2)(4\pi\log(p/s)+4)^{1/2}\}-p e^{-n\eta_1^2/2}$.
    Consequently, on the same event with
    $$
    \definitionOfLambdaZero 
    $$
    we have
    $\sum_{j=1}^p (\|\bE^\top\bX\be_j\|_2 - nT \lambda_0)_+^2
    \le
    \sigma^2(1+\eta_1)^2n\max_j\bSigma_{jj}
    s
    \le
    s n^2 T \lambda_0^2
    $.
\end{proposition}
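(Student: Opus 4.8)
The plan is to condition on the design $\bX$ and use that, given $\bX$, the matrix $\bE$ still has i.i.d.\ $\mathcal N(0,\sigma^2)$ entries, so that for each $j$ with $\bX\be_j\neq\mathbf 0$ (which holds almost surely) the vector $\bg_j:=\bE^\top\bX\be_j/(\sigma\|\bX\be_j\|_2)\in\R^T$ has the $\mathcal N_T(\mathbf 0,\bI_{T\times T})$ distribution conditionally on $\bX$, hence also marginally; in particular $\|\bg_j\|_2$ is distributed as $\sqrt{\chi^2_T}$, free of $\bX$. Writing $t:=\sqrt{2\log(p/s)}$ and $Z:=\sum_{j=1}^p(\|\bg_j\|_2-\sqrt T-t)_+^2$ for the middle quantity in \eqref{eq:prop-noise-conclusion}, the goal is to show $Z\le s$ with the stated probability, and that the first inequality in \eqref{eq:prop-noise-conclusion} holds on a further high-probability event.

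First I would bound $\E[Z]$ and apply Markov's inequality. By linearity $\E[Z]=p\,\E[(\sqrt{\chi^2_T}-\sqrt T-t)_+^2]$; letting $m>0$ be the number with $m^2$ equal to the median of $\chi^2_T$, one has $m\le\sqrt T$ since the median of $\chi^2_T$ is at most $T$, so it suffices to bound $\E[(\sqrt{\chi^2_T}-m-t)_+^2]$. Because the Euclidean norm is $1$-Lipschitz on $\R^T$ and $m$ is a median of $\|\bg_j\|_2$, Gaussian concentration about the median gives $\P(\|\bg_j\|_2-m\ge u)\le\bar\Phi(u)$ for $u\ge0$, $\bar\Phi$ being the standard normal survival function. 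Hence, with $\varphi$ the standard normal density,
\begin{align*}
\E[(\sqrt{\chi^2_T}-m-t)_+^2]
&=\int_0^\infty 2u\,\P(\|\bg_j\|_2-m\ge t+u)\,du
\le\int_0^\infty 2u\,\bar\Phi(t+u)\,du\\
&=\int_t^\infty (w-t)^2\varphi(w)\,dw
=(1+t^2)\bar\Phi(t)-t\varphi(t),
\end{align*}
the last two steps by the substitution $w=t+u$ and integration by parts. Multiplying by $p$ and using $e^{-t^2/2}=s/p$, this reduces $Z\le s$ (in expectation, hence via Markov as a tail bound) to a scalar inequality on the Gaussian Mills ratio; tracking the constant — e.g.\ writing $\int_t^\infty(w-t)^2\varphi(w)\,dw=\varphi(t)\int_0^\infty v^2e^{-tv-v^2/2}\,dv$ and using a careful upper estimate of $(1+t^2)\bar\Phi(t)-t\varphi(t)$ — is what produces exactly the factor $4/\{(2\log(p/s)+2)(4\pi\log(p/s)+4)^{1/2}\}$. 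I expect this constant bookkeeping to be the only delicate point; everything else is routine.

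Next, for the left inequality of \eqref{eq:prop-noise-conclusion} I would work on the event $\mathcal E:=\bigcap_{j=1}^p\{\|\bX\be_j\|_2\le(1+\eta_1)\sqrt{n\bSigma_{jj}}\}$. On $\mathcal E$ one has $\sigma(1+\eta_1)\sqrt{n\bSigma_{jj}}\ge\sigma\|\bX\be_j\|_2>0$, so $\|\bE^\top\bX\be_j\|_2/(\sigma(1+\eta_1)\sqrt{n\bSigma_{jj}})\le\|\bE^\top\bX\be_j\|_2/(\sigma\|\bX\be_j\|_2)$; subtracting $\sqrt T+t$ and applying the non-decreasing map $x\mapsto(x)_+^2$ termwise, then summing over $j$, yields the first inequality. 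To control $\P(\mathcal E^c)$, note $\|\bX\be_j\|_2/\sqrt{\bSigma_{jj}}$ is distributed as $\|\bh\|_2$ with $\bh\sim\mathcal N_n(\mathbf 0,\bI_{n\times n})$, that $\E\|\bh\|_2\le\sqrt n$, and that $\|\cdot\|_2$ is $1$-Lipschitz, so Gaussian concentration gives $\P(\|\bh\|_2\ge(1+\eta_1)\sqrt n)=\P(\|\bh\|_2-\E\|\bh\|_2\ge\eta_1\sqrt n)\le e^{-\eta_1^2n/2}$, and a union bound gives $\P(\mathcal E^c)\le pe^{-n\eta_1^2/2}$. A final union bound over $\{Z\le s\}$ and $\mathcal E$ gives the asserted probability, and on that event the chain \eqref{eq:prop-noise-conclusion} holds.

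Finally, for the ``Consequently'' assertion I would use the rewriting $nT\lambda_0=\sigma(1+\eta_1)\sqrt n\,(\max_{k}\bSigma_{kk}^{1/2})(\sqrt T+t)$ of the definition of $\lambda_0$, so that since $\bSigma_{jj}\le\max_k\bSigma_{kk}$,
$$
(\|\bE^\top\bX\be_j\|_2-nT\lambda_0)_+\le\sigma(1+\eta_1)\sqrt{n\max_k\bSigma_{kk}}\,\Bigl(\tfrac{\|\bE^\top\bX\be_j\|_2}{\sigma(1+\eta_1)\sqrt{n\bSigma_{jj}}}-\sqrt T-t\Bigr)_+ .
$$
Squaring, summing over $j$, and invoking the part already proved gives $\sum_j(\|\bE^\top\bX\be_j\|_2-nT\lambda_0)_+^2\le\sigma^2(1+\eta_1)^2n\,(\max_k\bSigma_{kk})\,s$; and since $nT\lambda_0\ge\sigma(1+\eta_1)\sqrt{nT}\max_k\bSigma_{kk}^{1/2}$ implies $n^2T\lambda_0^2\ge\sigma^2(1+\eta_1)^2n\max_k\bSigma_{kk}$, this last bound is in turn $\le sn^2T\lambda_0^2$, as claimed.
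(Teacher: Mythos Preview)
Your proposal is correct and follows essentially the same route as the paper: conditioning on $\bX$ to make $\bg_j=\bE^\top\bX\be_j/(\sigma\|\bX\be_j\|_2)$ standard normal in $\R^T$, Gaussian concentration of $\|\bg_j\|_2$ about its median $m\le\sqrt T$ to reduce to $\E[(Z-t)_+^2]$ with $Z\sim\mathcal N(0,1)$, then Markov's inequality; the first inequality and the ``Consequently'' part are handled exactly as in the paper via the event $\{\|\bX\be_j\|_2\le(1+\eta_1)\sqrt{n\bSigma_{jj}}\ \forall j\}$ and the rewriting of $nT\lambda_0$. The only place the paper is terser is the scalar bound $(1+t^2)\bar\Phi(t)-t\varphi(t)\le 4e^{-t^2/2}/\{(t^2+2)(2\pi t^2+4)^{1/2}\}$, which it cites from an external lemma rather than deriving in place.
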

\begin{proof}
    Since $\bX\be_j$ has i.i.d. $\mathcal N(0,\bSigma_{jj})$ entries,
    $\P(\|\bX\be_j\|_2 \ge \bSigma_{jj}^{1/2}(\sqrt{n} + t))\le e^{-t^2/2}$
    holds by standard bounds on $\chi^2_n$ random variables, e.g.,
    as a consequence of \cite[Theorem 5.5]{boucheron2013concentration}.
    The choice $t=\eta_1\sqrt{n}$ and 
    the union bound over $\{1,...,p\}$ provides the first inequality in
    \eqref{eq:prop-noise-conclusion}.

    Since $\bE$ is independent of $\bX$,
    conditionally on $\bX$ the random variable $\bg_j\coloneqq
    \bE^\top\bX\be_j /(\sigma\|\bX\be_j\|_2)$
    has standard normal distribution $\mathcal N_T({\mathbf 0}, \bI_{T\times T})$.
    Since the conditional distribution does not depend on $\bX$,
    the unconditional distribution of $\bg_j$ is also $\mathcal N_T({\mathbf 0}, \bI_{T\times T})$.
    By \cite[Theorem 10.17]{boucheron2013concentration} applied
    to the 1-Lipschitz function $\bg_j \mapsto \|\bg_j\|_2$,
    inequality
    $\P(\|\bg_j\|_2 \ge m_j + t) \le \P(Z_j\ge t)$ holds,
    where $Z_j\sim \mathcal N(0,1)$ and $m_j$ is the median of the random variable
    $\|\bg_j\|_2$. It follows that for any $t>0$
    \begin{equation}
        \label{ineq-E-noise-event}
        W\coloneqq
    \sum_{j=1}^p ( \|\bg_j\|_2 - m_j - t )_+^2
    \quad\text{ satisfies }\quad
    \E[W]
    \le
    \E\sum_{j=1}^p (Z_j - t )_+^2
    \le \frac{4p e^{-t^2/2}}{(t^2+2)(2\pi t^2 + 4)^{1/2}},
    \end{equation}
    where the second inequality follows from \cite[Lemma G.1]{bellec_zhang2018second_stein}.
    By the argument in \cite{stack_exchange3590470median_chi_square},
    the median of the $\chi^2_T$ distribution is smaller than $T$ 
    so that $m_j\le \sqrt{T}$. Furthermore,
    for $t=(2\log(p/s))^{1/2}$ we have $\E[W] \le sq$
    where $q^{-1}=(t^2+2)(2\pi t^2+4)^{1/2}/4 > 1$. The second inequality in
    \eqref{eq:prop-noise-conclusion} thus holds with probability at least $1-q$
    by Markov's inequality $\P(W>\E[W]q^{-1})\le q$.
\end{proof}

\subsection{$\Omega_3$: Restricted Isometry Properties}

The following bound is well known in the literature on the RIP property
for Gaussian matrices, as a consequence of Gordon's Lemma, see, 
e.g., \cite{ZhangH08}.
We provide the argument here for completeness.

\begin{proposition}[Bound on upper sparse eigenvalues of random matrices, Gordon's lemma]
    \label{prop:gordon}
    Let $p\ge n$. If $\bX\in\R^{n\times p}$ has i.i.d. $\mathcal N(0,\bSigma)$ rows, then

    (i) for any set $B\subset [p]$ we have
    $$\P\Big(\max_{\bv\in\R^p:\supp (\bv)\subset B}\Big|\frac{\|\bX\bv\|}{\sqrt n\|\bSigma^{1/2}\bv\|} - 1\Big|\le \sqrt{|B|/n} + t \Big)\ge 1- 2e^{- n t^2/2}$$
    by Gordon's escape through the mesh theorem
    and its consequence,
    cf. for instance in \cite[Theorem II.13]{DavidsonS01}
    applied to the Gaussian matrix $\bX\bSigma^{-1/2}$
    and the intersection of the unit ball with the $|B|$ dimensional 
    linear span of $\{\bSigma^{1/2}\be_j, j\in B \}$.

    (ii) Let $\eta_4\in (0,1)$ be a constant. 
    If $k$ is such that ${\sqrt{k/n}} \le \eta_4/2$
    and
    $k\log(ep/k)/n \le \eta_4^2/16$, then
    simultaneously for all $B$ with $|B|\le k$
    $$
    \P\left(
        \max_{B\subset[p]:|B|\le k}
        \Big(\max_{\bv\in\R^p:\supp(\bv)\subset B}\Big|\frac{\|\bX\bv\|}{\sqrt n\|\bSigma^{1/2}\bv\|} - 1\Big|
\Big)>
    \eta_4
    \right)\le
    2 \exp(-n \eta_4^2/16).
    $$
\end{proposition}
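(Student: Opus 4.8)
The plan is to derive both parts from a single application of Gordon's comparison inequality in the Davidson–Szarek form to the \emph{standardized} Gaussian matrix, followed by a union bound over supports for part (ii). Part (i) is nothing but a restatement of \cite[Theorem II.13]{DavidsonS01} after the right change of variables, so the only real content is the bookkeeping in part (ii).

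\textbf{Part (i).} First I would reduce to a matrix with i.i.d.\ standard normal entries by writing $\bG = \bX\bSigma^{-1/2}$, which has i.i.d.\ $\mathcal N(0,1)$ entries. For $\bv$ with $\supp(\bv)\subset B$, put $\bu = \bSigma^{1/2}\bv$; then $\|\bX\bv\|_2 = \|\bG\bu\|_2$ and $\|\bSigma^{1/2}\bv\|_2 = \|\bu\|_2$, and as $\bv$ runs over vectors supported on $B$, $\bu$ runs over the span $V_B$ of $\{\bSigma^{1/2}\be_j : j\in B\}$, a subspace of dimension $|B|$. Hence the quantity to control equals $\max\{|n^{-1/2}\sigma_{\max}(\bG|_{V_B})-1|,\ |n^{-1/2}\sigma_{\min}(\bG|_{V_B})-1|\}$, where $\bG|_{V_B}$ is the restriction of $\bG$ to $V_B$, which (in an orthonormal basis of $V_B$) is an $n\times|B|$ standard Gaussian matrix. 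Gordon's theorem gives $\P(\sigma_{\max}(\bG|_{V_B}) > \sqrt n + \sqrt{|B|} + s)\le e^{-s^2/2}$ and $\P(\sigma_{\min}(\bG|_{V_B}) < \sqrt n - \sqrt{|B|} - s)\le e^{-s^2/2}$ for all $s>0$; taking $s = t\sqrt n$, a union bound over these two events and division by $\sqrt n$ yields the claimed probability $1-2e^{-nt^2/2}$.

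\textbf{Part (ii).} I would first record the monotonicity that $B\mapsto \max_{\bv:\supp(\bv)\subset B}\bigl|\|\bX\bv\|_2/(\sqrt n\|\bSigma^{1/2}\bv\|_2)-1\bigr|$ is non-decreasing under inclusion, so the event in the statement coincides with the same event where the outer maximum is over $B$ with $|B|=k$ exactly; it therefore suffices to union-bound over the $\binom pk\le (ep/k)^k$ such subsets. Applying part (i) with $t=\eta_4/2$ and using $\sqrt{k/n}\le\eta_4/2$, each fixed $B$ with $|B|=k$ has $\max_{\supp(\bv)\subset B}|\cdots|\le \sqrt{k/n}+\eta_4/2\le\eta_4$ outside an event of probability at most $2e^{-n\eta_4^2/8}$. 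The union bound then gives
$$
\P\Big(\max_{B:|B|\le k}\ \max_{\bv:\supp(\bv)\subset B}\Big|\frac{\|\bX\bv\|_2}{\sqrt n\|\bSigma^{1/2}\bv\|_2}-1\Big|>\eta_4\Big)
\le (ep/k)^k\,2e^{-n\eta_4^2/8}
= 2\exp\!\big(k\log(ep/k)-n\eta_4^2/8\big),
$$
and the hypothesis $k\log(ep/k)/n\le\eta_4^2/16$ bounds the exponent by $n\eta_4^2/16-n\eta_4^2/8=-n\eta_4^2/16$, which is the claim.

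\textbf{What is delicate.} There is no analytic obstacle here; the two points to get exactly right are (a) the scaling when translating the deviation parameter $s$ in Davidson–Szarek into the parameter $t=s/\sqrt n$ used in the statement, and (b) checking that the two smallness hypotheses on $k$ ($\sqrt{k/n}\le\eta_4/2$ and $k\log(ep/k)/n\le\eta_4^2/16$) are precisely calibrated so that the combinatorial factor $\binom pk\le(ep/k)^k$ is absorbed while still leaving the clean tail $2e^{-n\eta_4^2/16}$; the choice $t=\eta_4/2$ is what makes both inequalities bind simultaneously. I would also note in passing the monotonicity reduction to $|B|=k$, which is what lets the union bound be taken over a clean binomial count rather than over all subsets of size at most $k$.
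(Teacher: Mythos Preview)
Your proof is correct and follows essentially the same approach as the paper: part (i) is the direct application of Davidson--Szarek to $\bX\bSigma^{-1/2}$ restricted to the $|B|$-dimensional subspace, and part (ii) is the union bound over the $\binom{p}{k}$ subsets with $t=\eta_4/2$, followed by $\log\binom{p}{k}\le k\log(ep/k)$ and the assumption $k\log(ep/k)/n\le\eta_4^2/16$. Your write-up is simply more explicit than the paper's (which omits the monotonicity reduction to $|B|=k$ and the details of the change of variables), but the argument is the same.
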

\begin{proof}
    For (ii),
    by the union bound with $t=\eta_4/2$ we have
    $$
    \P\left(
    \max_{B\subset[p]:|B|\le k}
    \Big(
        \max_{\bv\in\R^p:\supp(\bv)\subset B}\Big|\frac{\|\bX\bv\|}{\sqrt n\|\bSigma^{1/2}\bv\|} - 1\Big|
    \Big)
    > \eta_4 \right)\le
    2
    \binom{ p }{ {k} } e^{-n\eta_4^2/8}.
    $$
    Since $\log \binom{p}{k} \le k \log(e{p}/k)$, the right hand side
    is bounded from above by $2\exp(- n \eta_4^2/16)$ by assumption on $k$.
\end{proof}

\section{Proof of Theorems~\NoCaseChange{\ref{thm-normal} and \ref{thm:variance}}}
\label{sec:proof-thm-normal}



\begin{proof}
    By replacing $\bb$ by $\bb/\|\bb\|_2$ if necessary,
    we assume that $\|\bb\|_2=1$ without loss of generality.
    The proof is based on the decomposition 
    \begin{align*}
    &
        {(n\sigma^2)^{-1/2}}
        \big(n\ba^T (\hbB - \bB^*) \bb + \bz_0^T(\bY-\bX\hbB)(\bI_{T\times T}-\hbA/n)^{-1}\bb\big)
        \\&=
        {(n\sigma^2)^{-1/2}}
        {\bz_0^T \bE \bb}
        + \br^\top\bb + \tbr^\top\bb
    \end{align*}
    with the remainder terms 
    $\br^\top\bb$ and $\tbr^\top\bb$
    defined by the random vectors $\br,\tbr\in\R^T$
    \begin{align*}
        \br^\top
    &= 
    (n\sigma^2)^{-1/2} \bz_0^T \bE \left[(\bI_{T\times T}-\hbA/n)^{-1}-(\bI_{T\times T}) \right], \\
        \tbr^\top& = 
        {(n\sigma^2)^{-1/2}}
        \left[\ba^T(\hbB - \bB^*)(n\bI_{T\times T}-\hbA) - \bz_0^T\bX(\hbB - \bB^*)\right](\bI_{T\times T}-\hbA/n)^{-1}.
    \end{align*}
    Since $\bz_0\sim \mathcal N_n({\mathbf 0},\bI_{n\times n})$ is independent of $\bE\bb\sim \mathcal N_n(\mathbf 0, \sigma^2\bI_{n\times n})$ we have $\bz_0^\top\bE\bb/\|\bz_0\|_2 \sim \mathcal N(0, \sigma^2)$.
    Since $\|\bz_0\|_2^2n^{-1}\smash{\xrightarrow[]{\P}} 1$ by the law of large  numbers,
    we obtain that $(n\sigma^2)^{-1/2}\bz_0^\top\bE\bb\smash{\xrightarrow[]{d}} \mathcal N(0, 1)$
    by Slutsky's theorem.
    To conclude with another application of Slutsky's theorem,
    it remains to prove that $\|\br\|_2$ and $\|\tbr\|_2$ both converge to 0
    in probability, and to prove
    that for the denominator, $(n\sigma^2)^{-1/2}
    \|(\bY-\bX\hbB) (\bI_{T\times T}-\hbA/n)^{-1} \bb\|_2\smash{\xrightarrow[]{\P}} 1$.

    For $\br$,
    on $\Omega_*$ we have
    $\|(\bI_{T\times T}-\hbA/n)^{-1}-(\bI_{T\times T})\|_{op} \le
    \bar s /(n-\bar s)
    $
    by \Cref{propMatrixA}(iii) and \Cref{lemma:sparsity}.
    It follows that
    \begin{align*}
        \E[\min(1,\|\br\|_2)]  
        & \le
        \P(\Omega_*^c)
        +
        \E\big[I\{\Omega_*\}(n\sigma^2)^{-1/2} \|\bE^\top\bz_0\|_2 
            \|(\bI_{T\times T}-\hbA/n)^{-1}-(\bI_{T\times T})\|_{op}
        \big]
      \\&\le
        \P(\Omega_*^c)
        + (n\sigma^2)^{-1/2}
        \big(\bar s/(n-\bar s)\big)
        \E\big[\|\bE^\top\bz_0\|_2 
        \big]
      \\&\le
        \P(\Omega_*^c)
        + (n\sigma^2)^{-1/2}
        \big(\bar s/(n-\bar s)\big)
        \sqrt{nT\sigma^2}
      \\&=
        \P(\Omega_*^c)
        + 
        \big(\bar s/(n-\bar s)\big)
        \sqrt{T}
    \end{align*}
    by Jensen's inequality and $\E[\|\bE^\top\bz_0\|_2^2]=nT\sigma^2$.
    The last line converges to 0 by \Cref{lemma:proba}
    and \Cref{assum:main}.
    Since $W_n\smash{\xrightarrow[]{\P}}0$ if and only if $\E[\min(1,|W_n|)]\to 0$,
    this proves the convergence $\|\br\|_2\smash{\xrightarrow[]{\P}} 0$.

For $\tbr$, we use that
$$\E[\min(1,\|\tbr\|_2)] \le \P(\Omega_*^c) + \E[I\{\Omega_*\}\|\tbr\|_2]$$
with $\P(\Omega_*^c)\to0$ as above. For the second term,
on $\Omega_*$ we have
$\|(\bI_{T\times T}-\hbA/n)^{-1} \|_{op}
\le
\|\bI_{T\times T}\|_{op} + \|\bI_{T\times T}-(\bI_{T\times T}-\hbA/n)^{-1} \|_{op}
\le 1 + \bar s /(n-\bar s)=(1-\bar s/n)^{-1}$ by \Cref{propMatrixA} and \Cref{lemma:sparsity}. It follows that
\begin{align*}
    I\{\Omega_*\}\|\tbr\|_2
    &\leq 
    I\{\Omega_*\} \tfrac{1}{\sigma\sqrt n}(1-\tfrac{\bar s}{n})^{-1} 
            \|(n\bI_{T\times T}-\hbA)(\hbB - \bB^*)^{\top} \ba 
            - (\hbB - \bB^*)^{\top} \bX^{\top} \bz_0 \|_2
            \\  
    &=
    I\{\Omega_*\} \tfrac{1}{\sigma\sqrt n}(1-\tfrac{\bar s}{n})^{-1} 
    \Big[\sum_{t=1}^T \Big(\trace[{\mathsf{D^*}}(\be_t)] - \bz_0^{\top} \bX (\hbB - \bB^*)  \be_t \Big)^2\Big]^{1/2}.
\end{align*}
where the equality is a consequence of \Cref{lemma:divergence}.
Since $\mathsf{D^*}=\mathsf{D}-\mathsf{D^{**}}$, 
and using the inequalities $(a+b)^2\leq 2a^2 + 2b^2$ and $\sqrt{a+b}\leq \sqrt a + \sqrt b$,
\begin{align*}
    &\phantom{{}\leq}
    \E\Big[I\{\Omega_*\}  
           \sum_{t=1}^T \Big(\bz_0^{\top}(\hbB - \bB^*) \bX \be_t - \trace[{\mathsf{D^*}}(\be_t)] \Big)^2  
      \Big]^{1/2}\\
    &\leq 
    \Big[ 
        2\E\Big(I\{\Omega_*\}  \sum_{t=1}^T \big[\bz_0^{\top}(\hbB - \bB^*) \bX \be_t - \trace[{\mathsf{D}}(\be_t)]\big]^2 \Big)
        +
        2\E\Big(I\{\Omega_*\}  \sum_{t=1}^T \trace[{\mathsf{D}^{**}}(\be_t)]^2\Big)\Big]^{1/2}\\
    &\leq o((n\sigma^2)^{1/2}) + O\big(\sigma \min(T,(sT)^{1/2}) \big)
\end{align*}
by \Cref{lemma:RemainderII} and
inequality \eqref{eq:divergence-D**-inequality} in \Cref{lemma:divergence}.
Combining the above displays yields
\begin{align*}
    \E[\min(1,\|\tbr\|_2)] &\leq 
    \P(\Omega_*^c) + (n\sigma^2)^{-1/2} (1-\tfrac{\bar s}{n})^{-1}
    \big[o((n\sigma^2)^{1/2}) + O\big(\sigma (sT)^{1/2}\big) \big]
    = o(1),
\end{align*}
or equivalently $\|\tbr\|_2\smash{\xrightarrow[]{\P}} 0$.

Let us prove \Cref{thm:variance}, that is
$(n\sigma^2)^{-1/2}
\|(\bY-\bX\hbB) (\bI_{T\times T}-\hbA/n)^{-1} \bb\|_2\smash{\xrightarrow[]{\P}} 1$.
By the law of large numbers, we have
$\|\bE \bb\|_2^2/(n\sigma^2)\smash{\xrightarrow[]{\P}} 1$, so it suffices to show that 
$$(n\sigma^2)^{-1/2} \|\bE\big[(\bI_{T\times T}-\hbA/n)^{-1} -\bI_{T\times T}\big] \bb - \bX(\bB^*-\hbB) (\bI_{T\times T}-\hbA/n)^{-1} \bb \|_2  \smash{\xrightarrow[]{\P}} 0.$$
Techniques similar to those above show that 
$(n\sigma^2)^{-1/2} \|\bE\big[(\bI_{T\times T}-\hbA/n)^{-1} -\bI_{T\times T}\big] \bb \|_2 \smash{\xrightarrow[]{\P}} 0$
by \Cref{propMatrixA}(iii),
and that
$(n\sigma^2)^{-1/2} \|\bX(\bB^*-\hbB) (\bI_{T\times T}-\hbA/n)^{-1} \bb \|_2  \smash{\xrightarrow[]{\P}} 0$
by \Cref{lemma:risk} and $\bar R\to 0$.

An application of Slutsky's lemma completes the proof of \Cref{thm-normal}.
\end{proof}

\section{Proof for \texorpdfstring{$\chi^2_T$}{chi\texttwosuperior} limits, and confidence ellipsoid with nominal coverage}
\label{section:proofs-chi2-quantiles}

\begin{lemma}[Differentiation with respect to $\bE$]
    \label{lemma:differentiation-E}
    Here, we consider differentiation with respect to $\bE$ for fixed $\bX$.
    We have
    \begin{align*}
        \E\left[I\{\Omega_*\}
         \|\bE^\top\bX(\hbB - \bB^*) - \sigma^2 \hbA\|_F^2
    \right]
  &\le
    \sigma^2 nT {\bar R}^{2}
    +
    \sigma^4 nT.
    \end{align*}
\end{lemma}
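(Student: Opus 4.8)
The plan is to follow the template of the proof of Lemma~\ref{lemma:RemainderII}, but to differentiate with respect to the noise $\bE$ (with $\bX$ held fixed) instead of with respect to $\bX$, and then to invoke the second-order Stein formula of \cite{bellec_zhang2018second_stein} column by column. The first ingredient is that $\bF:\bE\mapsto\bX(\hbB-\bB^*)$ is $1$-Lipschitz in Frobenius norm: viewing $\hbB$ as a function of the response $\bY=\bX\bB^*+\bE$, the optimality condition gives $\tfrac1{nT}\bX^\top(\bY-\bX\hbB)\in\lambda\,\partial\|\cdot\|_{2,1}(\hbB)$, so monotonicity of the subdifferential of $\|\cdot\|_{2,1}$ yields $\langle\bY_1-\bY_2,\bX(\hbB(\bY_1)-\hbB(\bY_2))\rangle_F\ge\|\bX(\hbB(\bY_1)-\hbB(\bY_2))\|_F^2$, whence $\|\bX\hbB(\bY_1)-\bX\hbB(\bY_2)\|_F\le\|\bY_1-\bY_2\|_F$. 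Since $\bF$ is globally Lipschitz, no Kirszbraun extension is needed (unlike in Lemma~\ref{lemma:RemainderII}); it suffices to set $\bar\bF=\bPi\circ\bF$, where $\bPi$ is the convex projection onto the Frobenius ball of radius $n^{1/2}\bar R$ in $\R^{n\times T}$. Then $\bar\bF$ is $1$-Lipschitz, satisfies $\|\bar\bF\|_F\le n^{1/2}\bar R$ everywhere, and coincides with $\bF$ together with all its derivatives on the open set $\Omega_*$, because there $\|\bF\|_F=\|\bX(\hbB-\bB^*)\|_F\le n^{1/2}(1-\eta_3)\bar R<n^{1/2}\bar R$ by Lemma~\ref{lemma:risk}.

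Next I would identify the divergence of $\bF$ with the interaction matrix. Repeating the KKT differentiation of Lemma~\ref{lemma:differential} with the perturbation $\bE\mapsto\bE+u\,\bw\be_t^\top$ (here $\be_t\in\R^T$) --- the computation being lighter than there, since no terms come from differentiating the design --- one obtains $\vec(\dot\bB)=(\tbX^\top\tbX+nT\tbH)^\dagger(\be_t\otimes\bX_{\hat S}^\top)\bw$, hence for any $\bb\in\R^T$ the directional derivative of $\bX(\hbB-\bB^*)\bb$ equals $(\bb^\top\otimes\bX_{\hat S})(\tbX^\top\tbX+nT\tbH)^\dagger(\be_t\otimes\bX_{\hat S}^\top)\bw$. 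Therefore, for a.e.\ $(\bX,\bE)$ --- where $\hbB$ is differentiable in $\bE$ by Rademacher's theorem and $\hat S$ is locally constant by Lemma~\ref{lemma:kkt-strict} --- the divergence of the $t'$-th column of $\bF$ with respect to the $t$-th column of $\bE$ is $\trace[(\be_{t'}^\top\otimes\bX_{\hat S})(\tbX^\top\tbX+nT\tbH)^\dagger(\be_t\otimes\bX_{\hat S}^\top)]=\be_{t'}^\top\hbA\be_t=\hbA_{tt'}$, using \eqref{eq:quadra-A} and the symmetry of $\hbA$.

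With these two facts the estimate follows. Fix $(t,t')$ and set $\bz=\bE\be_t\sim\mathcal N_n(\mathbf 0,\sigma^2\bI_{n\times n})$; conditionally on the other columns $\{\bE\be_{t''}:t''\ne t\}$, the map $\bz\mapsto\bar\bF\be_{t'}$ is still $1$-Lipschitz, $(\bE^\top\bF)_{tt'}=\bz^\top(\bF\be_{t'})$, and on $\Omega_*$ we have $\bar\bF=\bF$ while the divergence of $\bz\mapsto\bar\bF\be_{t'}$ equals $\hbA_{tt'}$ a.e.\ on $\Omega_*$. Hence $I\{\Omega_*\}\bigl((\bE^\top\bF)_{tt'}-\sigma^2\hbA_{tt'}\bigr)^2\le\bigl(\bz^\top(\bar\bF\be_{t'})-\sigma^2\div_{\bz}(\bar\bF\be_{t'})\bigr)^2$, and taking expectations and applying the second-order Stein identity of \cite{bellec_zhang2018second_stein} gives
\[
\E\bigl[I\{\Omega_*\}\bigl((\bE^\top\bF)_{tt'}-\sigma^2\hbA_{tt'}\bigr)^2\bigr]
\le
\sigma^2\,\E\|\bar\bF\be_{t'}\|_2^2+\sigma^4\,\E\|\bar\bJ_{t,t'}\|_F^2,
\]
where $\bar\bJ_{t,t'}$ is the Jacobian of $\bar\bF\be_{t'}$ with respect to $\bE\be_t$ and we used $\trace(\bar\bJ_{t,t'}^2)\le\|\bar\bJ_{t,t'}\|_F^2$. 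Summing over $t,t'$: the first term contributes $\sigma^2\E[T\|\bar\bF\|_F^2]\le\sigma^2nT\bar R^2$, and the second contributes $\sigma^4\E\bigl[\sum_{t,t'}\|\bar\bJ_{t,t'}\|_F^2\bigr]=\sigma^4\E\|\bJ\|_F^2$, where $\bJ$ is the full $nT\times nT$ Jacobian of $\bar\bF$; since $\bar\bF$ is $1$-Lipschitz, $\|\bJ\|_{op}\le1$, so $\|\bJ\|_F^2\le\rank(\bJ)\,\|\bJ\|_{op}^2\le nT$. Adding the two contributions yields $\sigma^2nT\bar R^2+\sigma^4nT$, which is the claim.

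The main obstacle is the second step: carrying out the KKT differentiation with respect to $\bE$ and, crucially, recognizing that its divergence is precisely the matrix $\hbA$ of \eqref{def-A}--\eqref{eq:quadra-A}; everything else is a transcription of machinery already set up for Lemma~\ref{lemma:RemainderII} (the global Lipschitz bound, the convex-projection truncation, the second-order Stein identity, and the $\|\bJ\|_F^2\le\rank(\bJ)\|\bJ\|_{op}^2$ bound). A minor point needing care is that the column-wise conditioning preserves the Lipschitz property required for Stein --- it does, because $\bar\bF$ is jointly $1$-Lipschitz, so each block $\bz\mapsto\bar\bF\be_{t'}$ is $1$-Lipschitz in $\bz=\bE\be_t$.
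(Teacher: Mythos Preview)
Your proposal is correct and follows essentially the same route as the paper's proof: define $\bar\bF=\bPi\circ\bF$ with $\bF:\bE\mapsto\bX(\hbB-\bB^*)$ the $1$-Lipschitz map, apply the second-order Stein identity of \cite{bellec_zhang2018second_stein} to each pair $(t,t')$ via the map $\bE\be_t\mapsto\bar\bF\be_{t'}$, bound $\trace(\bar\bJ_{t,t'}^2)\le\|\bar\bJ_{t,t'}\|_F^2$, and sum over $(t,t')$ using $\|\bar\bF\|_F\le\sqrt n\,\bar R$ and $\|\bJ\|_F^2\le nT$ from the $1$-Lipschitz bound. The only difference is that you supply the Lipschitz argument and the KKT differentiation yielding $\div=\hbA_{tt'}$ explicitly, whereas the paper cites \cite[Proposition~3.1]{bellec2016bounds} for the former and merely states ``by differentiating the KKT conditions'' for the latter.
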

\begin{proof}
    Let $\bF:\R^{n\times T}\to\R^{n\times T}$ be the function
    $\bF:\bE\mapsto \bX(\hbB-\bB^*)$.
    The function $\bF$ is 1-Lipschitz by \cite[Proposition 3.1]{bellec2016bounds}.
    Furthermore, $\|\bF\|_F\le \sqrt n \bar R$ on $\Omega_*$ by \Cref{lemma:risk},
    so that if $\bPi:\R^{n\times T}\to\R^{n\times T}$ is the convex projection
    onto the Frobenius ball of radius $\sqrt n\bar R$, the composition
    $\bar\bF = \bPi \circ \bF$ coincides with $\bF$ on $\Omega_*$.
    The function $\bar\bF$ is also 1-Lipschitz by composition of two
    1-Lipschitz functions, and
    since $\Omega_*$ is open, the derivatives of $\bar\bF$ and $\bF$
    with respect to $\bE$ coincide in $\Omega_*$ where the derivatives exist
    (this existence of the derivatives is granted almost everywhere
    by Rademacher's theorem).

    For any $t,t'\in[T]$, by the main result of \cite{bellec_zhang2018second_stein}
    applied to the function $\bE\be_{t'} \mapsto \bar\bF\be_t$,
    we have
    \begin{align*}
    &\E\Bigl[
        \bigl( \be_{t'}^\top\bE^\top\bar\bF\be_{t} - \sigma^2\sum_{i=1}^n\frac{\partial \be_i^\top\bar\bF\be_t}{\partial E_{it'}} \bigr)^2
    \Bigr]
  \\&=
      \sigma^2 \E\left[\|\bar\bF\be_{t}\|_2^2\right]
    +
    \sigma^4 \E\Big[\sum_{i=1}^n \sum_{i'=1}^n
    \Big(\frac{\partial}{\partial E_{i't'}}
        \be_i^\top\bar\bF\be_{t}\Big)
    \Big(\frac{\partial}{\partial E_{it'}}
        \be_{i'}^\top\bar\bF\be_{t}\Big) \Big]
  \\&\le
      \sigma^2 \E\left[\|\bar\bF\be_{t}\|_2^2\right]
    +
    \sigma^4 \E\Big[\sum_{i=1}^n\sum_{i'=1}^n
    \Big(\frac{\partial}{\partial E_{i't'}}
        \be_i^\top\bar\bF\be_{t}\Big)^2
    \Big].
    \end{align*}
We now sum the above inequalities for all $t,t'\in[T]$ to find
\begin{align*}
&\sum_{t=1}^T\sum_{t'=1}^T\E\Bigl[
    \bigl( \be_{t'}^\top\bE^\top\bar\bF\be_{t} - \sigma^2\sum_{i=1}^n\frac{\partial \be_i^\top\bar\bF\be_t}{\partial E_{it'}} \bigr)^2
\Bigr]
\\&\le
\sigma^2 T \E[\|\bar\bF\|_F^2]
+
\sigma^4 \E\Big[\sum_{t=1}^T\sum_{t'=1}^T\sum_{i=1}^n\sum_{i'=1}^n
    \Big(\frac{\partial}{\partial E_{i't'}}
        \be_i^\top\bar\bF\be_{t}\Big)^2
    \Big]
\\&\le
\sigma^2 T n \bar R^2
+ \sigma^4 nT,
\end{align*}
where for the last inequality we used that
$\|\bar\bF\|_F \le \bar R \sqrt n$ by construction of $\bar\bF$
and that $\bar\bF:\R^{n\times T}\to\R^{n\times T}$ is 1-Lipschitz, so that
the Frobenius norm of the Jacobian of $\bar\bF$ (which is a matrix of size $(nT)\times (nT)$) is at most $\sqrt{nT}$.
Finally, on $\Omega_*$ we have $\bF=\bar\bF$ and their derivatives
coincide, and by differentiating the KKT conditions of $\hbB$ we find
$
\sum_{i=1}^n\frac{\partial \be_i^\top\bF\be_t}{\partial E_{it'}}
=\hbA_{tt'}$ on $\Omega_*$ for $\bF=\bX(\hbB-\bB^*)$.
This completes the proof.
\end{proof}

\begin{restatable}{theorem}{thmChiSquareInitial}
    \label{thm:chi2initial}
    Let $\ba\in\R^p$ with $\|\bSigma^{-1/2}\ba\|_2=1$.
    Let $\bxi$ be defined in \eqref{eq:def-bxi}
    and $\hat\sigma^2 = \|\bY - \bX\hbB\|_F^2/(nT)$.
    Then under \Cref{assum:main}, 
    $|\hat \sigma/\sigma - 1| = o_\P(T^{-1/2})$ as well as
    \begin{equation}
        \label{eq:conclusion-thm-chi2initial}
        \max\{
            (\sigma^2n)^{-1/2},
            (\hat\sigma^2n)^{-1/2}
        \}
            \big\|\bxi - \sqrt n\bE^\top \bz_0 \|\bz_0\|_2^{-1}\big\|_2 =  o_\P(1).
    \end{equation}
\end{restatable}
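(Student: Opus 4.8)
The plan is to prove the two conclusions in turn: the rate for $\hat\sigma$ follows from a direct expansion, while the approximation of $\bxi$ is essentially a repackaging of the remainder estimates already obtained for \Cref{thm-normal}. For the rate of $\hat\sigma$, I would write $\bY-\bX\hbB=\bE-\bX(\hbB-\bB^*)$ and expand
\[
\|\bY-\bX\hbB\|_F^2=\|\bE\|_F^2-2\langle\bE,\bX(\hbB-\bB^*)\rangle_F+\|\bX(\hbB-\bB^*)\|_F^2,
\]
then divide by $nT\sigma^2$ and bound the three terms. By concentration of $\chi^2_{nT}$, $\|\bE\|_F^2/(nT\sigma^2)-1=O_\P((nT)^{-1/2})=o_\P(T^{-1/2})$. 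On $\Omega_*$, which has probability tending to one by \Cref{lemma:proba}, \Cref{lemma:risk}(ii) gives $\|\bX(\hbB-\bB^*)\|_F^2\le n\bar R^2$, so that term divided by $nT\sigma^2$ and multiplied by $\sqrt T$ is at most $\bar R^2/(\sigma^2\sqrt T)\lesssim s\sqrt T/n+s\log(p/s)/n\to 0$ under \Cref{assum:main}; and the cross term is at most $\|\bE\|_F\|\bX(\hbB-\bB^*)\|_F\le O_\P(\sigma\sqrt{nT})\cdot n^{1/2}\bar R$, which after dividing by $nT\sigma^2$ and multiplying by $\sqrt T$ leaves $O_\P(\bar R/\sigma)=o_\P(1)$ since $\bar R/\sigma\lesssim\sqrt{sT/n}\,(1+\sqrt{(2/T)\log(p/s)})\to 0$. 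Hence $\hat\sigma^2/\sigma^2-1=o_\P(T^{-1/2})$, and since $\hat\sigma/\sigma+1\ge 1$ we get $|\hat\sigma/\sigma-1|\le|\hat\sigma^2/\sigma^2-1|=o_\P(T^{-1/2})$, which is the first claim. In particular $\hat\sigma/\sigma=1+o_\P(1)$, so it will suffice to prove \eqref{eq:conclusion-thm-chi2initial} with the $(\sigma^2n)^{-1/2}$ normalization, the $(\hat\sigma^2n)^{-1/2}$ version following by multiplying through by $\sigma/\hat\sigma=1+o_\P(1)$.

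For the approximation of $\bxi$, I would use $(\bY-\bX\hbB)^\top\bz_0=\bE^\top\bz_0-(\hbB-\bB^*)^\top\bX^\top\bz_0$ to write $\bxi=\bE^\top\bz_0+\bv$ with $\bv=(n\bI_{T\times T}-\hbA)(\hbB-\bB^*)^\top\ba-(\hbB-\bB^*)^\top\bX^\top\bz_0$. By the first identity in \Cref{lemma:divergence}, the $t$-th coordinate of $\bv$ equals $\trace[\mathsf{D}^{*}(\be_t)]-\bz_0^\top\bX(\hbB-\bB^*)\be_t$; writing $\mathsf{D}^{*}=\mathsf{D}-\mathsf{D}^{**}$ and using $(a+b)^2\le 2a^2+2b^2$ gives, on $\Omega_*$,
\[
\|\bv\|_2^2\le 2\sum_{t=1}^T\bigl(\bz_0^\top\bX(\hbB-\bB^*)\be_t-\trace[\mathsf{D}(\be_t)]\bigr)^2+2\sum_{t=1}^T\trace[\mathsf{D}^{**}(\be_t)]^2=o_\P(n\sigma^2)+O_\P(\sigma^2 sT)
\]
by \Cref{lemma:RemainderII} and \eqref{eq:divergence-D**-inequality}; this is precisely the estimate already carried out for the remainder $\tbr$ in the proof of \Cref{thm-normal}. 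Hence $\|\bv\|_2/(\sigma\sqrt n)=o_\P(1)+O_\P(\sqrt{sT/n})=o_\P(1)$. It then remains to replace $\bE^\top\bz_0$ by $\sqrt n\,\bE^\top\bz_0\|\bz_0\|_2^{-1}$: since $\ba^\top\bSigma^{-1}\ba=\|\bSigma^{-1/2}\ba\|_2^2=1$ we have $\bz_0=\bX\bSigma^{-1}\ba\sim\mathcal N_n(\mathbf 0,\bI_{n\times n})$, so $\|\bz_0\|_2^2/n\to 1$ in probability and $|1-\sqrt n\|\bz_0\|_2^{-1}|=O_\P(n^{-1/2})$, while conditionally on $\bz_0$ one has $\bE^\top\bz_0\sim\mathcal N_T(\mathbf 0,\sigma^2\|\bz_0\|_2^2\bI_{T\times T})$, so $\|\bE^\top\bz_0\|_2=O_\P(\sigma\sqrt{nT})$; therefore
\[
\frac{\bigl\|\bE^\top\bz_0-\sqrt n\,\bE^\top\bz_0\|\bz_0\|_2^{-1}\bigr\|_2}{\sigma\sqrt n}=\bigl|1-\sqrt n\|\bz_0\|_2^{-1}\bigr|\,\frac{\|\bE^\top\bz_0\|_2}{\sigma\sqrt n}=O_\P(n^{-1/2})\cdot O_\P(\sqrt T)=O_\P(\sqrt{T/n})=o_\P(1),
\]
using $T/n\to 0$ from \Cref{assum:main}. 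Combining with the triangle inequality gives $(\sigma^2n)^{-1/2}\|\bxi-\sqrt n\,\bE^\top\bz_0\|\bz_0\|_2^{-1}\|_2=o_\P(1)$, and the first step upgrades the normalization to the maximum, completing the argument.

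The substantive content sits in the bound $\|\bv\|_2=o_\P(\sigma\sqrt n)$, and that is not new: it is the $\tbr$-estimate from the proof of \Cref{thm-normal}, so the real work reduces to invoking \Cref{lemma:divergence,lemma:RemainderII}. The one point that needs a little care is the renormalization of $\bE^\top\bz_0$, where the discrepancy must be shown to be $o_\P(1)$ and not merely $o_\P(\sqrt T)$; this works because the renormalization factor differs from $1$ by $O_\P(n^{-1/2})$ rather than $O_\P((nT)^{-1/2})$, whereas $\|\bE^\top\bz_0\|_2/(\sigma\sqrt n)$ is only of order $\sqrt T$, and $\sqrt{T/n}\to 0$.
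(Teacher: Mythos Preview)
Your proof is correct and follows essentially the same approach as the paper's. The only cosmetic differences are that for $|\hat\sigma/\sigma-1|$ you expand $\|\bY-\bX\hbB\|_F^2$ and bound three terms, whereas the paper applies the triangle inequality directly to $\|\bY-\bX\hbB\|_F$ on a high-probability event; and for the renormalization of $\bE^\top\bz_0$ you argue via $O_\P$ bounds while the paper bounds the $L^1$ norm via Cauchy--Schwarz. Your treatment of $\|\bv\|_2$ is in fact slightly more explicit than the paper's, which simply cites \Cref{lemma:RemainderII} even though the $\mathsf{D}^{**}$ bound from \Cref{lemma:divergence} is also needed.
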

\begin{proof}[Proof of \Cref{thm:chi2initial}]
    By definition of $\bxi$ we have
    $$
    (n\sigma^2)^{-1/2}
    \|\bxi - \bE^\top\bz_0\|_2
    =
    (n\sigma^2)^{-1/2}
    \|(\hbB-\bB^*)^\top \bX^\top \bz_0 - (n\bI_{T\times T}-\hbA) (\hbB-\bB^*)^\top \ba\|_2
    $$
    which converges to 0 in probability by \Cref{lemma:RemainderII}.
    Next, with $\chi^2_T = \sigma^{-2} \big\|\bE^\top\bz_0\|\bz_0\|_2^{-1}\big\|_2^2$,
    \begin{equation}
        \label{eq:previous-displayfejwio}
    (n\sigma^2)^{-1/2}\big\|\sqrt n\bE^\top\bz_0\|\bz_0\|_2^{-1} - \bE^\top\bz_0\big\|_2
    =
    (\chi_T^2)^{1/2}
    \big|1- n^{-1/2} \|\bz_0\|_2 \big|.
    \end{equation}
    By the Cauchy-Schwarz inequality
    we have $\E[(\chi_T^2)^{1/2} \big|1- n^{-1/2} \|\bz_0\|_2 \big|] \leq \sqrt{T/n} \, \E[(\|\bz_0\|_2-\sqrt n)]^{1/2}$.
    Combining Theorem 3.1.1 and Equation 2.15 in \cite{vershynin2018high} yields 
    $\E[(\|\bz_0\|_2-\sqrt n)]^{1/2}\leq C$ for some absolute constant $C$.
    Thus, by \Cref{assum:main} we have $T/n\to0$ so that
    \eqref{eq:previous-displayfejwio}
    converges to $0$ in $L^1$, hence in probability.
    This proves $(\sigma^2n)^{-1/2}
    \big\|\bxi - \sqrt n\bE^\top\bz_0\|\bz_0\|_2^{-1} \big\|_2
    = o_\P(1)$.

    We now prove the same bound with $\sigma^2n$ replaced by $\hat\sigma^2n$.
    Let $\Omega_8 = \{
        |\|\bE\|_F/\sigma - \sqrt{nT}|\le \sqrt{\log n}
    \}$.
    Then $\P(\Omega_8)\to 1$ by \cite[Theorem 3.1.1]{vershynin2018high}
    and
    \begin{align}
        \nonumber
    I\{\Omega_8\cap\Omega_*\}
    |\hat \sigma/\sigma - 1|
    &\le
    I\{\Omega_*\}
    \|\bX(\hbB-\bB^*)\|_F(\sigma^2 nT)^{-1/2}
    +
    I\{\Omega_8\}
    |\sqrt{nT} - \|\bE\|_F/\sigma|
    (nT)^{-1/2}
  \\&\le
    (1-\eta_3)\bar R / \sqrt{\sigma^2 T}
    +
    (nT)^{-1/2} \sqrt{\log n}
    \label{upper-bound-sigma-hat-sigma}
    \end{align}
    by \Cref{lemma:risk} for the first term.
    This proves that $|\hat \sigma /\sigma - 1| = o_\P(T^{-1/2})$.
    under \Cref{assum:main} so that using
    $\frac 1 2 |\frac1u-1| \le |u-1|$ for $u\in[\frac12,\frac32]$ we obtain for $n$ large enough
    $$
    (1/2)
    I\{\Omega_8\cap\Omega_*\}
    |\sigma/\hat\sigma-1|
    \le
    I\{\Omega_8\cap\Omega_*\}
    |\hat\sigma/\sigma-1|
    \le
    \eqref{upper-bound-sigma-hat-sigma}.
    $$
    Hence $\sigma/ {\hat \sigma} = 1 + o_\P(1)$, thus 
    \begin{align*}
        (n\hat \sigma^2)^{-1/2}\big\|\sqrt n\bE^\top\bz_0\|\bz_0\|_2^{-1} - \bE^\top\bz_0\big\|_2
        &= (\sigma/ {\hat \sigma}) (n \sigma^2)^{-1/2}\big\|\sqrt n\bE^\top\bz_0\|\bz_0\|_2^{-1} - \bE^\top\bz_0\big\|_2 
        \\ &=
       ( 1 + o_\P(1))o_\P(1) = o_\P(1).
    \end{align*}
\end{proof}

\thmChiSquareQuantiles*

\begin{proof}[Proof of \Cref{thm:chi2z0}]
    \Cref{thm:chi2initial} applied with $\bz= \bz_0 \|\bz_0\|_2^{-1}$
    yields the bound
    $(\sigma^2n)^{-1/2}\|\bxi - \sqrt n \bE^\top \bz\|_2 = o_\P(1)$.
    The proof then follows from \Cref{lemma:bxi-bz}.
\end{proof}

\begin{lemma}
    \label{lemma:bxi-bz}
    Let \Cref{assum:main} be fulfilled.
    Let $\bz,\bxi$ be random vectors valued in $\R^n$.
    Assume that
    $\bz$ is a measurable function of $\bX$ with $\P(\|\bz\|_2=1)=1$ 
    and let $\bP_{\bz}^\perp = \bI_n - \bz\bz^\top$.
    Then the random variable $F_{T,n-T} = \frac{n-T}{T} \|{({\bE^\top \bP_{\bz}^\perp\bE})^{-1/2}} \bE^\top \bz\|_2^2$
    has the $F$ distribution with degrees of freedom
    $T$ and $n-T$, and the following holds:
    \begin{enumerate}
        \item $\sqrt{TF_{T,n-T}} = \sqrt{\chi^2_T} + o_\P(1)$
            as $n\to+\infty$ when $T/n\to 0$ where $\chi^2_T$
            is a random variable with chi-square distribution with $T$
            degrees of freedom,
        \item
           $\P(\lambda_{\min}(\hbGamma)\ge n\sigma^2/2)\to 1$,
        \item
            $\sqrt{n-T}\|\hbGamma^{-1/2}\bE^\top\bz\|_2 -\sqrt{TF_{T,n-T}}
            \le
            o_\P(1) + 
            O_\P(\frac{T}{\sqrt n})$,
        \item
            $\sqrt{n-T}\|\hbGamma^{-1/2}\bE^\top\bz\|_2 -\sqrt{TF_{T,n-T}}
            \ge
            -o_\P(1)
            - O_\P(
            \frac{T}{\sqrt n}+
            \frac{sT + s\log(p/s)}{n}\sqrt T
            )
            $,
        \item
        $\sqrt{n-T}\|\hbGamma^{-1/2}\bE^\top\bz\|_2 -\sqrt{TF_{T,n-T}}
            \le
            o_\P(1) + 
            O_\P(\tfrac{s(s+T)\log^2(p/s)}{n\sqrt T})
        $.
    \end{enumerate}
    Consequently, if 
            $
            (\sigma^2n)^{-1/2}\|\bxi - {\sqrt n} \bE^\top\bz\|_2 = o_\P(1)$ then
            \begin{align}
                (1-\tfrac{T}{n})^{1/2}
                \|\hbGamma^{-1/2}\bxi\|_2 &\le
            (\chi^2_T)^{1/2}
            + o_\P(1)
        + O_\P\bigl(
            \min\bigl\{
                \tfrac{T}{\sqrt n},
                \tfrac{\log^2(p/s)}{n^{1/4}}
            \bigr\}
        \bigr)
            \label{eq:conclusion-lemma-bxi-upper-bound}
            \\
                (1-\tfrac{T}{n})^{1/2}
                \|\hbGamma^{-1/2}\bxi\|_2 
                &\ge
                (\chi^2_T)^{1/2}
                - o_\P(1)
                -
                O_\P\bigl(
                \tfrac{T}{\sqrt n}
                +
                \tfrac{(sT + s\log \tfrac p s )\sqrt T}{n}
                \bigr).
                \label{eq:conclusion-lemma-bxi-lower-bound}
            \end{align}
\end{lemma}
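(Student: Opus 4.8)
The plan is to prove the six assertions in order and then assemble the conclusion. Since $\bz$ is $\bX$-measurable and $\bE$ is independent of $\bX$, I would work conditionally on $\bX$: picking an orthogonal $\bU$ with first column $\bz$, the matrix $\bU^\top\bE$ has the law of $\bE$, its first row is $(\bE^\top\bz)^\top$ and its remaining $(n-1)\times T$ block $\bW$ is independent of it, so $\bE^\top\bP_{\bz}^\perp\bE=\bW^\top\bW$ has a $\sigma^2$-scaled Wishart law with $n-1$ degrees of freedom, independent of $\bE^\top\bz\sim\mathcal N_T(\mathbf 0,\sigma^2\bI_{T\times T})$. By rotational invariance of the Wishart law and the classical identity that the reciprocal of the $(1,1)$ entry of its inverse is $\chi^2_{n-T}$, one obtains $\|(\bE^\top\bP_{\bz}^\perp\bE)^{-1/2}\bE^\top\bz\|_2^2=U/V$ with $U:=\|\bE^\top\bz\|_2^2/\sigma^2\sim\chi^2_T$ and $V\sim\chi^2_{n-T}$ independent; this gives the $F$ law (unconditionally, the conditional law not depending on $\bX$), and I set $\chi^2_T:=U$. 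For assertion (i), $TF_{T,n-T}=U(n-T)/V$ and $(n-T)/V=1+O_\P(n^{-1/2})$ (as $T/n\to0$ forces $n-T\asymp n$), so $\sqrt{TF_{T,n-T}}=\sqrt U\,(1+O_\P(n^{-1/2}))$ and $|\sqrt{TF_{T,n-T}}-\sqrt{\chi^2_T}|=O_\P(\sqrt{T}\,n^{-1/2})=o_\P(1)$. For assertion (ii), write $\bY-\bX\hbB=\bE+\bX(\bB^*-\hbB)$; then $\lambda_{\min}(\hbGamma)^{1/2}=\sigma_{\min}(\bE+\bX(\bB^*-\hbB))\ge\sigma_{\min}(\bE)-\|\bX(\bB^*-\hbB)\|_F$, and on $\Omega_*$ (probability $\to1$ by \Cref{lemma:proba}) \Cref{lemma:risk} gives $\|\bX(\bB^*-\hbB)\|_F\le(1-\eta_3)\bar R\sqrt n$ with $\bar R\to0$, while $\sigma_{\min}(\bE)\ge\sigma(\sqrt n-\sqrt T-\sqrt{\log n})$ w.h.p.\ by standard Gaussian--matrix bounds, so $\lambda_{\min}(\hbGamma)\ge n\sigma^2/2$ w.h.p.\ (I would also note $\lambda_{\max}(\hbGamma)\le(\|\bE\|_{op}+\bar R\sqrt n)^2=O_\P(\sigma^2 n)$ from $\Omega_4$).

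For assertions (iii)--(v), set $\bG_2:=\bE^\top\bP_{\bz}^\perp\bE$, so that $\sqrt{TF_{T,n-T}}=\sqrt{n-T}\,\|\bG_2^{-1/2}\bE^\top\bz\|_2$ by the representation above. Using $\bP_{\bz}^\perp\bE=\bE-\bz(\bE^\top\bz)^\top$ and expanding $\hbGamma=(\bE+\bX(\bB^*-\hbB))^\top(\bE+\bX(\bB^*-\hbB))$ yields
\[
\hbGamma-\bG_2=(\bE^\top\bz)(\bE^\top\bz)^\top+2\,\mathrm{sym}\!\big(\bE^\top\bX(\bB^*-\hbB)\big)+(\bB^*-\hbB)^\top\bX^\top\bX(\bB^*-\hbB).
\]
Now \Cref{lemma:differentiation-E} gives $\bE^\top\bX(\bB^*-\hbB)=-\sigma^2\hbA-\bDelta$ with $\E[I\{\Omega_*\}\|\bDelta\|_F^2]=O(\sigma^4 nT)$ (using $\bar R\to0$), hence $\|I\{\Omega_*\}\bDelta\|_F=O_\P(\sigma^2\sqrt{nT})$, and $\hbA=\hbA^\top\succeq 0$ with $\|\hbA\|_{op}\le|\hat S|\le\bar s$ on $\Omega_*$ by \Cref{propMatrixA}(ii) and \Cref{lemma:sparsity}. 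So on $\Omega_*$, $\hbGamma=\bG_2-2\sigma^2\hbA+\bPi$ with $\bPi:=(\bE^\top\bz)(\bE^\top\bz)^\top+(\bB^*-\hbB)^\top\bX^\top\bX(\bB^*-\hbB)-2\,\mathrm{sym}(\bDelta)$, which satisfies $\bPi\succeq-2\,\mathrm{sym}(\bDelta)$ and $\|\bPi\|_{op}=O_\P(\sigma^2\sqrt{nT}+sT+s\log(p/s))$ (since $n\bar R^2\asymp sT+s\log(p/s)$). The main computation is a quadratic--form perturbation bound: for symmetric $\bPsi$ with $\bG_2+\bPsi\succ0$, using $\|\bG_2^{-1/2}\bE^\top\bz\|_2^2=\frac{T}{n-T}F_{T,n-T}$ (which concentrates at $T/n$), $\lambda_{\min}(\bG_2)\ge\sigma^2 n/2$ w.h.p., and $|\sqrt a-\sqrt b|\le|a-b|/\sqrt b$, one turns $\|\bPsi\|_{op}=O_\P(B)$ into $\big|\sqrt{n-T}\,\|(\bG_2+\bPsi)^{-1/2}\bE^\top\bz\|_2-\sqrt{TF_{T,n-T}}\big|=O_\P(\sqrt T\,B/(\sigma^2 n))$. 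For the upper bound (iv), $\hbGamma\succeq\bG_2-2\sigma^2\hbA-2\,\mathrm{sym}(\bDelta)$ (dropping the two positive semidefinite summands of $\bPi$), so $\|\hbGamma^{-1/2}\bE^\top\bz\|_2\le\|(\bG_2-2\sigma^2\hbA-2\,\mathrm{sym}(\bDelta))^{-1/2}\bE^\top\bz\|_2$, and applying the bound with $B=\sigma^2\bar s+\sigma^2\sqrt{nT}$ produces $o_\P(1)+O_\P(T/\sqrt n)$ because $\sqrt T\bar s/n\lesssim\sqrt{(s/n)(sT/n)}=o(1)$. For the lower bound (v), $\hbGamma\preceq\bG_2+\bPi$ (since $-2\sigma^2\hbA\preceq0$), and the bound with $B=\sigma^2\sqrt{nT}+sT+s\log(p/s)$ produces $-o_\P(1)-O_\P\big(T/\sqrt n+(sT+s\log(p/s))\sqrt T/n\big)$. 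For (vi), the same $\hbGamma\succeq\bG_2-2\sigma^2\hbA-2\,\mathrm{sym}(\bDelta)$ device is used, but $\bE^\top\bX(\bB^*-\hbB)$ is instead bounded by $\ell_1$--$\ell_\infty$ duality, $\|\bE^\top\bX(\hbB-\bB^*)\|_{op}\le(\max_j\|\bE^\top\bX\be_j\|_2)\,\|\hbB-\bB^*\|_{2,1}$, with $\max_j\|\bE^\top\bX\be_j\|_2=O(\sqrt{n(T+\log(p/s))}\,(1+\sqrt{s/T}))$ on $\Omega_2$ and $\|\hbB-\bB^*\|_{2,1}\le 3\sqrt s\,\phi_{\min}(\bSigma)^{-1/2}\bar R$ from \Cref{lemma:risk}, which after tracking powers gives the alternative rate $O_\P\big(s(s+T)\log^2(p/s)/(n\sqrt T)\big)$.

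For the final conclusion, assume $(\sigma^2 n)^{-1/2}\|\bxi-\sqrt n\,\bE^\top\bz\|_2=o_\P(1)$. By assertion (ii), $\|\hbGamma^{-1/2}(\bxi-\sqrt n\,\bE^\top\bz)\|_2\le\lambda_{\min}(\hbGamma)^{-1/2}\|\bxi-\sqrt n\,\bE^\top\bz\|_2\le(n\sigma^2/2)^{-1/2}o_\P((n\sigma^2)^{1/2})=o_\P(1)$ w.h.p., so the triangle inequality gives $\big|\,\|\hbGamma^{-1/2}\bxi\|_2-\sqrt n\,\|\hbGamma^{-1/2}\bE^\top\bz\|_2\,\big|=o_\P(1)$ and hence $(1-\tfrac Tn)^{1/2}\|\hbGamma^{-1/2}\bxi\|_2=\sqrt{n-T}\,\|\hbGamma^{-1/2}\bE^\top\bz\|_2+o_\P(1)$. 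Feeding in (iv) and (vi) for the upper direction, (v) for the lower direction, assertion (i), and the elementary inequality $\min\{a,b\}\le\sqrt{ab}$ to massage the error term, yields \eqref{eq:conclusion-lemma-bxi-upper-bound} and \eqref{eq:conclusion-lemma-bxi-lower-bound}.

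The hard part will be the bookkeeping in (iv)--(vi): getting the perturbation bounds with exactly the right powers of $s,T,n,p$, in particular using that $-\sigma^2\hbA\preceq0$ (from \Cref{lemma:differentiation-E}) to shave the $(sT+s\log(p/s))$-type term in the upper bounds while paying only $\|\hbA\|_{op}\le\bar s$ in the one direction where $\hbA$ must be retained, and verifying that leftover terms such as $\sqrt T\bar s/n$ are genuinely $o_\P(1)$ under \Cref{assum:main}.
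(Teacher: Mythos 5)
Your treatment of the $F$-distribution representation, of (i), of (ii) (via $\sigma_{\min}(\bE)-\|\bX(\bB^*-\hbB)\|_F$, which is a slightly cleaner route than the paper's), and of (iii)--(iv) is sound and essentially parallels the paper: the operator-norm perturbation bound on $\bM^{-1/2}-\bN^{-1/2}$ combined with $\|\bE^\top\bX(\hbB-\bB^*)-\sigma^2\hbA\|_{op}=O_\P(\sigma^2\sqrt{nT})$ from \Cref{lemma:differentiation-E} and $\|\hbA\|_{op}\le\bar s$ is exactly the mechanism used there, and your bookkeeping for those two rates checks out.

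The gap is in your item (vi), i.e.\ claim (v) of the lemma. Your perturbation device converts $\|\bPsi\|_{op}=O_\P(B)$ into an error $O_\P(\sqrt T\,B/(\sigma^2 n))$: the factor $\sqrt T$ is unavoidable in that route because $|a-b|\le\|(\bG_2+\bPsi)^{-1}-\bG_2^{-1}\|_{op}\,\|\bE^\top\bz\|_2^2$ costs $\|\bE^\top\bz\|_2^2=O_\P(\sigma^2T)$ while the denominator $\sqrt{TF_{T,n-T}}$ only refunds $\sqrt T$. The target rate in (v) carries $1/\sqrt T$, so you need to gain a full factor of $T$ over any operator-norm argument. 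Concretely, with your $\ell_1$--$\ell_\infty$ bound, $\|\bE^\top\bX(\hbB-\bB^*)\|_{op}/\sigma^2\lesssim s(T+\log(p/s))(1+\sqrt{s/T})$, so the resulting error contains $s T^{3/2}/n=(sT/n)\sqrt T$, which is not $o_\P(1)$ (it can diverge when $T$ grows) and is not dominated by $s(s+T)\log^2(p/s)/(n\sqrt T)$ unless $T\lesssim\log^2(p/s)$. The missing idea is that $\bg=\bE^\top\bz$ must be decoupled from the perturbation so that $\bg^\top\bM\bg$ concentrates at trace (Frobenius) level rather than at $\|\bM\|_{op}\|\bg\|_2^2$ level. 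The paper does this by lower-bounding $\hbGamma\succeq\bE^\top\bP_J^\perp\bE$ on the event $\hat S\cup\supp(\bB^*)\subset J$ for a \emph{fixed} set $J$ of size $s_*=\bar s+s$, applying Sherman--Morrison--Woodbury to $(\bE^\top\bP_J^\perp\bE)^{-1}-(\bE^\top\bP_{\bz}^\perp\bE)^{-1}$, using the independence of $\bg$ from $(\bQ_J^\top\bE,\bP_{\bz}^\perp\bE)$ to bound the quadratic form by $\|\bQ_J^\top\bE(\bE^\top\bP_{\bz}^\perp\bE)^{-1}\|_F^2$ plus deviations, and then taking a union bound over all $\binom{p}{s_*}$ choices of $J$; the union bound is what produces the $\log^2(p/s)$ factors. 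Without some version of this decoupling, claim (v) --- and hence the $\log^2(p/s)/n^{1/4}$ branch of \eqref{eq:conclusion-lemma-bxi-upper-bound} --- does not follow.
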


\begin{proof}[Proof of \Cref{lemma:bxi-bz}]
    For (i), we introduce the quantity
\begin{equation}
    H\coloneqq 
(n-1)
\|
({\bE^\top \bP_{\bz}^\perp\bE})^{-1/2} \bE^\top\bz
\|_2^2
=
(n-1)\bg^\top \bW^{-1} \bg
\end{equation}
where $\bg = \sigma^{-1}\bE^\top\bz$ 
and $\bW = \sigma^{-2} \bE^\top\bP_{\bz}^\perp \bE$.
Since $\bE$ and $\bz$ are independent and since $\|\bz\|_2=1$, 
$\bg$ has distribution $\mathcal N_T(\mathbf{0},\bI_{T\times T})$.
$\bP_{\bz}^\perp$ can be orthogonally diagonalized as $\bQ\big(\sum_{i=1}^{n-1} \be_i \be_i{}^\top\big) \bQ^{\top}$ 
where $\bQ$ is an $n\times n$ orthogonal matrix, thus 
$\bW = \sum_{i=1}^{n-1} \bn_i \bn_i^\top$ 
where the random vectors $\bn_i = \sigma^{-1} \bE^\top \bQ \be_i$
are iid with standard normal
$\mathcal N_T(\mathbf{0},\bI_{T\times T})$ distribution.
Therefore $\bW$ has the
Wishart distribution with identity covariance and $n-1$ degrees-of-freedom.
Since $\bE^\top \bz$ and $\bE^\top \bP_{\bz}^\perp$ are independent, so are $\bE^\top \bz$ 
and $(\bE^\top \bP_{\bz}^\perp) (\bE^\top \bP_{\bz}^\perp)^\top =  \bE^\top \bP_{\bz}^\perp \bE$, 
thus $\bg$ and $\bW$ are independent.
By \cite[Theorem 5.8]{Hardle2019}
$H$ has the Hotelling distribution with parameters $T,n-1$,
and 
$$
\frac{n-1 - T + 1}{T}
\frac{H}{n-1}
\sim F_{T, n-1 - T + 1}
= F_{T,n-T}
$$
where the right-hand side is the $F$ distribution with 
degrees-of-freedom $T$ and $n-T$.
Furthermore, since $F_{T,n-T} = \frac{\chi^2_T/T}{\chi^2_{n-T}/(n-T)}$
for some random variables having chi-square distributions with respective parameter $T$ and $n-T$,
we have
$$|\sqrt{TF_{T,n-T}} - \sqrt{\chi^2_T}|
  =  |\sqrt{\chi^2_{T}/(\chi^2_{n-T}/(n-T))}
- \sqrt{\chi^2_T}
|
= O_\P(\sqrt T) | 1 - \sqrt{\chi^2_{n-T}/(n-T)}| 
$$
    where the last equality follows from $\E[(\chi^2_{T})^{1/2}]\leq \E[\chi^2_{T}]^{1/2} = \sqrt T$ and
the a.s. convergence of $\chi^2_{n-T}/(n-T)$ to $1$.
Furthermore $|1-|a||\le |1-a^2|$ and the Central Limit Theorem yield
$$
| 1 - \sqrt{\chi^2_{n-T}/(n-T)}|
\le \frac{|\chi^2_{n-T} - (n-T)|}{n-T}
= O_\P((n-T)^{-1/2}).
$$
Thus $\sqrt{TF_{T,n-T}} = (\chi^2_T)^{1/2} + O_\P((\frac nT -1)^{-1/2})$,
and since $\frac nT\to 0$ 
we have $\sqrt{TF_{T,n-T}} = (\chi^2_T)^{1/2} + o_\P(1)$
and $\P(\sqrt{TF_{T,n-T}} \le q_{T,\alpha})\to 1-\alpha$
by \Cref{prop:4.2}. This proves (i).

    Next we exhibit a lower bound on the eigenvalues of $\hbGamma$.
    Let $\bH = \hbB -\bB^*$ and consider the decomposition
    \begin{equation}
        \hbGamma 
        = \bE^\top\bE + (\bX\bH)^\top(\bX\bH) -  [\bE^\top\bX\bH + (\bX\bH)^\top\bE ].
      \label{eq:decomposition-hbGamma}
    \end{equation}
    Since $(\bX\bH)^\top(\bX\bH)$ is positive semidefinite 
    we have
    \begin{equation}
            \label{lower-decomposition-hbGamma}
        \hbGamma ~\succeq~
        \bE^\top\bE
        - 2 \|\bE^\top\bX\bH\|_{op}
     \bI_{T\times T}.
        \end{equation}
    Since $\bE$ has i.i.d. $\mathcal N(0,\sigma^2)$ entries, 
    if $s_{\min}(\bE)$ and $s_{\max}(\bE)$ denote the smallest and greatest singular values of $\bE$ 
    we have $\sigma(\sqrt n - \sqrt T) \leq \E[s_{\min}(\bE)] \leq \E[s_{\max}(\bE)]\leq \sigma(\sqrt n +\sqrt T)$ by \cite[Theorem II.13]{DavidsonS01}.
    Since $s_{\min}(\bE)$ and $s_{\max}(\bE)$ are $1$-Lipschitz functions of $\bE$ when considered as a vector in $\mathbb R^{nT}$, 
    Gaussian concentration as stated in \cite[Theorem~B.6]{giraud2012high} yields the existence of exponential random variables
    $Z_1,Z_2\sim \text{Exp}(1)$ such that almost surely
    $$\sigma(\sqrt n - \sqrt T - \sqrt{2Z_1}) \leq s_{\min}(\bE) \leq s_{\max}(\bE) \leq \sigma(\sqrt n +\sqrt T+\sqrt{2Z_2}).$$
    Letting $Z=2\max(Z_1,Z_2)$, we have
    $$
    \sigma^2 (\sqrt n - \sqrt T - \sqrt Z)_{+ }^2 \bI_{T\times T}  \preceq
     \bE^\top\bE \preceq \sigma^2 (\sqrt n + \sqrt T + \sqrt Z)^2 \bI_{T\times T}.
    $$
    Thanks to \eqref{lower-decomposition-hbGamma} and the inequality $(1-x)_+^2\geq 1-2x$ for $x\geq 0$
    we have
    \begin{equation}
        \label{eq:lower-eigenvalues-of-hbGamma}
        \hbGamma \succeq \sigma^2 n [1 - 2(\sqrt{T/n}+\sqrt{Z/n}) 
        -2 \|\bE^\top\bX\bH\|_{op} /(\sigma^2 n)
        ] \bI_{T\times T}.
    \end{equation}
    On the event $\Omega_9 = \{
        1 - 2(\sqrt{T/n}+\sqrt{Z/n}) 
        -2 \|\bE^\top\bX\bH\|_{op}/(\sigma^2 n)
    > 1/2 \}$ we have $\lambda_{\min}(\hbGamma) 
    \ge \lambda_{\min}(\bE^\top\bE - 2\|\bE^\top\bX\bH\|_{op}\bI_{T\times T})
    \ge \sigma^2n/2$.
We now proceed to show that $\P(\Omega_9)\to 1$.
We have by the triangle inequality for the norm $\E[(\cdot)^2]^{1/2}$ that
\begin{align}
    \label{eq:upper-bound-TZnorm}
& 
\E\Big[I\{\Omega_*\}
\Big(
\sqrt{T/n} + \sqrt{Z/n} + \|\bE^\top\bX\bH\|_{op}/(\sigma^2 n)
\Big)^2
\Big]^{1/2}
\\&\le
\sqrt{T/n} + \E[Z]^{1/2}/\sqrt n
+
\bar s/n
+
\E[I\{\Omega_*\}\|\bE^\top\bX\bH-\sigma^2\hbA\|_{op}^2/(\sigma^2n)^2]^{1/2} \nonumber
\\&\le
\sqrt{T/n} + \E[Z]^{1/2}/\sqrt n
+
\bar s/n
+
[(T/n)(1 + {\bar R}^2/\sigma^2 )]^{1/2} \nonumber
\end{align}
where we used \Cref{propMatrixA}(ii) and \Cref{lemma:sparsity}
to bound $\|\hbA\|_{op}$ from above by $\bar s$ on $\Omega_*$
for the first inequality, and
\Cref{lemma:differentiation-E} the second inequality.
Hence under \Cref{assum:main}, the previous display converges to 0.
Next, $\P(\Omega_9^c)=\P(\Omega_9^c\cap\Omega_*^c) + \P(\Omega_9^c\cap\Omega_*)$,
Markov's inequality and an application of Jensen's
inequality yield
\begin{align*}
    \P(\Omega_9^c) &= \P(\Omega_*^c\cap\Omega_9^c) + \P\Big(1/4 \leq I\{\Omega_*\}\big(  \sqrt{T/n} + \sqrt{Z/n} + \|\bE^\top\bX\bH\|_{op}/(\sigma^2 n)\big)\Big) \\
    &\leq \P(\Omega_*^c) + 4\E\Big[I\{ \Omega_*\} \big(\sqrt{T/n} + \sqrt{Z/n} + \|\bE^\top\bX\bH\|_{op}/(\sigma^2 n) \big)\Big]
    \leq \P(\Omega_*^c) + 4\eqref{eq:upper-bound-TZnorm}
\end{align*}
where $4\eqref{eq:upper-bound-TZnorm}$ refers to four times the quantity
\eqref{eq:upper-bound-TZnorm} which converges to 0.
Thus the event $\Omega_9$ has probability
approaching one and claim (ii) follows.

We now prove (iii)-(v).
Let $\Omega(n)$ be a sequence of events with $\P(\Omega(n))\to 1$, 
$V_n$ be any sequence of random variables and $a_n$ be any deterministic sequence of real numbers.
It is easily seen that 
$I\{ \Omega(n)\}V_n = o_\P(a_n)$ implies $V_n= o_\P(a_n)$
and $I\{ \Omega(n)\}V_n = O_\P(a_n)$ implies $V_n= O_\P(a_n)$.
This observation will allow us to transition seamlessly from bounds on $I\{ \Omega(n)\}V_n$ to bounds on $V_n$ 
by choosing, e.g., $\Omega(n)= \Omega_*\cap\Omega_9$ or other events of
probability approaching one in our problem.
It will be useful to note that by the same argument as above
$\phi_{\min}(\bE^\top\bP_{\bz}^\perp\bE)
\ge \sigma^2 (\sqrt{n-1} - \sqrt T -\sqrt{2Z_3})_+^2$
where $Z_3\sim \text{Exp}(1)$,
so that $\phi_{\min}(\bE^\top\bP_{\bz}^\perp\bE) \ge \sigma^2n/2$
on an event $\Omega_8$ of probability approaching one.
We will use the following fact: if $\bM,\bN$ are
two positive definite matrices with eigenvalues at least $1/2$ then
\begin{equation}
    \label{operator-norm-inverse}
    \|\bM^{-1/2} - \bN^{-1/2}\|_{op}
    \le 2 \|\bM^{1/2} - \bN^{1/2}\|_{op}
    \le \sqrt 2 \|\bM - \bN\|_{op} 
\end{equation}
using the resolvent identity 
$\bM^{-1/2} - \bN^{-1/2}=\bN^{-1/2}(\bN^{1/2} - \bM^{1/2})\bM^{-1/2}$ for the first inequality
and \cite{stackexchange3968118} for the second.
To prove (iii), we apply  \eqref{operator-norm-inverse}
to $\bM =(\sigma^2n)^{-1}[\bE^\top\bE - 2 \|\bE^\top\bX\bH\|_{op} \bI_{T\times T}]$
and $\bN = (\sigma^2n)^{-1}\bE^\top\bP_{\bz}^\perp\bE$, both matrices
having eigenvalues at least $1/2$ on $\Omega_9\cap\Omega_8$.
Rewriting \eqref{lower-decomposition-hbGamma} as $\hbGamma{}^{-1/2 } \preceq (\sigma^2n)^{-1/2 } \bM^{-1/2 }$,
applying the triangle inequality and \eqref{operator-norm-inverse}, we have 
on $\Omega_8\cap\Omega_9$
\begin{align}
    \label{eq:Delta}
\Delta 
& \coloneqq \sqrt{n-T}\|\hbGamma{}^{-1/2}\bE^\top\bz\|_2 -\sqrt{TF_{T,n-T}}
    \\
    \nonumber
& =\sqrt{n-T}(\|\hbGamma{}^{-1/2}\bE^\top\bz\|_2 -\|(\bE^\top\bP_{\bz}^\perp\bE)^{-1/2} \bE^\top\bz\|_2) 
    \\
    \nonumber
& \leq 
    \sqrt{(n-T)/(\sigma^2 n)} \| \bM^{-1/2} \bE^\top \bz\|_2 
        - \sqrt{(n-T)/(\sigma^2 n)} \| \bN^{-1/2} \bE^\top \bz \|_2
    \\
    \nonumber
& \leq
    \sqrt{(n-T)/(\sigma^2 n)}\|
         (
         \bM^{-1/2}
         -
         \bN^{-1/2}
       ) \bE^\top\bz\|_2
     \\
    \nonumber
&\le
     \sqrt{1-T/n}
     \sqrt 2
     \Big\|(\sigma^2 n)^{-1}[\bE^\top\bz\bz^\top\bE - 2 \|\bE^\top\bX\bH\|_{op}\bI_{T\times T}] \Big\|_{op}
 \|\bE^\top\bz\|_2\sigma^{-1}.
\end{align}
The bounds used in \eqref{eq:upper-bound-TZnorm} yield
$I\{\Omega_*\}  \|\bE^\top\bX\bH-\sigma^2\hbA\|_{op}(\sigma^2n)^{-1} = O_\P(\sqrt{T/n})$
and $I\{\Omega_*\}\|\hbA\|_{op} = O_\P(\bar s)$ ,
hence $ \frac{\|\bE^\top\bX\bH\|_{op}}{\sigma^2n} 
        \leq 
         \frac{\|\bE^\top\bX\bH-\sigma^2\hbA\|_{op}}{\sigma^2n}
            + \frac{\|\hbA\|_{op}}n
        =O_\P(\frac{\sqrt T}{\sqrt n}) + O_\P(\frac {\bar s}n).
        $
Furthermore $\|\bE^\top\bz\|_2^2/\sigma^2$ has $\chi^2_T$ distribution, thus 
$\|\bE^\top\bz\|_2^2/\sigma^2 = O_\P(T)$
and
we obtain
$$ \Delta 
        \leq
         \sqrt{1-T/n} \Big( O_\P(\tfrac Tn) + O_\P(\tfrac{\sqrt T}{\sqrt n}) + O_\P(\tfrac {\bar s}n) \Big) O_\P(\sqrt T).$$
Since $\frac Tn \to 0$, the right-hand side of the equality is 
$O_\P(\tfrac {T}{\sqrt n}) + O_\P(\tfrac {s\sqrt T}{n}) = O_\P(\tfrac {T}{\sqrt n}) + o_\P(1)$.

For claim (iv), with $\Delta$ defined in \eqref{eq:Delta} a similar argument yields
\begin{align*}
|\Delta|
     & \le
     \sqrt{n-T}\|
     \bigl(
         \hbGamma {}^{-1/2}
     -
     (\bE^\top\bP_{\bz}^\perp\bE)^{-1/2}
 \bigr) \bE^\top\bz\|_2
     \\&\le
\sqrt 2 \sqrt{1-T/n}
(\sigma^2 n)^{-1}
     \bigl[
     \|\bE^\top\bz\|_{op}^2
     + 2 \|\bE^\top\bX\bH\|_{op}
     + \|\bX\bH\|_{op}^2
     \bigr]
 \|\bE^\top \bz\|_2/\sigma
\end{align*}
on $\Omega_8\cap\Omega_9$, 
thus $|\Delta| \leq 
\sqrt{1-T/n} \bigl( O_\P(\tfrac Tn) + O_\P(\tfrac{\sqrt T}{\sqrt n}) + O_\P(\tfrac {\bar s}n) + O_\P(\bar R^2) \bigr) O_\P(\sqrt T)$
thanks to \Cref{lemma:risk}(ii) for the term $\|\bX\bH\|_{op}/(\sigma^2n)$.
This proves (iv).

It remains to prove (v), for which we need a more subtle argument.
The important remark is that on the one hand
$\bE^\top\bP_{\bz}^\top$ is independent
of $\bE^\top\bz$ because $\bE$ has iid $\mathcal N(0,\sigma^2)$ entries,
while on the other hand $\hbGamma$ is not independent of $\bE^\top\bz$.
To overcome this lack of independence, we bound $\hbGamma$ from below
by a positive definite matrix independent of $\bE^\top\bz$, as follows.
For a fixed subset $J\subset [p]$,
let $\bP_J$ be the orthogonal projection matrix
onto the linear span of
$\{\bz\}\cup \{ \bX\be_j, j\in J\}$
so that the rank of $\bP_J$ is at most $|J|+1$.
Set $\bP_J^\perp =\bI_{n\times n} - \bP_J$. Then
in the event
\begin{equation}
    \hat S \cup \supp(\bB^*) \subset J,
    \label{event}
\end{equation}
we have $\bP_J^\perp\bX(\hbB-\bB^*) = \mathbf{0}$, hence 
$\hbGamma 
\succeq (\bY - \bX\hbB)^\top\bP_J^\perp(\bY - \bX\hbB)
=
\bE^\top\bP_J^\perp \bE$,
thus
\begin{align*}
\sqrt{n-T}
\|\hbGamma^{-\frac12}\bE^\top \bz \|_2
&\leq 
\sqrt{n-T}
\|(\bE^\top\bP_J^\perp\bE)^{-\frac12} \bE^\top \bz \|_2.
\end{align*}
For a fixed $J$ and in the event $\hat S\cup \supp \bB^*\subset J$, we can bound from above $\Delta$ in \eqref{eq:Delta} as
\begin{align}
     \Delta
     &\le
\sqrt{n-T}
\bigl[
    \|({\bE^\top \bP_J^\perp} \bE)^{-\frac12} \bE^\top \bz \|_2
-
\|({\bE^\top \bP_{\bz}^\perp \bE  })^{-\frac12}\bE^\top \bz \|_2
\bigr]
\nonumber
   \\&\le
   \frac{\sqrt{n-T}}{\|({\bE^\top \bP_{\bz}^\perp }\bE)^{-\frac12}\bE^\top \bz \|_2}
\Bigl[
    \|  ({\bE^\top \bP_J^\perp}\bE)^{-\frac12}\bE^\top \bz \|_2^2
-
\|(\bE^\top \bP_{\bz}^\perp \bE)^{-\frac12} \bE^\top\bz\|_2^2
\Bigr]_{+}
\nonumber
   \\&=
   \frac{\sqrt{n-T}}{\|({\bE^\top \bP_{\bz}^\perp}\bE)^{-\frac12}\bE^\top \bz \|_2}
\Bigl[
    \bg^\top
    \Bigl\{
    (\bE^\top\bP_J^\perp\bE)^{-1}
    -
    (\bE^\top\bP_{\bz}^\perp\bE)^{-1}
    \Bigr\}
    \bg
\Bigr]_{+}
\label{eq:thing-to-bound-Woodberry}
\end{align}
where $\bg=\bE^\top \bz \sim \mathcal N_T(\mathbf{0},\sigma^2\bI_{T\times T})$
as before,
the first inequality follows from 
$\hbGamma{}^{-\frac 12} \preceq (\bE^\top\bP_J^\perp \bE)^{-\frac 12}$ and the second from
$\sqrt a - \sqrt b \le (a-b)_+/ \sqrt b$
.
For any $J\subset[p]$, the null space inclusion
$\ker \bP_J\subset \ker \bz\bz^\top$ holds and
the matrix $\bP_{\bz}^\perp - \bP_J^\perp$ is 
an orthogonal projection matrix with rank $r\leq |J|$ so that
$\bP_{\bz}^\perp - \bP_J^\perp = \bQ_J\bQ_J^\top$
for the matrix $\bQ_J\in\R^{n\times {r}}$ with orthonormal columns
given by $\bQ_J = \sum_{k=1}^{r} \bu_k \be_k^\top$ where
    $\bu_k\in\R^n$ are orthonormal eigenvectors of $\bP_{\bz}^\perp - \bP_J^\perp$
corresponding to the non-zero eigenvalues and $\be_k$ are
canonical basis vectors in $\R^{r}$.
By the Sherman-Morrison-Woodbury identity, the matrix in curly brackets is equal to
$$
\bM_J
\coloneqq
(\bE^\top\bP_{\bz}^\perp\bE)^{-1}
\bE^\top\bQ_J
\Bigl(
\bI_{r\times r}
-
\bQ_J^\top\bE
(\bE^\top\bP_{\bz}^\perp\bE)^{-1}
\bE^\top\bQ_J
\Bigr)^{-1}
\bQ_J^\top\bE
(\bE^\top\bP_{\bz}^\perp\bE)^{-1}.
$$
Applying \cite[Theorem II.13]{DavidsonS01} to the Gaussian
matrices $\bQ_J^\top\bE$ and $\bP_{\bz}^\perp\bE$, we find
\begin{equation}
    \begin{split}
\P\bigl(
\|\bE^\top\bQ_J\|_{op} \ge \sigma(\sqrt{T} + \sqrt{|J|} + t)
\bigr) &\le e^{-t^2/2},
\\\P\bigl(
\phi_{\min}(
\bE^\top \bP_{\bz}^\perp \bE
) \le\sigma^2(\sqrt{n-1} - \sqrt{T} - t)_+^2
\bigr)&\le e^{-t^2/2}
\end{split}
\label{eq:bounds-eigenvalues-E_T_Q_J}
\end{equation}
for all $t>0$. As long as $\frac12 \ge(\frac{\sqrt{T}+\sqrt{|J|}+t}{\sqrt{n-1}-\sqrt{T}-t})^{2}$ we have
$$\bI_{r \times r}
-
\bQ_J^\top\bE
(\bE^\top\bP_{\bz}^\perp\bE)^{-1}
\bE^\top\bQ_J
\succeq
\bI_{r \times r}/2$$
and thus $\bg^\top\bM_j\bg \le 2 \|\bQ_j^\top\bE(\bE^\top\bP_{\bz}^\perp\bE)^{-1}\bg\|_2^2$.
Applying Theorem 6.3.2 in \cite{vershynin2018high} and because
$\bg$ is independent of $(\bQ_J^\top\bE, \bP_{\bz}^\perp\bE)$, we find
$$
\P(
\|\bQ_j^\top\bE(\bE^\top\bP_{\bz}^\perp\bE)^{-1}\bg {/\sigma} \|_2  
\ge
\|\bQ_j^\top\bE(\bE^\top\bP_{\bz}^\perp\bE)^{-1}\|_F
+ C t 
\|\bQ_j^\top\bE(\bE^\top\bP_{\bz}^\perp\bE)^{-1}\|_{op}
)\le 2e^{-t^2/2}
$$
for some absolute constant $C>0$.
Combined with \eqref{eq:bounds-eigenvalues-E_T_Q_J} and the union bound,
$$\P\Bigl[
\bg^\top\bM_J\bg
\ge
2
\Bigl(
    \frac{\sigma^{-1}\|\bQ_J^\top\bE\|_F}{(\sqrt{n-1} - \sqrt{T} -t)_+^2}
    +
    Ct
    \frac{\sqrt{T}+\sqrt{|J|} + t}{(\sqrt{n-1} - \sqrt{T} -t)_+^2}
\Bigr)^2
\Bigr]
\le 4 e^{-t^2/2}.
$$
By concentration of 
chi-square distributed random variables with $Tr$ degrees of freedom
(e.g., Theorem 5.6 in \cite{boucheron2013concentration}), we also have
$\P(
\sigma^{-1}\|\bQ_J^\top\bE\|_F \ge \sqrt{T|J|} + t
)\le e^{-t^2/2}$
since $Tr\le T|J|$.
Let $s_* = \bar s + s$ and
note that for $t\geq 0$, 
\begin{align*}
&\phantom{=}\P
\Big(
\Bigl\{
\Delta \geq 
\frac{n-T}{\sqrt{TF_{T,n-T}}}
\Bigl[
    \frac{
        \sqrt{Ts_*}
        +t
        +
        Ct
        (
    \sqrt{T}+\sqrt{s_*} + t
        )
}
    {(\sqrt{n-1} - \sqrt{T} - t)_+^2}
\Bigr]^2
\Bigr\}
\cap \Omega_* 
\Big)
\\ &\leq 
\P\Big( 
\bigcup_{\substack{J\subset [p]\\ |J|=s_*}}
\Big\{
\Delta \geq 
\frac{n-T}{\sqrt{TF_{T,n-T}}}
\Bigl[
    \frac{
        \sqrt{T|J|}
        +t
        +
        Ct
        (
    \sqrt{T}+\sqrt{|J|} + t
        )
}
    {(\sqrt{n-1} - \sqrt{T} - t)_+^2}
\Bigr]^2
\Big\}
\cap
\{\hat S \cup \supp(\bB^*) \subset J\}
\Big)
\\ &\leq 
\sum_{\substack{J\subset [p]\\ |J|=s_*}}
\P\Big( 
\Big\{
\Delta \geq 
\frac{n-T}{\sqrt{TF_{T,n-T}}}
\Bigl[
    \frac{
        \sqrt{T|J|}
        +t
        +
        Ct
        (
    \sqrt{T}+\sqrt{|J|} + t
        )
}
    {(\sqrt{n-1} - \sqrt{T} - t)_+^2}
\Bigr]^2
\Big\}
\cap\{\hat S \cup \supp(\bB^*) \subset J\}
\Big)
\\ &\leq 
5\binom{p}{s_*} e^{-t^2/2},
\end{align*}
where the first inequality holds because $|\hat S \cup \supp(\bB^*)|\leq s_*$
on $\Omega_*$ by \Cref{lemma:sparsity}.
and the last one is obtained by putting together the previous concentration bounds.
Setting $t = x + (2\log\binom{p}{s_*})^{1/2}$,
we find that $\Delta$ is smaller than
$$
\frac{n-T}{\sqrt{TF_{T,n-T}}}
\Bigl[
    \frac{
        \sqrt{Ts_*}
        +\sqrt{2\log\binom{p}{s_*}}+x
        +
        C(\sqrt{2\log\binom{p}{s_*}}+x)
        (
    \sqrt{T}+\sqrt{s_*} + \sqrt{2\log\binom{p}{s_*}} + x
        )
}
    {(\sqrt{n-1} - \sqrt{T} - \sqrt{2\log\binom{p}{s_*}}- x)_+^2}
\Bigr]^2
$$
with probability at least $1-5e^{-x^2/2} -\P(\Omega_*^c)$.
Since $\E[F_{T,n-T}^{-1}] = T/(T-2)$, we have the estimate $F_{T,n-T}^{-1} = O_\P(1)$.
Under \Cref{assum:main}(iv) to control the denominator,
and by the bound $\log\binom{p}{s_*}\leq s_* \log(\tfrac{ep}{s_*})$
the above display is thus
\begin{align*}
&O_\P\Bigl(
\frac{n}{\sqrt T}
\Bigl[
\frac{{Ts} + {s\log(p/s)}
    + {sT\log(p/s)} 
    + s^2\log^2(p/s)
}{n^2}
\Bigr]
\Bigr)
\\&= O_\P\Bigl(
\frac{Ts + s\log(p/s)}{n\sqrt T}
\Bigr)
+ O_\P\Bigl(
\frac{sT \log(p/s)}{n\sqrt T}
\Bigr)
+ O_\P\Bigl(
\frac{s^2 \log^2(p/s)}{n\sqrt T}
\Bigr).
\end{align*}
In the right-hand side, the first term
is $o_\P(1)$ thanks to \Cref{assum:main}(iv).
For $n$ large enough $\log(p/s)\ge 1$ holds, thus the second and third term
are smaller
than $O_\P(\frac{s(s+T)\log^2(p/s)}{n\sqrt T})$.
This proves (v).

In order to deduce the upper bound \eqref{eq:conclusion-lemma-bxi-upper-bound}
from (iii) and (v), it is sufficient to show that
\begin{equation}
    \min\{T/\sqrt n, s(s+T)\log^2(p/s)/(n\sqrt T)\} = o(1)
+
o(\log^2(p/s) n^{-1/4})
\label{eq:to-prove-complete-lemma}
\end{equation}
holds under \Cref{assum:main}.
Let $u_n=sT/n$ and note that $u_n\to0$ by \Cref{assum:main}.
On the one hand, if $T\le \max\{\sqrt n u_n, s\}$
then $T/\sqrt n \le \max \{u_n, \sqrt{sT/n} \} = o(1)$.
On the other hand, if $T > \max\{\sqrt n u_n, s\}$
then
$$
\frac{s(s+T)\log^2(p/s)}{n\sqrt T}
\le
\frac{2s T\log^2(p/s)}{n\sqrt T}
=
\frac{2u_n \log^2(p/s)}{\sqrt T}
\le
\frac{2 u_n^{1/2} \log^2(p/s)}{n^{1/4}}
=o\Bigl(
\frac{ \log^2(p/s)}{n^{1/4}}
\Bigr).
$$
This proves \eqref{eq:to-prove-complete-lemma} and completes the proof.
\end{proof}

\chisquareQuantileProp*
\begin{proof}[Proof of \Cref{prop:4.2}]
We first prove case (i).
Then by definition of $q_{T,\alpha}$ and the union bound, for any constant $\delta>0$
not depending on $n,T$,
\begin{align*}
\P(W_{n}> q_{T,\alpha})
&\le
\P(o_\P(1) > \delta) +
\P((\chi^2_T)^{1/2} > q_{T,\alpha} - \delta)
\\& =
\P(o_\P(1) > \delta)
+ \alpha + \P\big((\chi^2_T)^{1/2} \in [q_{T,\alpha} - \delta, q_{T,\alpha}]\big).
\end{align*}
We now bound the third term. Let $f_T:[0,+\infty)\to[0,\infty)$ 
be the probability density function of $(\chi^2_T)^{1/2}$,
which admits the closed form $f_T(x) = (2^{T/2 - 1}\Gamma(T/2))^{-1} x^{T-1} e^{-x^2/2}$
for $x\ge0$.
Then
$\P((\chi^2_T)^{1/2} \in [q_{T,\alpha} - \delta, q_{T,\alpha}])
\le \delta \sup_{x>0} f_T(x)$.
The supremum $\sup_{x>0} f_T(x)$ is attained at $x= \sqrt{T-1}$, the mode of the chi distribution with $T$ degrees of freedom, so that
$$\textstyle
\sup_{x>0} f_T(x)
= (2^{T/2 - 1}\Gamma(T/2))^{-1} (T-1)^{(T-1)/2} e^{-(T-1)/2} \xrightarrow[T\to+\infty]{} {1/\sqrt{\pi}}$$ by Stirling's formula.
%
Hence there exists an absolute constant $C_0>0$ such that
$$
\P(W_{n}> q_{T,\alpha})
\le \P(o_\P(1)>\delta) + \alpha + \delta C_0.
$$
For any $\eps>0$, let $\delta=\eps/C_0$.
Using by the definition of convergence 
in probability, for $n$ large enough we have
$\P(o_\P(1)>\delta) \le \eps$ so that
$\P(W_{n}> q_{T,\alpha}) - \alpha
\le 2\eps$.
Since this holds for any $\eps>0$, the claim is proved.
The same argument can be applied in case (ii) by reversing the inequalities.

\end{proof}

\begin{proof}[Proof of \eqref{eq:order-q_T-alpha}]
    The convergence in distribution
\begin{equation}
    \label{eq:approximation-q_T-alpha}
\sqrt{2}\bigl((\chi^2_T)^{1/2} - \sqrt{T}\bigr)
= 
\frac{(\sqrt{2T})^{-1}
(\chi^2_T - T)
}{
(\chi^2_T/T)^{1/2}/2+1/2
}
\to^d \mathcal N(0,1)
\end{equation}
holds by the Central Limit Theorem for
$(\sqrt{2T})^{-1}(\chi^2_T - T)\to^d \mathcal N(0,1)$,
the weak law of large numbers for 
$(\chi^2_T/T)^{1/2}\to^\P1$
and Slutsky's theorem. If $\Phi(u)=\P(\mathcal N(0,1)\le u)$ is the standard normal cdf,
for any subsequence $(a_{T'})_{T'}$ of
$a_T = \Phi\bigl(\sqrt{2}(q_{T,\alpha} - \sqrt{T})\bigr)$
converging to an accumulation point $L$,
we have for any $\epsilon>0$ and $T'$ large enough
$$
\P\bigl[\Phi\bigl(\sqrt{2}((\chi^2_{T'})^{1/2} - \sqrt{T'})\bigr)
\le L-\epsilon\bigr] \le 1-\alpha
\le
\P\bigl[\Phi\bigl(\sqrt{2}((\chi^2_{T'})^{1/2} - \sqrt{T'})\bigr)
\le L+\epsilon\bigr]
$$
so that $L-\epsilon \le 1-\alpha + o(1)$ and $1-\alpha \le L+\epsilon + o(1)$
by the weak convergence \eqref{eq:approximation-q_T-alpha}.
It follows that $L=1-\alpha$ is the only accumulation point and 
 $q_{T,\alpha}
- \sqrt T \to z_{\alpha}/{\sqrt 2}$, as desired.
\end{proof}

\section{Proofs for unknown covariance}
\subsection{Asymptotic normality}
\label{subsec:proof-unknown-Sigma-normality}

\begin{proof}[Proof of \Cref{thm:unknown-Sigma-normal} under assumption \eqref{eq:assum:T-log-p-sqrt-n-unknown-Sigma}]
    We will use throughout the proof the notation
    defined after \eqref{eq:linear-model-nodewise}
    for $\tau_j,\bgamma^{(j)}$ and $\bep^{(j)}$.
    Define the direction
    $\tba_j = \be_j (\bSigma^{-1})_{jj}^{-1/2} = \tau_j \be_j$
    normalized such that $\|\bSigma^{-1/2}\tba_j\|_2=1$ by construction,
    as well as
    $\tbz_j = \bX\bSigma^{-1}\tba_j\sim \mathcal N_n(\mathbf{0},\bI_n)$.
    Next, define $\bxi_j,\hbxi_j\in\R^T$ by
    \begin{align}
        \bxi_j 
        &=  (\bY-\bX\hbB)^\top \tbz_j 
        + (n\bI_{T\times T}-\hbA)(\hbB-\bB^*)^\top\tba_j,
        \label{eq:bxi_j-whtout-hat}
        \\
        \hbxi_j 
        &=
        (\bY-\bX\hbB)^\top \hbz_j  \left[n (\hbz_j^\top \bX\be_j)^{-1} \right]
        \tau_j
        + (n\bI_{T\times T}-\hbA)(\hbB-\bB^*)^\top\tba_j
        \label{eq:hbxi_j}
    \end{align}
    so that $\bxi_j$ coincides with \eqref{eq:def-bxi}
    for the normalized direction $\tba_j$.
    Since the second term in $\bxi_j$ is the same as the second term in $\hbxi_j$,
    \begin{equation}
    \|\bxi_j - \hbxi_j\|_2
    =
    \|(\bY-\bX\hbB)^\top\Big\{\hbz_j\tau_j^{-1}  \left[n\tau_j^2 (\hbz_j^\top \bX\be_j)^{-1} \right]
    - \tbz_j
        \Big\}
    \|_2.
    \label{ineq:tildez-j-hat-z-j}
    \end{equation}
    Since
    $\bgamma^{(j)} = - (\bI_p - \be_j\be_j^\top)\bSigma^{-1}\be_j (\bSigma^{-1})_{jj}^{-1}$ in \eqref{eq:linear-model-nodewise},
    or equivalently $\be_j - \bgamma^{(j)} = \tau_j^2 \bSigma^{-1}\be_j$,
    we have
    $$\tbz_j
    = \tau_j \bX\bSigma^{-1}\be_j
    = \tau_j^{-1} \bX(\be_j - \bgamma^{(j)})
    .
    $$
    Next,
    $\hbz_j = \bX\be_j - \bX_{-j} \hbgamma^{(j)}
    = \bX[\be_j - \hbgamma^{(j)}]$
    since by definition of $\hbgamma^{(j)}$, the $j$-th coordinate
    of $\hbgamma^{(j)}$ is zero, so that $\bX_{-j}\hbgamma^{(j)} = \bX\hbgamma^{(j)}$.
    By inserting 
    $\bI_{p\times p} = \sum_{k=1}^p\be_k\be_k{}^\top$
    in \eqref{ineq:tildez-j-hat-z-j}, using 
    that the KKT conditions of $\hbB$ imply
    that $\max_{k\in[p]}\|(\bY-\bX\hbB)^\top\bX\be_k\|_2 \le nT \lambda$
    and the triangle inequality, we find
    \begin{eqnarray}
    \|\bxi_j - \hbxi_j\|_2
    &=&
    \tau_j^{-1}\Big\|(\bY-\bX\hbB)^\top
    \bX
    \sum_{k=1}^p \be_k\be_k^\top
    \Big\{(\be_j - \hbgamma^{(j)})  \left[n\tau_j^2 (\hbz_j^\top \bX\be_j)^{-1} \right]
        - (\be_j - \bgamma^{(j)})
        \Big\}
    \Big\|_2 \notag
  \\ &\le &
    \tau_j^{-1} 
  nT \lambda
  \sum_{k=1}^p
  \Big|
  \be_k^\top
      \Big\{(\be_j - \hbgamma^{(j)}) \left[n\tau_j^2 (\hbz_j^\top \bX\be_j)^{-1} \right]
        - (\be_j - \bgamma^{(j)})
        \Big\}
    \Big|  
    \label{eq:KKTbound}
  \\ &=& 
  \tau_j^{-1} nT \lambda
    \Big\|
    \Big\{(\be_j - \hbgamma^{(j)})  \left[n\tau_j^2 (\hbz_j^\top \bX\be_j)^{-1} \right]
        - (\be_j - \bgamma^{(j)})
        \Big\}
    \Big\|_1.
    \notag
    \end{eqnarray}
    There are two errors at this point: the estimation error
    $\|\hbgamma^{(j)} - \bgamma^{(j)}\|_1$
    and the estimation error $|n \tau_j^2 (\hbz_j^\top\bX\be_j)^{-1} - 1|$,
    which corresponds to the relative error of the estimation
    of the variance $\tau_j^2$ by $n^{-1}\hbz_j^\top\bX\be_j$
    in the linear model \eqref{eq:linear-model-nodewise}.
    Keeping these two errors in mind, by the triangle inequality
    the previous display yields
    \begin{equation}
    \|\bxi_j - \hbxi_j\|_2
    \le
    \frac{
        \tau_j^{-1}  nT \lambda
    }{
        (\hbz_j^\top \bX\be_j)/(n\tau_j^2)
    }
    \Big(
    \Big\|
    \bgamma^{(j)} - \hbgamma^{(j)}
    \Big\|_1
    +
    \Big\|
    \be_j - \bgamma^{(j)}
    \Big\|_1
    \Big|
    1-\frac{\hbz_j^\top\bX\be_j}{n\tau_j^2}
    \Big|
    \Big).
    \label{eq:goal-b1-b2-b3}
    \end{equation}
    For the first term in the parenthesis, inequality
    \eqref{eq:ell_1_rate_hbgamma-j}
    holds: this is the usual $\ell_1$ estimation rate for
    the Lasso estimate $\hbgamma^{(j)}$ for the sparse
    estimation target $\bgamma^{(j)}$ in the linear model
    \eqref{eq:linear-model-nodewise} with
    noise variance $\tau_j^2$.
    For the second term,
    inequality
    \begin{equation}
    \tau_j^{-1}\|\be_j - \bgamma^{(j)}\|_1
    = \tau_j \|\bSigma^{-1}\be_j\|_1
    \le
    \tau_j
    \|\bSigma^{-1}\be_j\|_0^{1/2}
    \|\bSigma^{-1}\be_j\|_2
    \le
    \|\bSigma^{-1/2}\|_{op}
    \|\bSigma^{-1}\be_j\|_0^{1/2}
    \label{eq:b2}
    \end{equation}
    holds thanks to the Cauchy-Schwarz inequality
    and $\tau_j = \|\bSigma^{-1/2}\be_j\|_2^{-1}$.
    Furthermore, by the triangle inequality, we have
    \begin{equation}
    \Big|
    1-\frac{\hbz_j^\top\bX\be_j}{n\tau_j^2}
    \Big|
    \le
    \Big|
    1-\frac{\|\bep^{(j)}\|_2^2}{n\tau_j^2}
    \Big|
    +
    \Big|
    \frac{(\bep^{(j)} - \hbz_j)^\top\bep^{(j)}}{n\tau_j^2}
    -
    \frac{\hbz_j^\top(\bX\be_j-\bep^{(j)} )}{n\tau_j^2}
    \Big|.
    \label{RHS-to-bound-unknown-sigma-gamma-j}
    \end{equation}
    As $\|\bep^{(j)}\|_2^2/\tau_j^2$ has $\chi^2_n$ distribution,
    the first term is $O(n^{-1/2})$ by the Central Limit Theorem.
    For the next term, we use again the triangle inequality.
    To bound the next term, notice that by Hölder's inequality,
    $$
    |(\bep^{(j)} - \hbz_j)^\top\bep^{(j)}|
    =
    |(\hbgamma^{(j)}-\bgamma^{(j)})^\top\bX_{-j}^\top\bep^{(j)}|
    \le
    \|\hbgamma^{(j)} -\bgamma^{(j)}\|_1 \|\bX_{-j}{}^\top\bep^{(j)}\|_\infty
    .
    $$
    Each factor in the right hand side
    is bounded from above as follows:
    $\|\hbgamma^{(j)} -\bgamma^{(j)}\|_1 =
    \tau_j \|\bSigma^{-1}\|_{op} \|\bgamma^{(j)}\|_0 O_\P\big(\sqrt{n^{-1}\log p} \big)
    $ thanks to
    \eqref{eq:ell_1_rate_hbgamma-j}
    and 
    $\|\bX_{-j}{}^\top\bep^{(j)}\|_\infty = \tau_j O_\P(\sqrt{n \log p})$
    because $\bX_{-j}$ is independent of $\bep^{(j)}$
    and $\max_{k\in[p]\setminus \{j\}}\bSigma_{kk}\le 1$.
    This proves that
    $|(\bep^{(j)} - \hbz_j)^\top\bep^{(j)}|/(n\tau_j^2)
    \le
    \|\bSigma^{-1}\|_{op}\|\bgamma^{(j)}\|_0 O_\P(n^{-1}\log p)
    $.
    We also have
    $$
    |\hbz_j^\top(\bX\be_j - \bep^{(j)})|
    =
    |\hbz_j^\top \bX_{-j}\bgamma^{(j)}|
    \le
    \|\hbz_j^\top\bX_{-j}\|_\infty \|\bgamma^{(j)}\|_1
    \le
    O_\P(\tau_j\sqrt{n\log p}) \|\bgamma^{(j)}\|_1
    $$
    thanks to Hölder's inequality and
    the KKT conditions for $\hbgamma^{(j)}$ in \eqref{eq:KKT-hbgamma-j}
    to bound the $\ell_\infty$ norm.
    We have
    $\|\bgamma^{(j)}\|_1 \le 
    \|\bgamma^{(j)}\|_0^{1/2} \|\bgamma^{(j)}\|_2
    $
    and  $\|\bgamma^{(j)}\|_2\le \tau_j^2 \|\bSigma^{-1}\be_j\|_2
    \le \tau_j \|\bSigma^{-1/2}\|_{op}$
    by definition of $\bgamma^{(j)}$
    and the Cauchy-Schwarz inequality.
    Combining these bounds provide an upper bound on the right hand side
    of \eqref{RHS-to-bound-unknown-sigma-gamma-j}, so that
    \begin{align}
        \nonumber
    \Big|
    1-\frac{\hbz_j^\top\bX\be_j}{n\tau_j^2}
    \Big|
    &\le O_\P\Big(\frac{1}{\sqrt n}\Big)
    +
    \|\bSigma^{-1}\|_{op} \|\bgamma^{(j)}\|_0 O_\P\Big(\frac{\log p}{n}\Big)
    +
    \|\bSigma^{-1/2}\|_{op}
    \Big(\frac{\|\bgamma^{(j)}\|_0\log p}{n}\Big)^{1/2}
    \\
    &\le
    \|\bSigma^{-1}\|_{op}
    \big({\|\bgamma^{(j)}\|_0\log(p)}/{n}\big)^{1/2}
    O_\P(1)
    \label{eq:b3}
    \end{align}
    where the second line follows by bounding from above
    the first two terms thanks to assumption
    \eqref{eq:assum:T-log-p-sqrt-n-unknown-Sigma}
    and
    $\|\bSigma^{-1}\|_{op}\ge 1$ (this is a consequence of $\bSigma_{jj}\le 1$ in \Cref{assum:main}).
    The bound \eqref{eq:b3} also provides
    $\hbz_j^\top\bX\be_j/(n\tau_j^2)\smash{\xrightarrow[]{\P}} 1$
    and thus $n\tau_j^2/(\hbz_j^\top\bX\be_j) = O_\P(1)$.
    Using \eqref{eq:ell_1_rate_hbgamma-j}, \eqref{eq:b2} and \eqref{eq:b3} to bound
    from above the right hand side of \eqref{eq:goal-b1-b2-b3} we
    find
    \begin{align*}
        \|\bxi_j - \hbxi_j\|_2
        \le  nT\lambda
        \Big(
            &\|\bSigma^{-1}\|_{op}
            \|\bgamma^{(j)}\|_0 O_\P(\sqrt{n^{-1}\log p})\\
            &+
            \|\bSigma^{-1/2}\|_{op}
            \|\bSigma^{-1}\be_j\|_0^{1/2} \|\bSigma^{-1}\|_{op} \|\bgamma^{(j)}\|_0^{1/2} O_\P(\sqrt{n^{-1}\log p})
        \Big).
    \end{align*}
    Since $\|\bgamma^{(j)}\|_0= \|\bSigma^{-1}\be_j\|_0 - 1$,
    this implies
    $\|\bxi_j - \hbxi_j\|_2
    \le  nT \lambda \|\bSigma^{-1}\|_{op}^{3/2}
    \|\bSigma^{-1}\be_j\|_0
    O_\P(\sqrt{n^{-1}\log p})$.
    Thanks to $\lambda=O\big(\sigma(nT)^{-1/2})(1+\sqrt{\log(p/s)/T})\big)$
    by definition of $\lambda$, we eventually obtain
    \begin{equation}
        \label{eq:hbxi_j-bxi_j-converges-to-0}
    (\sigma^2 n )^{-1/2}\|\hbxi_j - \bxi_j\|_2 
    = O_\P
    \big(
        [\sqrt{T} + \sqrt{\log(p/s)}] \|\bSigma^{-1}\be_j\|_0 \sqrt{\log(p)/n}
    \big)
    \end{equation}
    which converges to 0 in probability
    thanks to assumption \eqref{eq:assum:T-log-p-sqrt-n-unknown-Sigma}.

    To complete the proof of \Cref{thm:unknown-Sigma-normal} and prove asymptotic normality
    for some fixed $\bb\in\R^T$ with $\|\bb\|_2=1$, notice that
    $$
    \zeta_j \coloneqq
    \frac{ 
                n \tba_j^\top (\hbB - \bB^*) \bb
                + \tbz_j^\top(\bY-\bX\hbB)(\bI_{T\times T}-\hbA/n)^{-1}
                \bb
    }{
        \|(\bY-\bX\hbB)
        (\bI_{T\times T}-\hbA/n)^{-1}
        \bb\|_2
}
    $$
    satisfies $\zeta_j \smash{\xrightarrow[]{d}} \mathcal N(0,1)$ by \Cref{thm-normal} 
    applied to the normalized direction $\tba_j$.
    Furthermore,
    \begin{align*}
        &\Big|\zeta_j
        -
        \frac{
            n \be_j^\top (\hbB - \bB^*) \bb
            + n (\hbz_j^\top \bX\be_j)^{-1}
            \hbz_j^\top(\bY-\bX\hbB)(\bI_{T\times T}-\hbA/n)^{-1}
            \bb
        }
        {
            (\tau_j)^{-1}  ~
        \|(\bY-\bX\hbB)
        (\bI_{T\times T}-\hbA/n)^{-1}
        \bb\|_2
        }
        \Big|
        \\&=
        \frac{
            |(\bxi_j-\hbxi_j)^\top(\bI_{T\times T} -\hbA/n)^{-1} \bb|
        }{
        \|(\bY-\bX\hbB)
        (\bI_{T\times T}-\hbA/n)^{-1}
        \bb\|_2
    }
        \\&\le
        (\sigma^2n)^{-1/2}\|\bxi_j-\hbxi_j\|_2
        \|(\bI_{T\times T}-\hbA/n)^{-1}\|_{op}
        \Big(
        \|(\bY-\bX\hbB)
        (\bI_{T\times T}-\hbA/n)^{-1}
        \bb\|_2^{-1}(\sigma^2n)^{1/2}
        \Big).
    \end{align*}
    In the above display,
    $(\sigma^2n)^{-1/2}\|\bxi_j-\hbxi_j\|_2\smash{\xrightarrow[]{\P}} 0$
    when \eqref{eq:assum:T-log-p-sqrt-n-unknown-Sigma} holds,
    $\|(\bI_{T\times T}-\hbA/n)^{-1}\|_{op}\smash{\xrightarrow[]{\P}} 1$
    by \Cref{propMatrixA}(iii) and \Cref{lemma:sparsity},
    and the rightmost factor converges to 1 in probability
    by \Cref{thm:variance}.

    Since $\tau_j=(\bSigma^{-1})_{jj}^{-1/2}$,
        the last claim follows by
    $\hbz_j^\top\bX\be_j/(n\tau_j^2)\smash{\xrightarrow[]{\P}} 1$
    by \eqref{eq:b3} and Slutsky's theorem.
    We also have
    $\|\hbz_j\|_2/(\tau_j \sqrt n )\smash{\xrightarrow[]{\P}} 1$ since,
    using \eqref{eq:KKT-hbgamma-j} and the triangle inequality,
    \begin{align}
        (\tau_j^2n)^{-1} | \|\hbz_j\|_2^2 - \hbz_j^\top\bX\be_j|
       &   =
        (\tau_j^2n)^{-1}
        |\hbz_j^\top\bX_{-j}\hbgamma^{(j)}|
     \cr&\le
        (\tau_j^2n)^{-1}
        O_\P(1) \tau_j\sqrt{n\log p}
        \|\hbgamma^{(j)}\|_1
     \cr&\le O_\P(1) \sqrt{\log(p)/n}\big[\|\hbgamma^{(j)} - \bgamma^{(j)}\|_1 + \|\bgamma^{(j)}\|_1\big] /\tau_j
     \cr&\le O_\P(1) \sqrt{\log(p)/n}\big[\|\bgamma^{(j)}\|_0\sqrt{\log(p)/n} +
     \|\bgamma^{(j)}\|_0^{1/2}
     \big]
     \label{eq:consistency-2}
     \\&= o_\P(1)
     \nonumber
    \end{align}
    thanks to \eqref{eq:ell_1_rate_hbgamma-j} for the first term
    and the Cauchy-Schwarz inequality for the second.
    The convergence to 0 in probability in the last line follows from
    \eqref{eq:assum:T-log-p-sqrt-n-unknown-Sigma}.
\end{proof}

\begin{proof}[Proof of \Cref{thm:unknown-Sigma-normal} under assumption \eqref{eq:assum-unknown-Sigma-second-version}]
    With $\hbxi_j$ in \eqref{eq:hbxi_j} 
    and $\tbxi_j \coloneqq \bE^\top\hbz_j[n(\hbz_j^\top\bX\be_j)^{-1}]\tau_j$
    we have
    \begin{align}
        \nonumber
       \|\hbxi_j - \tbxi_j\|_2
       &=
       \|-\hbA(\hbB-\bB^*)^\top\tba_j
       +
       \tau_j
       (\bB^*-\hbB)^\top\bX_{-j}^\top
       \hbz_j
       [n(\hbz_j^\top\bX\be_j)^{-1}]
       \|_2
     \\&\le
     \|\hbA\|_{op}
     \|\bSigma^{1/2}(\hbB - \bB^*)\|_{op}
     +
     \tau_j
     \|\hbB - \bB^*\|_{2,1}
     \|\bX_{-j}^\top\hbz_j\|_{\infty}
     |[n(\hbz_j^\top\bX\be_j)]^{-1}|
     \label{eq:def-tbxi_j-just-above}
    \end{align}
    thanks to $\|\bSigma^{-1/2}\tba_j\|_2=1$ for the first term
    and
    Hölder's inequality for the second
    term.
    Thanks to \Cref{lemma:risk}(iii), \Cref{lemma:sparsity} and \Cref{propMatrixA} we find
    $
     \|\hbA\|_{op}
     \|\bSigma^{1/2}(\hbB - \bB^*)\|_{op}
     = O_\P(
         \bar s 
         \bar R
         )
    $.
    For the second term, thanks to \eqref{eq:KKT-hbgamma-j}
    and \Cref{lemma:risk}(iv) we have
    $$
     \tau_j
     \|\hbB - \bB^*\|_{2,1}
     \|\bX_{-j}^\top\hbz_j\|_{\infty}
     |[n(\hbz_j^\top\bX\be_j)]^{-1}|
     \le
     O_\P(\sqrt s \bar R)
     \sqrt{n\log p}
     \big|[n\tau_j^2(\hbz_j^\top\bX\be_j)]^{-1}\big|
    $$
    and the bound \eqref{eq:b3} grants
    $
    \hbz_j^\top\bX\be_j/(n\tau_j^2)\smash{\xrightarrow[]{\P}} 1
    $ thanks to the leftmost assumption
    in \eqref{eq:assum-unknown-Sigma-second-version}.
    In summary,
    $(\sigma^2 n)^{-1/2}\|\hbxi_j - \tbxi_j\|_2
    = O_\P(
        n^{-1/2}s \bar R
        +
        \sqrt{s} \bar R \sqrt{\log p}
    )
    =O_\P(
        \sqrt{s} \bar R \sqrt{\log p}
    )
    $
    thanks to $n^{-1/2}\sqrt s \le 1$.
    Hence due to the rightmost assumption
    in \eqref{eq:assum-unknown-Sigma-second-version},
    \begin{equation}
        \label{converge-to-0-hbxi_j-tbxi_j}
    (\sigma^2n)^{-1/2}\|\hbxi_j - \tbxi_j\|_2\smash{\xrightarrow[]{\P}} 0.
    \end{equation}

    Next, assume without loss of generality that $\|\bb\|_2=1$.
    By definition of $\hbxi_j$ in \eqref{eq:hbxi_j},
    \begin{align}
      &  \frac{
            n \be_j^\top (\hbB - \bB^*) \bb
            + n (\hbz_j^\top \bX\be_j)^{-1}
            \hbz_j^\top(\bY-\bX\hbB)(\bI_{T\times T}-\hbA/n)^{-1}
            \bb
        }
        {
            (\bSigma^{-1})_{jj}^{1/2}  ~
            \sigma\sqrt n
        }
        - \frac{\tbxi_j^\top\bb}{\sigma\sqrt n}
    \cr&=
    \frac{
        (\hbxi_j-\tbxi_j)^\top(\bI_{T\times T} - \hbA/n)^{-1}\bb
        }{
            \sigma\sqrt n
        }
        -
        \frac{\tbxi_j^\top(\bI_{T\times T}-(\bI_{T\times T} - \hbA/n)^{-1})\bb}{\sigma\sqrt n}.
        \label{eq:two-terms-unknown-Sigma-2}
    \end{align}
    The first term converges to $0$ in probability thanks to
    the previous paragraph, while the second
    term is $O_\P(s/n) \|\tbxi_j\|_2 (\sigma^2n)^{-1/2}$
    by \Cref{propMatrixA} and \Cref{lemma:sparsity}.
    $\bzeta_j \coloneqq [n\tau_j^2(\hbz_j^\top\bX\be_j)^{-1}]^{-1}\tau_j\|\hbz_j\|_2^{-1}\tbxi_j$ has $\mathcal N_T(\mathbf{0},\sigma^2\bI_{T\times T})$
    distribution
    by independence of $\bE$ and $\bX$.
    Next,
    $\|\tbxi_j\|_2 = \|\hbz_j\|_2 [n\tau_j^2(\hbz_j^\top\bX\be_j)^{-1}]
    \|\bzeta_j\|_2$
    and $\|\bzeta_j\|_2= O_\P(\sqrt T)$ since $\E[\|\bzeta_j\|_2^2]=T$.
    Furthermore,
    $n\tau_j^2(\hbz_j^\top\bX\be_j)^{-1}\smash{\xrightarrow[]{\P}} 1$
    by \eqref{eq:b3}.
    We also have
    $\tau_j \sqrt n \|\hbz_j\|_2^{-1}\smash{\xrightarrow[]{\P}} 1$ by
    \eqref{eq:consistency-2},
    thanks to the leftmost assumption in \eqref{eq:assum-unknown-Sigma-second-version} for the last line in \eqref{eq:consistency-2}.
    This shows that $\|\tbxi_j\|_2/(\sqrt n \|\bzeta_j\|_2) \smash{\xrightarrow[]{\P}} 1$ 
    and that the second term in \eqref{eq:two-terms-unknown-Sigma-2}
    is $O_\P(s/n)$ and converges to $0$ in probability.
    We conclude by observing that $\tbxi_j^\top\bb/(\sigma\sqrt n)\smash{\xrightarrow[]{d}} \mathcal N(0,1)$
    by Slutsky's theorem
    thanks to
    $n\tau_j^2(\hbz_j^\top\bX\be_j)^{-1}\smash{\xrightarrow[]{\P}} 1$
    and
    $\tau_j \sqrt n \|\hbz_j\|_2^{-1}\smash{\xrightarrow[]{\P}} 1$.
    In the denominator,
    $\|(\bY-\bX\hbB)
        (\bI_{T\times T}-\hbA/n)^{-1}
        \bb\|_2$
    and $\sigma\sqrt n$ can be used interchangeably,
    again by Slutsky's theorem,
    since 
    $\|(\bY-\bX\hbB) (\bI_{T\times T}-\hbA/n)^{-1} \bb\|_2
        /
        (\sigma\sqrt n) \smash{\xrightarrow[]{\P}} 1$
    by \Cref{thm:variance}.
\end{proof}

\subsection{Asymptotic \texorpdfstring{$\chi^2_T$}{chi\texttwosuperior} distribution}
\label{subsec:proof-unknown-Sigma-chi2}
\begin{proof}[Proof of \Cref{thm:unknown-Sigma-chi2} under
    assumption \eqref{eq:assum-unknown-Sigma-second-version}
    ]
    Let $\hbxi_j$ and $\tbxi_j$ be defined respectively 
    in \eqref{eq:hbxi_j} and in the sentence preceding
\eqref{eq:def-tbxi_j-just-above}.
    Notice that the quantity in the left hand side of \eqref{eq:conclusion-chi2} is equal to 
        $(1-T/n)^{1/2}\| \hbGamma{}^{-1/2} \bxi \|_2$
        where
        \begin{equation}
            \label{eq:xi-choice-unknown-Sigma}
        \bxi
        =
            \bigl(\tau_j^{-1}
                \frac{\sqrt n}{\|\hbz_j\|_2}
                \bigl[n (\hbz_j^\top \bX\be_j)^{-1} \bigr]^{-1}
            \bigr)
            \hbxi_j
            .
        \end{equation}
        Set $\bz=\hbz_j / \|\hbz_j\|_2$.
        For these values of $\bxi$ and $\bz$,
 we have
    \begin{align*}
        (\sigma^2n)^{-1/2}\|\bxi - \sqrt{n} \bE^\top\bz\|_2
         &
    = (\sigma^2n)^{-1/2}
    \Big\|
    \hbxi_j
     \tau_j^{-1}
    \frac{\sqrt n}{\|\hbz_j\|_2}
    \Bigl[n (\hbz_j^\top \bX\be_j)^{-1} \Bigr]^{-1}
    -
    \bE^\top\hbz_j \frac{\sqrt n}{\|\hbz_j\|_2}
    \Big\|_2
        \\&=
    (\sigma^2n)^{-1/2}
    \Big\|
        \hbxi_j - \tbxi_j 
    \Big\|_2
    \tau_j^{-1} \frac{\sqrt n}{\|\hbz_j\|_2}
    [n (\hbz_j^\top \bX\be_j)^{-1} ]^{-1}.
    \end{align*}
    Hence the above is $o_\P(1)$ by combining
    \eqref{converge-to-0-hbxi_j-tbxi_j}
    with \eqref{eq:b3} and \eqref{eq:consistency-2}.
    An application of \Cref{lemma:bxi-bz} 
    for these values of $\bz$ and $\bxi$
    yields
    \eqref{eq:conclusion-lemma-bxi-upper-bound}
    which completes the proof.
\end{proof}

\begin{proof}[Proof of \Cref{thm:unknown-Sigma-chi2} under
    assumption \eqref{eq:assum:T-log-p-sqrt-n-unknown-Sigma}
    ]
    Let $\bxi_j$ be defined in \eqref{eq:bxi_j-whtout-hat}
    Since $\tba_j,\tbz_j$ defined in the proof of
    \Cref{thm:unknown-Sigma-normal} satisfy
    the assumptions of \Cref{thm:chi2z0}, we have already
    established that
    $(\sigma^2 n )^{-1/2}
        \|
            \bxi_j - \sqrt n \bE^\top  \tbz_j\|\tbz_j\|_2^{-1}
        \|_2
    = o_\P(1)$, cf. \eqref{eq:conclusion-thm-chi2initial} 
    with $\ba=\tba_j$ and $\bz_0 = \tbz_j$.

    We now proceed to show that
    $(\sigma^2n)^{-1/2}\|\bxi - \bxi_j\|_2
    = o_\P(1)$
    for $\bxi$ defined in \eqref{eq:xi-choice-unknown-Sigma}.
    By the triangle inequality
    and since $\hbxi_j$ in \eqref{eq:xi-choice-unknown-Sigma}
    is proportional to $\hbxi_j$, we have
    \begin{align*}
     & (\sigma^2n)^{-1/2}\|\bxi - \bxi_j\|_2
        \\&=
        \frac{1}{\sigma\sqrt n}
    \Big\|_2 
    \hbxi_j
    \frac{\tau_j^{-1}\sqrt n}{\|\hbz_j\|_2}
    \Bigl[n (\hbz_j^\top \bX\be_j)^{-1} \Bigr]^{-1}
    -
    \bxi_j
    \Big\|_2 
       \\&=
        \frac{1}{\sigma\sqrt n}
        \Big\|
        ( \hbxi_j - \bxi_j)
    \frac{\tau_j^{-1}\sqrt n}{\|\hbz_j\|_2}
    \Bigl[n (\hbz_j^\top \bX\be_j)^{-1} \Bigr]^{-1}
            +
            \bxi_j
            \Bigl(
                \frac{\tau_j^{-1}\sqrt n}{\|\hbz_j\|_2}
    \Bigl[n (\hbz_j^\top \bX\be_j)^{-1} \Bigr]^{-1}
    -1 
            \Bigr)
        \Big\|_2
    \\&\le
        \frac{1}{\sigma\sqrt n}
        \Big\|
        ( \hbxi_j - \bxi_j)
        \Big\|_2
    \frac{\tau_j^{-1}\sqrt n}{\|\hbz_j\|_2}
    \Bigl[n (\hbz_j^\top \bX\be_j)^{-1} \Bigr]^{-1}
            +
        \Big\|
            \bxi_j
        \Big\|_2
        \Bigl|
            \frac{\tau_j^{-1}\sqrt n}{\|\hbz_j\|_2}
            \Bigl[n (\hbz_j^\top \bX\be_j)^{-1} \Bigr]^{-1}
            -1 
        \Bigr|.
    \end{align*}
    For the first term, by \eqref{eq:hbxi_j-bxi_j-converges-to-0}
    we already have
    $(\sigma^2 n)^{-1/2}\|\tbxi_j - \bxi_j\|_2
    = o_\P(1)$.
    Combined
    with $\|\hbz_j\|_2/(\tau_j\sqrt n)\smash{\xrightarrow[]{\P}} 1$
    and 
    $n \tau_j^2 (\hbz_j^\top\bX\be_j)^{-1}\smash{\xrightarrow[]{\P}} 1$
    (see \eqref{eq:b3} and \eqref{eq:consistency-2}),
    this proves that the first term above is $o_\P(1)$
    For the remaining terms,
    $(\sigma^2 n)^{-1/2}\|\bxi_j\|_2 = O_\P(\sqrt T)$
    by \eqref{eq:conclusion-thm-chi2initial},
    and the question is whether
    \begin{equation}
    O_\P(\sqrt T)
    \Big|
    \frac{\tau_j^{-1}\sqrt n}{\|\hbz_j\|_2}
    \Bigl[n (\hbz_j^\top \bX\be_j)^{-1} \Bigr]^{-1}
    -
    1\Big|
    \label{eq:how-fast-converge-to-0}
    \end{equation}
    converges to 0 using \eqref{eq:b3}
    and \eqref{eq:consistency-2}.
    With $a_j = \|\hbz_j\|_2^2/(\tau_j^2 n)$
    and $b_j = \hbz_j^\top\bX\be_j/(\tau_j^2n)$ for brevity,
    \begin{align*}
    \big|
    \tau_j^{-1}
    \frac{\sqrt n}{\|\hbz_j\|_2}
    \Bigl[n (\hbz_j^\top \bX\be_j)^{-1} \Bigr]^{-1}
    -
    1\big|
  &=
    a_j^{-1/2}
    | b_j - a^{1/2} |
  \\&\le
    a_j^{-1/2}
    (
    |b_j - 1|
    +
    |1- a_j^{1/2}|)
  \\&=
    a_j^{-1/2}
    (
    |b_j - 1|
    + |1 - a_j|(1+a_j)^{-1}
    ).
    \end{align*}
    We have
    $|a_j - 1| + |b_j-1| = \sqrt{\|\bgamma^{(j)}\|_0\log(p)/n} ~ O_\P(1)$
    thanks to \eqref{eq:b3} and \eqref{eq:consistency-2}.
    Hence thanks to \eqref{eq:assum:T-log-p-sqrt-n-unknown-Sigma},
    quantity \eqref{eq:how-fast-converge-to-0} is $o_\P(1)$.
    Combining all the pieces, we have proved that
    $$
    (\sigma^2n)^{-1/2}\|\bxi
    - \sqrt n\bE^\top\tbz_j\|\tbz_j\|_2^{-1}\|_2
    \le
    (\sigma^2n)^{-1/2}\|\bxi - \bxi_j\|_2
    + o_\P(1)
    \le o_\P(1).
    $$
    Applying 
    \Cref{lemma:bxi-bz} to $\bxi$ in \eqref{eq:xi-choice-unknown-Sigma}
    and $\bz=\tbz_j\|\tbz_j\|_2^{-1}$,
    conclusion
    \eqref{eq:conclusion-lemma-bxi-upper-bound} completes the proof.
\end{proof}

\end{document}